\UseAllTwocells \xyoption{frame} \CompileMatrices
\newtheorem{prop}{Proposition}[section]
\newtheorem{lem}[prop]{Lemma}
\newtheorem{cor}[prop]{Corollary}
\newtheorem{thm}[prop]{Theorem}
\newtheorem{rem}[prop]{Remark}
\newtheorem*{thm*}{Main Theorem}
\numberwithin{equation}{section}
\newcommand{\CnZn}{[\mathbb{C}^n/\mathbb{Z}_n]}
\DeclareMathOperator{\diag}{diag}
\DeclareMathOperator{\ad}{ad}
\begin{document}

\title[HIGHER GENUS GW THEORY OF $\CnZn$ I: HAE\MakeLowercase{s}]{Higher genus Gromov-Witten theory of $[\mathbb{C}^n/\mathbb{Z}_n]$ I: \\ Holomorphic Anomaly Equations}

\author[Genlik]{Deniz Genlik}
\address{Department of Mathematics\\ Ohio State University\\ 100 Math Tower, 231 West 18th Ave. \\ Columbus,  OH 43210\\ USA}
\email{genlik.1@osu.edu}

\author[Tseng]{Hsian-Hua Tseng}
\address{Department of Mathematics\\ Ohio State University\\ 100 Math Tower, 231 West 18th Ave. \\ Columbus,  OH 43210\\ USA}
\email{hhtseng@math.ohio-state.edu}

\begin{abstract}
We study the structure of higher genus Gromov-Witten theory of the quotient stack $[\mathbb{C}^n/\mathbb{Z}_n]$. We prove holomorphic anomaly equations for $[\mathbb{C}^n/\mathbb{Z}_n]$, generalizing previous results of Lho-Pandharipande \cite{lho-p2} for the case of $[\mathbb{C}^3/\mathbb{Z}_3]$ and ours \cite{gt} for the case $[\mathbb{C}^5/\mathbb{Z}_5]$ to arbitrary $n\geq{3}$.
\end{abstract}

\date{\today}

\maketitle

\tableofcontents

\setcounter{section}{-1}

\section{Introduction}\label{sec:intro}

For an integer $n\geq 2$, the cyclic group $\mathbb{Z}_n$ acts naturally on $\mathbb{C}^n$ by letting its generator $1\in\mathbb{Z}_n$ act via the $n\times n$ matrix $$\text{diag}(e^{\frac{2\pi\sqrt{-1}}{n}},..., e^{\frac{2\pi\sqrt{-1}}{n}}).$$
The quotient $[\mathbb{C}^n/\mathbb{Z}_n]$ is a smooth Deligne-Mumford stack. The diagonal action of the torus $\mathrm{T}=(\mathbb{C}^*)^n$ on $\mathbb{C}^n$ induces a $\mathrm{T}$-action on $[\mathbb{C}^n/\mathbb{Z}_n]$, making it a toric Deligne-Mumford stack.

This paper is concerned with $\mathrm{T}$-equivariant Gromov-Witten invariants of $[\mathbb{C}^n/\mathbb{Z}_n]$. By definition, these are the following integrals
\begin{equation}\label{eqn:GWinv}
   \left\langle \prod_{i=1}^m \gamma_i\psi_i^{k_i} \right\rangle_{g,m}^{\CnZn}:= \int_{\left[\overline{M}_{g, m}^{\mathrm{orb}}\left(\left[\mathbb{C}^{n} / \mathbb{Z}_{n}\right], 0\right)\right]^{vir}} \prod_{i=1}^{m} \mathrm{ev}_{i}^{*}\left(\gamma_{i}\right)\psi_i^{k_i}. 
\end{equation}
Here, $[\overline{M}_{g, m}^{\mathrm{orb}}\left(\left[\mathbb{C}^{n} / \mathbb{Z}_{n}\right], 0\right)]^{vir}$ is the ($\mathrm{T}$-equivariant) virtual fundamental class of the moduli space $\overline{M}_{g, m}^{\mathrm{orb}}\left(\left[\mathbb{C}^{n} / \mathbb{Z}_{n}\right], 0\right)$ of stable maps to $[\mathbb{C}^n/\mathbb{Z}_n]$. $\psi_i\in H^2(\overline{M}_{g, m}^{\mathrm{orb}}\left(\left[\mathbb{C}^{n} / \mathbb{Z}_{n}\right], 0\right), \mathbb{Q})$ are descendant classes. 
\begin{equation*}
  \mathrm{ev}_i: \overline{M}_{g, m}^{\mathrm{orb}}\left(\left[\mathbb{C}^{n} / \mathbb{Z}_{n}\right], 0\right)\to I[\mathbb{C}^n/\mathbb{Z}_n]  
\end{equation*}
are evaluation maps, which take values in the inertia stack $I[\mathbb{C}^n/\mathbb{Z}_n]$ of $[\mathbb{C}^n/\mathbb{Z}_n]$. $\gamma_i$ are classes in the $\mathrm{T}$-equivariant Chen-Ruan cohomology of $\CnZn$, $$\gamma_i\in H^*_\mathrm{T, Orb}([\mathbb{C}^n/\mathbb{Z}_n]):=H^*_\mathrm{T}(I\CnZn).$$

Let $$\lambda_0,...,\lambda_{n-1}\in H^*_\mathrm{T}(\text{pt})=H^*(B\mathrm{T})$$ be the first Chern classes of the tautological line bundles of $B\mathrm{T}=(B\mathbb{C}^*)^n$. Then (\ref{eqn:GWinv}) takes value in $\mathbb{Q}(\lambda_0,...,\lambda_{n-1})$.

Foundational treatments of orbifold Gromov-Witten theory can be found in many references, the original being \cite{agv}. The ($\mathrm{T}$-equivariant) Gromov-Witten theory of the non-compact target $\CnZn$ is by definition a {\em twisted} Gromov-Witten theory of the classifying stack $B\mathbb{Z}_n$. Foundational discussions on twisted Gromov-Witten theory of orbifolds can be found in \cite{ccit} and \cite{Tseng}.

The main results of this paper concern structures of Gromov-Witten invariants (\ref{eqn:GWinv}), formulated in terms of generating functions. The definition of inertia stacks implies that 
\begin{equation*}
    I\CnZn=\CnZn\cup\bigcup_{k=1}^{n-1} B\mathbb{Z}_n.
\end{equation*}
Let 
\begin{equation*}
    \phi_0=1\in H^0_\mathrm{T}(\CnZn), \phi_k=1\in H^0_\mathrm{T}(B\mathbb{Z}_n), 1\leq k\leq n-1.
\end{equation*}
Then, $\{\phi_0,...,\phi_{n-1}\}$ is an additive basis of $H^*_\mathrm{T,Orb}(\CnZn)$. The orbifold Poincar\'e dual $\{\phi^0,\ldots,\phi^{n-1}\}$ of this basis is given by
\begin{equation*}
\begin{split}
\phi^0&=n\lambda_0\cdots\lambda_{n-1}\phi_0,\\
\phi^1&=n\phi_{n-1},\\
&\,\,\,\vdots\\
\phi^{n-1}&=n\phi_1.
\end{split}
\end{equation*}
To simplify notation, in what follows, we set \begin{equation*}
\phi_i\coloneqq\phi_j\quad\text{if }j\equiv{i}\mod{n}\quad\text{and}\quad\phi^i\coloneqq\phi^j\quad\text{if }j\equiv{i}\mod{n},
\end{equation*}
for all $i\geq{0}$ and $0\leq{j}\leq{n-1}$.   

Associated to $\phi_{c_{1}}, \ldots,\phi_{c_{m}}\in H^{\star}_{\mathrm{T,Orb}}\left(\left[\mathbb{C}^n/\mathbb{Z}_n\right]\right)$ we define the Gromov-Witten potential by
\begin{equation*}
\mathcal{F}_{g, m}^{\left[\mathbb{C}^{n} / \mathbb{Z}_{n}\right]}\left(\phi_{c_{1}}, \ldots, \phi_{c_{m}}\right)=\sum_{d=0}^{\infty} \frac{\Theta^{d}}{d !} \int_{\left[\overline{M}_{g, m+d}^{\mathrm{orb}}\left(\left[\mathbb{C}^{n} / \mathbb{Z}_{n}\right], 0\right)\right]^{v i r}} \prod_{i=1}^{m} \mathrm{ev}_{i}^{*}\left(\phi_{c_{i}}\right) \prod_{i=m+1}^{m+d} \mathrm{ev}_{i}^{*}\left(\phi_{1}\right).    
\end{equation*}
We also use the following standard double-bracket notation, 
\begin{equation*}
\left\langle\left\langle\phi_{c_{1}}, \ldots, \phi_{c_{m}}\right\rangle\right\rangle_{g, m}^{\left[\mathbb{C}^{n} / \mathbb{Z}_{n}\right]}=\mathcal{F}_{g, m}^{\left[\mathbb{C}^{n} / \mathbb{Z}_{n}\right]}\left(\phi_{c_{1}}, \ldots, \phi_{c_{m}}\right).
\end{equation*}

We use the following involutions throughout the paper to present equations more efficiently:
$$\mathrm{Inv}:\{0,...,n-1\}\rightarrow\{0,...,n-1\}$$ with $\mathrm{Inv}(0)=0$ and $\mathrm{Inv}(i)=n-i$ for $1\leq i \leq n-1$, and 
$$\mathrm{Ion}:\{0,...,n\}\rightarrow\{0,...,n\}$$ with $\mathrm{Ion}(0)=n$, $\mathrm{Ion}(n)=0$, and $\mathrm{Ion}(i)=i$ for $1\leq i \leq n-1$.

Holomorphic anomaly equations are partial differential equations predicted by physicists as a part of the higher genus mirror symmetry conjecture for Calabi-Yau threefolds \cite{BCOV1,BCOV2}. Lho-Pandharipande provided mathematical proofs of holomorphic anomaly equations for $K\mathbb{P}^2$ \cite{lho-p} and formal quintic \cite{lho-p3} using stable quotient theory. Their equations exactly match with physics calculations for these Calabi-Yau threefolds given in \cite{asyz}. Motivated by  $K\mathbb{P}^2$, Lho-Pandharipande also proved holomorphic anomaly equations for $[\mathbb{C}^3/\mathbb{Z}_3]$ in \cite{lho-p2}. We generalize their work on holomorphic anomaly equations for $[\mathbb{C}^3/\mathbb{Z}_3]$ to $\CnZn$ for $n\geq{3}$.

The main results of this paper, summarized below, are differential equations for these generating functions $\mathcal{F}^{\CnZn}_{g}$ when $n\geq 3$ and $g\geq{2}$ after the following specializations of equivariant parameters: for $0\leq i \leq n-1$, 
\begin{equation}\label{eqn:specialization}
   \lambda_i=\begin{cases} 
      e^{\frac{2\pi\sqrt{-1}i}{n}}e^{\frac{\pi\sqrt{-1}}{n}}& \text{if $n$ is even,}\\
      e^{\frac{2\pi\sqrt{-1}i}{n}} & \text{if $n$ is odd}.
     \end{cases}
\end{equation}
Although physics prediction of holomorphic anomaly equations was meant for Calabi-Yau manifolds of dimension $3$, borrowing terminology from String Theory, we call the differential equations in our main results as {\em holomorphic anomaly equations} for $\CnZn$.
\begin{thm*}[Finite generation property and holomorphic anomaly equations]
\hfill
\begin{enumerate}
\item (=Corollary \ref{cor:VertexEdgeCont}) The Gromov-Witten potential lies in a certain polynomial ring:
\begin{equation*}
\mathcal{F}_{g, m}^{\left[\mathbb{C}^{n} / \mathbb{Z}_{n}\right]}\left(\phi_{c_{1}}, \ldots, \phi_{c_{m}}\right)\in\mathbb{C}[L^{\pm{1}}][\mathfrak{S}_n][\mathfrak{C}_n].
\end{equation*}

\item (=Theorem \ref{thm:HAE_n_odd})
Let $n\geq{3}$ be an odd number with $n=2s+1$, and $g\geq{2}$. We have
\begin{equation*}
\frac{C_{s+1}}{(2s+1)L}\frac{\partial}{\partial A_{s}}\mathcal{F}_{g}^{\left[\mathbb{C}^{n} / \mathbb{Z}_{n}\right]}
=\frac{1}{2}\mathcal{F}_{g-1,2}^{\left[\mathbb{C}^n/ \mathbb{Z}_n\right]}\left(\phi_s,\phi_s\right)+\frac{1}{2}\sum_{i=1}^{g-1}\mathcal{F}_{g-i,1}^{\left[\mathbb{C}^n/ \mathbb{Z}_n\right]}\left(\phi_s\right)\mathcal{F}_{i,1}^{\left[\mathbb{C}^n/ \mathbb{Z}_n\right]}\left(\phi_s\right).
\end{equation*}

\item (=Theorem \ref{thm:HAE_n_even})
Let $n\geq{4}$ be an even number with $n=2s$, and $g\geq{2}$. We have
\begin{equation*}
\frac{C_{s+1}}{2sL}\frac{\partial}{\partial A_{s-1}}\mathcal{F}_{g}^{\left[\mathbb{C}^{n} / \mathbb{Z}_{n}\right]}
=\mathcal{F}_{g-1,2}^{\left[\mathbb{C}^n/ \mathbb{Z}_n\right]}\left(\phi_{s-1},\phi_s\right)+\sum_{i=1}^{g-1}\mathcal{F}_{g-i,1}^{\left[\mathbb{C}^n/ \mathbb{Z}_n\right]}\left(\phi_{s-1}\right)\mathcal{F}_{i,1}^{\left[\mathbb{C}^n/ \mathbb{Z}_n\right]}\left(\phi_s\right).
\end{equation*}    
\end{enumerate}
\end{thm*}

We refer to Corollary \ref{cor:VertexEdgeCont}, Theorem \ref{thm:HAE_n_odd}, and Theorem \ref{thm:HAE_n_even} for more details. Theorem \ref{thm:HAE_n_odd} is a generalization of the differential equation obtained in \cite{lho-p2} for $[\mathbb{C}^3/\mathbb{Z}_3]$. 

The proofs of Theorems \ref{thm:HAE_n_odd} and \ref{thm:HAE_n_even} follow the approach taken in \cite{lho-p2} for the case $n=3$. The approach is based on the {\em cohomological field theory} (in the sense of \cite{km}) nature of Gromov-Witten theory of $\CnZn$ and relies heavily on the Givental-Teleman classification \cite{g3}, \cite{t}, of semisimple cohomological field theories. A survey of Givental-Teleman classification can be found in \cite{Picm}.

More precisely, the proofs of Theorems \ref{thm:HAE_n_odd} and \ref{thm:HAE_n_even} use a formula obtained from Givental-Teleman classification which expresses the potential $\mathcal{F}_{g, m}^{\left[\mathbb{C}^{n} / \mathbb{Z}_{n}\right]}$ as a sum over graphs whose summands only require {\em genus $0$} Gromov-Witten theory of $\CnZn$, see equation (\ref{eqn:formula_Fg}) and Theorem \ref{prop:contributions} for details. This approach thus requires a detailed study of genus $0$ Gromov-Witten theory of $\CnZn$, which is worked out in Section \ref{sec:genus_0}. Many specific power series arise in the analysis of the genus $0$ theory. Properties of these power series and the rings containing them are studied in details in Section \ref{sec:diff_ring}. Holomorphic anomaly equations, Theorems \ref{thm:HAE_n_odd} and \ref{thm:HAE_n_even}, are described and proved in Section \ref{sec:HAE}. In Section \ref{sec:HAE_insertion}, following a question by the referee, we apply the same approach to obtain holomorphic anomaly equations for Gromov-Witten potentials with insertions. Appendix \ref{appendix:stirling} contains discussions on properties of Stirling numbers used in this paper. Appendix \ref{appendix:I-function} contains a detailed analysis of the $I$-functions of $\CnZn$.

Some previous studies related to holomorphic anomaly equations in dimension $>3$ can be found in \cite{lho}, \cite{lho3}, \cite{Ob}. In \cite{gt}, we use results in \cite{lho} to derive two holomorphic anomaly equations for $[\mathbb{C}^5/\mathbb{Z}_5]$. One of them is the $n=5$ case of Theorem \ref{thm:HAE_n_odd}, and the other is new.

Studying higher genus Gromov-Witten theory of $K\mathbb{P}^{n-1}$ in detail and comparing its cohomological field theory structure to that of $\CnZn$ described in this paper, we obtain a crepant resolution correspondence for $K\mathbb{P}^{n-1}$ and $\CnZn$ in all genera \cite{gt2}.

\subsection{Acknowledgment}
We are very grateful to the referee for useful comments and suggestions. We thank C.-C. Liu and G. Oberdieck for their interests in this work. D. G. would like to thank Aniket Shah for a discussion that turned out to be useful for proof of Lemma \ref{lem:DkImCommutator}.
D. G. is supported in part by a Special Graduate Assignment fellowship by the OSU Department of Mathematics, and H.-H. T. is supported in part by a Simons Foundation collaboration grant.

\section{Genus zero theory}\label{sec:genus_0}
\subsection{Mirror theorem}

Applying the methods of \cite{ccit}, we obtain\footnote{Here, $\langle \alpha \rangle$ is the fractional part of $\alpha$.} the twisted $I$-function for $[\mathbb{C}^n/\mathbb{Z}_n]$, 
\begin{equation*}
I^{\text{tw}}(\mathbf{x},z)=z\sum_{k_0,...,k_{n-1}\geq{0}}\frac{\prod_{\substack{b:0\leq b<\alpha(\Vec{k}) \\ \langle b \rangle=\langle\alpha(\Vec{k})\rangle}}\prod_{i=0}^{n-1}\left(\lambda_i-bz\right)}{z^{k_0+...+k_{n-1}}}\frac{x_0^{k_0}\cdots x_{n-1}^{k_{n-1}}}{{k_0}!\cdots k_{n-1}!}\phi_{n\alpha(\Vec{k})}
\end{equation*}
where
\begin{equation*}
\mathbf{x}=\sum_{i=0}^{n-1}x_i\phi_i\quad\text{and}\quad
\alpha(\Vec{k})=\sum_{i=0}^{n-1}\frac{ik_i}{n}\quad\text{with}\quad\Vec{k}=(k_0,...,k_{n-1})\in\mathbb{Z}^n.
\end{equation*}
The $J$-function of $[\mathbb{C}^n/\mathbb{Z}_n]$ is characterized by
\begin{equation*}
J^{\text{tw}}(\tau,-z)=-z+\tau+O(z^{-1}).
\end{equation*}
To get a mirror theorem, we need to find the appropriate locus to restrict twisted $I$-function $I^{\text{tw}}(x,z)$. For that, we need the following lemma.

\begin{lem}
For every integer $n\geq 3$ and for every integer $l\in\{2,...,n-1\}$ there exists an integer $k$ such that $n-k\geq 1$ and $1<\frac{kl}{n}<2$.
\end{lem}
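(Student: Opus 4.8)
The plan is to fix $n\geq 3$ and $l\in\{2,\dots,n-1\}$, and to find an explicit integer $k$ with $1\le n-k$ (equivalently $k\le n-1$) such that $n<kl<2n$. The natural candidate is to take the smallest $k$ for which $kl>n$, i.e.\ $k=\lfloor n/l\rfloor+1$. With this choice the left inequality $kl>n$ holds by construction, so the only thing to verify is the right inequality $kl<2n$ together with the range bound $k\le n-1$.

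First I would check the upper bound $kl<2n$. Write $n=ql+r$ with $0\le r<l$ and $q=\lfloor n/l\rfloor\ge 1$ (using $l\le n-1<n$ so $q\ge 1$; in fact $l\le n-1$ forces $q\ge 1$). Then $k=q+1$ and $kl=(q+1)l=ql+l=n-r+l$. Since $0\le r$ we get $kl\le n+l\le n+(n-1)<2n$, which is exactly the desired strict inequality. Moreover $kl=n-r+l>n$ iff $l>r$, which holds since $r<l$; note we must also rule out $kl=n$, but $kl=n$ would force $r=l$, impossible. So $n<kl<2n$, i.e.\ $1<\tfrac{kl}{n}<2$.

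Next I would confirm $k\le n-1$, equivalently $n-k\ge 1$. We have $k=q+1=\lfloor n/l\rfloor+1\le n/l+1$. Since $l\ge 2$, this gives $k\le n/2+1$. For $n\ge 3$ one has $n/2+1\le n-1$ exactly when $n\ge 4$; the case $n=3$ must be handled separately, but then $l=2$ is the only value, $q=\lfloor 3/2\rfloor=1$, $k=2$, and indeed $n-k=1\ge 1$ while $kl=4$ and $1<4/3<2$. For $n\ge 4$ the bound $k\le n/2+1\le n-1$ finishes the argument. (Alternatively, one can argue uniformly: if $k=n$ then $n\mid kl$ would make $kl/n$ an integer, contradiction, and $k>n$ is impossible since $k\le n/l+1\le n/2+1<n$ already for $n\ge 3$.)

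The argument is essentially a one-line division-with-remainder computation, so there is no real obstacle; the only point requiring a moment's care is the borderline range check $k\le n-1$ for small $n$, which is why the case $n=3$, $l=2$ is worth spelling out explicitly. Everything else follows from $n=\lfloor n/l\rfloor l+r$ with $0\le r<l\le n-1$.
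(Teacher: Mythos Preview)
Your proof is correct. You give a uniform choice $k=\lfloor n/l\rfloor+1$ and verify $n<kl<2n$ and $k\le n-1$ directly from the division-with-remainder identity $n=ql+r$ with $0\le r<l$; the only subtlety, the range bound $k\le n-1$, you handle cleanly (and your parenthetical alternative, using that $k\le n/2+1<n$ for $n\ge 3$ together with $k\in\mathbb{Z}$, in fact works uniformly without the separate $n=3$ check).

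The paper instead proceeds by three cases on the size of $l$, taking $k=n-1$ for $l=2$, $k=\lfloor n/l\rfloor$ for $3\le l\le n/2$, and $k=2$ for $n/2<l\le n-1$. Your single-formula approach is more systematic and avoids the case split; it is also more robust, since the paper's middle case as written does not quite give $kl>n$ (for instance $n=7$, $l=3$ yields $k=2$ and $kl/n=6/7<1$), and presumably a ``$+1$'' is intended there --- which is precisely your choice.
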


\begin{proof}
For $l=2$, let $k=n-1$. For $3\leq l\leq \frac{n}{2}$, let $k=\lfloor\frac{n}{l}\rfloor$. For $\frac{n}{2}<l\leq n-1$, let $k=2$.
\end{proof}

By the existence of such $k$, we see that if we let all $k_i$ to be 0 except when $i=l$ and if we let $k_l=k$ then the coefficient of the monomial $x_l^{k_l}$ has a term of $z$-degree greater than equal to $2$. So, we should restrict the twisted $I$-function $I^{\text{tw}}(x,z)$ to the locus $x_2=...=x_{n-1}=0$ to be able to look for a mirror theorem by the characterization of the $J$-function.

Applying \cite[Theorem 4.8]{ccit}, we obtain the following generalization of the mirror theorem for $[\mathbb{C}^3/\mathbb{Z}_3]$ in \cite[Section 6.3]{ccit}.

\begin{thm}\label{thm:mirrorthm}
For $n\geq 3$, the twisted $I$-function and the $J$-function of  $[\mathbb{C}^n/\mathbb{Z}_n]$ satisfies the following equality
\begin{equation*}
I^{\text{tw}}\left(x_0\phi_0+x_1\phi_1, z\right)=J^{\text{tw}}\left(\tau^0\phi_0+\tau^1\phi_1, z\right)
\end{equation*}
with
\begin{equation*}
\tau^0=x_0\quad\text{and}\quad
\tau^1=\sum_{k\geq 0}\frac{(-1)^{nk}x_1^{nk+1}}{(nk+1)!}\left(\frac{\Gamma\left(k+\frac{1}{n}\right)}{\Gamma\left(\frac{1}{n}\right)}\right)^n.
\end{equation*}
\end{thm}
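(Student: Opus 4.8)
The plan is to invoke the mirror theorem machinery of \cite[Theorem 4.8]{ccit} for the twisted theory of $B\mathbb{Z}_n$ and track the resulting change of variables explicitly along the restricted locus. By the discussion just above the statement, we restrict $I^{\mathrm{tw}}$ to $x_2=\cdots=x_{n-1}=0$, so only the terms with $\vec k=(k_0,k_1,0,\ldots,0)$ survive. First I would compute $\alpha(\vec k)=k_1/n$ for such $\vec k$, and observe that the power of $z$ appearing in the $\vec k$-th summand is $1-(k_0+k_1)+\#\{b: 0\le b<k_1/n,\ \langle b\rangle=\langle k_1/n\rangle\}$. The point is that $\langle k_1/n\rangle = k_1/n$ precisely when $0\le k_1\le n-1$, and more generally the inner product over $b$ is non-empty only when $k_1$ is a multiple of $n$ or $k_1<n$; a careful bookkeeping shows that on this locus $I^{\mathrm{tw}}$ has the form $-z + (\text{degree-}0\text{ term}) + O(z^{-1})$, i.e. it already lies on Givental's Lagrangian cone in the shape that matches the characterization of $J^{\mathrm{tw}}$.

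Next I would read off the mirror map. The coefficient of $\phi_0$ on the restricted locus receives contributions only from $\vec k=(k_0,0,\ldots,0)$ with the inner $b$-product empty, giving simply $x_0$; hence $\tau^0=x_0$. For $\tau^1$, the coefficient of $\phi_1=\phi_{n\alpha(\vec k)}$ with $n\alpha(\vec k)\equiv 1 \pmod n$ forces $k_1\equiv 1\pmod n$, so $k_1=nk+1$ for $k\ge 0$, and to land in $z$-degree $0$ (the degree-$0$ part of $J^{\mathrm{tw}}(\tau,-z)$, which is $\tau$) we must take $k_0=0$. The surviving summand is then
\begin{equation*}
\sum_{k\ge 0}\frac{(-1)^{\,nk+1}\,x_1^{nk+1}}{(nk+1)!}\prod_{\substack{0\le b<k+\frac1n\\ \langle b\rangle = \frac1n}}\ \prod_{i=0}^{n-1}(\lambda_i - b z)\Big|_{z\text{-degree }0},
\end{equation*}
and the $b$-values in that product are exactly $b=\frac1n, 1+\frac1n,\ldots,(k-1)+\frac1n$, so the $z^0$-part of $\prod_i(\lambda_i-bz)$ is $\prod_{i=0}^{n-1}\lambda_i$... — at this point one must be careful: the product is over $n$ factors $(\lambda_i-bz)$ and we want its constant-in-$z$ piece, which over all the relevant $b$'s multiplies out to $\prod_{j=0}^{k-1}\big(j+\tfrac1n\big)^n = \big(\Gamma(k+\tfrac1n)/\Gamma(\tfrac1n)\big)^n$. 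Reconciling the sign $(-1)^{nk+1}$ coming out of $J^{\mathrm{tw}}(\tau,-z)$ with the stated $(-1)^{nk}$ in $\tau^1$ is just the overall $-z+\tau$ normalization, and I would spell that out.

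Finally I would assemble these pieces: having checked that the restricted $I^{\mathrm{tw}}$ lies on the cone and extracted $(\tau^0,\tau^1)$ as above, \cite[Theorem 4.8]{ccit} (equivalently the characterization $J^{\mathrm{tw}}(\tau,-z)=-z+\tau+O(z^{-1})$ together with the fact that both sides lie on the same Lagrangian cone) gives the claimed identity $I^{\mathrm{tw}}(x_0\phi_0+x_1\phi_1,z)=J^{\mathrm{tw}}(\tau^0\phi_0+\tau^1\phi_1,z)$. I expect the main obstacle to be the combinatorial control of the inner product $\prod_{b:\,0\le b<\alpha(\vec k),\ \langle b\rangle=\langle\alpha(\vec k)\rangle}\prod_i(\lambda_i-bz)$ for general $\vec k$ on the restricted locus — in particular verifying that no summand contributes a positive power of $z$ beyond the leading $-z$ (which is what legitimizes the restriction to $x_2=\cdots=x_{n-1}=0$ and was the reason for the preceding lemma), and that the terms landing in $z$-degree $0$ are exactly those enumerated above with no stray contributions from $k_0>0$. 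The identification of the product with the Gamma-quotient is then a routine computation using $\prod_{j=0}^{k-1}(j+\tfrac1n)=\Gamma(k+\tfrac1n)/\Gamma(\tfrac1n)$.
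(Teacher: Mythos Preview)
Your overall strategy matches the paper's: restrict to $x_2=\cdots=x_{n-1}=0$, verify that the restricted $I$-function has the shape $z+\tau(x)+O(z^{-1})$, and read off $\tau^0,\tau^1$ from the $z$-degree-$0$ part. However, the execution of the $\tau^1$ computation contains a genuine error.

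The term of the $I$-function indexed by $(k_0,k_1)=(0,nk+1)$ is
\[
\frac{\gamma_{nk+1}(z)}{z^{nk}}\cdot\frac{x_1^{nk+1}}{(nk+1)!}\,\phi_1,
\qquad
\gamma_{nk+1}(z)=\prod_{\substack{0\le b<k+\frac1n\\ \langle b\rangle=\frac1n}}\prod_{i=0}^{n-1}(\lambda_i-bz),
\]
so to land in $z$-degree $0$ you need the coefficient of $z^{nk}$ in $\gamma_{nk+1}(z)$, i.e.\ its \emph{leading} coefficient, not its constant term. You dropped the denominator $z^{nk}$ from your displayed summand and then asked for the ``constant-in-$z$ piece'' of $\gamma_{nk+1}(z)$; that would give $\prod_b\prod_i\lambda_i$, which is unrelated to $\prod_{j=0}^{k-1}(j+\tfrac1n)^n$. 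The leading coefficient is
\[
\prod_{j=0}^{k-1}\prod_{i=0}^{n-1}\bigl(-(j+\tfrac1n)\bigr)
=(-1)^{nk}\prod_{j=0}^{k-1}(j+\tfrac1n)^n
=(-1)^{nk}\left(\frac{\Gamma(k+\tfrac1n)}{\Gamma(\tfrac1n)}\right)^{\!n},
\]
and \emph{this} is the source of the sign $(-1)^{nk}$ in $\tau^1$; it has nothing to do with the $-z$ in the characterization of $J^{\mathrm{tw}}$. Your $(-1)^{nk+1}$ is spurious and your sign reconciliation paragraph should be replaced by this leading-coefficient computation.

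Two smaller points: the statement ``the inner product over $b$ is non-empty only when $k_1$ is a multiple of $n$ or $k_1<n$'' is false (e.g.\ $k_1=n+1$ gives $b=1/n$); what you actually need is that $\deg_z\gamma_{k_1}=(\lceil k_1/n\rceil-1)n\le k_1-1$ for all $k_1\ge1$, which gives $\deg_z\bigl(\gamma_{k_1}(z)/z^{k_0+k_1-1}\bigr)\le 0$. Also, contributions to $\phi_0$ can in principle come from $k_1\in n\mathbb{Z}_{>0}$ as well as $k_1=0$; they are excluded not because the $b$-product is empty but because those terms have strictly negative $z$-degree.
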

\begin{proof}
We decompose $I^{\text{tw}}\left(x_0\phi_0+x_1\phi_1, z\right)$ as
\begin{equation}\label{splittwisted}
I^{\text{tw}}\left(x_0\phi_0+x_1\phi_1, z\right)=z\phi_0+\sum_{k_0\geq{1}}\frac{1}{z^{k_0-1}}\frac{x_0^{k_0}}{{k_0}!}\phi_{0}
+\sum_{k_0\geq{0},k_1\geq{1}}\frac{\gamma_{k_1}(z)}{z^{k_0+k_{1}-1}}\frac{x_0^{k_0}x_{1}^{k_{1}}}{{k_0}!k_{1}!}\phi_{k_1}
\end{equation}
where 
\begin{equation*}
\gamma_{k_1}(z)=\prod_{\substack{b:0\leq b<\frac{k_1}{n} \\ \langle b \rangle=\langle\frac{k_1}{n} \rangle}}\prod_{i=0}^{n-1}\left(\lambda_i-bz\right)\quad\text{for}\quad k_1\geq 1.
\end{equation*}
By induction on $k_1$, we can show that
$\gamma_{k_1}(z)$ is a polynomial of degree\footnote{Here, $\lceil - \rceil$ is the ceiling function.} $m_{k_1}=\left(\left\lceil{\frac{k_1}{n}}\right\rceil-1\right)n$ with the leading coefficient
\begin{equation*}
l_{k_1}=
\begin{cases}
{\displaystyle\prod_{i=1}^{\left\lceil{\frac{k_1}{n}}\right\rceil-1}}\left(i-\frac{k_1}{n}\right)^n&\text{if}\quad n\nmid k_1,\\
(-1)^{n}{\displaystyle\prod_{i=1}^{\left\lceil{\frac{k_1}{n}}\right\rceil-1}}(-1)^{n}i^n&\text{if}\quad n \mid k_1.
\end{cases}
\end{equation*}
When $k_0\geq{0}$, observe that
\begin{equation*}
\deg\left(\frac{\gamma_{k_1}(z)}{z^{k_0+k_{1}-1}}\right)=\left(\left\lceil{\frac{k_1}{n}}\right\rceil-1\right)n+1-k_1-k_0\leq{0}.
\end{equation*}
Hence, by equation (\ref{splittwisted}) we see that the twisted $I$-function $I^{\text{tw}}\left(x_0\phi_0+x_1\phi_1, z\right)$ is of the form \begin{equation*}
I^{\text{tw}}(x_0\phi_0+x_1\phi_1,z)=z+\tau(x_0\phi_0+x_1\phi_1)+O(z^{-1}).  
\end{equation*}
To write $\tau(x_0\phi_0+x_1\phi_1)$ explicitly, we need to find the summands of equation (\ref{splittwisted}) which are constant in $z$. Clearly, the only contribution is $x_0\phi_0$ from the first sum. Let $$\gamma_{k_1}(z)=\sum_{j=0}^{m_{k_1}}\gamma_{k_1}^j(z)$$ where $\gamma_{k_1}^j(z)$ is the monomial of degree $j$ in $z$. Then, for the second sum, we need to find $(k_0,k_1)$ satisfying
\begin{equation*}
\deg\left(\frac{\gamma_{k_1}^j(z)}{z^{k_0+k_{1}-1}}\right)=j+1-(k_0+k_1)=0,
\end{equation*}
or equivalently, we need to find $(k_0,k_1)$ satisfying
\begin{equation*}
j=k_0+k_1-1.
\end{equation*}
Since $0\leq j \leq m_{k_1}$ and $k_1\geq 1$, the only such possibility is $(k_0,k_1)=(0,nk+1)$ for some $k\geq 0$. In this case, we have $j=nk=m_{k_1}=\deg{\gamma_{k_1}(z)}$ and the leading coefficient of ${\gamma_{k_1}(z)}$ is
\begin{equation*}
l_{k_1}=
{\displaystyle\prod_{i=1}^{k}}\left(i-\frac{nk+1}{n}\right)^n=(-1)^{nk}\left({\displaystyle\prod_{i=1}^{k}}\left(k-i+\frac{1}{n}\right)\right)^n=(-1)^{nk}\left(\frac{\Gamma\left(k+\frac{1}{n}\right)}{\Gamma\left(\frac{1}{n}\right)}\right)^n.
\end{equation*}
So, we see that
\begin{equation*}
\tau(x_0\phi_0+x_1\phi_1)=\tau^0\phi_0+\tau^1\phi_1
\end{equation*}
with
\begin{equation*}
\tau^0=x_0\quad\text{and}\quad
\tau^1=\sum_{k\geq 0}\frac{(-1)^{nk}x_1^{nk+1}}{(nk+1)!}\left(\frac{\Gamma\left(k+\frac{1}{n}\right)}{\Gamma\left(\frac{1}{n}\right)}\right)^n.
\end{equation*}
\end{proof}

In what follows, we impose the specializations (\ref{eqn:specialization}). Then we have
\begin{equation*}
\prod_{i=0}^{n-1}\left(\lambda_i-bz\right)=
\begin{cases} 
    1+(bz)^n & \text{if $n$ is even,}\\
    1-(bz)^n & \text{if $n$ is odd} 
\end{cases}
=1+(-1)^n(bz)^n.
\end{equation*}
Using the twisted $I$-function $I^{\text{tw}}$, the above specializations, and the convention of \cite{{lho-p}}, we define the $I$-function for  $[\mathbb{C}^n/\mathbb{Z}_n]$ :
\begin{equation}\label{def:I-function}
I\left(x, z\right)=
        \sum_{k=0}^{\infty}\frac{x^k}{{z^k}k!}\prod_{\substack{b:0\leq b<\frac{k}{n} \\ \langle b \rangle=\langle\frac{k}{n}\rangle}}\left(1+(-1)^n(bz)^n\right)\phi_k.
\end{equation}
It is easy to see that $I$-function (\ref{def:I-function}) of $[\mathbb{C}^n/\mathbb{Z}_n]$ is of the form
\begin{equation}\label{eq:IfuncAsSumIk}
I\left(x, z\right)=\sum_{k=0}^{\infty}\frac{I_k(x)}{z^k}\phi_k.
\end{equation}
For $0\leq i \leq n-1$, define
\begin{equation}\label{eq:tilde_I_i_functions}
\widetilde{I}_i(x,z)=\sum_{l=0}^{\infty}\frac{I_{nl+i}(x)}{z^{nl+i}}.
\end{equation}
Then, by equation (\ref{eq:IfuncAsSumIk}) we see that the $I$-function (\ref{def:I-function}) can be written as
\begin{equation}\label{eq:I_func_as_sum_of_tilde_I_i}
 I(x,z)=\sum_{i=0}^{n-1}\widetilde{I}_i(x,z)\phi_i=\widetilde{I}_0(x,z)\phi_0+\ldots+\widetilde{I}_{n-1}(x,z)\phi_{n-1}.  
\end{equation}
By keeping track of the degrees, we see that 
\begin{equation*}
I_k(x)=\sum_{l=0}^{\infty}\frac{(-1)^{nl}x^{nl+k}}{(nl+k)!}\left(\frac{\Gamma\left(l+\frac{k}{n}\right)}{\Gamma\left(\frac{k}{n}\right)}\right)^n
\end{equation*}
for $0\leq{k}\leq{n-1}.$

The small $J$-function for $[\mathbb{C}^n/\mathbb{Z}_n]$ is defined by 
\begin{equation*}
J\left(\Theta,z\right)=\phi_0+\frac{\Theta\phi_1}{z}+\sum_{i=0}^{n-1}\phi^i\left\langle\left\langle\frac{\phi_i}{z(z-\psi)}\right\rangle\right\rangle_{0,1}^{[\mathbb{C}^n/\mathbb{Z}_n]}.
\end{equation*}
Theorem \ref{thm:mirrorthm} implies
\begin{equation}\label{eq:smallmirrorthm}
J\left(\Theta(x),z\right)=I(x,z)
\end{equation}
with the mirror transformation
\begin{equation}\label{eq:mirrortransform}
\Theta(x)=I_1(x).
\end{equation}

\subsection{Picard-Fuchs equations}
Define the operator $$D:\mathbb{C}[\![x]\!]\rightarrow \mathbb{C}[\![x]\!]$$ and its inverse $$D^{-1}:x\mathbb{C}[\![x]\!]\rightarrow x\mathbb{C}[\![x]\!]$$ by
\begin{equation*}
Df(x)=x\frac{df(x)}{dx}, \quad D^{-1}f(x)=\int_{0}^x\frac{f(t)}{t}dt.
\end{equation*}
 We have the following identity
\begin{equation}\label{stirling1}
{x^m}\frac{d^m}{dx^m}=D(D-1)...(D-m+1)=\sum_{k=1}^ms_{m,k}{D^k}
\end{equation}
where $s_{m,k}$ are Stirling numbers of first kind. For a brief account of the properties of Stirling numbers, see Appendix \ref{appendix:stirling}.

\begin{prop}
The $I$-function of $[\mathbb{C}^n/\mathbb{Z}_n]$ satisfies the following Picard-Fuchs (type) equation
\begin{equation}\label{eq:PF1}
\frac{1}{x^n}D(D-1)...(D-n+1)I(x,z)-(-1)^n\left(\frac{1}{n}\right)^nD^nI(x,z)=\left(\frac{1}{z}\right)^nI(x,z).
\end{equation}
\end{prop}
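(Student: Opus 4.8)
The plan is to verify the Picard-Fuchs equation (\ref{eq:PF1}) termwise on the explicit series (\ref{def:I-function}) for the $I$-function. Since both sides are $\mathbb{C}(\lambda_\bullet)$-linear (here specialized) in the basis $\phi_0,\dots,\phi_{n-1}$ and $I(x,z)=\sum_k \tfrac{x^k}{z^k k!}\,c_k(z)\,\phi_k$ with $c_k(z)=\prod_{b}(1+(-1)^n(bz)^n)$ (product over $b$ with $0\le b<k/n$, $\langle b\rangle=\langle k/n\rangle$), it suffices to check the identity coefficient-by-coefficient on each monomial $x^k$. First I would record how the operators act on a single term: $D$ multiplies the $x^k$-term by $k$, so $D(D-1)\cdots(D-n+1)$ multiplies it by $k(k-1)\cdots(k-n+1)$, and dividing by $x^n$ shifts $k\mapsto k-n$; likewise $D^n$ multiplies by $k^n$; and multiplication by $(1/z)^n$ shifts the $z$-grading. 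Thus the whole identity (\ref{eq:PF1}) reduces, after matching the $x^{k}z^{-(k+n)}\phi_{k}$-coefficients, to a recursion relating $c_{k+n}(z)$ to $c_k(z)$.

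The key computation is the behaviour of the product $c_k(z)$ under $k\mapsto k+n$. Because the constraint $\langle b\rangle=\langle k/n\rangle$ together with $0\le b<k/n$ means $b$ runs over $\{\langle k/n\rangle,\ 1+\langle k/n\rangle,\ \dots\}$ up to $k/n$, passing from $k$ to $k+n$ simply appends one more factor, namely $b=k/n$ itself (when $n\nmid k$) or is handled by the $n\mid k$ branch; in all cases one gets $c_{k+n}(z)=c_k(z)\cdot\bigl(1+(-1)^n(k/n)^n z^n\bigr)$. Feeding this into the coefficient comparison, the left-hand side of (\ref{eq:PF1}) contributes $\tfrac{(k+n)(k+n-1)\cdots(k+1)}{(k+n)!}c_{k+n}(z) - (-1)^n n^{-n}\tfrac{k^n}{k!}c_k(z)$ times $x^k z^{-(k+n)}$; using $(k+n)!=(k+n)(k+n-1)\cdots(k+1)\cdot k!$ the first term becomes $\tfrac{1}{k!}c_{k+n}(z)=\tfrac{1}{k!}c_k(z)\bigl(1+(-1)^n(k/n)^n z^n\bigr)$, and the $(-1)^n(k/n)^n z^n$ piece cancels exactly the second term, leaving $\tfrac{1}{k!}c_k(z)$, which is precisely the $x^k z^{-(k+n)}\phi_k$-coefficient of $(1/z)^n I(x,z)$ on the right-hand side. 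One also checks the low-order terms $k=0,\dots,n-1$ separately: there $D(D-1)\cdots(D-n+1)$ annihilates them (a factor $(D-j)$ kills $x^j$), so the left side contributes nothing, matching the right side, whose $(1/z)^n I$ has no $x^k z^{-(k+n)}$ term with the relevant index until the shift brings in $c_{k+n}$.

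The main obstacle is purely bookkeeping: correctly tracking the index set $\{b: 0\le b<k/n,\ \langle b\rangle=\langle k/n\rangle\}$ through the shift $k\mapsto k+n$, in particular the split between the cases $n\mid k$ and $n\nmid k$ in the formula for the leading coefficient/product (the same dichotomy already visible in the formula for $l_{k_1}$ in the proof of Theorem \ref{thm:mirrorthm}), and making sure the empty-product conventions for small $k$ are consistent. Once the single-factor recursion $c_{k+n}(z)=c_k(z)\bigl(1+(-1)^n(k/n)^n z^n\bigr)$ is established, the rest is the short cancellation displayed above. Alternatively, and perhaps more cleanly, one can avoid the case split by noting that $c_k(z)=\prod_{j=1}^{\lceil k/n\rceil-1}\bigl(1+(-1)^n(j-k/n)^n z^n\bigr)$ directly from the definition (for all $k\ge 0$, with the empty product equal to $1$ when $k\le n$), and then the shift $k\mapsto k+n$ visibly multiplies by the single new factor with $j=\lceil k/n\rceil$, i.e. $1+(-1)^n(k/n)^n z^n$; I would use this reformulation to streamline the argument.
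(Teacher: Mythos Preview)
Your argument is correct and is essentially the paper's proof recast as a termwise coefficient comparison: the paper applies $\frac{d^n}{dx^n}$ to $I(x,z)$, shifts the summation index by $n$, isolates the same single extra factor $1+(-1)^n(k/n)^n z^n$ in the product, and then invokes $x^n\frac{d^n}{dx^n}=D(D-1)\cdots(D-n+1)$. Two minor remarks: your separate treatment of the terms $k=0,\dots,n-1$ is redundant (the recursion you wrote already covers every $k\ge 0$), and your alternative closed form $c_k(z)=\prod_{j}(1+(-1)^n(j-k/n)^n z^n)$ has a sign slip for odd $n$ (the factors should be $1+(-1)^n((k/n-j)z)^n$), but neither affects the validity of the main argument.
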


\begin{proof}
Applying the operator $\frac{d^n}{dx^n}$ to the function $I(x,z)$ we obtain
\begin{align*}
\frac{d^n}{dx^n}I(x,z)=&\sum_{k=n}^{\infty}\frac{x^{k-n}}{{z^k}(k-n)!}\prod_{\substack{b:0\leq b<\frac{k}{n} \\ \langle b \rangle=\langle\frac{k}{n}\rangle}}\left(1+(-1)^n(bz)^n\right)\phi_k\\
=&\sum_{k=0}^{\infty}\frac{x^{k}}{{z^{k+n}}k!}\prod_{\substack{b:0\leq b<1+\frac{k}{n} \\ \langle b \rangle=\langle\frac{k}{n}\rangle}}\left(1+(-1)^n(bz)^n\right)\phi_k \quad\text{ by shifting index and }\phi_{k+n}=\phi_k\\
=&\sum_{k=0}^{\infty}\frac{x^{k}}{{z^{k+n}}k!}\prod_{\substack{b:\frac{k}{n}\leq b<1+\frac{k}{n} \\ \langle b \rangle=\langle\frac{k}{n}\rangle}}\left(1+(-1)^n(bz)^n\right)\prod_{\substack{b:0\leq b<\frac{k}{n} \\ \langle b \rangle=\langle\frac{k}{n}\rangle}}\left(1+(-1)^n(bz)^n\right)\phi_k\\
=&\left(\frac{1}{z}\right)^n\sum_{k=0}^{\infty}\frac{x^k}{{z^k}k!}\prod_{\substack{b:0\leq b<\frac{k}{n} \\ \langle b \rangle=\langle\frac{k}{n}\rangle}}\left(1+(-1)^n(bz)^n\right)\phi_k\\
&+(-1)^n\left(\frac{1}{n}\right)^n\sum_{k=0}^{\infty}\frac{k^nx^k}{{z^k}k!}\prod_{\substack{b:0\leq b<\frac{k}{n} \\ \langle b \rangle=\langle\frac{k}{n}\rangle}}\left(1+(-1)^n(bz)^n\right)\phi_k\\
=&\left(\frac{1}{z}\right)^nI(x,z)+(-1)^n\left(\frac{1}{n}\right)^nD^nI(x,z).
\end{align*}
Using equation (\ref{stirling1}), we complete the proof.
\end{proof}

By equation (\ref{stirling1}), we can rewrite the Picard-Fuchs equation (\ref{eq:PF1}) as
\begin{equation}\label{eqn:PF1.5}
\frac{1}{x^n}\sum_{k=1}^{n}s_{n,k}D^kI(x,z)-(-1)^n\left(\frac{1}{n}\right)^nD^nI(x,z)=\left(\frac{1}{z}\right)^nI(x,z).
\end{equation}
Since $s_{n,n}=1$, we can rewrite equation (\ref{eqn:PF1.5}) further as
\begin{equation}\label{eq:PF2}
x^{-n}\left(\left(1-(-1)^n\left(\frac{x}{n}\right)^n\right)D^nI(x,z)+\sum_{k=1}^{n-1}s_{n,k}D^kI(x,z)\right)=z^{-n}I(x,z).
\end{equation}
We define\footnote{When $n=3$, our $L$ differs from $L$ defined in \cite{lho-p2} by a sign.} the following series in $\mathbb{C}[\![x]\!]$:
\begin{equation}\label{Lseries}
    L(x)=x\left(1-(-1)^n\left(\frac{x}{n}\right)^n\right)^{-\frac{1}{n}}.
\end{equation}
By equation (\ref{eq:DLLLemma1}) below, we obtain the following alternative form of the Picard-Fuchs equation (\ref{eq:PF2}), which we frequently use:
\begin{equation}\label{eq:PF3}
L^{-n}\left(D^{n}I(x,z)+\frac{D L}{L} \sum_{k=1}^{n-1} s_{n,k} D^{k}I(x,z)\right)=z^{-n}I(x,z).
\end{equation}

Due to the particular form (\ref{eq:IfuncAsSumIk}) of $I$-function, in order to define some series avoiding $\phi_k$'s, we also introduce the function $E(x,z)$
\begin{equation}\label{def:E-function}
E\left(x, z\right)=\sum_{k=0}^{\infty}\frac{x^k}{{z^k}k!}\prod_{\substack{b:0\leq b<\frac{k}{n} \\ \langle b \rangle=\langle\frac{k}{n}\rangle}}\left(1+(-1)^n(bz)^n\right)=\sum_{k=0}^{\infty}\frac{I_k(x)}{z^k}
\end{equation}
just by removing the $\phi_k$ from the expression of the $I$-function (\ref{def:I-function}). Also, substituting equation (\ref{eq:IfuncAsSumIk}) into Picard-Fuchs equation (\ref{eq:PF3}) and analyzing the coefficients of both sides, we obtain
\begin{equation}\label{eq:PFforIks}
D^{n}I_k+\frac{D L}{L} \sum_{k=1}^{n-1} s_{n,k} D^{k}I_k=0
\end{equation}
for $0\leq{k}\leq{n-1}$.

\subsection{Birkhoff factorization}\label{subsection:Birkhoff_Factorizaton}
Next, we define\footnote{For a series $F(x,z)\in\mathbb{C}[\![x,\frac{1}{z}]\!]$, the constant term of $F(x,z)$ with respect to $\frac{1}{z}$ is denoted by $F(x,\infty)$.} the series $E_i(x,z)$ and $C_i(x)$ for $i\geq 0$:
\begin{equation}\label{def:EiCi}
E_i(x,z)=\mathds{M}^iE(x,z)\quad\text{and}\quad C_i(x)=E_i(x,\infty)
\end{equation}
where $\mathds{M}$ is the Birkhoff operator defined by
\begin{equation}\label{eqn:BF_operator}
    \mathds{M}F(x,z)=zD\frac{F(x,z)}{F(x,\infty)}
\end{equation}
for any $F(x,z)$ with non-zero $F(x,\infty)$.
We also define\footnote{It is easy to show that two definitions of $C_i$'s are equivalent.} the series $C_i(x)$ inductively as follows:
\begin{equation}\label{altdefCis}
C_0=I_0=1\quad\text{and}\quad C_{i}=D\mathfrak{L}_{i-1}...\mathfrak{L}_{0}I_i\quad\text{for}\quad i\geq 1
\end{equation}
where $$\mathfrak{L}_i=C_i^{-1}D$$ for $i\geq 1$ and $\mathfrak{L}_0$ is the identity.
For any $l\geq 0$, we define the following series in $x$
\begin{equation*}
K_l=\prod_{i=0}^lC_i.
\end{equation*}
We have the following identities for the series $C_i$ and $K_l$, proved in Appendix \ref{appendix:I-function-Part1}, see Lemma \ref{propertiesofCfunctions} and Corollary \ref{Kfunctions}.
\begin{enumerate}
    \item $C_{k+n}=C_k$ for all $k\geq 1$\label{Cfunctions1},
    \item $\prod_{k=1}^nC_k={L}^n$ \label{Cfunctions2},
    \item $C_{k}=C_{n+1-k}$ for all $1\leq k \leq n$ \label{Cfunctions4}.
    \item $K_{n+l}=L^nK_l$ for all $l\geq{0}$, in particular $K_n=L^n$, \label{Kfunctions1}
    \item $K_lK_{n-l}=L^n$ and $K_lK_{\mathrm{Inv}(l)}=L^{l+\mathrm{Inv}(l)}$ for all $0\leq l \leq n-1$.\label{Kfunctions2}
\end{enumerate}

Define the $\mathds{S}$-operator for $[\mathbb{C}^n/\mathbb{Z}_n]$ by
\begin{equation*}
\mathds{S}^{[\mathbb{C}^n/\mathbb{Z}_n]}\left(\Theta,z\right)\left(\gamma\right)=\gamma+\sum_{i=0}^{n-1}\phi^{i}\left\langle\left\langle\frac{\phi_i}{z-\psi},\gamma\right\rangle\right\rangle_{0,2}^{[\mathbb{C}^n/\mathbb{Z}_n]}
\end{equation*}
for $\gamma\in H^{\star}_{\mathrm{T,Orb}}\left(\left[\mathbb{C}^n/\mathbb{Z}_n\right]\right)$. The $\mathds{S}$-operator satisfies the following identities :
\begin{align}
\mathds{S}^{[\mathbb{C}^n/\mathbb{Z}_n]}\left(\Theta,z\right)\left(\phi_0\right)=&I(x,z),\label{PropertiesofSOperator1}\\
\mathds{S}^{[\mathbb{C}^n/\mathbb{Z}_n]}\left(\Theta,z\right)\left(\phi_i\right)=&z\mathfrak{L}_i\mathds{S}^{[\mathbb{C}^n/\mathbb{Z}_n]}\left(\Theta,z\right)\left(\phi_{i-1}\right)\quad\text{for}\quad i\geq 1.\label{PropertiesofSOperator2}
\end{align}
Equation (\ref{PropertiesofSOperator1}) is a direct consequence of the definitions of $J$-function, $\mathds{S}$-operator, and equation (\ref{eq:smallmirrorthm}). For equation (\ref{PropertiesofSOperator2}), we see that both sides are of the form $\phi_i+O(z^{-1})$, hence they must match by properties of the Lagrangian cone \cite{g3}.

\begin{lem}\label{lem:PFFactorization}
We have the following factorization of the operator acting on the left-hand side of the Picard-Fuchs equation (\ref{eq:PF3})
\begin{equation}
\mathfrak{L}_1\cdots\mathfrak{L}_n=\mathfrak{L}_n\cdots\mathfrak{L}_1=L^{-n}\left(D^{n}+\frac{D L}{L} \sum_{k=1}^{n-1} s_{n,k} D^{k}\right).
\end{equation}
\end{lem}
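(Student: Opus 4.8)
The plan is to prove the two claimed equalities separately: first the identity $\mathfrak{L}_1\cdots\mathfrak{L}_n=\mathfrak{L}_n\cdots\mathfrak{L}_1$, and then the identity of either of these composites with the differential operator on the right-hand side. The natural tool for the second part is the $\mathds{S}$-operator and its recursion \eqref{PropertiesofSOperator2}.

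For the second identity, I would iterate \eqref{PropertiesofSOperator2} starting from \eqref{PropertiesofSOperator1}: applying $z\mathfrak{L}_i$ successively for $i=1,\dots,n$ gives
\begin{equation*}
z^n\,\mathfrak{L}_n\cdots\mathfrak{L}_1\,I(x,z)=\mathds{S}^{[\mathbb{C}^n/\mathbb{Z}_n]}(\Theta,z)(\phi_n)=\mathds{S}^{[\mathbb{C}^n/\mathbb{Z}_n]}(\Theta,z)(\phi_0)=I(x,z),
\end{equation*}
where the middle equality uses $\phi_n=\phi_0$. Hence $z^n\mathfrak{L}_n\cdots\mathfrak{L}_1 I(x,z)=I(x,z)$, which upon multiplying by $z^{-n}$ says precisely that $\mathfrak{L}_n\cdots\mathfrak{L}_1$ agrees with $z^{-n}\cdot(\text{something})$ applied to $I$; comparing with the Picard-Fuchs equation \eqref{eq:PF3} rewritten as $\mathcal{P}\,I(x,z)=z^{-n}I(x,z)$, where $\mathcal{P}=L^{-n}(D^n+\tfrac{DL}{L}\sum_{k=1}^{n-1}s_{n,k}D^k)$, I would conclude that $\mathfrak{L}_n\cdots\mathfrak{L}_1$ and $\mathcal{P}$ act identically on $I(x,z)$. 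To upgrade this to an identity of operators I would expand in powers of $z^{-1}$: writing $I(x,z)=\sum_{k\ge0}I_k(x)z^{-k}\phi_k$ and using equation \eqref{eq:PFforIks}, each $I_k$ (for $0\le k\le n-1$) is a solution of the scalar ODE $\mathcal{P}f=0$; the point is that the $n$ functions $I_0,\dots,I_{n-1}$ form a fundamental system of solutions of the order-$n$ operator $\mathcal{P}$ (indeed $I_k=x^k/k!+O(x^{n+k})$, so their leading terms are linearly independent), and any order-$\le n$ differential operator that annihilates a full fundamental system must be a left-multiple of $\mathcal{P}$; since both $\mathfrak{L}_n\cdots\mathfrak{L}_1$ and $\mathcal{P}$ are monic of order $n$ in $D$ after clearing the $L^{-n}$ (using $\prod_{k=1}^n C_k=L^n$ from property \eqref{Cfunctions2}), they must coincide.

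For the commutativity $\mathfrak{L}_1\cdots\mathfrak{L}_n=\mathfrak{L}_n\cdots\mathfrak{L}_1$, the cleanest route is the same annihilation argument run in the other order: by the analogue of \eqref{PropertiesofSOperator2} one also has $\mathds{S}(\Theta,z)(\phi_i)=z\mathfrak{L}_i\mathds{S}(\Theta,z)(\phi_{i-1})$, and feeding $\phi_0$ through in the order $\mathfrak{L}_n,\dots,\mathfrak{L}_1$ versus noting that $\mathds{S}(\phi_0)=I$ is also annihilated (mod $z^{-n}$) by $L^n z^{-n}\mathfrak{L}_1\cdots\mathfrak{L}_n$ once we verify this composite is again monic of order $n$; then both $\mathfrak{L}_1\cdots\mathfrak{L}_n$ and $\mathfrak{L}_n\cdots\mathfrak{L}_1$ equal the unique monic order-$n$ operator $\mathcal{P}$. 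Alternatively, and perhaps more transparently, I would use the periodicity $C_{k+n}=C_k$ (property \eqref{Cfunctions1}) together with $C_k=C_{n+1-k}$ (property \eqref{Cfunctions4}): these symmetries make the ordered product of the $\mathfrak{L}_i=C_i^{-1}D$ invariant under reversal, since reversing $\mathfrak{L}_1\cdots\mathfrak{L}_n$ and then reindexing $k\mapsto n+1-k$ returns the same product.

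The main obstacle I anticipate is the operator-identity upgrade: passing from ``two operators agree on $I(x,z)$'' to ``two operators are equal'' requires knowing that $I_0,\dots,I_{n-1}$ span the solution space of $\mathcal{P}$ and that both candidate operators are genuinely order $n$ with the correct monic leading symbol. The leading-order count is where properties \eqref{Cfunctions1}, \eqref{Cfunctions2}, \eqref{Cfunctions4} of the $C_i$ do the real work — in particular $\prod_{k=1}^n C_k=L^n$ is exactly what matches the $L^{-n}$ prefactor — so I would isolate and carefully check that bookkeeping before invoking the uniqueness of the annihilating operator.
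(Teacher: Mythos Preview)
Your proposal is correct and follows essentially the same approach as the paper: iterate \eqref{PropertiesofSOperator2} to obtain $\mathfrak{L}_n\cdots\mathfrak{L}_1 I(x,z)=z^{-n}I(x,z)$, observe that the components $I_0,\dots,I_{n-1}$ (equivalently the $\widetilde{I}_i$) form a fundamental system for the order-$n$ Picard--Fuchs operator, match the leading coefficients via $\prod_{k=1}^n C_k=L^n$, and conclude by uniqueness of the monic annihilating operator. For the commutativity, your ``alternative'' route---using $C_k=C_{n+1-k}$ to get $\mathfrak{L}_k=\mathfrak{L}_{n+1-k}$, whence the reversed product coincides with the original---is precisely the paper's argument; your first attempt (running the $\mathds{S}$-recursion ``in the other order'') is not well-formed as stated, since \eqref{PropertiesofSOperator2} only produces the composite $\mathfrak{L}_n\cdots\mathfrak{L}_1$, but you correctly abandon it for the symmetry argument.
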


\begin{proof}
The first equality is a direct result of the definition of $\mathfrak{L}_i$ and Lemma \ref{propertiesofCfunctions}, i.e., we have
\begin{equation*}
\mathfrak{L}_i=\mathfrak{L}_{n+1-i}
\end{equation*}
for all $1\leq{i}\leq{n}$.

Using identities (\ref{PropertiesofSOperator1}) and (\ref{PropertiesofSOperator2}), we obtain the following identity
\begin{equation}\label{eq:PFFactored}
\mathfrak{L}_n\cdots\mathfrak{L}_1I(x,z) =z^{-n}I(x,z).
\end{equation}
For $0\leq i \leq n-1$, we defined the functions $\widetilde{I}_i(x,z)$ by equation (\ref{eq:tilde_I_i_functions}). Due to particular form (\ref{eq:I_func_as_sum_of_tilde_I_i}) of $I$-function, we see that the set $\{{\widetilde{I}_i(x,z)}\}_{0\leq{i}\leq{n-1}}$ is a basis of solutions to equation (\ref{eq:PF3}) and equation (\ref{eq:PFFactored}). Moreover, for all $1\leq{i}\leq{n-1}$, we have
\begin{equation}
\mathfrak{L}_{i+1}\mathfrak{L}_{i}=\frac{1}{C_{i+1}}D\frac{1}{C_{i}}D=\frac{1}{C_{i+1}C_i}(D-X_i)D\quad\text{with } X_i=\frac{DC_i}{C_i}.
\end{equation}
Applying this procedure repeatedly, we see that
\begin{equation*}
\begin{split}
\mathfrak{L}_n\cdots\mathfrak{L}_1
&=\frac{1}{\prod_{i=1}^nC_i}(D-\alpha_{n-1})\cdots(D-\alpha_1)D\\
&=L^{-n}(D-\alpha_{n-1})\cdots(D-\alpha_1)D\quad \text{by Lemma \ref{propertiesofCfunctions}},
\end{split}
\end{equation*}
with
\begin{equation*}
\alpha_i=\sum_{j=1}^{i}X_j
\end{equation*}
for $1\leq{i}\leq{n-1}$. This shows that equations (\ref{eq:PF3}) and (\ref{eq:PFFactored}) have the same leading coefficients. Since both equations have the same solution space and the same leading coefficient, some elementary arguments from the theory of linear ordinary differential equations imply that (\ref{eq:PF3}) and (\ref{eq:PFFactored}) must be exactly the same equation. This completes the proof.
\end{proof}

\subsection{Quantum product}\label{sec:quantum_product}
Let $\gamma=\sum_{i=0}^{n-1}t_i\phi_i\in H^{\star}_{\mathrm{T,Orb}}\left(\left[\mathbb{C}^n/\mathbb{Z}_n\right]\right)$. The full genus $0$ Gromov-Witten potential is defined to be
\begin{equation}\label{eqn:full_GW_potential}
\begin{split}
    \mathcal{F}_0^{\left[\mathbb{C}^n / \mathbb{Z}_n\right]}(t, \Theta)
    &=\sum_{m=0}^{\infty} \sum_{d=0}^{\infty} \frac{1}{m ! d !}\int_{\left[\overline{M}_{0, m+d}^{\mathrm{orb}}\left(\left[\mathbb{C}^{n} / \mathbb{Z}_{n}\right], 0\right)\right]^{v i r}}  \prod_{i=1}^m \operatorname{ev}_i^*(\gamma) \prod_{i=m+1}^{m+d} \mathrm{ev}_i^*\left(\Theta \phi_1\right)\\
    &=\sum_{m=0}^{\infty} \sum_{d=0}^{\infty} \frac{1}{m ! d !}\left\langle \underbrace{\gamma,...,\gamma}_{m-\text{times}}, \underbrace{\Theta\phi_1,...,\Theta\phi_1}_{d-\text{times}} \right\rangle_{0, m+d}^{\CnZn}.
\end{split}
\end{equation}

The orbifold Poincar\'e pairing 
\begin{equation*}
    g(-,-): H^{\star}_{\mathrm{T,Orb}}\left(\left[\mathbb{C}^n/\mathbb{Z}_n\right]\right)\times H^{\star}_{\mathrm{T,Orb}}\left(\left[\mathbb{C}^n/\mathbb{Z}_n\right]\right)\to \mathbb{Q}(\lambda_0,...,\lambda_{n-1})
\end{equation*}
in the basis $\{\phi_0,...,\phi_{n-1}\}$ and under the specialization (\ref{eqn:specialization}), has the matrix representation  $G=[G_{ij}]$ given by
\begin{equation*}
G_{ij}=g(\phi_i,\phi_j)=
\begin{cases}
\frac{1}{n}\text{ if } i+j=0\mod{n},\\
0\text{ if } i+j\neq 0\mod{n}
\end{cases}
=\frac{1}{n}\delta_{\mathrm{Inv}(i),j}=\frac{1}{n}\delta_{i,\mathrm{Inv}(j)}.
\end{equation*}
Its inverse $G^{-1}=[G^{ij}]$ is given by
\begin{equation*}
G^{ij}=
n\delta_{\mathrm{Inv}(i),j}=n\delta_{i,\mathrm{Inv}(j)}
\end{equation*}
where $0\leq i,j \leq n-1$.

The quantum product $\bullet_\gamma$ at $\gamma\in H^{\star}_{\mathrm{T,Orb}}\left(\left[\mathbb{C}^n/\mathbb{Z}_n\right]\right)$ is a product structure on $H^{\star}_{\mathrm{T,Orb}}\left(\left[\mathbb{C}^n/\mathbb{Z}_n\right]\right)$. It can be defined as follows:
\begin{equation*}
    g(\phi_i\bullet_\gamma \phi_j, \phi_k):=\frac{\partial^3}{\partial t_i\partial t_j\partial t_k}\mathcal{F}_0^{\left[\mathbb{C}^n / \mathbb{Z}_n\right]}(t, \Theta).
\end{equation*}
In what follows, we focus on the quantum product $\bullet_{\gamma=0}$ at $\gamma=0\in H^{\star}_{\mathrm{T,Orb}}\left(\left[\mathbb{C}^n/\mathbb{Z}_n\right]\right)$, which we denote by $\bullet$. Note that $\bullet$ still depends on $\Theta$.

\begin{lem}
\begin{equation}\label{twopointfunctioncalc}
\left\langle\left\langle\phi_i,\phi_j\right\rangle\right\rangle_{0,2}^{\left[\mathbb{C}^n / \mathbb{Z}_n\right]}=\begin{cases} 
      0 & \text{if}\quad i+j\neq n-1, \\
      \frac{1}{n}\mathfrak{L}_{i}...\mathfrak{L}_{0}I_{i+1} & \text{if}\quad i+j=n-1.
   \end{cases}
\end{equation}
\end{lem}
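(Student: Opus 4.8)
The plan is to compute the two-point double-bracket function via the $\mathds{S}$-operator, exploiting the fact that the $\mathds{S}$-operator packages exactly the genus-$0$ two-point descendant invariants. First I would recall the defining expansion
\begin{equation*}
\mathds{S}^{[\mathbb{C}^n/\mathbb{Z}_n]}(\Theta,z)(\phi_j)=\sum_{i=0}^{n-1}\phi^{i}\left\langle\left\langle\frac{\phi_i}{z-\psi},\phi_j\right\rangle\right\rangle_{0,2}^{[\mathbb{C}^n/\mathbb{Z}_n]},
\end{equation*}
and extract the coefficient of $z^{-1}$: since $\tfrac{1}{z-\psi}=\tfrac1z+\tfrac{\psi}{z^2}+\cdots$, the $z^{-1}$-coefficient of $\langle\langle\phi_i/(z-\psi),\phi_j\rangle\rangle_{0,2}$ is precisely $\langle\langle\phi_i,\phi_j\rangle\rangle_{0,2}^{[\mathbb{C}^n/\mathbb{Z}_n]}$. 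Pairing both sides of the $\mathds{S}$-operator expansion against a suitable basis element and using the orbifold Poincaré pairing $g$ (with matrix $G$, $G^{ij}=n\delta_{i\,\mathrm{Inv}(j)}$) lets me read off $\langle\langle\phi_i,\phi_j\rangle\rangle_{0,2}$ from the appropriate coefficient of $\mathds{S}^{[\mathbb{C}^n/\mathbb{Z}_n]}(\Theta,z)(\phi_j)$, namely the $\phi^{i}$-component's $z^{-1}$-coefficient.

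Next I would iterate the recursion (\ref{PropertiesofSOperator2}), $\mathds{S}(\Theta,z)(\phi_i)=z\mathfrak{L}_i\mathds{S}(\Theta,z)(\phi_{i-1})$, together with the base case (\ref{PropertiesofSOperator1}), $\mathds{S}(\Theta,z)(\phi_0)=I(x,z)$, to get
\begin{equation*}
\mathds{S}^{[\mathbb{C}^n/\mathbb{Z}_n]}(\Theta,z)(\phi_i)=z^{i}\,\mathfrak{L}_i\cdots\mathfrak{L}_1 I(x,z)=z^i\,\mathfrak{L}_i\cdots\mathfrak{L}_0 I(x,z),
\end{equation*}
using that $\mathfrak{L}_0=\mathrm{id}$. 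Now I substitute the expansion $I(x,z)=\sum_{k\geq 0}\tfrac{I_k(x)}{z^k}\phi_k$ from (\ref{eq:IfuncAsSumIk}); because $\mathfrak{L}_j=C_j^{-1}D$ acts only in $x$ and commutes with the $\tfrac1z$-grading, applying $z^i\mathfrak{L}_i\cdots\mathfrak{L}_0$ to the $k$-th term produces $z^{i-k}\big(\mathfrak{L}_i\cdots\mathfrak{L}_0 I_k\big)\phi_k$. To obtain the $z^{-1}$-coefficient of the $\phi_k$-component we need $i-k=-1$, i.e. $k=i+1$, giving $\mathfrak{L}_i\cdots\mathfrak{L}_0 I_{i+1}$ as the coefficient of $z^{-1}\phi_{i+1}$ in $\mathds{S}(\Theta,z)(\phi_i)$. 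Finally I convert from the $\phi_{i+1}$-basis to the $\phi^{i}$-basis appearing in the $\mathds{S}$-operator definition: writing $\phi_{i+1}$ in terms of the dual basis, the relevant bookkeeping with $G$ shows $\phi_{i+1}$ contributes $\tfrac1n\phi^{\mathrm{Inv}(i+1)}=\tfrac1n\phi^{n-1-i}$. Matching this against the $\phi^{i}$-term forces $i'=n-1-i$ on the index appearing in the bracket, i.e. the nonzero pairing $\langle\langle\phi_{i'},\phi_i\rangle\rangle_{0,2}$ occurs exactly when $i'+i=n-1$; after relabelling $i'\mapsto i$, $i\mapsto j$ this is the claimed case distinction, and the value is the $z^{-1}$-coefficient times the factor $\tfrac1n$ coming from $G$, namely $\tfrac1n\mathfrak{L}_i\cdots\mathfrak{L}_0 I_{i+1}$.

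For the vanishing statement when $i+j\neq n-1$: the computation above shows that the $\phi^{i}$-component of $\mathds{S}(\Theta,z)(\phi_j)$ — equivalently the $\phi_{\mathrm{Inv}(i)}$-component up to the pairing constant — has $z^{-1}$-coefficient equal to $\mathfrak{L}_j\cdots\mathfrak{L}_0$ applied to the $z^{-1}$-extracting term of $I$, which by the grading argument is $\mathfrak{L}_j\cdots\mathfrak{L}_0 I_{k}$ only when $k=j+1$ lands in the $\phi_k$-slot matching $\phi_{\mathrm{Inv}(i)}$, i.e. when $j+1\equiv \mathrm{Inv}(i)=n-i \pmod n$, that is $i+j\equiv n-1$; for all other residues the coefficient is a power of $z$ different from $z^{-1}$ in that component, hence $\langle\langle\phi_i,\phi_j\rangle\rangle_{0,2}=0$. (One should note $i,j\in\{0,\dots,n-1\}$ so $i+j=n-1$ is the only representative of the class $n-1$ that can occur, except for the boundary subtlety when $i=j=0$ would give $i+j=0$, not $n-1$, so there is no collision.)

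The main obstacle I anticipate is purely bookkeeping: keeping the index arithmetic mod $n$ consistent between the $\phi_k$-expansion of $I$, the dual basis $\{\phi^k\}$ with its cyclic relabelling $\phi^i=\phi^j$ for $j\equiv i$, and the involution $\mathrm{Inv}$, while correctly tracking the factors of $n$ from $G$ and $G^{-1}$. In particular one must verify that the operator $z^i\mathfrak{L}_i\cdots\mathfrak{L}_0$ genuinely shifts the $\tfrac1z$-degree of the $\phi_k$-term by $i-k$ with no extra powers of $z$ hidden in the $\mathfrak{L}_j$'s — this is immediate from $\mathfrak{L}_j=C_j^{-1}D$ with $C_j\in\mathbb{C}[[x]]$ and $D=x\,d/dx$ both $z$-independent — and that the comparison of $z^{-1}$-coefficients on the two sides of (\ref{PropertiesofSOperator2}) is legitimate, which it is since both sides lie on the same tangent space of the Lagrangian cone and agree to the required order by \cite{g3}, as already invoked in the excerpt.
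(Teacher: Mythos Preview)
Your proposal is correct and follows essentially the same argument as the paper: iterate the recursion (\ref{PropertiesofSOperator2}) from the base case (\ref{PropertiesofSOperator1}) to obtain $\mathds{S}(\phi_i)=z^{i}\mathfrak{L}_i\cdots\mathfrak{L}_0 I(x,z)$, substitute the expansion (\ref{eq:IfuncAsSumIk}), extract the $z^{-1}$-coefficient $(\mathfrak{L}_i\cdots\mathfrak{L}_0 I_{i+1})\phi_{i+1}$, and then match against $\sum_j\phi^j\langle\langle\phi_j,\phi_i\rangle\rangle_{0,2}$ via the dual-basis relation $\phi_{i+1}=\tfrac1n\phi^{n-1-i}$. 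The paper compresses all of this into a single displayed identity and the phrase ``equating the coefficients,'' so your write-up is simply a more explicit version of the same proof.
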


\begin{proof}
By expanding equations (\ref{PropertiesofSOperator1}), (\ref{PropertiesofSOperator2}) and matching the coefficients of $z^{-1}$, we obtain the following identity for any $0\leq j\leq n-1$:
\begin{equation*}
\phi^0\left\langle\left\langle\phi_0,\phi_i\right\rangle\right\rangle_{0,2}^{\left[\mathbb{C}^n / \mathbb{Z}_n\right]}+\phi^1\left\langle\left\langle\phi_1,\phi_i\right\rangle\right\rangle_{0,2}^{\left[\mathbb{C}^n / \mathbb{Z}_n\right]}+...+\phi^{n-1}\left\langle\left\langle\phi_{n-1},\phi_i\right\rangle\right\rangle_{0,2}^{\left[\mathbb{C}^n / \mathbb{Z}_n\right]}=\mathfrak{L}_{i}...\mathfrak{L}_{0}I_{i+1}\phi_{i+1}.
\end{equation*}
Equating the coefficients, we complete the proof.
\end{proof}

\begin{lem}\label{lem:2point3point}
For any $0\leq i,j \leq n-1$, we have
\begin{equation*}
\frac{1}{C_1}D\left\langle\left\langle\phi_i,\phi_j\right\rangle\right\rangle_{0,2}^{\left[\mathbb{C}^n / \mathbb{Z}_n\right]}=\left\langle\left\langle\phi_i,\phi_j,\phi_1\right\rangle\right\rangle_{0,3}^{\left[\mathbb{C}^n / \mathbb{Z}_n\right]}.
\end{equation*}
\end{lem}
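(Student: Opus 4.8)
The plan is to reduce the identity to two elementary facts: that differentiating the genus-zero potentials in $\Theta$ inserts an extra $\phi_1$, and that under the mirror map the operator $\tfrac{1}{C_1}D$ is exactly the derivative with respect to $\Theta$. For the first fact, recall that $\mathcal{F}_{0,m}^{[\mathbb{C}^n/\mathbb{Z}_n]}$ is built from stable maps with extra marked points carrying the insertion $\Theta\phi_1$, so that $\langle\langle\cdots\rangle\rangle_{0,m}=\sum_d\tfrac{\Theta^d}{d!}\langle\cdots,\phi_1,\dots,\phi_1\rangle_{0,m+d}$. Since $\tfrac{\partial}{\partial\Theta}\tfrac{\Theta^d}{d!}=\tfrac{\Theta^{d-1}}{(d-1)!}$, re-indexing the sum gives
\begin{equation*}
\frac{\partial}{\partial\Theta}\left\langle\left\langle\phi_i,\phi_j\right\rangle\right\rangle_{0,2}^{[\mathbb{C}^n/\mathbb{Z}_n]}=\left\langle\left\langle\phi_i,\phi_j,\phi_1\right\rangle\right\rangle_{0,3}^{[\mathbb{C}^n/\mathbb{Z}_n]}.
\end{equation*}

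First I would record that, by the mirror theorem (Theorem \ref{thm:mirrorthm}) and the mirror transformation (\ref{eq:mirrortransform}), all the generating functions in play are to be regarded as elements of $\mathbb{C}[[x]]$ via $\Theta=\Theta(x)=I_1(x)$; this is legitimate because $I_1(x)=x+O(x^{n+1})$ is an invertible power series, so $x\mapsto\Theta$ and $\Theta\mapsto x$ are mutually inverse changes of variable, and the operator $D=x\,\tfrac{d}{dx}$ genuinely acts on the double brackets (as is visible already in (\ref{twopointfunctioncalc})). The chain rule applied to a power series $f\in\mathbb{C}[[x]]$ then gives
\begin{equation*}
\frac{\partial f}{\partial\Theta}=\frac{dx}{d\Theta}\cdot\frac{df}{dx}=\frac{x}{D\Theta}\cdot\frac{1}{x}\,Df=\frac{1}{D\Theta}\,Df.
\end{equation*}

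Next I would identify $D\Theta$. By (\ref{eq:mirrortransform}), $\Theta=I_1$, and by the inductive description (\ref{altdefCis}) of the series $C_i$ together with the fact that $\mathfrak{L}_0$ is the identity operator, $C_1=D\mathfrak{L}_0I_1=DI_1=D\Theta$. Substituting this into the chain-rule identity yields $\tfrac{\partial}{\partial\Theta}=\tfrac{1}{C_1}D$ as operators on $\mathbb{C}[[x]]$, and applying both sides to $\langle\langle\phi_i,\phi_j\rangle\rangle_{0,2}^{[\mathbb{C}^n/\mathbb{Z}_n]}$ and combining with the first displayed equation completes the proof.

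The re-indexing in the $\Theta$-derivative and the chain rule are routine, so there is no substantive obstacle; the only point deserving care is the bookkeeping of the change of variables $\Theta\leftrightarrow x$ — making sure the double brackets are consistently treated as power series in $x$ and that the base point of the mirror map is chosen so that $D\Theta=C_1$ holds exactly, not merely up to a unit.
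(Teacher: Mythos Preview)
Your proof is correct and is essentially the same as the paper's: both use $C_1=DI_1=D\Theta$ together with the chain rule to identify $\tfrac{1}{C_1}D$ with $\tfrac{\partial}{\partial\Theta}$, and both observe that differentiating the $\Theta$-series re-indexes to insert an extra $\phi_1$. The only difference is organizational---you separate the argument into two conceptual steps, while the paper runs it as a single chain of equalities.
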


\begin{proof}
The proof is the following direct computation:
\begin{align*}
\frac{1}{C_1}D\left\langle\left\langle\phi_i,\phi_j\right\rangle\right\rangle_{0,2}^{\left[\mathbb{C}^n / \mathbb{Z}_n\right]}
=&\frac{1}{D\Theta}D\left\langle\left\langle\phi_i,\phi_j\right\rangle\right\rangle_{0,2}^{\left[\mathbb{C}^n / \mathbb{Z}_n\right]}\\
&=\frac{1}{D\Theta}\sum_{d=1}^{\infty} \frac{\Theta^{d-1}D\Theta}{(d-1)!} \int_{\left[\overline{M}_{g, d+2}^{\mathrm{orb}}\left(\left[\mathbb{C}^{n} / \mathbb{Z}_{n}\right], 0\right)\right]^{v i r}} \mathrm{ev}_{1}^{*}\left(\phi_i\right)\mathrm{ev}_{2}^{*}\left(\phi_j\right)\prod_{l=3}^{d+2} \mathrm{ev}_{l}^{*}\left(\phi_{1}\right)\\
&=\sum_{d=1}^{\infty} \frac{\Theta^{d-1}}{(d-1)!} \int_{\left[\overline{M}_{g, d+2}^{\mathrm{orb}}\left(\left[\mathbb{C}^{n} / \mathbb{Z}_{n}\right], 0\right)\right]^{v i r}} \mathrm{ev}_{1}^{*}\left(\phi_i\right)\mathrm{ev}_{2}^{*}\left(\phi_j\right)\prod_{l=3}^{d+2} \mathrm{ev}_{l}^{*}\left(\phi_{1}\right)\\
&=\sum_{d=0}^{\infty} \frac{\Theta^{d}}{d!} \int_{\left[\overline{M}_{g, d+3}^{\mathrm{orb}}\left(\left[\mathbb{C}^{n} / \mathbb{Z}_{n}\right], 0\right)\right]^{v i r}} \mathrm{ev}_{1}^{*}\left(\phi_i\right)\mathrm{ev}_{2}^{*}\left(\phi_j\right)\mathrm{ev}_{3}^{*}\left(\phi_1\right)\prod_{l=4}^{d+3} \mathrm{ev}_{l}^{*}\left(\phi_{1}\right)\\
&=\left\langle\left\langle\phi_i,\phi_j,\phi_1\right\rangle\right\rangle_{0,3}^{\left[\mathbb{C}^n / \mathbb{Z}_n\right]}.
\end{align*}
The fist line follows from equation (\ref{eq:smallmirrorthm}) and the definition of $C_1$.
\end{proof}

\begin{prop}\label{generatingquantumprod}
For all $i\geq 0$, the quantum product at $0\in H^{\star}_{\mathrm{T,Orb}}\left(\left[\mathbb{C}^n/\mathbb{Z}_n\right]\right)$ satisfies
\begin{equation*}
\phi_1\bullet\phi_i=\frac{C_{i+1}}{C_1}\phi_{i+1}.
\end{equation*}
\end{prop}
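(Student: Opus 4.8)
The plan is to compute the structure constants $g(\phi_1\bullet\phi_i,\phi_k)$ of the quantum product at $\gamma=0$ directly from the definition, using the genus zero three-point functions $\langle\langle\phi_1,\phi_i,\phi_k\rangle\rangle_{0,3}^{[\mathbb{C}^n/\mathbb{Z}_n]}$, and then to read off $\phi_1\bullet\phi_i$ by inverting the Poincar\'e pairing $G$. Since $G^{ij}=n\delta_{i\mathrm{Inv}(j)}$, knowing $g(\phi_1\bullet\phi_i,\phi_k)$ for all $k$ determines $\phi_1\bullet\phi_i=\sum_k g(\phi_1\bullet\phi_i,\phi_k)\,G^{k\ell}\phi_\ell$; so the whole task reduces to evaluating the three-point functions. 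By the string/divisor-type argument already used in Lemma \ref{lem:2point3point} (insertion of $\phi_1$ corresponds to applying $\tfrac{1}{C_1}D$ to a two-point function), we have
\begin{equation*}
\langle\langle\phi_i,\phi_k,\phi_1\rangle\rangle_{0,3}^{[\mathbb{C}^n/\mathbb{Z}_n]}=\frac{1}{C_1}D\langle\langle\phi_i,\phi_k\rangle\rangle_{0,2}^{[\mathbb{C}^n/\mathbb{Z}_n]},
\end{equation*}
so I would feed in the explicit two-point functions from \eqref{twopointfunctioncalc}: the right side vanishes unless $i+k=n-1$, and in that case it equals $\tfrac{1}{C_1}D\bigl(\tfrac{1}{n}\mathfrak{L}_i\cdots\mathfrak{L}_0 I_{i+1}\bigr)=\tfrac{1}{nC_1}\mathfrak{L}_{i+1}\mathfrak{L}_i\cdots\mathfrak{L}_0 I_{i+1}$, using $\mathfrak{L}_{i+1}=C_{i+1}^{-1}D$.

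Next I would identify this quantity with $C_{i+1}$ up to the correct constant. By the inductive definition \eqref{altdefCis}, $C_{i+1}=D\mathfrak{L}_i\cdots\mathfrak{L}_0 I_{i+1}$ for $i+1\ge 1$, hence $\mathfrak{L}_{i+1}\mathfrak{L}_i\cdots\mathfrak{L}_0 I_{i+1}=C_{i+1}^{-1}D\mathfrak{L}_i\cdots\mathfrak{L}_0 I_{i+1}=C_{i+1}^{-1}C_{i+1}=1$. Therefore $\langle\langle\phi_i,\phi_{n-1-i},\phi_1\rangle\rangle_{0,3}^{[\mathbb{C}^n/\mathbb{Z}_n]}=\tfrac{1}{nC_1}$ when $i+k=n-1$ and vanishes otherwise; that is, $g(\phi_1\bullet\phi_i,\phi_k)=\tfrac{1}{nC_1}\delta_{k,\mathrm{Inv}(i+1)}$ (rewriting $n-1-i$ as $\mathrm{Inv}(i+1)$, valid for $0\le i\le n-1$, and then extending periodically). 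Applying $G^{-1}$ gives $\phi_1\bullet\phi_i=\tfrac{1}{nC_1}\cdot n\,\phi_{i+1}\cdot C_{?}$; more precisely, since only $k=\mathrm{Inv}(i+1)$ contributes and $G^{k\ell}=n\delta_{k\mathrm{Inv}(\ell)}$ picks out $\ell=i+1$, one gets $\phi_1\bullet\phi_i=\tfrac{1}{C_1}\phi_{i+1}$ — and to land the factor $C_{i+1}/C_1$ I must be careful that the identity \eqref{twopointfunctioncalc} I am differentiating already carries an implicit normalization. I expect the clean bookkeeping here is: $g(\phi_1\bullet\phi_i,\phi_{\mathrm{Inv}(i+1)})$ equals, via Lemma \ref{lem:2point3point} and the definition of $C_{i+1}$, exactly $\tfrac{C_{i+1}}{C_1}\cdot\tfrac1n$, whence the stated formula $\phi_1\bullet\phi_i=\tfrac{C_{i+1}}{C_1}\phi_{i+1}$ after multiplying by $G^{-1}$.

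The one genuine subtlety — the main obstacle — is the range of indices and the periodicity conventions: the two-point formula \eqref{twopointfunctioncalc} and the definition \eqref{altdefCis} of $C_{i+1}$ are stated for $0\le i\le n-1$, whereas Proposition \ref{generatingquantumprod} asserts the product rule for \emph{all} $i\ge 0$. So after establishing the case $0\le i\le n-2$ by the computation above, I would handle $i\ge n-1$ by invoking the periodicity $\phi_{i}=\phi_{i\bmod n}$, $\phi^i=\phi^{i\bmod n}$ together with the periodicity $C_{k+n}=C_k$ for $k\ge 1$ (property \eqref{Cfunctions1}), checking that the two-point function $\langle\langle\phi_i,\phi_j\rangle\rangle_{0,2}$ depends only on $i\bmod n$ and $j\bmod n$ and that $C_{i+1}$ as it appears is the periodically-extended one; the boundary case $i\equiv n-1$ (so $i+1\equiv 0$ and $C_{i+1}=C_n=L^n/\prod_{k=1}^{n-1}C_k$, or directly $C_n=C_1$ by property \eqref{Cfunctions4}) needs a separate line to confirm consistency. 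Everything else is the routine substitution sketched above; I do not anticipate any analytic difficulty beyond this indexing care.
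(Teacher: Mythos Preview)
Your approach is exactly the paper's: compute the three-point functions via Lemma~\ref{lem:2point3point} applied to \eqref{twopointfunctioncalc}, then invert the pairing; the extension to all $i\ge 0$ is handled, as you say, by the periodicities $\phi_{i+n}=\phi_i$ and $C_{k+n}=C_k$ (Lemma~\ref{propertiesofCfunctions}, parts (\ref{Cfunctions1}) and (\ref{Cfunctions4})).

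The only issue is a bookkeeping slip in your middle computation, which is precisely where you lose the $C_{i+1}$ and then hunt for it. You wrote
\[
\tfrac{1}{C_1}D\bigl(\tfrac{1}{n}\mathfrak{L}_i\cdots\mathfrak{L}_0 I_{i+1}\bigr)=\tfrac{1}{nC_1}\mathfrak{L}_{i+1}\mathfrak{L}_i\cdots\mathfrak{L}_0 I_{i+1},
\]
but since $\mathfrak{L}_{i+1}=C_{i+1}^{-1}D$ you have $D=C_{i+1}\mathfrak{L}_{i+1}$, so the right side should be $\tfrac{C_{i+1}}{nC_1}\mathfrak{L}_{i+1}\cdots\mathfrak{L}_0 I_{i+1}=\tfrac{C_{i+1}}{nC_1}$. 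Equivalently and more directly, just invoke the definition \eqref{altdefCis}: $D\mathfrak{L}_i\cdots\mathfrak{L}_0 I_{i+1}=C_{i+1}$, so $\tfrac{1}{C_1}D\langle\langle\phi_i,\phi_j\rangle\rangle_{0,2}=\tfrac{C_{i+1}}{nC_1}\delta_{i+j,n-1}$ with no detour through $\mathfrak{L}_{i+1}$. With that fix your computation closes immediately and matches the paper's proof line for line.
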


\begin{proof}
Initially, we assume $0\leq i \leq n-1$. Using equation (\ref{twopointfunctioncalc}) and Lemma \ref{lem:2point3point}, we obtain
\begin{equation*}
\left\langle\left\langle\phi_1,\phi_i,\phi_j\right\rangle\right\rangle_{0,3}^{\left[\mathbb{C}^n / \mathbb{Z}_n\right]}
=\frac{D\left\langle\left\langle\phi_i,\phi_j\right\rangle\right\rangle_{0,2}^{\left[\mathbb{C}^n / \mathbb{Z}_n\right]}}{C_1}
=\begin{cases} 
      0 & \text{if}\quad i+j\neq n-1, \\
      \frac{1}{n}\frac{C_{i+1}}{C_1} & \text{if}\quad i+j=n-1.
\end{cases}
\end{equation*}
Write
\begin{equation*}
\phi_1\bullet\phi_i=\sum_{l=0}^{n-1}a_{li}\phi_l.
\end{equation*}
Then, for $0\leq j \leq n-1$, we have
\begin{equation*}
g\left(\phi_1\bullet\phi_i,\phi_j\right)=\frac{1}{n}a_{\mathrm{Inv}(j)i}.
\end{equation*}
On the other hand, the relation $g(X\bullet Y,Z)=\left\langle\left\langle X,Y,Z\right\rangle\right\rangle_{0,3}^{\left[\mathbb{C}^n / \mathbb{Z}_n\right]}$ gives 
\begin{equation*}
g\left(\phi_1\bullet\phi_i,\phi_j\right)
=\begin{cases} 
      0 & \text{if}\quad i+j\neq n-1, \\
      \frac{1}{n}\frac{C_{i+1}}{C_1} & \text{if}\quad i+j=n-1.
\end{cases}
\end{equation*}
So, we obtain
\begin{equation*}
a_{li}
=\begin{cases} 
      0 & \text{if}\quad i+\mathrm{Inv}(l)\neq n-1, \\
      \frac{C_{i+1}}{C_1} & \text{if}\quad i+\mathrm{Inv}(l)=n-1
\end{cases}
=\frac{C_{i+1}}{C_1}\delta_{i, \mathrm{Ion}(l)-1}.
\end{equation*}
Parts (\ref{Cfunctions1}) and (\ref{Cfunctions4}) of Lemma \ref{propertiesofCfunctions} finish the proof.
\end{proof}

\begin{cor}\label{quantumprod}
For any $i,j\geq 0$, the quantum product at $0\in H^{\star}_{\mathrm{T,Orb}}\left(\left[\mathbb{C}^n/\mathbb{Z}_n\right]\right)$ is given by
\begin{equation*}
\phi_i\bullet\phi_j=\frac{K_{i+j}}{K_iK_j}\phi_{i+j}
\end{equation*}
and hence the genus $0$, $3$-point Gromov-Witten invariants are
\begin{equation*}
\left\langle\left\langle\phi_i,\phi_j,\phi_k\right\rangle\right\rangle_{0,3}^{\left[\mathbb{C}^n/ \mathbb{Z}_n\right]}=\frac{K_{i+j}}{K_iK_j}\frac{1}{n}\delta_{\mathrm{Inv}(i+j \,\,\mathrm{mod}\,\,n), k}.   
\end{equation*}
\end{cor}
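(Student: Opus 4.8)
The plan is to bootstrap from Proposition \ref{generatingquantumprod}, which records the structure constants of quantum multiplication by $\phi_1$, using the fact that $\phi_1$ generates the quantum cohomology algebra (an immediate consequence of that proposition). First I would establish, by induction on $i\geq 0$, the formula for the quantum powers of $\phi_1$:
\begin{equation*}
\phi_1^{\bullet i}=\frac{K_i}{C_1^{\,i}}\,\phi_i .
\end{equation*}
For $i=0$ this is trivial ($K_0=C_0=1$ and $\phi_0$ is the unit of $\bullet$), and for $i=1$ it reduces to $K_1=C_0C_1=C_1$. The inductive step is the one-line computation
\begin{equation*}
\phi_1^{\bullet(i+1)}=\phi_1\bullet\phi_1^{\bullet i}=\frac{K_i}{C_1^{\,i}}\,\phi_1\bullet\phi_i=\frac{K_i}{C_1^{\,i}}\cdot\frac{C_{i+1}}{C_1}\,\phi_{i+1}=\frac{K_{i+1}}{C_1^{\,i+1}}\,\phi_{i+1},
\end{equation*}
using Proposition \ref{generatingquantumprod} and $K_{i+1}=C_{i+1}K_i$. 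Here I rely on commutativity and associativity of the quantum product (WDVV) so that the quantum powers $\phi_1^{\bullet i}$ are unambiguously defined.

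Next I would invert the displayed formula to get $\phi_i=\dfrac{C_1^{\,i}}{K_i}\,\phi_1^{\bullet i}$ for every $i\geq 0$, and then, again by associativity,
\begin{equation*}
\phi_i\bullet\phi_j=\frac{C_1^{\,i}}{K_i}\frac{C_1^{\,j}}{K_j}\,\phi_1^{\bullet(i+j)}=\frac{C_1^{\,i+j}}{K_iK_j}\cdot\frac{K_{i+j}}{C_1^{\,i+j}}\,\phi_{i+j}=\frac{K_{i+j}}{K_iK_j}\,\phi_{i+j}.
\end{equation*}
As a consistency check one can verify compatibility with the periodicity relations $\phi_{i+n}=\phi_i$ and $K_{l+n}=L^nK_l$ of Corollary \ref{Kfunctions}: replacing $i$ by $i+n$ multiplies both numerator and denominator of the coefficient by $L^n$, leaving the identity unchanged.

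Finally, for the three-point invariants I would use the relation $g(X\bullet Y,Z)=\left\langle\left\langle X,Y,Z\right\rangle\right\rangle_{0,3}^{[\mathbb{C}^n/\mathbb{Z}_n]}$ together with the explicit Poincar\'e pairing $g(\phi_a,\phi_b)=\tfrac1n\delta_{\mathrm{Inv}(a\,\mathrm{mod}\,n),\,b}$ from Section \ref{sec:quantum_product}, obtaining
\begin{equation*}
\left\langle\left\langle\phi_i,\phi_j,\phi_k\right\rangle\right\rangle_{0,3}^{[\mathbb{C}^n/\mathbb{Z}_n]}=g(\phi_i\bullet\phi_j,\phi_k)=\frac{K_{i+j}}{K_iK_j}\,g(\phi_{i+j},\phi_k)=\frac{K_{i+j}}{K_iK_j}\cdot\frac1n\,\delta_{\mathrm{Inv}(i+j\,\mathrm{mod}\,n),\,k},
\end{equation*}
since $g(\phi_{i+j},\phi_k)$ is nonzero exactly when $(i+j)+k\equiv 0\pmod n$, i.e. when $k\equiv \mathrm{Inv}(i+j\,\mathrm{mod}\,n)\pmod n$. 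This is precisely the claimed formula.

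I do not expect a genuine obstacle: the whole argument is a short induction fed by Proposition \ref{generatingquantumprod} plus associativity of $\bullet$. The only point demanding a little care is the index bookkeeping modulo $n$ — in particular checking that the auxiliary powers $C_1^{\,i}$ cancel and that the final expression respects the periodicity of the $C_k$ and $K_l$ recorded after (\ref{altdefCis}) — and this is entirely routine.
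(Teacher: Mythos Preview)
Your proposal is correct and follows essentially the same approach as the paper: both arguments use Proposition \ref{generatingquantumprod} to inductively express $\phi_i$ as $\frac{C_1^{\,i}}{K_i}\phi_1^{\bullet i}$, then exploit associativity of $\bullet$ to compute $\phi_i\bullet\phi_j$, and finish with $g(\phi_i\bullet\phi_j,\phi_k)=\langle\langle\phi_i,\phi_j,\phi_k\rangle\rangle_{0,3}$. The only cosmetic difference is that you first establish the formula for $\phi_1^{\bullet i}$ and then invert, whereas the paper phrases the induction directly as the formula for $\phi_l$.
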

\begin{proof}
Using Proposition \ref{generatingquantumprod} and noting that $C_0=1$, inductively we show that for any $l\geq 0$ we have
\begin{equation*}
    \phi_l=\frac{C_1^l}{K_l}\underbrace{\phi_1\bullet...\bullet\phi_1}_{l-\text{times}}.
\end{equation*}
This implies
\begin{equation*}
      \phi_i\bullet\phi_j=\frac{C_1^{i+j}}{K_iK_j}\underbrace{\phi_1\bullet...\bullet\phi_1}_{i+j-\text{times}}=\frac{C_1^{i+j}}{K_iK_j}\frac{K_{i+j}}{C_1^{i+j}}\phi_{i+j}, 
\end{equation*}
and the genus $0$, $3$-point Gromov-Witten invariants part of the lemma follows from
\begin{equation*}
\left\langle\left\langle\phi_i,\phi_j,\phi_k\right\rangle\right\rangle_{0,3}^{\left[\mathbb{C}^n/ \mathbb{Z}_n\right]}=g(\phi_i\bullet\phi_j,\phi_k)=\frac{K_{i+j}}{K_iK_j}\frac{1}{n}\delta_{\text{Inv}(i+j \text{ mod }n), k}.    
\end{equation*}
\end{proof}

For all $i\geq{0}$, define
\begin{equation}\label{def:phi_tilde}
\widetilde{\phi}_i=\frac{K_i}{L^i}\phi_i.
 \end{equation}

\begin{lem}\label{lem:quantum_for_tildephi} 
For all $i,j\geq{0}$, we have $\widetilde{\phi}_{i+n}=\widetilde{\phi}_{i}$ and $\widetilde{\phi}_i\bullet\widetilde{\phi}_j=\widetilde{\phi}_{i+j}$.
\end{lem}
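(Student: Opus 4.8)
The plan is to verify both claims directly from the definitions (\ref{def:phi_tilde}) of $\widetilde\phi_i$, the periodicity properties of the $C_i$ and $K_l$ listed after equation (\ref{altdefCis}), and the quantum product formula from Corollary \ref{quantumprod}.

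For the periodicity $\widetilde\phi_{i+n}=\widetilde\phi_i$: by definition $\widetilde\phi_{i+n}=\frac{K_{i+n}}{L^{i+n}}\phi_{i+n}$. Property (\ref{Kfunctions1}) gives $K_{i+n}=L^n K_i$, and our convention $\phi_{i+n}=\phi_i$ identifies the Chen--Ruan classes. Hence $\widetilde\phi_{i+n}=\frac{L^n K_i}{L^{i+n}}\phi_i=\frac{K_i}{L^i}\phi_i=\widetilde\phi_i$, which is a one-line computation.

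For the multiplicativity $\widetilde\phi_i\bullet\widetilde\phi_j=\widetilde\phi_{i+j}$: expand using (\ref{def:phi_tilde}) and Corollary \ref{quantumprod},
\begin{equation*}
\widetilde\phi_i\bullet\widetilde\phi_j=\frac{K_i}{L^i}\frac{K_j}{L^j}\,\phi_i\bullet\phi_j=\frac{K_iK_j}{L^{i+j}}\cdot\frac{K_{i+j}}{K_iK_j}\,\phi_{i+j}=\frac{K_{i+j}}{L^{i+j}}\phi_{i+j}=\widetilde\phi_{i+j}.
\end{equation*}
Note that Corollary \ref{quantumprod} is stated for all $i,j\geq 0$ (not just $0\le i,j\le n-1$), so no reduction mod $n$ is needed; but if one prefers to reduce, the first part of this lemma together with property (\ref{Kfunctions1}) makes the two sides consistent under $i\mapsto i+n$, so there is no circularity.

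There is essentially no obstacle here: the statement is a formal bookkeeping consequence of results already established. The only point requiring a moment's care is making sure the $K_l$-identities are invoked in the form already proved (in particular that Corollary \ref{quantumprod} and property (\ref{Kfunctions1}) are used consistently for indices exceeding $n-1$), but this is routine. I would present the two computations above essentially verbatim as the proof.
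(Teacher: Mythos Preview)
Your proof is correct and follows essentially the same approach as the paper: both parts are verified by direct computation using $K_{i+n}=L^nK_i$, the convention $\phi_{i+n}=\phi_i$, and the quantum product formula from Corollary \ref{quantumprod}, exactly as you have done.
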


\begin{proof}
The first part follows from
\begin{equation*}
\widetilde{\phi}_{i+n}=\frac{K_{i+n}}{L^{i+n}}\phi_{i+n}=\frac{K_{i}L^n}{L^{i+n}}\phi_{i}=\frac{K_i}{L^i}\phi_i.
\end{equation*}
Here, we used the properties of $K_i$ listed in Section \ref{subsection:Birkhoff_Factorizaton} and proved in Appendix \ref{appendix:I-function-Part1}.
The second part follows from
\begin{equation*}
\widetilde{\phi}_i\bullet\widetilde{\phi}_j=\frac{K_i}{L^i}\phi_i\bullet\frac{K_j}{L^j}\phi_j=\frac{K_iK_j}{L^{i+j}}\phi_i\bullet\phi_j=\frac{K_iK_j}{L^{i+j}}\frac{K_{i+j}}{K_iK_j}\phi_{i+j}=\frac{K_{i+j}}{L^{i+j}}\phi_{i+j}=\widetilde{\phi}_{i+j}.
\end{equation*}
\end{proof}

For $\alpha\geq 0$, define
\begin{equation}\label{def:idempotent}
    e_{\alpha}=\frac{1}{n}\sum_{i=0}^{n-1}\zeta^{-\alpha{i}}\widetilde{\phi}_i
\end{equation}
where $\zeta=e^{\frac{2\pi\sqrt{-1}}{n}}$ is an $n^{\text{th}}$ root of unity.

By Lemma \ref{lem:quantum_for_tildephi}, for any $0\leq j \leq n-1$, we have
\begin{equation}\label{eq:tilde_phi1_dot_e_alpha}
\widetilde{\phi}_j\bullet e_{\alpha}
=\frac{1}{n}\sum_{i=0}^{n-1}\zeta^{-\alpha{i}}\widetilde{\phi}_{i+j}
=\zeta^{\alpha{j}}\frac{1}{n}\sum_{i=0}^{n-1}\zeta^{-\alpha{(i+j)}}\widetilde{\phi}_{i+j}
=\zeta^{\alpha}\frac{1}{n}\sum_{i=0}^{n-1}\zeta^{-\alpha{i}}\widetilde{\phi}_{i}=\zeta^{\alpha{j}}e_{\alpha}.
\end{equation}
Consecutive application of the identity (\ref{eq:tilde_phi1_dot_e_alpha}) gives us
\begin{equation*}
e_{\alpha}\bullet e_{\beta}=\frac{1}{n}\sum_{i=0}^{n-1}\zeta^{-\beta{i}}\widetilde{\phi}_i\bullet e_{\beta}=\frac{1}{n}\sum_{i=0}^{n-1}\zeta^{-\alpha{i}}\zeta^{\beta{i}}e_{\beta}=\frac{1}{n}\sum_{i}^{n-1}\zeta^{(\beta-\alpha)i}e_{\beta}=\delta_{\alpha,\beta}e_{\beta}.
\end{equation*}
Hence, we obtain the following result.
\begin{prop}
The quantum product at $0\in H^{\star}_{\mathrm{T,Orb}}\left(\left[\mathbb{C}^n/\mathbb{Z}_n\right]\right)$ is semisimple with the idempotent basis $\{e_\alpha\}_{0\leq \alpha \leq {n-1}}$.
\end{prop}

\subsection{Frobenius structure}\label{sec:frob_str}
We describe in detail some ingredients of the Frobenius structure obtained from genus $0$ Gromov-Witten theory of $\CnZn$. We refer the readers to \cite{lp} for generalities on the Frobenius structure arising in Gromov-Witten theory.

By the results in Section \ref{sec:quantum_product}, the Frobenius structure\footnote{The Frobenius manifold here is over the ring $\mathbb{C}[\![\Theta]\!]$, or over the ring $\mathbb{C}[\![x]\!]$ by mirror map (\ref{eq:mirrortransform}).} on $H^{\star}_{\mathrm{T,Orb}}\left(\left[\mathbb{C}^n/\mathbb{Z}_n\right]\right)$ defined by the Gromov-Witten theory of $\CnZn$ is semisimple in a neighborhood of $0\in H^{\star}_{\mathrm{T,Orb}}\left(\left[\mathbb{C}^n/\mathbb{Z}_n\right]\right)$.

For any $0\leq \alpha \leq n-1$, we have
\begin{equation}\label{eq:g_e_alpha_phi_0}
g\left(e_\alpha, \phi_0\right)=\frac{1}{n} \sum_{i=0}^{n-1} \zeta^{-\alpha i} g\left(\widetilde{\phi}_i, \phi_0\right)=\frac{1}{n} \sum_{i=0}^{n-1} \zeta^{-\alpha i}\frac{K_i}{L^i}\frac{1}{n}\delta_{\mathrm{Inv}(i),0}=\frac{1}{n^2}.
\end{equation}
Using the identity (\ref{eq:g_e_alpha_phi_0}) and the Frobenius property, we calculate the metric $g$ in the idempotent basis $\{e_\alpha\}$:
\begin{equation*}
g\left(e_\alpha, e_\alpha\right)
=g\left(e_\alpha, e_\alpha \bullet \phi_0\right)=g\left(e_\alpha \bullet e_\alpha, \phi_0\right)=g\left(e_\alpha, \phi_0\right)=\frac{1}{n^2}.
\end{equation*}
So, the normalized idempotents are given by
\begin{equation*}
\widetilde{e}_{\alpha}=\frac{e_{\alpha}}{\sqrt{g\left(e_{\alpha},e_{\alpha}\right)}}=ne_{\alpha}.
\end{equation*}
The transition matrix $\Psi$ is given by $\Psi_{\alpha{i}}=g\left(\widetilde{e}_{\alpha},\phi_i\right)$ where $0\leq \alpha,i\leq n-1$. By equations (\ref{eq:tilde_phi1_dot_e_alpha}) and (\ref{eq:g_e_alpha_phi_0}), we  calculate
\begin{equation*}
\begin{aligned}
\Psi_{\alpha{i}}
&=g\left(\widetilde{e}_{\alpha},\phi_i\right)=n\frac{L^i}{K_i}g\left(e_{\alpha},\widetilde{\phi}_i\right)=n\frac{L^i}{K_i}g\left(e_{\alpha},\widetilde{\phi}_i\bullet \phi_0\right)\\
&=n\frac{L^i}{K_i}g\left(\widetilde{\phi}_i\bullet e_{\alpha},\phi_0\right)=n\zeta^{\alpha{i}}\frac{L^i}{K_i}g(e_\alpha,\phi_0)=\frac{1}{n}\zeta^{\alpha{i}}\frac{L^i}{K_i}.
\end{aligned}
\end{equation*}
\begin{lem}
The inverse of the transition matrix $\Psi^{-1}=\left[\Psi^{-1}_{j\beta}\right]$ is given by
\begin{equation*}
\Psi^{-1}_{{j}\beta}=\zeta^{-\beta{j}}\frac{K_{j}}{L^{j}}\quad\text{where}\quad 0\leq\beta,j\leq n-1.
\end{equation*}
\end{lem}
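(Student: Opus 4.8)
The plan is a direct verification that the proposed matrix is a right (hence two-sided) inverse of $\Psi$. Recall from the computation just above that $\Psi_{\alpha i}=\frac{1}{n}\zeta^{\alpha i}\frac{L^{i}}{K_{i}}$ for $0\leq\alpha,i\leq n-1$. So I would form the $(\alpha,\beta)$ entry of the product $\Psi\cdot\Psi^{-1}$, namely
\begin{equation*}
\sum_{i=0}^{n-1}\Psi_{\alpha i}\,\Psi^{-1}_{i\beta}
=\sum_{i=0}^{n-1}\left(\frac{1}{n}\zeta^{\alpha i}\frac{L^{i}}{K_{i}}\right)\left(\zeta^{-\beta i}\frac{K_{i}}{L^{i}}\right)
=\frac{1}{n}\sum_{i=0}^{n-1}\zeta^{(\alpha-\beta)i},
\end{equation*}
where the factors $L^{i}/K_{i}$ and $K_{i}/L^{i}$ cancel term by term; no use of the properties of the $K_i$ from Section \ref{subsection:Birkhoff_Factorizaton} is even needed here, only that $K_i$ is invertible as a power series, which holds since $K_i=\prod_{l=0}^{i}C_l$ with each $C_l(0)=1$.

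The second step is to invoke the standard orthogonality of $n$-th roots of unity: since $\zeta=e^{2\pi\sqrt{-1}/n}$ is a primitive $n$-th root of unity and $\alpha-\beta$ ranges in $\{-(n-1),\dots,n-1\}$, the geometric sum $\sum_{i=0}^{n-1}\zeta^{(\alpha-\beta)i}$ equals $n$ when $\alpha=\beta$ and $0$ otherwise. Hence $\sum_{i}\Psi_{\alpha i}\Psi^{-1}_{i\beta}=\delta_{\alpha\beta}$, so $\Psi\Psi^{-1}=\mathrm{Id}$. As $\Psi$ is a square $n\times n$ matrix, this one-sided identity already forces $\Psi^{-1}$ to be the genuine inverse; if one prefers, the product $\Psi^{-1}\Psi$ can be checked by the identical cancellation-and-orthogonality argument.

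There is essentially no obstacle in this lemma: it is a one-line Fourier-orthogonality computation, and the only point requiring a word of care is confirming that $\alpha-\beta$ stays in the range where the root-of-unity sum is $n\delta_{\alpha\beta}$ rather than picking up spurious contributions, which is immediate from $0\leq\alpha,\beta\leq n-1$.
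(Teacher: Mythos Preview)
Your proof is correct and follows essentially the same approach as the paper: compute the $(\alpha,\beta)$ entry of $\Psi\Psi^{-1}$, cancel the $L^i/K_i$ factors, and use orthogonality of $n$-th roots of unity to obtain $\delta_{\alpha\beta}$.
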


\begin{proof}
We calculate
\begin{equation*}
\left[\Psi \Psi^{-1}\right]_{\alpha\beta}
=\sum_{i=0}^{n-1}\Psi_{\alpha{i}}\Psi^{-1}_{{i}\beta}
=\sum_{i=0}^{n-1}\frac{1}{n}\zeta^{\alpha{i}}\frac{L^{i}}{K_{i}}\zeta^{-\beta{i}}\frac{K_{i}}{L^{i}}\\
=\sum_{i=0}^{n-1}\frac{1}{n}\zeta^{i(\alpha-\beta)}
=\delta_{\alpha,\beta}.
\end{equation*}
\end{proof}

Let $\left\{u^{\alpha}\right\}_{\alpha=0}^{n-1}$ be canonical coordinates associated to the idempotent basis $\left\{e_{\alpha}\right\}_{\alpha=0}^{n-1}$ which satisfy
\begin{equation*}
u^{\alpha}\left(t_i=0,\Theta=0\right)=0.
\end{equation*}
Since $e_{\alpha}=\frac{\partial}{\partial{u^{\alpha}}}$, we have
\begin{equation}\label{eq:idempotenttophi_1}
\sum_{\alpha=0}^{n-1}\frac{\partial {u^{\alpha}}}{\partial{t_1}}e_{\alpha}=\phi_1.
\end{equation}

\begin{lem}\label{canonicalcoorder}
We have
\begin{equation*}
\frac{\partial u^{\alpha}}{\partial {t_1}}|_{t=0}=\zeta^{\alpha}\frac{L}{C_1}.
\end{equation*}
\end{lem}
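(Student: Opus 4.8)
The plan is to express $\phi_1$ explicitly in the idempotent basis $\{e_\alpha\}$ and then simply read off the coefficients from equation (\ref{eq:idempotenttophi_1}). All the needed ingredients—the definition (\ref{def:idempotent}) of $e_\alpha$, the definition (\ref{def:phi_tilde}) of $\widetilde\phi_i$, and the relation $K_1 = C_0C_1 = C_1$—are already in place, so the argument is essentially a discrete Fourier inversion.

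First I would invert the defining relation of the idempotents. Multiplying $e_\alpha = \frac{1}{n}\sum_{i=0}^{n-1}\zeta^{-\alpha i}\widetilde\phi_i$ by $\zeta^{\alpha j}$, summing over $\alpha$, and using the orthogonality $\sum_{\alpha=0}^{n-1}\zeta^{\alpha(j-i)} = n\,\delta_{ij}$ of $n$-th roots of unity, one gets
\begin{equation*}
\widetilde\phi_j = \sum_{\alpha=0}^{n-1}\zeta^{\alpha j}e_\alpha, \qquad 0\le j\le n-1.
\end{equation*}
In particular $\widetilde\phi_1 = \sum_{\alpha=0}^{n-1}\zeta^{\alpha}e_\alpha$. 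Since $\widetilde\phi_1 = \frac{K_1}{L}\phi_1 = \frac{C_1}{L}\phi_1$, this gives
\begin{equation*}
\phi_1 = \frac{L}{C_1}\widetilde\phi_1 = \frac{L}{C_1}\sum_{\alpha=0}^{n-1}\zeta^{\alpha}e_\alpha,
\end{equation*}
valid as an identity of sections over the semisimple neighbourhood of $0$.

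Finally, comparing with (\ref{eq:idempotenttophi_1}), namely $\sum_{\alpha=0}^{n-1}\frac{\partial u^\alpha}{\partial t_1}e_\alpha = \phi_1$, and using that $\{e_\alpha\}$ is a basis, I obtain $\frac{\partial u^\alpha}{\partial t_1} = \zeta^{\alpha}\frac{L}{C_1}$. Restricting to $t=0$, where everything becomes a function of $\Theta$ alone (equivalently of $x$ via the mirror map $\Theta = I_1(x)$), yields the stated formula for $\frac{du^\alpha}{dt_1}$.

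The only real content is the Fourier inversion of the idempotent change of basis; the rest is bookkeeping with the definitions, and no analytic input is required since the identity $\phi_1 = \frac{L}{C_1}\sum_\alpha \zeta^\alpha e_\alpha$ holds globally and restricts to $t=0$ at once. I do not anticipate a genuine obstacle here.
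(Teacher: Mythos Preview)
Your proof is correct. Both you and the paper start from equation (\ref{eq:idempotenttophi_1}), but you finish differently: you perform a discrete Fourier inversion of the definition (\ref{def:idempotent}) to write $\widetilde\phi_1=\sum_\alpha\zeta^\alpha e_\alpha$ and then rescale by $L/C_1$, whereas the paper instead multiplies $\phi_1$ into $e_\alpha$ using the quantum product formula $\phi_1\bullet\phi_i=\frac{C_{i+1}}{C_1}\phi_{i+1}$ (Proposition \ref{generatingquantumprod}) and reindexes the sum to recover $\zeta^\alpha\frac{L}{C_1}e_\alpha$. Your route is slightly more elementary since it uses only the linear-algebraic change of basis and avoids invoking the quantum product; the paper's route, while a little longer, ties the computation more visibly to the Frobenius structure and the eigenvalue interpretation of $\phi_1\bullet(-)$.
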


\begin{proof}
The result is obtained by the following calculation: at $t=0$, we have
\begin{align*}
\frac{\partial u^{\alpha}}{\partial {t_1}}|_{t=0}{e_{\alpha}}
=\sum_{\beta=0}^{n-1}\frac{\partial {u^{\beta}}}{\partial {t_1}}|_{t=0}\delta_{\alpha,\beta}{e_{\alpha}}
=\phi_1\bullet{e_{\alpha}}
=\frac{L}{K_1}\widetilde{\phi}_1\bullet e_\alpha
=\zeta^{\alpha}\frac{L}{C_1}e_{\alpha}.
\end{align*}
The equalities follow from equation (\ref{eq:tilde_phi1_dot_e_alpha}) and equation (\ref{eq:idempotenttophi_1}).
\end{proof}

The $R$-matrix has a central role in the Givental-Teleman classification of semisimple cohomological field theories. Let the $R$-matrix of the Frobenius structure associated to the ($\mathrm{T}$-equivariant) Gromov-Witten theory of $\CnZn$ near the semisimple point $0$ be denoted by 
\begin{equation*}
    R(z)=\sum_{k\geq 0} R_k z^k\in \mathrm{Id}+z\mathrm{End}(H^*_{\mathrm{T,Orb}}(\CnZn))[\![z]\!].
\end{equation*}
By the definition of $R$-matrix, $R(z)$ satisfies the symplectic condition 
\begin{equation*}
    R(z)\cdot R(-z)^*=\text{Id}.
\end{equation*}
Let $U$ be the diagonal matrix
\begin{equation*}
U=\diag(u^0,\ldots,u^{n-1})
\end{equation*}
associated to canonical coordinates $\left\{u^{\alpha}\right\}_{\alpha=0}^{n-1}$.
The $R$-matrix $R(z)$ also satisfies the following flatness equation
\begin{equation}\label{eqn:defn_of_R}
z(d\Psi^{-1})R+z\Psi^{-1}(dR)+\Psi^{-1}R (dU)-\Psi^{-1}(dU) R=0,    
\end{equation}
see \cite[Chapter 1, Section 4.6]{lp} and \cite[Proposition 1.1]{g1}. Here $d$ denotes total derivative with respect to $\{t_i\}$.

We examine the dependence on parameters of the full genus $0$ Gromov-Witten potential (\ref{eqn:full_GW_potential}): 
\begin{equation*}
\begin{split}
&\mathcal{F}_0^{\left[\mathbb{C}^n / \mathbb{Z}_n\right]}(t, \Theta)\\
&=\sum_{m=0}^{\infty} \sum_{d=0}^{\infty} \frac{1}{m ! d !}\left\langle \underbrace{\gamma,...,\gamma}_{m-\text{times}}, \underbrace{\Theta\phi_1,...,\Theta\phi_1}_{d-\text{times}} \right\rangle_{0, m+d}^{\CnZn}\\
&=\sum_{m=0}^{\infty} \sum_{d=0}^{\infty} \frac{1}{m ! d !}\left\langle \underbrace{\gamma|_{t_1=0}+t_1\phi_1,...,\gamma|_{t_1=0}+t_1\phi_1}_{m-\text{times}}, \underbrace{\Theta\phi_1,...,\Theta\phi_1}_{d-\text{times}} \right\rangle_{0, m+d}^{\CnZn}\\
&=\sum_{m=0}^{\infty} \sum_{d=0}^{\infty} \frac{1}{m ! d !}\sum_{b=0}^m\binom{m}{b}\left\langle \underbrace{\gamma|_{t_1=0},..., \gamma|_{t_1=0}}_{(m-b)-\text{times}}, \underbrace{t_1\phi_1,...,t_1\phi_1}_{b-\text{times}}, \underbrace{\Theta\phi_1,...,\Theta\phi_1}_{d-\text{times}} \right\rangle_{0, m+d}^{\CnZn}\\
&=\sum_{m=0}^{\infty} \sum_{d=0}^{\infty} \sum_{b=0}^m\frac{1}{b ! d ! (m-b)!}\left\langle \underbrace{\gamma|_{t_1=0},..., \gamma|_{t_1=0}}_{(m-b)-\text{times}}, \underbrace{t_1\phi_1,...,t_1\phi_1}_{b-\text{times}}, \underbrace{\Theta\phi_1,...,\Theta\phi_1}_{d-\text{times}} \right\rangle_{0, m+d}^{\CnZn}\\
&=\sum_{m=0}^{\infty} \sum_{d=0}^{\infty} \sum_{b=0}^m\frac{(b+d)!}{b ! d ! (m-b)!(b+d)!}\left\langle \underbrace{\gamma|_{t_1=0},..., \gamma|_{t_1=0}}_{(m-b)-\text{times}}, \underbrace{t_1\phi_1,...,t_1\phi_1}_{b-\text{times}}, \underbrace{\Theta\phi_1,...,\Theta\phi_1}_{d-\text{times}} \right\rangle_{0, m+d}^{\CnZn}\\
&=\sum_{m=0}^{\infty} \sum_{d=0}^{\infty} \frac{1}{m ! d !}\left\langle \underbrace{\gamma,...,\gamma}_{m-\text{times}}, \underbrace{(\Theta+t_1)\phi_1,...,(\Theta+t_1)\phi_1}_{d-\text{times}} \right\rangle_{0, m+d}^{\CnZn},\\
\end{split}    
\end{equation*}
namely
\begin{equation*}
    \mathcal{F}_0^{\left[\mathbb{C}^n / \mathbb{Z}_n\right]}(t, \Theta)=\mathcal{F}_0^{\left[\mathbb{C}^n / \mathbb{Z}_n\right]}(t|_{t_1=0}, \Theta+t_1).
\end{equation*}
That is, $\mathcal{F}_0^{\left[\mathbb{C}^n / \mathbb{Z}_n\right]}(t, \Theta)$ depends on $t_1$ and $\Theta$ through $\Theta+t_1$.  

The purpose of introducing the seemingly redundant variable $\Theta$ is to construct an equation that plays the role of divisor equation in orbifold Gromov-Witten theory, see \cite[Section 2.2]{bg} for a detailed discussion.

It follows from the construction of semisimple Frobenius structures that its ingredients also depend on $t_1$ and $\Theta$ through $\Theta+t_1$, for example, $$u^\alpha(t,\Theta)=u^\alpha(t|_{t_1=0}, \Theta+t_1).$$ 
In particular, the operator
\begin{equation}\label{annihilatoroperator}
\frac{\partial}{\partial{t_1}}-\frac{\partial}{\partial\Theta}
\end{equation}
annihilates the canonical coordinates $u^{\alpha}$. More precisely, we have
\begin{equation*}
\left(\frac{\partial u^\alpha}{\partial t_1}\right)|_{t=0}=\left(\frac{\partial}{\partial t_1}u^\alpha(t|_{t_1=0}, \Theta+t_1)\right)|_{t=0}=\left(\frac{\partial}{\partial \Theta}u^\alpha(t|_{t_1=0}, \Theta+t_1)\right)|_{t=0}=\frac{d}{d\Theta}\left(u^\alpha(t,\Theta)|_{t=0}\right).     
\end{equation*}
As a result, we have
\begin{equation*}
\left(\frac{\partial u^{\alpha}}{\partial t_1}\right)|_{t=0}=\frac{d(u^{\alpha}|_{t=0})}{d\Theta}=\frac{d(u^{\alpha}|_{t=0})}{dx}\frac{dx}{d\Theta}.
\end{equation*}
By Lemma \ref{canonicalcoorder}, we have
\begin{equation}\label{canonicalcoorder2}
\frac{d(u^{\alpha}|_{t=0})}{dx}=\zeta^{\alpha}L\frac{1}{x}
\end{equation}
at the semisimple point $0\in H^*_{\mathrm{T,Orb}}\left([\mathbb{C}^n/\mathbb{Z}_n]\right)$, i.e. at $t=0$.

Since $\mathcal{F}_0^{\left[\mathbb{C}^n / \mathbb{Z}_n\right]}(t, \Theta)$ depends on $t_1$ and $\Theta$ through $\Theta+t_1$, it follows that $\Psi$ and $R(z)$ also depend on $t_1$ and $\Theta$ through $\Theta+t_1$. Hence, the operator (\ref{annihilatoroperator}) also annihilates\footnote{An argument for this (for a different target space) from the CohFT viewpoint can be found in \cite[Section 3.3]{pt}.} $\Psi$ and $R(z)$, more precisely we have
\begin{equation*}
\left(\frac{\partial}{\partial t_1}\Psi\right)|_{t=0}=\frac{d}{d \Theta} (\Psi|_{t=0}), \quad
    \left(\frac{\partial}{\partial t_1}R(z)\right)|_{t=0}=\frac{d}{d \Theta} (R(z)|_{t=0}).
\end{equation*}
Next, in equation (\ref{eqn:defn_of_R}), we set all $t_i$'s to $0$ except $t_1$ and only consider $\frac{d}{dt_1}$. Since $U$, $\Psi$ and $R(z)$ are annihilated by the operator (\ref{annihilatoroperator}), it follows that when setting $t=0$, i.e. consider the restriction to the semisimple point $0\in H^*_{\mathrm{T,Orb}}\left([\mathbb{C}^n/\mathbb{Z}_n]\right)$, (\ref{eqn:defn_of_R}) implies
\begin{equation}\label{eqn:defn_of_R_2}
\begin{split}
&z\left(\frac{d}{d\Theta}(\Psi^{-1}|_{t=0})\right)R|_{t=0}+z(\Psi^{-1}|_{t=0})\left(\frac{d}{d\Theta}(R|_{t=0})\right)\\
&+(\Psi^{-1}|_{t=0})(R|_{t=0}) \left(\frac{d}{d\Theta}(U|_{t=0})\right)-(\Psi^{-1}|_{t=0})\left(\frac{d}{d\Theta}(U|_{t=0})\right) (R|_{t=0})=0. 
\end{split}
\end{equation}

In what follows, we only consider the Frobenius structure restricted to the semisimple point $0\in H^*_{\mathrm{T,Orb}}\left([\mathbb{C}^n/\mathbb{Z}_n]\right)$. For simplicity, we abuse notations and write $R|_{t=0}, \Psi|_{t=0}, U|_{t=0}$ as $R, \Psi, U$.

Using the mirror map $\Theta(x)=I_1(x)$ and the chain rule $$\frac{d}{d\Theta}=\frac{dx}{d\Theta}\frac{d}{dx},$$
we rewrite equation (\ref{eqn:defn_of_R_2}) as 
\begin{equation*}
   z\left(x\frac{d}{dx}\Psi^{-1}\right)R+z\Psi^{-1}\left(x\frac{d}{dx}R\right)+\Psi^{-1}R \left(x\frac{d}{dx}U\right)-\Psi^{-1}\left(x\frac{d}{dx}U\right) R=0. 
\end{equation*}
By matching coefficients of $z^k$, we can further rewrite it as\footnote{We set $R_k=0$ for $k<0$.}
\begin{equation}\label{flatness2}
D\left(\Psi^{-1}R_{k-1}\right)+\left(\Psi^{-1}R_k\right)DU-\Psi^{-1}\left(DU\right)\Psi\left(\Psi^{-1}R_k\right)=0   
\end{equation}
or equivalently
\begin{equation}\label{flatness1}
\Psi\left(D\Psi^{-1}\right)R_{k-1}+DR_{k-1}+R_k\left(DU\right)-\left(DU\right)R_k=0
\end{equation}
for $k\geq{0}$. Here $D=x\frac{d}{dx}$ as before. 

By equation (\ref{canonicalcoorder2}), we have, when $t=0$,  
\begin{equation}\label{DU}
DU=\diag(L,L\zeta,...,L\zeta^{n-1}).
\end{equation}

For $k\geq 0$, define the matrix $P_k$ by
$$P_k=\Psi^{-1}R_k$$ after being restricted to the semisimple point $0\in H^*_{\mathrm{T,Orb}}\left([\mathbb{C}^n/\mathbb{Z}_n]\right)$. Let $P_{i,j}^k$ denote the $(i,j)$ entry of the matrix $P_k$ where $0\leq i,j \leq{n-1}$.

\begin{lem}\label{PMatrixEqns}
For ${0\leq{i,j}\leq{n-1}}$ and $k\geq{0}$, we have
\begin{equation*}
DP_{i,j}^{k-1}=
C_{\mathrm{Ion}(i)}P_{\mathrm{Ion}(i)-1,j}^k-P_{i,j}^kL\zeta^j.
\end{equation*}
\end{lem}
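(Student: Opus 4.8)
The plan is to obtain the recursion directly from the flatness equation (\ref{flatness2}), specialized to the semisimple point $\gamma=0$, once the conjugate matrix $\Psi^{-1}(DU)\Psi$ has been computed in closed form.

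First I would restrict (\ref{flatness2}) to $\gamma=0$. By definition $P_k=\Psi^{-1}R_k$ at that point, and since the operator (\ref{annihilatoroperator}) annihilates $\Psi$, $R(z)$ and the canonical coordinates, the restriction commutes with $D=x\frac{d}{dx}$ and turns (\ref{flatness2}) into the matrix identity
\[
DP_{k-1}+P_k\,DU-\bigl(\Psi^{-1}(DU)\Psi\bigr)P_k=0,
\]
whose entries are now power series in $x$. By (\ref{DU}), $DU=\diag(L,L\zeta,\dots,L\zeta^{n-1})$, so the $(i,j)$ entry of $P_k\,DU$ is $P^k_{i,j}L\zeta^j$; this produces the second term on the right-hand side of the claim.

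The main step is the evaluation of $\Psi^{-1}(DU)\Psi$. Using the explicit formulas $\Psi_{\alpha i}=\frac1n\zeta^{\alpha i}\frac{L^i}{K_i}$ and $\Psi^{-1}_{i\alpha}=\zeta^{-\alpha i}\frac{K_i}{L^i}$ together with the diagonality of $DU$ with entries $L\zeta^\alpha$, one finds
\[
\bigl(\Psi^{-1}(DU)\Psi\bigr)_{i,l}=\frac{L}{n}\,\frac{K_i}{L^i}\,\frac{L^l}{K_l}\sum_{\alpha=0}^{n-1}\zeta^{\alpha(1+l-i)}.
\]
The identity $\sum_{\alpha=0}^{n-1}\zeta^{\alpha m}=n\,\delta_{n\mid m}$ forces $i\equiv l+1\pmod n$, i.e. $l=i-1$ when $1\le i\le n-1$ and $l=n-1$ when $i=0$. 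In the former case the surviving entry is $\frac{K_i}{K_{i-1}}=C_i$, in the latter it is $\frac{L^n}{K_{n-1}}=C_n$, using $K_0=1$, $K_i=C_iK_{i-1}$ and $K_n=L^n$ from Section \ref{subsection:Birkhoff_Factorizaton}. Since $\mathrm{Ion}(0)=n$ and $\mathrm{Ion}(i)=i$ otherwise, both cases read uniformly as $\bigl(\Psi^{-1}(DU)\Psi\bigr)_{i,l}=C_{\mathrm{Ion}(i)}\,\delta_{l,\,\mathrm{Ion}(i)-1}$.

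Plugging this back, the $(i,j)$ entry of $\bigl(\Psi^{-1}(DU)\Psi\bigr)P_k$ is $C_{\mathrm{Ion}(i)}P^k_{\mathrm{Ion}(i)-1,\,j}$, and reading off the $(i,j)$ entry of the restricted flatness identity gives precisely $DP^{k-1}_{i,j}=C_{\mathrm{Ion}(i)}P^k_{\mathrm{Ion}(i)-1,j}-P^k_{i,j}L\zeta^j$. I do not anticipate a genuine obstacle; the only points requiring care are the row/column conventions for $\Psi$ and $\Psi^{-1}$, and the fact — already established above — that passing to $\gamma=0$ converts the $\frac{\partial}{\partial t_1}$-flatness equation into an ordinary differential equation in $x$ via the mirror map $\Theta=I_1(x)$.
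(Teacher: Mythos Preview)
Your proposal is correct and follows essentially the same approach as the paper: both rewrite the flatness equation (\ref{flatness2}) as $DP_{k-1}=\Psi^{-1}(DU)\Psi\,P_k-P_k\,DU$, compute the conjugate $\Psi^{-1}(DU)\Psi$ entrywise via the explicit formulas for $\Psi^{\pm1}$ and the root-of-unity sum, and then read off the $(i,j)$ entry. The computations and the case split $i=0$ versus $1\le i\le n-1$ match the paper's (\ref{PsiDUPsi1})--(\ref{PsiDUPsi2}) line for line.
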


\begin{proof}
Equation (\ref{flatness2}) can be rewritten as
\begin{equation*}
D\left(\Psi^{-1}R_{k-1}\right)=\Psi^{-1}\left(DU\right)\Psi\left(\Psi^{-1}R_k\right)-\left(\Psi^{-1}R_k\right)DU
\end{equation*}
which is the same as
\begin{equation}\label{flatness3}
DP_{k-1}=\Psi^{-1}DU\Psi P_{k}-P_{k}DU.
\end{equation}
We see that
\begin{align}\label{PsiDUPsi1}
\begin{split}
   \left(\Psi^{-1}DU\Psi\right)_{ij}
  =&\sum_{l=0}^{n-1}\left(\Psi^{-1}DU\right)_{il}\Psi_{lj}\\
  =&\sum_{l=0}^{n-1}\zeta^{-li}\frac{K_i}{L^i}L\zeta^l\frac{1}{n}\zeta^{lj}\frac{L^j}{K_j}\\
  =&\frac{1}{n}\frac{K_i}{K_j}\frac{L^{j+1}}{L^i}\sum_{l=0}^{n-1}\zeta^{l(j-i+1)}\\
  =&
  \begin{cases}
  \frac{K_i}{K_j}\frac{L^{j+1}}{L^i}&\quad\text{if}\quad i=j+1\mod{n},\\
  0&\quad\text{otherwise}
  \end{cases}\\
  =&
  \begin{cases}
  C_i&\quad\text{if}\quad 1\leq i\leq {n-1}\quad\text{and}\quad j={i-1},\\
  C_n&\quad\text{if}\quad i=0\quad\text{and}\quad j={n-1},\\
  0&\quad\text{otherwise}.
  \end{cases} 
\end{split}
\end{align}
Equation (\ref{PsiDUPsi1}) implies
\begin{equation}\label{PsiDUPsi2}
\left(\Psi^{-1}DU\Psi{P_{k}}\right)_{ij}
=\sum_{l=0}^{n-1}\left(\Psi^{-1}DU\Psi\right)_{il}P^k_{l,j}
=C_{\mathrm{Ion}(i)}P^k_{\mathrm{Ion}(i)-1,j}
\end{equation}
Equations (\ref{flatness3}) and (\ref{PsiDUPsi2}) finish the proof.
\end{proof}
For $0\leq{i,j}\leq{n-1}$, define
\begin{equation*}
P_{i,j}(z)=\sum_{k=0}^{\infty} P_{i,j}^{k} z^{k},\quad D_{L_j}=D+\frac{L_j}{z} \quad\text{and}\quad \mu_j=\int_0^x\frac{L_j(u)}{u}du
\end{equation*}
where $L_j=L\zeta^j$. Then Lemma \ref{PMatrixEqns} is equivalent to the following.
\begin{lem}
For $0\leq{i,j}\leq{n-1}$, we have
$D_{L_j}P_{i,j}(z)=C_{\mathrm{Ion}(i)}z^{-1}P_{\mathrm{Ion}(i)-1,j}(z)$.
\end{lem}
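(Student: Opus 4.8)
The plan is to recognize that this statement is nothing but a repackaging of Lemma \ref{PMatrixEqns} into a single generating-function identity, obtained by multiplying the scalar identities by $z^{k}$ and summing. First I would expand the left-hand side, using that $D=x\,d/dx$ acts only on the coefficients (which lie in $\mathbb{C}[[x]]$) and that multiplication by $L_j/z$ lowers the power of $z$ by one:
\[
D_{L_j}P_{i,j}(z)=\sum_{k\geq 0}\bigl(DP_{i,j}^{k}\bigr)z^{k}+\sum_{k\geq 0}L_j P_{i,j}^{k}z^{k-1}.
\]
On the right-hand side I would likewise write $C_{\mathrm{Ion}(i)}z^{-1}P_{\mathrm{Ion}(i)-1,j}(z)=\sum_{k\geq 0}C_{\mathrm{Ion}(i)}P_{\mathrm{Ion}(i)-1,j}^{k}z^{k-1}$. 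After reindexing the two shifted sums (set $m=k-1$, so $m\geq -1$), the asserted identity becomes, coefficient by coefficient in $z^{m}$, equivalent to the family of equations
\[
DP_{i,j}^{m}+L_j P_{i,j}^{m+1}=C_{\mathrm{Ion}(i)}P_{\mathrm{Ion}(i)-1,j}^{m+1},\qquad m\geq -1,
\]
where one uses the convention $P_{i,j}^{-1}=0$ (equivalently $R_{-1}=0$, already in force in (\ref{flatness1})--(\ref{flatness2})), so that the $z^{m}$ coefficient of $DP_{i,j}(z)$ is $DP_{i,j}^{m}$ for $m\geq 0$ and is $0$ for $m=-1$.

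Next I would match this family against Lemma \ref{PMatrixEqns}. That lemma states $DP_{i,j}^{k-1}=C_{\mathrm{Ion}(i)}P_{\mathrm{Ion}(i)-1,j}^{k}-P_{i,j}^{k}L\zeta^{j}$ for all $k\geq 0$; since $L\zeta^{j}=L_j$, rearranging gives $DP_{i,j}^{k-1}+L_j P_{i,j}^{k}=C_{\mathrm{Ion}(i)}P_{\mathrm{Ion}(i)-1,j}^{k}$, which is precisely the displayed family under the substitution $k=m+1$. The only case warranting explicit comment is the bottom degree $m=-1$, i.e.\ $k=0$: there the left side reduces to $L_j P_{i,j}^{0}=C_{\mathrm{Ion}(i)}P_{\mathrm{Ion}(i)-1,j}^{0}$, which is exactly the $k=0$ instance of Lemma \ref{PMatrixEqns} (its left side $DP_{i,j}^{-1}$ vanishing by the same convention). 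I would also note in passing that all indices occurring on the right are legitimate matrix entries, since $\mathrm{Ion}(i)-1=i-1\in\{0,\dots,n-2\}$ for $1\le i\le n-1$ and $\mathrm{Ion}(0)-1=n-1$.

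No real obstacle arises, as all the analytic content already resides in Lemma \ref{PMatrixEqns}; this statement is a bookkeeping reformulation that is convenient for later use. The one point to handle with a little care is the $z$-grading: the factor $z^{-1}$ on the right is genuine and is matched by the $L_j/z$ term on the left rather than being an artifact, which is why I would isolate the $m=-1$ coefficient explicitly rather than merely ``summing Lemma \ref{PMatrixEqns} against $z^{k}$''.
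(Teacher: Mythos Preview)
Your proposal is correct and follows essentially the same approach as the paper: expand $D_{L_j}P_{i,j}(z)$ term by term, shift an index so that the coefficient of each $z^{k-1}$ becomes $DP_{i,j}^{k-1}+L_jP_{i,j}^k$ (using $P_{i,j}^{-1}=0$), and then invoke Lemma \ref{PMatrixEqns} to identify this with $C_{\mathrm{Ion}(i)}P_{\mathrm{Ion}(i)-1,j}^k$. The only cosmetic difference is that the paper shifts the index in the $D$-sum while you shift in the other two sums, and the paper splits into the cases $i=0$ and $1\le i\le n-1$ before recombining via $\mathrm{Ion}$, whereas you work directly with $\mathrm{Ion}(i)$ throughout.
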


\begin{proof} The proof is the following direct computation:
\begin{equation*}
\begin{split}
D_{L_j} P_{i,j}(z) 
&=\sum_{k=0}^{\infty} D P_{i,j}^{k} z^{k}+\sum_{k=0}^{\infty} L_{j} P_{i,j}^{k} z^{k-1} \\
&=\sum_{k=1}^{\infty} D P_{i,j}^{k-1} z^{k-1}+\sum_{k=0}^{\infty} L_{j} P_{i,j}^{k} z^{k-1}\\
&=\sum_{k=0}^{\infty}\left(D P_{i,j}^{k-1}+L_{j} P_{i,j}^{k}\right) z^{k-1} \quad\text { because } P_{i,j}^{-1}=0 \\
&=\begin{cases}
\sum_{k=0}^{\infty}C_nP_{n-1,j}^kz^{k-1}&\quad\text{if}\quad{i=0},\\
\sum_{k=0}^{\infty}C_iP_{i-1,j}^kz^{k-1}&\quad\text{if}\quad{1\leq{i}\leq{n-1}}
\end{cases}
\quad\text{ by Lemma \ref{PMatrixEqns}}\\
&=\begin{cases}
C_nz^{-1}P_{n-1,j}(z)&\quad\text{if}\quad{i=0},\\
C_iz^{-1}P_{i-1,j}(z)&\quad\text{if}\quad{1\leq{i}\leq{n-1}}
\end{cases} \\
&=C_{\mathrm{Ion}(i)}z^{-1}P_{\mathrm{Ion}(i)-1,j}(z).
\end{split}
\end{equation*}
\end{proof}

It immediately follows that $P_{0,j}(z)$ satisfies the following differential equation
\begin{equation}
\frac{1}{C_1}D_{L_j} \cdots \frac{1}{C_n}D_{L_j}P_{0,j}(z)=z^{-n}P_{0,j}(z).
\end{equation}
By the definition of $\mathfrak{L}_i$ and equation (\ref{eqn:mujcommutation}), this equation can be rewritten as 
\begin{equation}\label{eqn:P0jODE}
\mathfrak{L}_1\cdots\mathfrak{L}_n\left(e^{\frac{\mu_{j}}{z}}P_{0,j}(z)\right)=z^{-n}e^{\frac{\mu_{j}}{z}}P_{0,j}(z)
\end{equation}
By Lemma \ref{lem:PFFactorization}, equation \eqref{eqn:P0jODE} reads as
\begin{equation}\label{eq:PDForP0jz}
L^{-n}\left(D^{n}\left(e^{\frac{\mu_{j}}{z}}P_{0,j}(z)\right)+\frac{D L}{L} \sum_{r=1}^{n-1} s_{n,r} D^{r}\left(e^{\frac{\mu_{j}}{z}}P_{0,j}(z)\right)\right)=z^{-n}e^{\frac{\mu_{j}}{z}}P_{0,j}(z).
\end{equation}
We see that $P_{0,j}(z)$ satisfies the assumption of Lemma \ref{lem:AymptoticPFSolution}. Hence, we obtain the following two results by Lemma \ref{lem:AymptoticPFSolution} and Corollary. \ref{cor:Phi_jk_identity}.
\begin{cor}\label{cor:P0jk_is_in_CL}
For $0\leq{j}\leq{n-1}$ and $k\geq{0}$, we have $P^{k}_{0,j}\in\mathbb{C}[L]\subseteq\mathbb{C}[L^{\pm{1}}]$.
\end{cor}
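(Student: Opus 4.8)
The plan is to deduce Corollary~\ref{cor:P0jk_is_in_CL} from the general statement on asymptotic solutions of the Picard--Fuchs equation, Lemma~\ref{lem:AymptoticPFSolution}, after checking its hypotheses for $P_{0,j}(z)$, and then to transcribe the conclusion via Corollary~\ref{cor:Phi_jk_identity}. Two hypotheses need to be verified. First, $e^{\mu_j/z}P_{0,j}(z)$ must satisfy the Picard--Fuchs equation~\eqref{eq:PF3}; this is exactly equation~\eqref{eq:PDForP0jz}, which we have already obtained from Lemma~\ref{PMatrixEqns} by invoking the factorization of Lemma~\ref{lem:PFFactorization}. Second, $P_{0,j}(z)$ must be normalized, i.e.\ its coefficient of $z^0$ must be a nonzero constant. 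Since $R(z)=\mathrm{Id}+O(z)$ we have $R_0=\mathrm{Id}$, whence $P_0=\Psi^{-1}R_0=\Psi^{-1}$ and $P^0_{0,j}=(\Psi^{-1})_{0,j}=\zeta^{0}K_0/L^0=1$. Granting both, Lemma~\ref{lem:AymptoticPFSolution} yields $P^k_{0,j}\in\mathbb{C}[L]$ for all $k\ge 0$, and $\mathbb{C}[L]\subseteq\mathbb{C}[L^{\pm1}]$ is immediate.

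The real work is in Lemma~\ref{lem:AymptoticPFSolution}, which I would prove by induction on $k$. Writing $P_{0,j}(z)=\sum_{k\ge0}g_k z^k$ and conjugating~\eqref{eq:PDForP0jz} by $e^{-\mu_j/z}$, using $e^{-\mu_j/z}D^m e^{\mu_j/z}=D_{L_j}^m$ (recall $D\mu_j=L_j=L\zeta^j$), one gets the equation
\begin{equation*}
D_{L_j}^{n}P_{0,j}(z)+\frac{DL}{L}\sum_{r=1}^{n-1}s_{n,r}\,D_{L_j}^{r}P_{0,j}(z)=L^{n}z^{-n}P_{0,j}(z),
\end{equation*}
where $D_{L_j}$ acts on power series in $z$ by $D_{L_j}(hz^k)=(Dh)z^k+L_j h z^{k-1}$. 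Comparing coefficients of a fixed power of $z$ and using $L_j^{n}=L^{n}$ (because $\zeta^{jn}=1$), the leading appearance of the highest-index unknown drops out: the bare $g_k$-term cancels by the Stirling identity $s_{n,n-1}=-\binom{n}{2}$ together with $DL_j/L_j=DL/L$, leaving a first-order ``transport'' equation $nL_j^{n-1}Dg_k=(\text{explicit expression in }g_{k-1},\dots,g_{k-n+1}\text{ and their }D\text{-derivatives})$. All coefficients occurring here lie in the ring $\mathbb{C}[L]$, which is stable under $D$: a direct computation from~\eqref{Lseries} gives $DL=L\bigl(1+(-1)^n(L/n)^n\bigr)$, so $D(L^m)=mL^m+m(-1)^nL^{m+n}/n^n$. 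Hence, by the induction hypothesis, the right-hand side lies in $\mathbb{C}[L]$, and one solves for $g_k$ by applying $D^{-1}$.

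The main obstacle is precisely this last step. Since $D$ raises the $L$-degree by $n$, the antiderivative $D^{-1}$ of a general element of $\mathbb{C}[L]$ is only a formal power series in $L$, not a polynomial; and after dividing the transport equation by its leading coefficient $nL_j^{n-1}$ the right-hand side a priori carries negative powers of $L$, coming from the lower-order pieces of the operators $D_{L_j}^{r}$. Both pathologies must be shown spurious: one has to prove that the transport data actually lies in $L^{n-1}\mathbb{C}[L]$, the apparently dangerous contributions cancelling once the previously determined $g_\ell$ are known to satisfy \emph{all} the earlier $z$-level relations simultaneously, and that the resulting $D$-antiderivative terminates as a polynomial. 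This finiteness is the genuine content of the lemma and is of a piece with the finite-generation assertion of part~(1) of the Main Theorem; I would establish it by carefully bookkeeping $L$-valuations through the expansions of $D_{L_j}^{n}$ and $D_{L_j}^{r}$, using the normalization $P^0_{0,j}=1$ to start the induction and the prescribed $z$-power structure of the $R$-matrix to bound the relevant degrees.
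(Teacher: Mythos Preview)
Your reduction of the Corollary to Lemma~\ref{lem:AymptoticPFSolution} is exactly what the paper does, and your verification of the hypotheses ($e^{\mu_j/z}P_{0,j}(z)$ solves~\eqref{eq:PF3} and $P^{0}_{0,j}=(\Psi^{-1})_{0,j}=1$) is correct. Your inductive setup for the lemma itself is also the paper's: extract from~\eqref{eq:LPz0} the recursion
\[
nD\,g_k=-\sum_{l=2}^{n}L_j^{\,1-l}\mathds{L}_{j,l}(g_{k+1-l}),
\]
and show the right-hand side lies in the image of $D\!\restriction_{\mathbb{C}[L]}$. Since $D(L^r)=rL^rY$ with $Y=1+(-1)^nL^n/n^n$, that image is exactly $LY\,\mathbb{C}[L]$, so what must be shown is $\mathds{L}_{j,l}(\mathbb{C}[L])\subseteq L^{l}Y\,\mathbb{C}[L]$ for $l\ge 2$.

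The gap is in how you propose to close this step. ``Careful bookkeeping of $L$-valuations'' cannot by itself produce the needed inclusion: the factor $Y$ has constant term $1$, so membership in $Y\,\mathbb{C}[L]$ is \emph{not} detectable by an $L$-valuation, and the coefficients of $\mathds{L}_{j,l}$ (built from the $H_{m,l}$ and Stirling numbers) do not obviously carry a common factor of $Y$. The paper's missing ingredient is Lemma~\ref{lem:Ljkequivalence}: modulo the ideal $\mathcal{I}=(XY)\subset\mathbb{C}[L]$ one has the factorization
\[
\mathds{L}_{j,l}\equiv\binom{n}{l}\,D(D-Y)\cdots(D-(l-1)Y),
\]
and since $D(L^r)=rYL^r$ this operator visibly maps $L^r$ into $L^r Y^{l}\,\mathbb{C}[L]\subseteq L^{l}Y\,\mathbb{C}[L]$ (using $Y^{l}\equiv Y\bmod\mathcal{I}$ and $\mathcal{I}\subset L^nY\,\mathbb{C}[L]$), which is exactly what the induction needs. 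Your proposed alternative, appealing to ``the prescribed $z$-power structure of the $R$-matrix to bound the relevant degrees,'' would not help here: that structure constrains powers of $z$, not the $L$-content of the coefficients $g_k$.
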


\begin{cor}\label{cor:LPijkidentity}
For $0\leq{j}\leq{n-1}$ and $k\geq 0$, we have
\begin{equation}
\mathds{L}_{j,1}(P^{k}_{0,j})+\frac{1}{L_j}\mathds{L}_{j,2}(P^{k-1}_{0,j})+\frac{1}{L_j^2}\mathds{L}_{j,3}(P^{k-2}_{0,j})+\cdots+\frac{1}{L_j^{n-1}}\mathds{L}_{j,n}(P^{k+1-n}_{0,j})=0
\end{equation}
where $\mathds{L}_{j,k}$ is given by equation (\ref{eq:LLjk}).
\end{cor}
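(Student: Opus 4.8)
The plan is to read the recursion off directly from the Picard--Fuchs equation that $e^{\mu_j(x)/z}P_{0,j}(x,z)$ has just been shown to satisfy. Equation \eqref{eq:PDForP0jz} is precisely the hypothesis of Lemma~\ref{lem:AymptoticPFSolution}, so the corollary will follow from that lemma together with Corollary~\ref{cor:Phi_jk_identity}; the next two paragraphs sketch the mechanism behind those results so that the application is transparent.

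First I would substitute $P_{0,j}(x,z)=\sum_{k\ge 0}P^{k}_{0,j}(x)\,z^{k}$ into \eqref{eq:PDForP0jz} and expand each term $D^{r}\bigl(e^{\mu_j/z}P_{0,j}(z)\bigr)$ using the elementary identity $D\bigl(e^{\mu_j/z}g\bigr)=e^{\mu_j/z}\bigl(D+\tfrac{L_j}{z}\bigr)g$, which holds because $D\mu_j=L_j$. Iterating gives $D^{r}\bigl(e^{\mu_j/z}P_{0,j}(z)\bigr)=e^{\mu_j/z}\bigl(D+\tfrac{L_j}{z}\bigr)^{r}P_{0,j}(z)$, and the Leibniz rule, after sorting by powers of $z^{-1}$, writes $\bigl(D+\tfrac{L_j}{z}\bigr)^{r}=\sum_{p=0}^{r}z^{-p}Q_{r,p}$, where $Q_{r,p}$ is a differential operator in $D$ whose coefficients are polynomials in $L_j,DL_j,D^{2}L_j,\dots$ that are homogeneous of degree $p$ in these quantities; in particular $Q_{r,0}=D^{r}$ and $Q_{r,r}=L_j^{\,r}$.

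Next I would substitute these expansions into \eqref{eq:PDForP0jz}, cancel the common factor $e^{\mu_j/z}$, and collect powers of $z^{-1}$. The coefficient of $z^{-n}$ on the left is $L^{-n}L_j^{\,n}P_{0,j}(z)$, which cancels against the right-hand side precisely because $L_j=L\zeta^{j}$ and $\zeta^{n}=1$ force $L_j^{\,n}=L^{n}$; this cancellation is what makes the equation yield a genuine recursion rather than a triviality. The coefficients of $z^{0},z^{-1},\dots,z^{-(n-1)}$ then assemble---out of the operators $Q_{n,p}$, the lower $Q_{r,p}$, and the Stirling weights $s_{n,r}$, normalized by the appropriate powers of $L_j$ as in \eqref{eq:LLjk}---into the operators $\mathds{L}_{j,1},\dots,\mathds{L}_{j,n}$, turning \eqref{eq:PDForP0jz} into a single $z$-graded relation on $P_{0,j}(z)$; extracting the coefficient of $z^{k}$ then produces the asserted identity. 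I expect the one genuinely fiddly point to be this last step---verifying that the graded piece built from $\bigl(D+\tfrac{L_j}{z}\bigr)^{r}$ and $s_{n,r}$ is exactly the operator $\mathds{L}_{j,k}$ of \eqref{eq:LLjk}, with the index shift and powers of $L_j$ landing correctly---which is carried out in Lemma~\ref{lem:AymptoticPFSolution} and repackaged by Corollary~\ref{cor:Phi_jk_identity}. Applying both to the solution $P_{0,j}(z)$ of \eqref{eq:PDForP0jz} finishes the proof, and the same input also yields Corollary~\ref{cor:P0jk_is_in_CL}.
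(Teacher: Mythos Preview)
Your proposal is correct and matches the paper's approach exactly: the paper also observes that equation \eqref{eq:PDForP0jz} places $P_{0,j}(z)$ in the setting of Lemma~\ref{lem:AymptoticPFSolution}, and then invokes Corollary~\ref{cor:Phi_jk_identity} with $\Phi_j=P_{0,j}$ to obtain the stated recursion. One small clarification: the operator identity you flag as the ``fiddly point'' is actually established in Lemma~\ref{lem:Lj_Intermsof_Ljk} (the decomposition $\mathds{L}_j=\sum_k (L_j/z)^{n-k}\mathds{L}_{j,k}$) and then applied in Corollary~\ref{cor:Phi_jk_identity}, rather than in Lemma~\ref{lem:AymptoticPFSolution} itself, which concerns the membership $\Phi_{j,k}\in\mathbb{C}[L_j]$ and is what yields Corollary~\ref{cor:P0jk_is_in_CL}.
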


\section{Rings of functions}\label{sec:diff_ring}
The purpose of this section is to define and study rings of functions that contain various ingredients of the Gromov-Witten theory of $\CnZn$.

As described in Theorem \ref{thm:mirrorthm}, the $J$-function of $[\mathbb{C}^n/\mathbb{Z}_n]$ is explicitly given by the $I$-function, which defines the functions $I_k(x), k\geq 0$ via (\ref{eq:IfuncAsSumIk}). The Birkhoff factorization procedure in Section \ref{subsection:Birkhoff_Factorizaton} leads to the functions $L$ and $C_i, K_i, i\geq 0$, which also appear in the explicit formula for quantum product in Proposition \ref{generatingquantumprod} and Corollary \ref{quantumprod}.

Since the genus $0$ Gromov-Witten theory of $\CnZn$ is semisimple near $0$, Givental-Teleman classification yields a formula for Gromov-Witten potentials of $\CnZn$ in terms of the action of the $R$-matrix of $\CnZn$, see equation (\ref{eqn:formula_Fg}) below. As discussed in Section \ref{sec:frob_str}, this $R$-matrix satisfies the flatness equations given in Lemma \ref{PMatrixEqns}.

Flatness equations in Lemma \ref{PMatrixEqns} involve functions $C_i$. After certain changes of variables flatness equations in Lemma \ref{PMatrixEqns} can be rewritten as modified flatness equations (\ref{eqn:modflateqn}). This form of equations turns out to be more convenient to work with and involves functions $A_i$ defined in Section \ref{sec:desc_ring}.

For our goal of analyzing properties of Gromov-Witten potentials, we need to establish properties of those functions $C_i$ and $K_i$ arising in genus $0$ Gromov-Witten theory of $\CnZn$ and $A_i$ in modified flatness equation (\ref{eqn:modflateqn}). The rest of this Section is devoted to carry out this rather technical step. In particular, we construct certain differential rings and obtain polynomial relations among the generators of these differential rings, see Proposition \ref{pro:CDA_Simplification}, which lead to the ring of functions mentioned in the {\bf Main Theorem} in Section \ref{sec:intro}.

\subsection{Preparations}
The main purpose here is to define and study the following series\footnote{It is straightforward to see that these series are convergent in a neighborhood of $x=0$.} 
\begin{equation*}
X_{k,l}=\frac{D^lC_k}{C_k}\in \mathbb{C}[\![x]\!],
\end{equation*}
for all $k,l\geq{0}$. We denote $X_{k,1}$ just by $X_k$. Also, note that $X_0=0$ since $C_0=1$.

The next two lemmas concern properties of $X_{k,l}$.
\begin{lem}\label{DplusZk}
We have
\begin{equation*}
X_{k,l}=\left(D+X_k\right)^{l-1}X_k
\end{equation*}
for all $k\geq{0}$ and $l\geq{1}$. In particular, $X_{k,l}$ is a polynomial in $\{X_k, DX_k,\ldots,D^{l-1}X_k\}$ and $D^{l-1}X_k$ is a polynomial in $\{X_{k,1},\ldots,X_{k,l}\}$.
\end{lem}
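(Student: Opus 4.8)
The plan is to establish the identity $X_{k,l}=(D+X_k)^{l-1}X_k$ by induction on $l$, and then read off the two polynomiality statements as formal consequences.

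For the base case $l=1$ the identity reads $X_{k,1}=X_k$, which is the definition. For the inductive step, assume $X_{k,l}=(D+X_k)^{l-1}X_k$ and compute $X_{k,l+1}=\dfrac{D^{l+1}C_k}{C_k}$. The natural move is to write $D^{l+1}C_k = D(D^lC_k) = D(X_{k,l}\,C_k)$, using the inductive definition $D^lC_k = X_{k,l}C_k$. Applying the Leibniz rule for the derivation $D$ gives $D(X_{k,l}C_k) = (DX_{k,l})C_k + X_{k,l}(DC_k) = (DX_{k,l})C_k + X_{k,l}X_kC_k$. Dividing by $C_k$ yields
\begin{equation*}
X_{k,l+1}=DX_{k,l}+X_kX_{k,l}=(D+X_k)X_{k,l}=(D+X_k)^l X_k,
\end{equation*}
where the last equality uses the inductive hypothesis. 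This closes the induction.

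For the remaining assertions: expanding $(D+X_k)^{l-1}X_k$ by repeatedly applying $D$ and multiplication by $X_k$ produces, via the Leibniz rule, a sum of monomials in $X_k, DX_k, \ldots, D^{l-1}X_k$ (each application of $D$ either raises the order of a derivative of $X_k$ by one or, by Leibniz, splits across a product, so no derivative of order exceeding $l-1$ ever appears); hence $X_{k,l}$ is a polynomial in $\{X_k,DX_k,\ldots,D^{l-1}X_k\}$. Conversely, the relation $X_{k,l+1}=DX_{k,l}+X_kX_{k,l}$ can be solved for the top-order term: $D^{l-1}X_k = X_{k,l} - (\text{lower terms})$, and one shows by induction on $l$ that $D^{l-1}X_k$ is a polynomial in $\{X_{k,1},\ldots,X_{k,l}\}$, since $DX_{k,l}$ contributes $D^lX_k$ to $X_{k,l+1}$ together with terms already known to be polynomials in the $X_{k,i}$ with $i\le l$ (these involve $D^{i-1}X_k$ for $i\le l$, hence by the inductive hypothesis lie in the desired ring).

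\textbf{Main obstacle.} There is no serious obstacle here; the only point requiring a little care is the bookkeeping in the two polynomiality claims, namely verifying that the Leibniz expansion of $(D+X_k)^{l-1}X_k$ never introduces a derivative of $X_k$ of order $\ge l$, and dually that inverting the recursion stays within the ring generated by $X_{k,1},\ldots,X_{k,l}$. Both are straightforward inductions once the recursion $X_{k,l+1}=(D+X_k)X_{k,l}$ is in hand, so the heart of the matter is simply the one-line Leibniz computation in the inductive step.
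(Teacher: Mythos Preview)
Your proof is correct and matches the paper's approach: both establish the recursion $X_{k,l}=(D+X_k)X_{k,l-1}$ by a one-line Leibniz computation (you via $D(X_{k,l}C_k)$, the paper via the quotient rule on $D(D^{l-1}C_k/C_k)$, which is the same thing) and then close the induction. The paper dismisses the two polynomiality claims as ``directly follows'' and ``a basic elimination,'' so your more careful bookkeeping there is extra detail rather than a different idea.
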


\begin{proof}
The first part follows by induction on $l$. The case $l=1$ is clear. The inductive step is as follows:
\begin{equation*}
DX_{k,l-1}
=D\left(\frac{D^{l-1}C_k}{C_k}\right)
=\frac{D^lC_k}{C_k}-\frac{D^{l-1}C_k}{C_k}\frac{DC_k}{C_k}
=X_{k,l}-X_{k,l-1}X_k
\end{equation*}
which is equivalent to
\begin{equation*}
X_{k,l}=\left(D+X_k\right)X_{k,l-1}.
\end{equation*}
The polynomiality of $X_{k,l}$ directly follows, and the polynomiality of $D^{l-1}X_k$ follows from a basic elimination.
\end{proof}

\begin{lem}\label{DLLemma}
We have
\begin{align}
\frac{DL}{L}&=1+(-1)^n\frac{L^n}{n^n}=\frac{L^n}{x^n},\label{eq:DLLLemma1}\\
\frac{DK_i}{K_i}&=\sum_{r=0}^iX_r,\label{eq:DLLLemma2}\\
n\frac{DL}{L}&=\sum_{r=0}^nX_r,\label{eq:DLLLemma3}
\end{align}
for $0\leq{i}\leq{n}$.
\end{lem}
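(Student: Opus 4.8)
The plan is to prove the three identities in order, since \eqref{eq:DLLLemma3} will follow by combining \eqref{eq:DLLLemma2} with \eqref{Cfunctions2}. First I would establish \eqref{eq:DLLLemma1} by direct differentiation of the defining formula \eqref{Lseries}. Write $L = x\left(1-(-1)^n(x/n)^n\right)^{-1/n}$, so that $L^n = x^n\left(1-(-1)^n(x/n)^n\right)^{-1}$, equivalently $L^n\left(1-(-1)^n(x/n)^n\right) = x^n$, i.e. $L^n - (-1)^n L^n x^n/n^n = x^n$. Applying $D = x\,d/dx$ to this relation (using the Leibniz rule and $Dx^n = nx^n$, $D(x^n) = nx^n$) gives a linear relation among $D(L^n) = nL^{n-1}DL$ and known quantities; solving for $DL/L$ yields $DL/L = L^n/x^n$, and then substituting $x^n = L^n\left(1-(-1)^n(x/n)^n\right)$ back, or rather using $x^n/L^n = 1 - (-1)^n(x/n)^n$ together with $(x/n)^n = (x^n/L^n)\cdot(L^n/n^n)\cdot$(correction), one massages this into $1 + (-1)^n L^n/n^n$. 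A cleaner route: from $L^n(1 - (-1)^n x^n/n^n) = x^n$ one gets $x^n/L^n = 1-(-1)^n x^n/n^n$, hence $(-1)^n x^n/n^n = 1 - x^n/L^n$, so $(-1)^n L^n/n^n = (L^n/x^n)(1 - x^n/L^n) = L^n/x^n - 1$, giving $L^n/x^n = 1 + (-1)^n L^n/n^n$. This proves \eqref{eq:DLLLemma1}.

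Next I would prove \eqref{eq:DLLLemma2} by induction on $i$, using $K_i = \prod_{r=0}^i C_r$. For $i=0$, $K_0 = C_0 = 1$ so $DK_0/K_0 = 0 = X_0$ (recall $X_0 = 0$ since $C_0 = 1$). For the inductive step, $K_i = K_{i-1}C_i$, so by the logarithmic derivative rule $DK_i/K_i = DK_{i-1}/K_{i-1} + DC_i/C_i = \sum_{r=0}^{i-1} X_r + X_i = \sum_{r=0}^i X_r$, using the definition $X_i = X_{i,1} = DC_i/C_i$. This handles $0 \le i \le n$.

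Finally, \eqref{eq:DLLLemma3} follows by specializing \eqref{eq:DLLLemma2} to $i = n$: $DK_n/K_n = \sum_{r=0}^n X_r$. But by property \eqref{Kfunctions1} of the $K$-functions (proved in the appendix, $K_n = L^n$), we have $DK_n/K_n = D(L^n)/L^n = nL^{n-1}DL/L^n = n\,DL/L$. Comparing gives $n\,DL/L = \sum_{r=0}^n X_r$, as claimed. I do not expect any of these steps to present a genuine obstacle; the only mild subtlety is bookkeeping the sign conventions and the manipulation of the algebraic relation defining $L$ in the proof of \eqref{eq:DLLLemma1}, which must be done carefully to land on the stated form $1 + (-1)^n L^n/n^n$ rather than an equivalent but unsimplified expression. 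Everything else is a routine application of the logarithmic derivative rule together with the previously established facts $C_0 = 1$, $X_0 = 0$, and $K_n = L^n$.
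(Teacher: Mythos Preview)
Your proposal is correct and follows essentially the same approach as the paper. For \eqref{eq:DLLLemma1} the paper computes $DL$ by direct differentiation of the defining formula \eqref{Lseries}, whereas you implicitly differentiate the polynomial relation $L^n(1-(-1)^n(x/n)^n)=x^n$; both routes are routine and land on the same identity. Your arguments for \eqref{eq:DLLLemma2} (logarithmic derivative of the product $K_i=\prod_{r=0}^i C_r$) and \eqref{eq:DLLLemma3} (specialize to $i=n$ and use $K_n=L^n$) are exactly what the paper does.
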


\begin{proof}
By the logarithmic differentiation, we obtain
\begin{equation*}
\frac{DL}{L}=x\frac{\frac{dL}{dx}}{L}=1+\frac{(-1)^n\left(\frac{x}{n}\right)^{n}}{1-(-1)^n\left(\frac{x}{n}\right)^n}.
\end{equation*}
This implies
\begin{equation*}
\frac{DL}{L}
=1+(-1)^n\frac{x^n}{n^n}\left(1-(-1)^n\left(\frac{x}{n}\right)^n\right)^{-1}
=1+(-1)^n\frac{L^n}{n^n},
\end{equation*}
and
\begin{equation*}
\frac{DL}{L}
=\frac{1}{1-(-1)^n\left(\frac{x}{n}\right)^n}
=\left(1-(-1)^n\left(\frac{x}{n}\right)^n\right)^{-1}
=\frac{L^n}{x^n}.
\end{equation*}
The second equation follows directly from the definitions of $K_i$ and $X_r$. The last equation follows from the second equation and part (1) of Corollary \ref{Kfunctions}.
\end{proof}

Next, we consider two sets of functions $Z_{m,k}, B_{k,p}$ obtained from $C_i, i\geq 0$. 

For any $m\geq{1}$, define the following series in $x$
\begin{equation*}
Z_{m,k}=
\begin{cases}
D^{-1}C_{k+1}...D^{-1}C_{m}\quad&\text{if}\quad 0\leq k \leq m-1,\\
1\quad&\text{if}\quad k=m,\\
0\quad&\text{if}\quad k>m.
\end{cases}
\end{equation*}
From the definition of $Z_{m,k}$, we easily see that
\begin{equation}\label{eq:DZmkCk}
DZ_{m,k}=C_{k+1}Z_{m,k+1}
\end{equation}
for all $k\geq{0}$. We also recall that, by equation (\ref{altdefCis}), for $m\geq{1}$, $I_m=D^{-1}C_{1}...D^{-1}C_{m}$ which is just $Z_{m,0}$. Now for $k\geq 1$ define the following series in $x$:

\begin{equation*}
B_{k,p}=
\begin{cases}
D^{k-1}C_1\quad&\text{if}\quad p=1,\\
\sum\limits_{k_2=p-1}^{k_1-1}...\sum\limits_{k_p=1}^{k_{p-1}-1}\left(\prod\limits_{i=1}^{p-1}\binom{k_{i}-1}{k_{i+1}}\right)\left(D^{k_1-1-k_2}C_1\right)...\left(D^{k_{p-1}-1-k_p}C_{p-1}\right)\left(D^{k_p-1}C_p\right)\quad&\text{if}\quad 2\leq{p}\leq{k},\\
0\quad&\text{if}\quad p>k
\end{cases}
\end{equation*}
where $k_1=k$.

The next two lemmas are equations involving functions $Z_{m,k}, B_{k,p}$. These equations will be used in studying relations in rings of functions in Section \ref{sec:desc_ring} below.

\begin{lem}\label{lem:DkImCommutator}
For all $k,m\geq{1}$, we have
\begin{equation*}
D^kI_m=\sum\limits_{p=1}^kB_{k,p}Z_{m,p}.
\end{equation*}
\end{lem}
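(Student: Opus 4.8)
The plan is to prove the formula $D^kI_m=\sum_{p=1}^k B_{k,p}Z_{m,p}$ by induction on $k$, with $m\geq 1$ fixed. The base case $k=1$ reads $DI_m = B_{1,1}Z_{m,1}$. Here $B_{1,1}=D^0C_1=C_1$ and $Z_{m,1}=D^{-1}C_2\cdots D^{-1}C_m$, so $B_{1,1}Z_{m,1}=C_1\cdot D^{-1}C_2\cdots D^{-1}C_m$; on the other hand $I_m=Z_{m,0}=D^{-1}C_1\cdots D^{-1}C_m$, so applying $D$ and using $D\circ D^{-1}=\mathrm{id}$ gives $DI_m=C_1\cdot D^{-1}C_2\cdots D^{-1}C_m$, matching. (One can also phrase this uniformly via the relation $DZ_{m,k}=C_{k+1}Z_{m,k+1}$ from equation~\eqref{Amkrelation}, noting $I_m=Z_{m,0}$, so $DI_m=DZ_{m,0}=C_1Z_{m,1}$.)

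For the inductive step, assume $D^kI_m=\sum_{p=1}^k B_{k,p}Z_{m,p}$ and apply $D$, using the Leibniz rule $D(fg)=(Df)g+f(Dg)$ termwise:
\begin{equation*}
D^{k+1}I_m=\sum_{p=1}^k \bigl(DB_{k,p}\bigr)Z_{m,p}+\sum_{p=1}^k B_{k,p}\bigl(DZ_{m,p}\bigr).
\end{equation*}
In the second sum I substitute $DZ_{m,p}=C_{p+1}Z_{m,p+1}$ from \eqref{Amkrelation}, reindex $p\mapsto p-1$, and obtain $\sum_{p=2}^{k+1}B_{k,p-1}C_pZ_{m,p}$. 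Thus
\begin{equation*}
D^{k+1}I_m=\sum_{p=1}^{k+1}\Bigl(DB_{k,p}+B_{k,p-1}C_p\Bigr)Z_{m,p},
\end{equation*}
with the convention $B_{k,0}=0$ and $B_{k,k+1}=0$. It therefore remains to verify the recursion $B_{k+1,p}=DB_{k,p}+C_pB_{k,p-1}$ for the combinatorially defined coefficients $B_{k,p}$, for $1\leq p\leq k+1$.

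The verification of this recursion for $B_{k,p}$ is the main obstacle and is purely a combinatorial/bookkeeping computation with the nested sums and binomial coefficients in the definition of $B_{k,p}$. For $p=1$ it is immediate: $B_{k+1,1}=D^kC_1 = D(D^{k-1}C_1)=DB_{k,1}$ and $C_1B_{k,0}=0$. For $p\geq 2$ one expands $DB_{k,p}$ using the Leibniz rule on the product $(D^{k_1-1-k_2}C_1)\cdots(D^{k_p-1}C_p)$ — with $k_1=k$ — which produces a sum over which factor the extra $D$ hits; combining this with $C_pB_{k,p-1}$ (whose innermost factor absorbs a $C_p$, accounting for the new innermost layer of summation) and using the Pascal-type identity $\binom{k_i-1}{k_{i+1}}+\binom{k_i-1}{k_{i+1}-1}=\binom{k_i}{k_{i+1}}$ to merge index ranges, one matches the definition of $B_{k+1,p}$ with $k_1=k+1$. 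I expect this step to require careful tracking of the summation bounds (e.g.\ the shift $k_2=p-1,\dots,k_1-1$ when $k_1$ increases by one) but no genuinely new idea; the structure is the standard one for differentiating iterated $D^{-1}$-integrals. Aniket Shah's suggestion acknowledged in the paper presumably streamlines exactly this bookkeeping, e.g.\ by organizing the $B_{k,p}$ as coefficients in a generating-function or operator identity for $D^k\circ D^{-1}C_1\cdots D^{-1}C_p$.
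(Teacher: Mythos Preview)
Your approach is correct, but it is organized differently from the paper's. The paper first establishes the operator identity
\[
D^{i}\circ A \;=\; \sum_{j=0}^{i}\binom{i}{j}\,(D^{j}A)\,D^{\,i-j}
\]
for multiplication by a series $A$ (equivalently, $\mathrm{ad}_D^{\,j}(A)=(D^{j}A)$ iterated), and then peels off one layer at a time: starting from $D^{k}I_m=D^{k-1}(C_1 Z_{m,1})$, applying the identity introduces the outer sum over $k_2$ with weight $\binom{k-1}{k_2}$, then $D^{k_2}Z_{m,1}=D^{k_2-1}(C_2 Z_{m,2})$ introduces the next sum over $k_3$ with weight $\binom{k_2-1}{k_3}$, and so on. In this way the nested-sum definition of $B_{k,p}$ (including the particular binomial factors) is \emph{produced} by the argument rather than verified after the fact. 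Your induction on $k$ with the simple Leibniz rule is equally valid, but it trades that single operator identity for the recursion $B_{k+1,p}=DB_{k,p}+C_pB_{k,p-1}$, which you still have to check against the explicit definition. That check is exactly the bookkeeping you describe (Leibniz on the product plus Pascal $\binom{k_i-1}{k_{i+1}}+\binom{k_i-1}{k_{i+1}-1}=\binom{k_i}{k_{i+1}}$, cascading through the indices), and it is correct --- your small-case intuition is right --- but it is no shorter than the paper's route. In short: both arguments are fine; the paper's ``peel off a layer'' induction makes the origin of the binomial coefficients transparent, while yours keeps the analysis at the level of the ordinary product rule but defers the combinatorics to a separate recursion check that you have sketched rather than carried out.
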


\begin{proof}
Let $\mathfrak{T}_1$ and $\mathfrak{T}_2$ be two linear operators acting on $\mathbb{C}[\![x]\!]$, and let  $$[\mathfrak{T}_1,\mathfrak{T}_2]=\mathfrak{T}_1\mathfrak{T}_2-\mathfrak{T}_2\mathfrak{T}_1$$ be their commutator. Set $\ad_{\mathfrak{T}_1}^0(\mathfrak{T}_2)=\mathfrak{T}_2$, and let $\ad_{\mathfrak{T}_1}^j(\mathfrak{T}_2)$ be the generalization of the commutator for $j\geq 1$ inductively defined by
$$\ad_{\mathfrak{T}_1}^j(\mathfrak{T}_2)=[\mathfrak{T}_1,\ad_{\mathfrak{T}_1}^{j-1}(\mathfrak{T}_2)].$$
By induction, it can be shown that the commutator of the operator $D$ and multiplication by a series $A$ is given by
\begin{equation*}
\ad_{D}^j(A)=(D^jA)
\end{equation*}
i.e., multiplication by the series $(D^jA)$, and the multiplication by $A$ followed by the operator $D^i$ is given by
\begin{equation}\label{eq:HigherDiA}
D^iA=\sum_{j=0}^i\binom{i}{j}\ad_{D}^j(A)D^{i-j}.
\end{equation}
Using the fact that for $m\geq{1}$, $I_m=D^{-1}C_{1}...D^{-1}C_{m}=Z_{m,0}$ together with equations (\ref{eq:DZmkCk}) and (\ref{eq:HigherDiA}), we inductively complete the proof.
\end{proof}

\begin{lem}
For all $1\leq{m}\leq{n-1}$, we have
\begin{equation}\label{PFforGraded}
B_{n,m}+\frac{DL}{L}\sum\limits_{k=m }^{n-1}s_{n,k}B_{k,m}=B_{n,m}+\frac{DL}{L}\sum\limits_{k=1}^{n-1}s_{n,k}B_{k,m}=0.
\end{equation}
\end{lem}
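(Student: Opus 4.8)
The plan is to read \eqref{PFforGraded} off the Picard--Fuchs equation \eqref{eq:PFforIks} for the series $I_m$, $1\le m\le n-1$, after expanding each derivative $D^jI_m$ appearing there by the commutator formula of Lemma \ref{lem:DkImCommutator} and then separating the resulting identity according to the series $Z_{m,p}$.

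The first equality in \eqref{PFforGraded} is immediate: by definition $B_{k,p}=0$ whenever $p>k$, so $B_{k,m}=0$ for $k<m$ and those terms drop out of $\sum_{k=1}^{n-1}s_{n,k}B_{k,m}$.

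For the main equality, fix $m$ with $1\le m\le n-1$. Lemma \ref{lem:DkImCommutator} gives $D^jI_m=\sum_{p=1}^{j}B_{j,p}Z_{m,p}$ for every $j\ge 1$, so \eqref{eq:PFforIks} (with the running index there taken to be the fixed $m$) becomes
\[
\sum_{p=1}^{n}B_{n,p}Z_{m,p}+\frac{DL}{L}\sum_{k=1}^{n-1}s_{n,k}\sum_{p=1}^{k}B_{k,p}Z_{m,p}=0.
\]
Since $Z_{m,p}=0$ for $p>m$ and $m<n$, the first sum may be restricted to $p\le m$; interchanging the order of summation in the double sum (and using $B_{k,p}=0$ for $p>k$ to keep the outer range $k=1,\dots,n-1$ intact) rewrites this as
\[
\sum_{p=1}^{m}\Bigl(B_{n,p}+\frac{DL}{L}\sum_{k=p}^{n-1}s_{n,k}B_{k,p}\Bigr)Z_{m,p}=0,
\]
an identity valid for every $m\in\{1,\dots,n-1\}$. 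Write $E_p$ for the $p$-th coefficient in parentheses; crucially $E_p$ does not depend on $m$. Because $Z_{m,m}=1$, the case $m=1$ gives $E_1=0$, and if $E_1=\dots=E_{m-1}=0$ then the relation indexed by $m$ collapses to $E_m Z_{m,m}=E_m=0$. Induction on $m$ then yields $E_m=0$ for all $1\le m\le n-1$, which is exactly \eqref{PFforGraded}.

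The argument is essentially bookkeeping with the summation ranges and the two vanishing conventions $Z_{m,p}=0$ for $p>m$ and $B_{k,p}=0$ for $p>k$. The one structural point --- and the only place that needs a moment of care --- is that the $m$-th relation couples the coefficients $E_1,\dots,E_m$ at once, so one must exploit $Z_{m,m}=1$ and induct on $m$ to peel off $E_m=0$ one index at a time rather than reading it off a single relation. I do not anticipate any genuine obstacle.
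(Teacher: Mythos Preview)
Your proof is correct and follows essentially the same route as the paper: both arguments substitute Lemma~\ref{lem:DkImCommutator} into the Picard--Fuchs relation~\eqref{eq:PFforIks}, collect the coefficient of each $Z_{m,p}$, and then induct on $m$ using $Z_{m,m}=1$ to strip off the top term. The only cosmetic difference is that you name the coefficient $E_p$ and emphasize its independence of $m$, while the paper carries the expression along explicitly.
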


\begin{proof}
The first equality follows from the definition of $B_{k,m}$. For the second equality, we use induction on $m$. For $m=1$, it follows from $B_{k,1}=D^{k-1}C_1=D^kI_1$ and equation (\ref{eq:PFforIks}). The following completes the inductive step: 
\begin{align*}
0=&D^nI_m+\frac{DL}{L}\sum\limits_{k=1}^{n-1}s_{n,k}D^kI_m\quad\quad\text{by equation (\ref{eq:PFforIks})}\\
=&\sum\limits_{p=1}^nB_{n,p}Z_{m,p}+\frac{DL}{L}\sum\limits_{k=1}^{n-1}s_{n,k}\sum\limits_{p=1}^kB_{k,p}Z_{m,p}\quad\text{by Lemma \ref{lem:DkImCommutator}}\\
=&\sum\limits_{p=1}^mB_{n,p}Z_{m,p}+\frac{DL}{L}\sum\limits_{k=1}^{n-1}s_{n,k}\sum\limits_{p=1}^mB_{k,p}Z_{m,p}\quad\text{by definitions of }B_{k,p}\text{ and }Z_{m,p}\\
=&\sum\limits_{p=1}^m\left(B_{n,p}+\frac{DL}{L}\sum\limits_{k=1}^{n-1} s_{n,k}B_{k,p}\right)Z_{m,p}\\
=&B_{n,m}+\frac{DL}{L}\sum\limits_{k=1}^{n-1}s_{n,k}B_{k,m}+\sum\limits_{p=1}^{m-1}\underbrace{\left(B_{n,p}+\frac{DL}{L}\sum\limits_{k=1}^{n-1}s_{n,k}B_{k,p}\right)}_{=0\text{ by inductive hypothesis.}}Z_{m,p}.
\end{align*}
\end{proof}

\subsection{Descriptions of the rings}\label{sec:desc_ring}
Set 
\begin{equation*}
\mathbb{C}[L^{\pm 1}][\mathcal{DX}]:=\mathbb{C}[L^{\pm 1}][X_1,...,X_{n-1},DX_1,...,DX_{n-1},D^2X_1,...,D^2X_{n-1},...].
\end{equation*}
We simplify the description of $\mathbb{C}[L^{\pm 1}][\mathcal{DX}]$ by removing unnecessary generators. Set \begin{equation*}
\mathfrak{X}:=\{X_1,...,D^{n-3}X_1\}\cup,\ldots\cup\{X_i,...,D^{n-2-i}X_i\}\cup\ldots\cup\{X_{n-2}\}=\{D^jX_i\}_{1\leq i\leq n-2, 0\leq j\leq n-2-i}.
\end{equation*}

\begin{lem}\label{lem:DGradedRingX}
 $\mathbb{C}[L^{\pm 1}][\mathcal{DX}]$ is a quotient of the ring $\mathbb{C}[L^{\pm 1}][\mathfrak{X}]$.
\end{lem}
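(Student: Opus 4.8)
The plan is to exhibit $\mathbb{C}[L^{\pm 1}][\mathcal{DX}]$ as the quotient of the polynomial ring $\mathbb{C}[L^{\pm 1}][\mathfrak{X}]$ obtained by adjoining the countably many generators $D^jX_i$ for all $1\le i\le n-1$ and $j\ge 0$ and then imposing relations that express every generator outside the finite set $\mathfrak{X}$ as a polynomial (over $\mathbb{C}[L^{\pm 1}]$) in the elements of $\mathfrak{X}$. Concretely, there is an obvious surjective $\mathbb{C}[L^{\pm1}]$-algebra map $\mathbb{C}[L^{\pm 1}][\mathfrak{X}]\twoheadrightarrow\mathbb{C}[L^{\pm1}][\mathcal{DX}]$ sending each variable to the series it names; the content of the lemma is that this map is surjective, i.e. that the extra generators $X_{n-1}$, $D^jX_i$ with $j\ge n-1-i$, etc., already lie in the subring generated by $\mathfrak{X}$ and $L^{\pm1}$.

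First I would use the Picard--Fuchs relations for the graded pieces, equation~\eqref{PFforGraded}, together with the relation $n\frac{DL}{L}=\sum_{r=0}^n X_r$ from Lemma~\ref{DLLemma} and the fact (Lemma~\ref{DplusZk}) that each $X_{k,l}=\frac{D^lC_k}{C_k}$ is a polynomial in $X_k,DX_k,\dots,D^{l-1}X_k$ and conversely. The key observation is that \eqref{PFforGraded} with $m=n-1$ gives a relation of the form $B_{n,n-1}+\frac{DL}{L}\sum_{k=1}^{n-1}s_{n,k}B_{k,n-1}=0$, and unwinding the definitions of the $B_{k,p}$ (which involve products of iterated $D$-derivatives of $C_1,\dots,C_{n-1}$, hence of the $X_{i,l}$) this relation can be solved for the highest-order derivative appearing, which for each fixed $i$ is $D^{n-1-i}X_i$, in terms of lower-order derivatives and $L$. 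Running $m$ from $n-1$ downward (or using all the relations \eqref{PFforGraded} simultaneously), one peels off $D^{n-1-i}X_i$ for each $i$, expressing it as a polynomial in $\{D^jX_{i'}: j\le n-2-i'\}\cup\{L^{\pm1}\}$; then applying $D$ repeatedly and substituting back handles $D^jX_i$ for all $j\ge n-1-i$, and $X_{n-1}$ is handled separately via $n\frac{DL}{L}=\sum_{r=0}^nX_r$ (recalling $X_0=0$ and $C_n=C_1$, $C_{n+1-k}=C_k$, so $X_n=X_1$ up to the derivative bookkeeping coming from $DK_i/K_i$).

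The main obstacle I expect is purely bookkeeping: carefully checking that in the relation obtained from \eqref{PFforGraded} the coefficient of the top-order term $D^{n-1-i}X_i$ is a unit in $\mathbb{C}[L^{\pm1}]$ (it should be a nonzero scalar multiple of $s_{n,n-1}\frac{DL}{L}=s_{n,n-1}\frac{L^n}{x^n}$ times something, and one must rewrite $x^{-n}=L^{-n}\frac{DL}{L}$ to stay inside $\mathbb{C}[L^{\pm1}]$), and that the induction downward in $m$ never reintroduces a generator one has already eliminated. Since the lemma only claims that $\mathbb{C}[L^{\pm1}][\mathcal{DX}]$ is \emph{a} quotient of $\mathbb{C}[L^{\pm1}][\mathfrak{X}]$ — i.e. surjectivity of the natural map, not an explicit presentation — it suffices to verify the single fact that every $D^jX_i$ with $(i,j)\notin\{(i,j):1\le i\le n-2,\ 0\le j\le n-2-i\}$ lies in the $\mathbb{C}[L^{\pm1}]$-subalgebra generated by $\mathfrak{X}$, and this follows by the elimination just sketched. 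I would close by noting that no claim about the kernel is needed, so the proof ends once the generation statement is established.
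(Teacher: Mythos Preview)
Your approach is essentially the same as the paper's: use the relations \eqref{PFforGraded} for $1\le m\le n-1$ to express each ``first excluded'' derivative $D^{n-1-m}X_m$ in terms of lower ones and $L^{\pm1}$, then apply $D$ repeatedly for the rest. The paper carries out the bookkeeping you anticipate by dividing \eqref{PFforGraded} through by $K_m$ and observing that $B_{k,p}/K_p = X_{p,k-p} + (\text{polynomial in strictly lower }X_{i,l})$; in particular the leading term $X_{m,n-m}$ appears in $B_{n,m}/K_m$ with coefficient exactly $1$, so your worry about invertibility of the top coefficient is resolved without needing to analyze $s_{n,n-1}\tfrac{DL}{L}$. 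Your alternative treatment of $X_{n-1}$ via $n\tfrac{DL}{L}=\sum_r X_r$ (using $X_n=X_1$) also works, though the paper simply takes $m=n-1$ in the same scheme.
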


\begin{proof}
Now, for any $1\leq{p}\leq{k-1}$, define
\begin{align*}
\mathcal{Z}_{p,k}=&\{X_{1,1},...,X_{1,k-p},...,X_{p,1},...,X_{p,k-p}\}\quad\text{and}\quad \mathcal{S}_{p,k}=\mathcal{Z}_{p,k}\setminus\{X_{p,k-p}\},\\
\widetilde{\mathcal{Z}}_{p,k}=&\{X_1,...,D^{k-p-1}X_1,...,X_p,...,D^{k-p-1}X_p\}\quad\text{and}\quad\widetilde{\mathcal{S}}_{p,k}=\widetilde{\mathcal{Z}}_{p,k}\setminus\{D^{k-p-1}X_p\}.
\end{align*}
For each of these sets, and for a fixed $p$ we have
\begin{equation}\label{eq:setinclusionsforZpkSpk}
\mathcal{S}_{p,k}\subseteq\mathcal{Z}_{p,k}\subseteq\mathcal{S}_{p,k+1}\subseteq\mathcal{Z}_{p,k+1},\quad\text{and}\quad\widetilde{\mathcal{S}}_{p,k}\subseteq\widetilde{\mathcal{Z}}_{p,k}\subseteq\widetilde{\mathcal{S}}_{p,k+1}\subseteq\widetilde{\mathcal{Z}}_{p,k+1}.
\end{equation}
Note that for any $k\geq{1}$
\begin{equation*}
\frac{B_{k,p}}{K_p}=
\begin{cases}
X_{1,k-1}\quad&\text{if}\quad p=1,\\
\sum\limits_{k_2=p-1}^{k_1-1}...\sum\limits_{k_p=1}^{k_{p-1}-1}\left(\prod\limits_{i=1}^{p-1}\binom{k_{i}-1}{k_{i+1}}\right)X_{1,k_1-k_2-1}...X_{p-1,k_{p-1}-k_p-1}X_{p,k_p-1}\quad&\text{if}\quad 2\leq{p}\leq{k},\\
0\quad&\text{if}\quad p>k
\end{cases}
\end{equation*}
where $k_1=k$.
It follows that for any $1\leq{p}\leq{k-1}$, we have
\begin{equation}\label{eq:Zpkp}
\frac{B_{k,p}}{K_p}=X_{p,k-p}+\widetilde{B}_{k,p}
\end{equation}
where $\widetilde{B}_{k,p}$ is a polynomial in elements of $\mathcal{S}_{p,k}$. Then, dividing both sides of equation (\ref{PFforGraded}) by $K_m$ for any $1\leq{m}\leq{n-1}$, we obtain
\begin{equation}\label{eq:Xmnminusm}
\begin{split}
0=&\frac{B_{n,m}}{K_m}+\frac{DL}{L}\sum\limits_{k=m }^{n-1}s_{n,k}\frac{B_{k,m}}{K_m}\\
=&X_{m,n-m}+\widetilde{B}_{n,m}+\frac{DL}{L}\underbrace{\sum\limits_{k=m }^{n-1}s_{n,k}\frac{B_{k,m}}{K_m}}_{(\star)}.
\end{split}
\end{equation}
By set inclusions (\ref{eq:setinclusionsforZpkSpk}) and equation (\ref{eq:Zpkp}), it follows that $(\star)$ is a polynomial in elements of $\mathcal{Z}_{m,n-1}$. Since we know $\widetilde{B}_{n,m}$ is a polynomial in element of $\mathcal{S}_{m,n}$ and $\mathcal{Z}_{m,n-1}\subseteq\mathcal{S}_{m,n}$, it follows that $X_{m,n-m}$ is a polynomial in elements of $\mathcal{S}_{m,n}\cup\{{L^{\pm1}}\}$ by equation (\ref{eq:Xmnminusm}) and equation (\ref{eq:DLLLemma1}). This implies that $D^{n-m-1}X_m$ is a polynomial in elements of $\widetilde{\mathcal{S}}_{m,n}\cup\{L^{\pm1}\}$ by Lemma \ref{DplusZk}. This completes the proof.
\end{proof}

Now we present another description\footnote{By Lemma \ref{DLLemma}, $\frac{DL}{L}\in \mathbb{C}[L^\pm]$.} of the ring $\mathbb{C}[L^{\pm 1}][\mathcal{DX}]$ using a different set of generators $A_i$, $0\leq i\leq n$ defined by
\begin{equation*}
A_i=\frac{1}{L}\left(i\frac{DL}{L}-\sum_{r=0}^{i} X_{r}\right).
\end{equation*}
Set 
\begin{equation*}
\mathbb{C}[L^{\pm 1}][\mathcal{DA}]:=\mathbb{C}[L^{\pm 1}][A_1,...,A_{n-1},DA_1,...,DA_{n-1},D^2A_1,...,D^2A_{n-1},...],
\end{equation*}
and
\begin{equation*}
\mathfrak{A}:=\{A_1,...,D^{n-3}A_1\}\cup,\ldots\cup\{A_i,...,D^{n-2-i}A_i\}\cup\ldots\cup\{A_{n-2}\}=\{D^jA_i\}_{1\leq i\leq n-2, 0\leq j\leq n-2-i}.
\end{equation*}
The following is immediate from Lemma \ref{lem:DGradedRingX}.
\begin{cor}\label{lem:DGradedRingA}
$\mathbb{C}[L^{\pm 1}][\mathcal{DA}]$ is a quotient of the ring $\mathbb{C}[L^{\pm 1}][\mathfrak{A}]$.
\end{cor}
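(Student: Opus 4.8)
The plan is to show that the change of variables between the $X$-type generators and the $A$-type generators is ``triangular,'' so that the quotient presentation established in Lemma \ref{lem:DGradedRingX} for $\mathbb{C}[L^{\pm 1}][\mathcal{DX}]$ transports directly to $\mathbb{C}[L^{\pm 1}][\mathcal{DA}]$. Concretely, by definition
\begin{equation*}
A_i=\frac{1}{L}\left(i\frac{DL}{L}-\sum_{r=0}^{i}X_r\right),
\end{equation*}
and since $A_{i}-A_{i-1}=\frac{1}{L}\left(\frac{DL}{L}-X_i\right)$ (using $A_0=0$ because $X_0=0$), we get $X_i = \frac{DL}{L}-L(A_i-A_{i-1})$ for $1\le i\le n-1$. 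By equation (\ref{eq:DLLLemma1}) we have $\frac{DL}{L}=1+(-1)^n L^n/n^n\in\mathbb{C}[L^{\pm1}]$, so each $X_i$ lies in $\mathbb{C}[L^{\pm1}][A_1,\dots,A_i]$ and, symmetrically, each $A_i$ lies in $\mathbb{C}[L^{\pm1}][X_1,\dots,X_i]$. Applying $D^j$ and using that $D$ preserves $\mathbb{C}[L^{\pm1}]$ (again by (\ref{eq:DLLLemma1}), which gives $DL=L\cdot\frac{DL}{L}\in\mathbb{C}[L^{\pm1}]$ after multiplying by $L$, hence $D$ maps $\mathbb{C}[L^{\pm1}]$ into itself since $DL^{k}=k L^{k}\frac{DL}{L}$), one obtains that $D^jX_i$ is a polynomial with $\mathbb{C}[L^{\pm1}]$-coefficients in $\{D^{j'}A_i\}_{i'\le i,\,j'\le j}$ and vice versa. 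Therefore the two subalgebras of $\mathbb{C}[[x]]$ coincide:
\begin{equation*}
\mathbb{C}[L^{\pm1}][\mathcal{DX}]=\mathbb{C}[L^{\pm1}][\mathcal{DA}].
\end{equation*}

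Next I would check that this triangular correspondence also matches the two ``reduced'' generating sets $\mathfrak{X}$ and $\mathfrak{A}$. Both are indexed identically, $\mathfrak{X}=\{D^jX_i\}_{1\le i\le n-2,\,0\le j\le n-2-i}$ and $\mathfrak{A}=\{D^jA_i\}_{1\le i\le n-2,\,0\le j\le n-2-i}$, and the argument above shows that, within each fixed range $i'\le i$, $j'\le j$, the $D^jA_i$ are expressible through the $D^{j'}X_{i'}$ with $\mathbb{C}[L^{\pm1}]$-coefficients and conversely. Hence the surjection $\mathbb{C}[L^{\pm1}][\mathfrak{X}]\twoheadrightarrow\mathbb{C}[L^{\pm1}][\mathcal{DX}]$ of Lemma \ref{lem:DGradedRingX} composes with the isomorphism of generating sets to give a surjection $\mathbb{C}[L^{\pm1}][\mathfrak{A}]\twoheadrightarrow\mathbb{C}[L^{\pm1}][\mathcal{DA}]$. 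More precisely: the $\mathbb{C}[L^{\pm1}]$-algebra map sending each abstract variable $D^jA_i\in\mathfrak{A}$ to the actual series $D^jA_i\in\mathbb{C}[[x]]$ factors as $\mathbb{C}[L^{\pm1}][\mathfrak{A}]\to\mathbb{C}[L^{\pm1}][\mathfrak{X}]\to\mathbb{C}[L^{\pm1}][\mathcal{DX}]=\mathbb{C}[L^{\pm1}][\mathcal{DA}]$, where the first arrow substitutes the (polynomial, $\mathbb{C}[L^{\pm1}]$-coefficient) expressions for the $D^jA_i$'s in terms of $\mathfrak{X}$, and the second is the surjection from Lemma \ref{lem:DGradedRingX}; both are surjective, so the composite is.

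The one point that needs a small amount of care — and which I expect to be the only real content beyond bookkeeping — is verifying that the substitution map $\mathbb{C}[L^{\pm1}][\mathfrak{A}]\to\mathbb{C}[L^{\pm1}][\mathfrak{X}]$ actually lands inside the subring generated by $\mathfrak{X}$ (rather than needing $D^jX_i$ with $j$ outside the allowed truncation range $j\le n-2-i$). This follows because $A_i=\frac1L(i\frac{DL}{L}-\sum_{r\le i}X_r)$ involves only $X_1,\dots,X_i$ and no derivatives, so $D^jA_i$ involves only $D^{j'}X_r$ with $r\le i$ and $j'\le j\le n-2-i\le n-2-r$; thus every term already lies in the span of $\mathfrak{X}\cup\{L^{\pm1}\}$. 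Since the construction is symmetric in $X\leftrightarrow A$, the reverse substitution is equally well-defined, confirming that we have genuine mutually inverse $\mathbb{C}[L^{\pm1}]$-algebra identifications of the reduced presentations, and the corollary follows immediately from Lemma \ref{lem:DGradedRingX}.
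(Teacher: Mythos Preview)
Your proof is correct and is essentially a detailed expansion of the paper's one-line justification ``immediate from Lemma \ref{lem:DGradedRingX}'': you make explicit the triangular, $\mathbb{C}[L^{\pm1}]$-affine change of variables $X_i = \frac{DL}{L} - L(A_i-A_{i-1})$ between the $X$- and $A$-generators that the paper leaves implicit. The key observation that $D^jA_i$ only involves $D^{j'}X_r$ with $r\le i$, $j'\le j\le n-2-r$ (and symmetrically) is exactly what makes the passage from $\mathfrak{X}$ to $\mathfrak{A}$ work, and the paper evidently regards this as routine.
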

In what follows we further simplify the ring $\mathbb{C}[L^{\pm 1}][\mathfrak{A}]$. We begin with some basic properties of $A_i, 0\leq i\leq n$.

\begin{lem}\label{lem:PropertiesofAis}
For the series $A_i$, we have the following
\begin{enumerate}
    \item $A_i=-A_{n-i}$ for all $0\leq{i}\leq{n}$,
    \item $A_0=A_n=0$, and $A_{\frac{n}{2}}=0$ if $n$ is even,
    \item $\sum_{i=0}^{n}A_i=0$.
\end{enumerate}
\end{lem}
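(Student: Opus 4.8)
The plan is to deduce all three statements directly from the defining formula $A_i=\frac1L\bigl(i\tfrac{DL}{L}-\sum_{r=0}^{i}X_r\bigr)$, using only two inputs established above: the identity $n\tfrac{DL}{L}=\sum_{r=0}^{n}X_r$ from equation \eqref{eq:DLLLemma3}, and the symmetry $C_k=C_{n+1-k}$ for $1\le k\le n$ from part \ref{Cfunctions4} of Lemma \ref{propertiesofCfunctions}. The latter immediately gives $X_k=\tfrac{DC_k}{C_k}=\tfrac{DC_{n+1-k}}{C_{n+1-k}}=X_{n+1-k}$ for $1\le k\le n$, which will be the main engine of the argument.

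First I would dispatch the boundary values. Since $C_0=1$, we have $X_0=0$, hence $A_0=\frac1L(0-X_0)=0$; and $A_n=\frac1L\bigl(n\tfrac{DL}{L}-\sum_{r=0}^{n}X_r\bigr)=0$ is precisely equation \eqref{eq:DLLLemma3}. This proves the first two equalities in part (2); the remaining claim $A_{n/2}=0$ for even $n$ will drop out of part (1).

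Next, for part (1) I would write
\[
A_i+A_{n-i}=\frac1L\Bigl(n\frac{DL}{L}-\sum_{r=0}^{i}X_r-\sum_{r=0}^{n-i}X_r\Bigr)=\frac1L\Bigl(\sum_{r=0}^{n}X_r-\sum_{r=0}^{i}X_r-\sum_{r=0}^{n-i}X_r\Bigr),
\]
so it suffices to verify $\sum_{r=i+1}^{n}X_r=\sum_{r=1}^{n-i}X_r$ (using $X_0=0$). Here the symmetry $X_r=X_{n+1-r}$ does the job: the substitution $r\mapsto n+1-r$ carries the index set $\{i+1,\dots,n\}$ bijectively onto $\{1,\dots,n-i\}$, so the two sums agree term by term. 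Thus $A_i=-A_{n-i}$ for all $0\le i\le n$, and putting $i=n/2$ when $n$ is even yields $2A_{n/2}=0$, completing part (2).

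Finally, part (3) follows in one line from part (1): reindexing via $i\mapsto n-i$ gives $\sum_{i=0}^{n}A_i=\sum_{i=0}^{n}A_{n-i}=-\sum_{i=0}^{n}A_i$, hence $\sum_{i=0}^{n}A_i=0$. I do not expect any genuine difficulty; the only place demanding care is keeping the summation ranges correctly aligned through the substitution $r\mapsto n+1-r$ in part (1).
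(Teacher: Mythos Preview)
Your proof is correct and follows essentially the same approach as the paper: both arguments rest on the symmetry $X_k=X_{n+1-k}$ (from $C_k=C_{n+1-k}$) combined with the identity $n\tfrac{DL}{L}=\sum_{r=0}^{n}X_r$ to obtain $A_i=-A_{n-i}$, after which parts (2) and (3) are immediate. The only difference is cosmetic ordering---you handle the boundary cases $A_0,A_n$ first, whereas the paper derives everything from part (1)---but the substance is identical.
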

\begin{proof}
By Lemma \ref{propertiesofCfunctions}, we have $C_i=C_{n+1-i}$ for all $1\leq{i}\leq{n}$. Hence,  $X_i=X_{n+1-i}$ for all $1\leq{i}\leq{n}$. This gives the following reformulation of equation (\ref{eq:DLLLemma3}) :
\begin{equation*}
\sum_{r=0}^{i} X_{r}-i\frac{D L}{L}=(n-i) \frac{D L}{L}-\left(\sum_{r=0}^{n-i} X_{r}\right)\quad\text{for all}\quad 0\leq{i}\leq{n}.
\end{equation*}
This proves the first part of the lemma. The other two parts follow immediately.
\end{proof}

Next, we obtain certain relations involving derivatives of $A_i$. For this purpose we need to take a digression to flatness equations.

For $0\leq{i,j}\leq{n-1}$ and $k\geq{0}$, define
\begin{equation*}
\widetilde{P}_{i,j}^k=\frac{L^i}{K_i}P_{i,j}^k\zeta^{(k+i)j}.
\end{equation*}

\begin{lem}\label{PtildeMatrixEqns} 
For $0\leq i \leq n-1$, we have
\begin{equation*}
\widetilde{P}_{\mathrm{Ion}(i)-1,j}^k=\widetilde{P}_{i,j}^k+\frac{1}{L}D\widetilde{P}_{i,j}^{k-1}+\frac{1}{L}\left(\sum_{r=0}^i X_r-i\frac{DL}{L}\right)\widetilde{P}_{i,j}^{k-1}.
\end{equation*}
\end{lem}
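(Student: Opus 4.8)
The plan is to start from the recursion in Lemma~\ref{PMatrixEqns}, namely $DP_{i,j}^{k-1} = C_{\mathrm{Ion}(i)}P_{\mathrm{Ion}(i)-1,j}^k - P_{i,j}^k L\zeta^j$, and translate it into a recursion for the normalized quantities $\widetilde{P}_{i,j}^k = \frac{L^i}{K_i}P_{i,j}^k\zeta^{(k+i)j}$. The first step is to solve the Lemma~\ref{PMatrixEqns} identity for $P_{\mathrm{Ion}(i)-1,j}^k$, writing
\begin{equation*}
P_{\mathrm{Ion}(i)-1,j}^k = \frac{1}{C_{\mathrm{Ion}(i)}}\left(DP_{i,j}^{k-1} + P_{i,j}^k L\zeta^j\right).
\end{equation*}
Then I would substitute $P_{i,j}^k = \frac{K_i}{L^i}\zeta^{-(k+i)j}\widetilde{P}_{i,j}^k$ everywhere, using that the index shift from $i$ to $\mathrm{Ion}(i)-1$ corresponds to the shift $k \mapsto k$ on the left but changes the exponent of $\zeta$ and the $K/L$ prefactor in a way governed by $K_{i} = C_i K_{i-1}$ (part~(\ref{Cfunctions1}) and the definition of $K_\bullet$). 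The key bookkeeping point is that $\mathrm{Ion}(i)-1$ equals $i-1$ when $1\le i\le n-1$ and equals $n-1$ when $i=0$, and in both cases the combination $\frac{L^{\mathrm{Ion}(i)-1}}{K_{\mathrm{Ion}(i)-1}}$ relates to $\frac{L^i}{K_i}$ by a factor $\frac{C_{\mathrm{Ion}(i)}}{L}$ (again using $K_n = L^n$ from part~(\ref{Kfunctions1}) for the wrap-around case), so that the explicit $C_{\mathrm{Ion}(i)}$ cancels against the one in the denominator above.

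The second step is to handle the $\zeta$-exponent: the left side carries $\zeta^{(k+\mathrm{Ion}(i)-1)j}$ while, after the substitution, the right side naturally produces $\zeta^{(k-1+i)j}$ from the $DP_{i,j}^{k-1}$ term and $\zeta^{(k+i)j}\zeta^j = \zeta^{(k+i+1)j}$... here one must check the wrap-around case carefully, since $\mathrm{Ion}(i)-1 \equiv i-1$ only modulo $n$, and $\zeta^n = 1$ makes the congruence an equality of powers. After clearing these factors, the $P_{i,j}^k L\zeta^j$ term becomes exactly $\frac{L}{1}\cdot\widetilde{P}_{i,j}^k \cdot \frac{1}{L} = \widetilde{P}_{i,j}^k$ after dividing through by $L$, and the $DP_{i,j}^{k-1}$ term becomes $\frac{1}{L}D\widetilde{P}_{i,j}^{k-1}$ plus correction terms coming from the fact that $D$ does not commute with multiplication by $\frac{K_i}{L^i}$. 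Computing that commutator via the logarithmic derivative, $D\!\left(\frac{K_i}{L^i}g\right) = \frac{K_i}{L^i}\left(Dg + \left(\frac{DK_i}{K_i} - i\frac{DL}{L}\right)g\right)$, and invoking $\frac{DK_i}{K_i} = \sum_{r=0}^i X_r$ from equation~(\ref{eq:DLLLemma2}), produces precisely the term $\frac{1}{L}\left(\sum_{r=0}^i X_r - i\frac{DL}{L}\right)\widetilde{P}_{i,j}^{k-1}$ in the claimed formula.

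The main obstacle I anticipate is the careful treatment of the $i=0$ wrap-around case in parallel with the generic $1\le i\le n-1$ case: one must verify that the prefactor manipulations using $K_n=L^n$, $C_n = C_1$, and $\zeta^n=1$ conspire so that the single uniform formula with $\mathrm{Ion}(i)-1$ holds without a case split. Everything else is a routine substitution-and-simplify computation once the logarithmic-derivative identity for $D\!\left(\frac{K_i}{L^i}\,\cdot\,\right)$ is in hand; the power series $\widetilde{P}_{i,j}^{k-1}$ and $\widetilde{P}_{i,j}^{k-2}$ (implicit through $D\widetilde{P}_{i,j}^{k-1}$, which involves $\widetilde{P}_{i,j}^{k-1}$ itself rather than lower $k$, so in fact only $k$ and $k-1$ appear) enter exactly as written. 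I would present the proof as a single displayed chain of equalities starting from the rewritten Lemma~\ref{PMatrixEqns} identity and ending at the statement, with inline justifications citing Lemma~\ref{PMatrixEqns}, the definition of $\widetilde{P}_{i,j}^k$, the relations $K_i = C_i K_{i-1}$, $K_n = L^n$, and equation~(\ref{eq:DLLLemma2}).
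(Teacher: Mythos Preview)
Your proposal is correct and follows essentially the same approach as the paper: start from Lemma~\ref{PMatrixEqns}, substitute $P_{i,j}^k=\frac{K_i}{L^i}\zeta^{-(k+i)j}\widetilde{P}_{i,j}^k$ on both sides, expand $D\bigl(\frac{K_i}{L^i}\widetilde{P}_{i,j}^{k-1}\bigr)$ via the logarithmic derivative, and invoke equation~(\ref{eq:DLLLemma2}). The paper handles the wrap-around uniformly by observing $\frac{K_{\mathrm{Ion}(i)}}{L^{\mathrm{Ion}(i)-1}}\zeta^{-(k-1+\mathrm{Ion}(i))j}=\frac{K_i}{L^{i-1}}\zeta^{-(k-1+i)j}$ (using $K_n=L^n$ and $\zeta^n=1$ for $i=0$), which is exactly the check you anticipated.
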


\begin{proof}
This is just a reformulation of Lemma \ref{PMatrixEqns}. The LHS of Lemma \ref{PMatrixEqns} becomes
\begin{equation*}
DP_{i,j}^{k-1}
=\left(\frac{DK_i}{L^i}-i\frac{K_i}{L^i}\frac{DL}{L}\right)\widetilde{P}_{i,j}^{k-1}\zeta^{-(k-1+i)j}+\frac{K_i}{L^i}D\widetilde{P}_{i,j}^{k-1}\zeta^{-(k-1+i)j}
\end{equation*}
by the definition of $K_i$ and, by the use of Corollary \ref{Kfunctions}, we see that the RHS of Lemma \ref{PMatrixEqns} becomes
\begin{align*}
C_{\mathrm{Ion}(i)}P_{\mathrm{Ion}(i)-1,j}^k-P_{i,j}^kL\zeta^j
=&C_{\mathrm{Ion}(i)}\frac{K_{{\mathrm{Ion}(i)}-1}}{L^{{\mathrm{Ion}(i)}-1}}\widetilde{P}_{{\mathrm{Ion}(i)}-1,j}^k\zeta^{-(k-1+{\mathrm{Ion}(i)})j}-\frac{K_i}{L^{i-1}}\widetilde{P}_{i,j}^k\zeta^{-(k-1+i)j}\\
=&\frac{K_{{\mathrm{Ion}(i)}}}{L^{{\mathrm{Ion}(i)}-1}}\widetilde{P}_{{\mathrm{Ion}(i)}-1,j}^k\zeta^{-(k-1+{\mathrm{Ion}(i)})j}-\frac{K_i}{L^{i-1}}\widetilde{P}_{i,j}^k\zeta^{-(k-1+i)j}\\
=&\frac{K_i}{L^{i-1}}\widetilde{P}_{{\mathrm{Ion}(i)}-1,j}^k\zeta^{-(k-1+i)j}-\frac{K_i}{L^{i-1}}\widetilde{P}_{i,j}^k\zeta^{-(k-1+i)j}.
\end{align*}
Putting these together, using  one more time the definition of $K_i$, Corollary \ref{Kfunctions} and cancelling out some common factors we obtain
\begin{equation*}
\left(\frac{DK_i}{K_i}-i\frac{DL}{L}\right)\widetilde{P}_{i,j}^{k-1}+D\widetilde{P}_{i,j}^{k-1}=
\widetilde{P}_{{\mathrm{Ion}(i)}-1,j}^kL-\widetilde{P}_{i,j}^kL.
\end{equation*}
The rest follows from equation (\ref{eq:DLLLemma2}).
\end{proof}

Lemma \ref{PtildeMatrixEqns}
is equivalent to the following \textbf{\textit{modified flatness equations}} for $[\mathbb{C}^n/\mathbb{Z}_n]$:
\begin{equation}\label{eqn:modflateqn}
\widetilde{P}_{\mathrm{Ion}(i)-1,j}^k=\widetilde{P}_{i,j}^k+\frac{1}{L}D\widetilde{P}_{i,j}^{k-1}+A_{n-i}\widetilde{P}_{i,j}^{k-1}.
\end{equation}
We further analyze (\ref{eqn:modflateqn}). Let $k=0$. Then $\widetilde{P}_{\mathrm{Ion}(i)-1,j}^0=\widetilde{P}_{i,j}^0$ for all $0\leq{i}\leq{n-1}$. This means $\widetilde{P}_{i,j}^0=\widetilde{P}_{0,j}^0$ for all $0\leq{i}\leq{n-1}$. Now, let $k=1$. Then, we have
\begin{equation*}
\underbrace{\sum_{i=0}^{n-1}\widetilde{P}_{\mathrm{Ion}(i)-1,j}^1}_{(\star)}=\underbrace{\sum_{i=0}^{n-1}\widetilde{P}_{i,j}^1}_{(\star\star)}+\frac{1}{L}D\sum_{i=0}^{n-1}\widetilde{P}_{i,j}^0+\underbrace{\sum_{i=0}^{n-1}A_{n-i}\widetilde{P}_{0,j}^0}_{(\star\star\star)}.
\end{equation*}
Clearly, $(\star)$ and $(\star\star)$ are the same and $(\star\star\star)$ is zero. Since $\widetilde{P}_{i,j}^0=\widetilde{P}_{0,j}^0$, the above equation is 
\begin{equation*}
\frac{n}{L}D\widetilde{P}_{0,j}^0=0.
\end{equation*}
Hence, $\widetilde{P}_{i,j}^0=\widetilde{P}_{0,j}^0$ is a constant whose value depends on the initial conditions. Now, the equations with $k=1$ yield
\begin{equation*}
\begin{split}
\widetilde{P}_{n-1,j}^1&=\widetilde{P}_{0,j}^1+A_{n}\widetilde{P}_{0,j}^{0}\\
\widetilde{P}_{n-2,j}^1&=\widetilde{P}_{n-1,j}^1+A_{1}\widetilde{P}_{0,j}^0\\
&\,\,\,\vdots\\
\widetilde{P}_{n-i,j}^1&=\widetilde{P}_{n-i+1,j}^1+A_{i-1}\widetilde{P}_{0,j}^0.
\end{split}
\end{equation*}
Adding these equations side by side and using Lemma \ref{lem:PropertiesofAis} yield
\begin{equation*}
\widetilde{P}_{n-i,j}^{1}=\widetilde{P}_{0, j}^{1}+\sum_{r=0}^{i-1} A_{r} \widetilde{P}_{0, j}^{0}\quad\text{for}\quad 1\leq{i}\leq{n}.
\end{equation*}

Now, let $k=2$ in equation (\ref{eqn:modflateqn}) and plug the above equation into it, we find 
\begin{equation*}
\begin{split}
\widetilde{P}_{\mathrm{Ion}(i)-1,j}^2
=&
\widetilde{P}_{i,j}^2+\frac{1}{L}D\widetilde{P}_{i,j}^1+A_{n-i}\widetilde{P}_{i,j}^1\\
=&
\widetilde{P}_{i,j}^2+\frac{1}{L}D\left(\widetilde{P}_{0, j}^{1}+\sum_{r=0}^{n-i-1} A_{r} \widetilde{P}_{0, j}^{0}\right)+A_{n-i}\left(\widetilde{P}_{0, j}^{1}+\sum_{r=0}^{n-i-1} A_{r} \widetilde{P}_{0, j}^{0}\right)\\
=&
\widetilde{P}_{i,j}^2+\frac{1}{L}D\widetilde{P}_{0 ,j}^{1}+\frac{1}{L}\sum_{r=0}^{n-i-1} \left(DA_{r}\right) \widetilde{P}_{0, j}^{0}+A_{n-i}\widetilde{P}_{0, j}^{1}+\sum_{r=0}^{n-i-1}A_{n-i}A_{r} \widetilde{P}_{0, j}^{0}.
\end{split}
\end{equation*}
Summing this equality over $0\leq{i}\leq{n-1}$, cancelling out $\sum_{i=0}^{n-1}\widetilde{P}_{\mathrm{Ion}(i)-1,j}^2=\sum_{i=0}^{n-1}\widetilde{P}_{i,j}^2$, and noting that $\sum_{i=0}^{n-1}A_{n-i}\widetilde{P}_{0,j}^1=0$, we obtain
\begin{equation}\label{eq:nLDP1}
\frac{n}{L}D\widetilde{P}_{0,j}^1+\frac{1}{L}\sum_{i=0}^{n-1}\sum_{r=0}^{n-i-1}\left(DA_r\right)\widetilde{P}_{0,j}^0+\sum_{i=0}^{n-1}\sum_{r=0}^{n-i-1}A_{n-i}A_r\widetilde{P}_{0,j}^0=0.
\end{equation}

Set $k=1$ in Corollary \ref{cor:LPijkidentity}, we obtain
\begin{equation*}
\mathds{L}_{j,1}(P^{1}_{0,j})+\frac{1}{L_j}\mathds{L}_{j,2}(P^{0}_{0,j})=0
\end{equation*}
which reads as
\begin{equation*}
nDP^{1}_{0,j}+\frac{1}{L_j}\binom{n+1}{4}(Y^2-Y)P^{0}_{0,j}-\frac{1}{L_j}\binom{n}{2}YDP^{0}_{0,j}+\frac{1}{L_j}\binom{n}{2}D^2P^{0}_{0,j}=0.
\end{equation*}
Since ${P}_{0,j}^0=\widetilde{P}_{0,j}^0$ is constant and ${P}_{0,j}^1=\zeta^{-j}\widetilde{P}_{0,j}^1$, the equation becomes
\begin{equation*}
nD\widetilde{P}_{0,j}^1+\frac{1}{L}\binom{n+1}{4}Y(Y-1)\widetilde{P}^{0}_{0,j}=0.
\end{equation*}
By the definition of $Y$ in (\ref{eqn:XandY}), we obtain
\begin{equation}\label{eq:DPTilde1}
\begin{split}
D\widetilde{P}_{0,j}^1=&\frac{1}{n}\frac{1}{L}\binom{n+1}{4}Y(1-Y)\widetilde{P}_{0,j}^0\\
=&\frac{(-1)^{n-1}}{n}\binom{n+1}{4}\left(1+(-1)^n\frac{L^n}{n^n}\right)\frac{L^{n-1}}{n^n}\widetilde{P}_{0,j}^0\in\mathbb{C}[L^{\pm{1}}].
\end{split}
\end{equation}
Define $f_n(L)\in\mathbb{C}[L^{\pm{1}}]$ to be the right hand side of above equation without $\widetilde{P}_{0,j}^0$:
\begin{equation*}
f_n(L)=\frac{(-1)^{n-1}}{n}\binom{n+1}{4}\left(1+(-1)^n\frac{L^n}{n^n}\right)\frac{L^{n-1}}{n^n},
\end{equation*}
so $D\widetilde{P}_{0,j}^1=f_n(L)\widetilde{P}_{0,j}^0$.

\begin{lem}\label{lem:more_on_Ais}
For any $n\geq{3}$, we have
\begin{equation*}
\sum_{i=0}^{n-1}\sum_{r=0}^{n-i-1}DA_r=\sum_{r=1}^{\lfloor\frac{n-1}{2}\rfloor}(n-2r)DA_r\quad\text{and}\quad \sum_{i=0}^{n-1}\sum_{r=0}^{n-i-1}A_{n-i}A_r=-\sum_{r=1}^{\lfloor\frac{n-1}{2}\rfloor}A_r^2.
\end{equation*}
\end{lem}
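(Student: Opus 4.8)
The plan is to reduce both double sums by interchanging the order of summation (or reindexing) and then to invoke the symmetry properties of the $A_i$ collected in Lemma~\ref{lem:PropertiesofAis}, namely $A_i=-A_{n-i}$, $A_0=A_n=0$, $A_{n/2}=0$ when $n$ is even, and $\sum_{i=0}^nA_i=0$.

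For the first identity I would begin by swapping the order of summation: since the constraint $0\le r\le n-i-1$ is equivalent to $0\le i\le n-1-r$, the double sum becomes $\sum_{r=0}^{n-1}(n-r)\,DA_r$. The $r=0$ term drops because $A_0=0$, leaving $\sum_{r=1}^{n-1}(n-r)\,DA_r$. I then pair the index $r$ with $n-r$ and use $DA_{n-r}=-DA_r$, so that the coefficient of $DA_r$ coming from the pair $\{r,n-r\}$ is $(n-r)-r=n-2r$. When $n=2s$ the self-paired middle index $r=s$ contributes nothing, since $A_{n/2}=0$ (and also $n-2s=0$); in both parities the surviving range is exactly $1\le r\le\lfloor(n-1)/2\rfloor$, which gives $\sum_{r=1}^{\lfloor(n-1)/2\rfloor}(n-2r)\,DA_r$.

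For the second identity I would substitute $j=n-i$, turning the sum into $\sum_{j=1}^{n}A_j\sum_{r=0}^{j-1}A_r$. Because $A_0=0$ the inner sum effectively starts at $r=1$, and because $A_n=0$ the outer $j=n$ term vanishes, so the expression equals $\sum_{1\le r<j\le n-1}A_rA_j$. I then use $\bigl(\sum_{i=1}^{n-1}A_i\bigr)^2=0$, valid since $\sum_{i=0}^{n}A_i=0$ with $A_0=A_n=0$, which expands as $\sum_{i=1}^{n-1}A_i^2+2\sum_{1\le r<j\le n-1}A_rA_j=0$; hence the cross-term sum equals $-\tfrac12\sum_{i=1}^{n-1}A_i^2$. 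Finally, pairing $i$ with $n-i$ and using $A_i^2=A_{n-i}^2$ together with $A_{n/2}=0$ for even $n$ yields $\sum_{i=1}^{n-1}A_i^2=2\sum_{r=1}^{\lfloor(n-1)/2\rfloor}A_r^2$, so the expression becomes $-\sum_{r=1}^{\lfloor(n-1)/2\rfloor}A_r^2$.

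The only point requiring care is the bookkeeping around the parity of $n$ and the middle index $r=n/2$; but since $A_{n/2}=0$ in the even case (and thus $DA_{n/2}=0$), every potentially troublesome "half-way" term vanishes automatically, so there is no genuine obstacle here — the argument is a pure reindex-and-symmetrize computation driven entirely by Lemma~\ref{lem:PropertiesofAis}.
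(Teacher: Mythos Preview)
Your proof is correct and follows the same approach as the paper, which simply states that both identities follow from the relation $A_i=-A_{n-i}$ without spelling out any details. Your argument carries out the reindexing and pairing that the paper leaves implicit, and all the bookkeeping (including the parity case $r=n/2$) is handled correctly.
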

\begin{proof}
This follows from the fact that $A_i=-A_{n-i}$.
\end{proof}
By equations (\ref{eq:nLDP1}), (\ref{eq:DPTilde1}), and Lemma \ref{lem:more_on_Ais}  we obtain the following
\begin{lem}\label{lem:Equations forDAl}
For any $n\geq{3}$, we have
\begin{equation*}
\frac{n}{L}f_n(L)+\frac{1}{L}\sum_{r=1}^{\lfloor\frac{n-1}{2}\rfloor}(n-2r)\left(DA_r\right)-\sum_{r=1}^{\lfloor\frac{n-1}{2}\rfloor}A_r^2=0.
\end{equation*}
Equivalently, dividing into even and odd cases, we have
\begin{equation*}
\begin{split}
2DA_{s-1}
&=\sum_{r=1}^{s-1}LA_r^2-\sum_{r=1}^{s-2}(n-2r)DA_r-2sf_{2s}(L)\quad\text{if}\quad n=2s\geq{4},\\
DA_s
&=\sum_{r=1}^{s}LA_r^2-\sum_{r=1}^{s-1}(n-2r)DA_r-({2s+1})f_{2s+1}(L)\quad\text{if}\quad n=2s+1\geq{3}.
\end{split}
\end{equation*}
\end{lem}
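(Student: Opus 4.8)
The plan is to combine the two displayed formulas that immediately precede the statement — equation~(\ref{eq:nLDP1}) and the formula for $D\widetilde{P}_{0,j}^1$ extracted from~(\ref{eq:DPTilde1}) — with the two double-sum identities recorded in the (unnamed) lemma just above, and then do a short piece of bookkeeping to split the result into the even and odd cases.

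First I would recall from~(\ref{eq:DPTilde1}) that $D\widetilde{P}_{0,j}^1=f_n(L)\,\widetilde{P}_{0,j}^0$, with $f_n(L)\in\mathbb{C}[L^{\pm1}]$ the explicit series defined right below~(\ref{eq:DPTilde1}). Substituting this into~(\ref{eq:nLDP1}) and applying
\[
\sum_{i=0}^{n-1}\sum_{r=0}^{n-i-1}DA_r=\sum_{r=1}^{\lfloor\frac{n-1}{2}\rfloor}(n-2r)DA_r,\qquad \sum_{i=0}^{n-1}\sum_{r=0}^{n-i-1}A_{n-i}A_r=-\sum_{r=1}^{\lfloor\frac{n-1}{2}\rfloor}A_r^2
\]
turns~(\ref{eq:nLDP1}) into
\[
\widetilde{P}_{0,j}^0\left(\frac{n}{L}f_n(L)+\frac{1}{L}\sum_{r=1}^{\lfloor\frac{n-1}{2}\rfloor}(n-2r)DA_r-\sum_{r=1}^{\lfloor\frac{n-1}{2}\rfloor}A_r^2\right)=0.
\]
Since $\widetilde{P}_{0,j}^0=P_{0,j}^0$ is the $(0,j)$-entry of $\Psi^{-1}R_0=\Psi^{-1}$, which by the explicit formula for $\Psi^{-1}$ equals $1$ (and in particular is a nonzero constant), we cancel it and obtain the first displayed equation of the lemma.

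For the \emph{equivalently} part I would split on the parity of $n$ and isolate the top term of $\sum_{r=1}^{\lfloor(n-1)/2\rfloor}(n-2r)DA_r$. When $n=2s+1$, $\lfloor(n-1)/2\rfloor=s$ and the $r=s$ term has coefficient $n-2s=1$; moving everything else to the right and multiplying through by $L$ gives $DA_s=\sum_{r=1}^{s}LA_r^2-\sum_{r=1}^{s-1}(n-2r)DA_r-(2s+1)f_{2s+1}(L)$. When $n=2s$, $\lfloor(n-1)/2\rfloor=s-1$ and the $r=s-1$ term has coefficient $n-2(s-1)=2$; the same manipulation yields $2DA_{s-1}=\sum_{r=1}^{s-1}LA_r^2-\sum_{r=1}^{s-2}(n-2r)DA_r-2sf_{2s}(L)$.

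There is no genuine obstacle here: the analytic input (the flatness/modified flatness equations, the Picard--Fuchs identity for $P^1_{0,j}$, and the symmetry $A_i=-A_{n-i}$) has already been isolated in the preceding lemmas, so what remains is purely algebraic. The only points needing a little care are confirming that the common factor $\widetilde{P}_{0,j}^0$ is nonzero so that it can be divided out, and correctly tracking that the leading term of $\sum(n-2r)DA_r$ contributes coefficient $1$ in the odd case but $2$ in the even case.
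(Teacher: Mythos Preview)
Your proposal is correct and follows essentially the same approach as the paper, which simply cites equations~(\ref{eq:nLDP1}) and~(\ref{eq:DPTilde1}) together with the double-sum identities from the preceding lemma. Your added verification that $\widetilde{P}_{0,j}^0=1$ (so the common factor can be cancelled) is a helpful detail the paper leaves implicit.
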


Equations in Lemma \ref{lem:Equations forDAl} are generalizations\footnote{For $n=5$, this generalization is explained in more detail by matching the functions in \cite{lho} with ours.} of equation (9) in \cite[Section 3]{lho} and second equation in \cite[Lemma 9]{lho-p2}.

The above relations allow us to further simplify the ring $\mathbb{C}[L^{\pm 1}][\mathcal{DA}]$. Let $n\geq 3$ be an odd number with $n=2s+1$, define
\begin{equation*}
\mathfrak{S}_{\text{odd}}=\{A_1,\ldots,D^{n-3}A_1\}\cup\cdots\cup\{A_{s-1},\ldots,D^{n-s+1}A_{s-1}\}\cup\{A_s\}.
\end{equation*}
Similarly, let $n\geq 4$ be an even number with $n=2s$, define
\begin{equation*}
\mathfrak{S}_{\text{even}}=\{A_1,\ldots,D^{n-3}A_1\}\cup\cdots\cup\{A_{s-2},\ldots,D^{n-s}A_{s-2}\}\cup\{A_{s-1}\}.
\end{equation*}
In either case, we denote both $\mathfrak{S}_{\text{odd}}$, and $\mathfrak{S}_{\text{even}}$ as $\mathfrak{S}_n$.

\begin{prop}\label{pro:CDA_Simplification}
$\mathbb{C}[L^{\pm 1}][\mathcal{DA}]$ is a quotient of the ring $\mathbb{C}[L^{\pm 1}][\mathfrak{S}_n]$.
\end{prop}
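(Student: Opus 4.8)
The plan is to bootstrap off Corollary~\ref{lem:DGradedRingA}, which already exhibits $\mathbb{C}[L^{\pm 1}][\mathcal{DA}]$ as a quotient of $\mathbb{C}[L^{\pm 1}][\mathfrak{A}]$, and to then knock out the generators of $\mathfrak{A}$ that do not appear in $\mathfrak{S}_n$ using two inputs: the symmetry $A_i=-A_{n-i}$ of Lemma~\ref{lem:PropertiesofAis}, and the single differential relation of Lemma~\ref{lem:Equations forDAl}. Concretely, since the evaluation map from the free polynomial ring $\mathbb{C}[L^{\pm 1}][\mathfrak{S}_n]$ into $\mathbb{C}[L^{\pm 1}][\mathcal{DA}]$ is a $\mathbb{C}[L^{\pm 1}]$-algebra homomorphism, and since by Corollary~\ref{lem:DGradedRingA} the finite set $\mathfrak{A}=\{D^jA_i\}_{1\le i\le n-2,\ 0\le j\le n-2-i}$ generates $\mathbb{C}[L^{\pm 1}][\mathcal{DA}]$ over $\mathbb{C}[L^{\pm 1}]$, it is enough to show that every $D^jA_i\in\mathfrak{A}$ lies in the $\mathbb{C}[L^{\pm 1}]$-subalgebra generated by $\mathfrak{S}_n$.

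First I would dispose of the ``upper half'' of the indices using parts (1)--(2) of Lemma~\ref{lem:PropertiesofAis}, i.e. $D^jA_i=-D^jA_{n-i}$ for all $j$, together with $A_0=A_n=0$ and, when $n$ is even, $A_{n/2}=0$. For $n=2s$ this lets one replace each $D^jA_i\in\mathfrak{A}$ with $i\geq s$ either by $0$ (when $i=s$) or by $\pm D^jA_{n-i}$ with $n-i\leq s-1$; for $n=2s+1$ it does the same for $i\geq s+1$, leaving $A_s$ untouched. Along the way the derivative order satisfies $j\leq n-2-i\leq n-2-(n-i)$ whenever $i\geq\lceil n/2\rceil$, so this substitution never calls for a derivative of $A_{n-i}$ of higher order than is already permitted in $\mathfrak{A}$. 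This reduces matters to showing that the $D^jA_i$ with $1\leq i\leq s$ (odd case) resp. $1\leq i\leq s-1$ (even case), with $j$ in the $\mathfrak{A}$-range, lie in the $\mathbb{C}[L^{\pm 1}]$-algebra generated by $\mathfrak{S}_n$.

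Next I would eliminate the derivatives of the ``critical'' series: $A_s$ in the odd case, $A_{s-1}$ in the even case. Lemma~\ref{lem:Equations forDAl} expresses $DA_s$ (resp. $2DA_{s-1}$) as a polynomial in $L^{\pm 1}$, in the critical series $A_s$ (resp. $A_{s-1}$) itself, and in $\{A_r,DA_r\}$ with $r\leq s-1$ (resp. $r\leq s-2$); here one uses that $f_n(L)\in\mathbb{C}[L^{\pm 1}]$ and that $DL=L+(-1)^nL^{n+1}/n^n\in\mathbb{C}[L]$, so that $D$ preserves $\mathbb{C}[L^{\pm 1}]$. Applying $D$ repeatedly and re-substituting this identity at each stage, an induction on $j$ then shows that for all $j\geq 1$ the function $D^jA_s$ (resp. $D^jA_{s-1}$) is a polynomial in $L^{\pm 1}$, in $A_s$ (resp. $A_{s-1}$), and in $\{D^\ell A_r:1\leq r\leq s-1,\ 0\leq\ell\leq j\}$ (resp. $\{D^\ell A_r:1\leq r\leq s-2,\ 0\leq\ell\leq j\}$). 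Since $A_s\in\mathfrak{S}_{\text{odd}}$ and $A_{s-1}\in\mathfrak{S}_{\text{even}}$, after this step the only remaining generators are the $D^jA_i$ with $i$ strictly below the critical index, which, together with the critical series itself, are exactly the elements of $\mathfrak{S}_n$.

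The one place where care is genuinely needed — and the main obstacle of the argument — is the bookkeeping of derivative orders at the very end: one must check that the orders $\ell\leq j\leq n-2-s$ (odd) resp. $\ell\leq j\leq n-2-(s-1)$ (even) produced when expanding the critical derivatives, together with the orders $j\leq n-2-i$ coming directly from $\mathfrak{A}$ and from the symmetry step, never exceed the bounds built into the blocks of $\mathfrak{S}_n$. Because $n-2-r$ is strictly decreasing in $r$, the tightest constraint occurs at the largest relevant subcritical index, and the required inequalities (of the shape $s-1\leq n-2-r$ for $r\leq s-1$, and the analogous even-case estimate) hold with room to spare, so no generator outside $\mathfrak{S}_n$ is ever invoked. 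Combining the three reductions — symmetry to kill the upper half, the relation of Lemma~\ref{lem:Equations forDAl} to kill the critical derivatives, and the order count to confirm closure — produces the desired surjection $\mathbb{C}[L^{\pm 1}][\mathfrak{S}_n]\twoheadrightarrow\mathbb{C}[L^{\pm 1}][\mathcal{DA}]$.
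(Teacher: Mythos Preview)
Your proof is correct and follows essentially the same approach as the paper, which simply states that the result ``follows easily from Lemmas~\ref{lem:DGradedRingX}, \ref{lem:PropertiesofAis}, and~\ref{lem:Equations forDAl}.'' You have supplied the details the paper omits: first passing through Corollary~\ref{lem:DGradedRingA} to reduce to the finite generating set $\mathfrak{A}$, then using the symmetry $A_i=-A_{n-i}$ (and $A_{n/2}=0$ in the even case) to reduce to indices at most the critical one, and finally using the relation of Lemma~\ref{lem:Equations forDAl} iteratively to eliminate all higher derivatives of the critical series, with the order-bookkeeping verifying that no generator outside $\mathfrak{S}_n$ is produced.
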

\begin{proof}
This follows easily from Lemmas \ref{lem:DGradedRingX}, \ref{lem:PropertiesofAis}, and \ref{lem:Equations forDAl}.
\end{proof}
As in \cite{lho-p} and \cite{lho-p2}, we do not know if there are any further polynomial relations among the elements of the differential ring $\mathbb{C}[L^{\pm 1}][\mathcal{DA}]$.

\section{Holomorphic anomaly equations}\label{sec:HAE}

\subsection{More on flatness equation} In this subsection, we will give a description of a canonical lift of each $\widetilde{P}_{i,j}^k$ to the free algebra $\mathbb{C}[L^{\pm 1}][\mathfrak{S}_n]$. Then, considering $\widetilde{P}_{i,j}^k$ as the elements of $\mathbb{C}[L^{\pm 1}][\mathfrak{S}_n]$, we will investigate the derivatives of $\widetilde{P}_{i,j}^k$ with respect to the generators of  $\mathbb{C}[L^{\pm 1}][\mathfrak{S}_n]$ which also appear in the holomorphic anomaly equations in the main theorem of the paper.

The modified flatness equations (\ref{eqn:modflateqn}), Lemma \ref{DplusZk}, and Corollary \ref{cor:P0jk_is_in_CL} imply that $\widetilde{P}_{i,j}^k\in\mathbb{C}[L^{\pm 1}][\mathcal{DA}].$ Through  Lemmas \ref{lem:DGradedRingX}, \ref{lem:PropertiesofAis},  \ref{lem:Equations forDAl}, and the modified flatness equations (\ref{eqn:modflateqn}), we have a canonical lift of each $\widetilde{P}_{i,j}^k$ to the free algebra $\mathbb{C}[L^{\pm 1}][\mathfrak{S}_n]$ via the following order:
\begin{equation}\label{eq:ModifiedFlatnessLift}
\begin{split}
\widetilde{P}_{n-1,j}^k&=\widetilde{P}_{0,j}^k+\frac{1}{L}D\widetilde{P}_{0,j}^{k-1}\in\mathbb{C}[L^{\pm 1}]\subseteq \mathbb{C}[L^{\pm 1}][\mathfrak{S}_n]\\
\widetilde{P}_{n-2,j}^k&=\widetilde{P}_{n-1,j}^k+\frac{1}{L}D\widetilde{P}_{n-1,j}^{k-1}+A_1\widetilde{P}_{n-1,j}^{k-1}\in\mathbb{C}[L^{\pm 1}][A_1]\subseteq \mathbb{C}[L^{\pm 1}][\mathfrak{S}_n]\\
\vdots \,\,\,\,\,\, &=\,\,\,\,\,\,\,\,\,\,\,\,\,\,\,\,\,\,\,\,\,\,\,\,\,\,\,\,\,\,\vdots
\end{split}
\end{equation}
More precisely, we start with $\widetilde{P}_{0,j}^k\in \mathbb{C}[L^{\pm 1}]$ and use equation (\ref{eqn:modflateqn}) for $i=n, n-1,..., 2$ in this descending order to inductively construct lifts of $\widetilde{P}_{i,j}^k$ for $i=n-1, n-2,...,1$ in this descending order. In this process, unnecessary $A_i$'s are eliminated using Lemmas \ref{lem:PropertiesofAis} and \ref{lem:Equations forDAl}, and orders of derivatives are bounded above using Lemma \ref{lem:DGradedRingX}.

In the rest of this subsection, we consider this lift and denote it also as $$\widetilde{P}_{i,j}^k\in\mathbb{C}[L^{\pm 1}][\mathfrak{S}_n].$$

\begin{lem}[Odd case]\label{lem:oddderivativeflatness}
Let $n\geq{3}$ be an odd number with $n=2s+1$. We have the following identity
\begin{equation*}
\frac{\partial\widetilde{P}_{i,j}^k}{\partial{A_s}}=\delta_{i,s}\widetilde{P}_{{s+1},j}^{k-1}.
\end{equation*}
\end{lem}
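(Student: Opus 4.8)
The plan is to prove the identity by induction on $k$, using the canonical lift defined by the modified flatness equations (\ref{eqn:modflateqn}) together with the elimination procedure described in (\ref{eq:ModifiedFlatnessLift}). The base case $k=0$ is immediate: each $\widetilde{P}_{i,j}^0$ equals the constant $\widetilde{P}_{0,j}^0\in\mathbb{C}$, so $\partial\widetilde{P}_{i,j}^0/\partial A_l=0$, which matches the right-hand side since $\widetilde{P}_{l+1,j}^{-1}=0$ by the convention $P_{i,j}^{-1}=0$.

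For the inductive step, I would fix $k\geq 1$ and run the descending recursion $i=n-1,n-2,\ldots,1$ exactly as in (\ref{eq:ModifiedFlatnessLift}), that is, using (\ref{eqn:modflateqn}) in the form $\widetilde{P}_{\mathrm{Ion}(i)-1,j}^k=\widetilde{P}_{i,j}^k+\frac{1}{L}D\widetilde{P}_{i,j}^{k-1}+A_{n-i}\widetilde{P}_{i,j}^{k-1}$. Start from $\widetilde{P}_{n-1,j}^k=\widetilde{P}_{0,j}^k+\frac{1}{L}D\widetilde{P}_{0,j}^{k-1}\in\mathbb{C}[L^{\pm1}]$, which is independent of $A_l$, so $\partial\widetilde{P}_{n-1,j}^k/\partial A_l=0$; this agrees with the claim since $i=n-1\neq l$ (as $l=s<n-1$ for $n=2s+1\geq 3$). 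Then differentiate the recursion with respect to $A_l$: since $D$ and $\partial/\partial A_l$ commute (both act on the free polynomial algebra $\mathbb{C}[L^{\pm1}][\mathfrak{S}_n]$ and $DA_r$, $L$ are among the generators with $D(DA_r)$ again a generator-polynomial), I get
\begin{equation*}
\frac{\partial\widetilde{P}_{\mathrm{Ion}(i)-1,j}^k}{\partial A_l}=\frac{\partial\widetilde{P}_{i,j}^k}{\partial A_l}+\frac{1}{L}D\frac{\partial\widetilde{P}_{i,j}^{k-1}}{\partial A_l}+\left(\frac{\partial A_{n-i}}{\partial A_l}\right)\widetilde{P}_{i,j}^{k-1}+A_{n-i}\frac{\partial\widetilde{P}_{i,j}^{k-1}}{\partial A_l}.
\end{equation*}
By the inductive hypothesis at level $k-1$, $\partial\widetilde{P}_{i,j}^{k-1}/\partial A_l=\delta_{i,l}\widetilde{P}_{l+1,j}^{k-2}$, and one needs to track how $\partial A_{n-i}/\partial A_l$ behaves after the eliminations of Lemmas \ref{lem:PropertiesofAis} and \ref{lem:Equations forDAl}: because $n$ is odd, $A_{n-i}=-A_i$ and the only surviving generator among $A_1,\ldots,A_{s}$ is realized so that $\partial A_{n-i}/\partial A_l$ is $\pm\delta_{i,\,n-l}$ plus contributions from the relation expressing $DA_s$ (which involves $A_r$'s with $r<s$, not $A_s$ itself, so does not contaminate $\partial/\partial A_l$ with $l=s$). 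Carefully bookkeeping these terms as $i$ descends from $n-1$ to $1$, the $\delta$'s telescope: the derivative $\partial\widetilde{P}_{i,j}^k/\partial A_l$ picks up exactly one contribution of the form $\widetilde{P}_{l+1,j}^{k-1}$ precisely when $i=l$, and is $0$ otherwise.

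The main obstacle I anticipate is the bookkeeping of the $A_l$-dependence through the elimination step, i.e. making sure that after rewriting everything in the free algebra $\mathbb{C}[L^{\pm1}][\mathfrak{S}_n]$ the naive chain-rule term $(\partial A_{n-i}/\partial A_l)\widetilde{P}_{i,j}^{k-1}$ really contributes only the single clean term $\delta_{i,l}\widetilde{P}_{i,j}^{k-1}=\delta_{i,l}\widetilde{P}_{l+1,j}^{k-1}$ after shifting the index via the recursion at the previous stage, and that no hidden $A_l$-dependence enters through the substitution $DA_s\mapsto\sum_r LA_r^2-\cdots$ of Lemma \ref{lem:Equations forDAl} — which is fine precisely because for $n=2s+1$ that relation expresses $DA_s$ in terms of $A_1,\ldots,A_{s-1}$ and $L$ only, and $A_l=A_s$ appears nowhere on the right. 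Once this is verified, combining it with the telescoping sum over the descending recursion and the inductive hypothesis closes the argument; the cases $i<l$, $i=l$, and $i>l$ should be checked separately but each reduces to a short computation with the $\delta$-functions. I would also double-check the edge index $\mathrm{Ion}(i)-1$ when $i=0$ versus $i\geq 1$ to be sure the recursion is being applied on the correct range, but this is the same case distinction already handled in (\ref{eq:ModifiedFlatnessLift}).
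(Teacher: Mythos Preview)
Your proposal has a genuine error at the key step. You assert that ``for $n=2s+1$ that relation expresses $DA_s$ in terms of $A_1,\ldots,A_{s-1}$ and $L$ only, and $A_l=A_s$ appears nowhere on the right.'' This is false: Lemma~\ref{lem:Equations forDAl} for odd $n=2s+1$ reads
\[
DA_s=\sum_{r=1}^{s}LA_r^2-\sum_{r=1}^{s-1}(n-2r)DA_r-(2s+1)f_{2s+1}(L),
\]
and the first sum includes $r=s$, contributing $LA_s^2$. So $A_s$ appears quadratically on the right. Consequently $D$ and $\partial/\partial A_l$ do \emph{not} commute on the free algebra: $\partial_{A_l}\bigl(D(A_l)\bigr)=2LA_l$ while $D\bigl(\partial_{A_l}(A_l)\bigr)=0$. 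Your differentiated recursion therefore has the wrong middle term: you cannot replace $\frac{1}{L}\partial_{A_l}\bigl(D\widetilde{P}_{i,j}^{k-1}\bigr)$ by $\frac{1}{L}D\bigl(\partial_{A_l}\widetilde{P}_{i,j}^{k-1}\bigr)$.

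This is not a cosmetic issue but precisely the nontrivial part of the proof. The only place where the failure of commutativity matters is the passage from $i=l$ to $i=l-1$, since $\widetilde{P}_{l,j}^{k-1}$ is the first object in the descending lift that actually contains $A_l$. The paper handles this step by recording $\partial(DA_l)/\partial A_l=2LA_l$ explicitly (equation~(\ref{eq:partial_of_DAl})), carrying the resulting extra term $2A_l\widetilde{P}_{l+1,j}^{k-2}$ through the computation, and then observing the cancellation
\[
\widetilde{P}_{l+1,j}^{k-1}+A_l\widetilde{P}_{l+1,j}^{k-2}+\tfrac{1}{L}D\widetilde{P}_{l+1,j}^{k-2}-\widetilde{P}_{l,j}^{k-1}=0
\]
from the modified flatness equation one level down (equation~(\ref{eq:Alvanishing})). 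Your ``telescoping'' would miss exactly this term and would not give $0$ at $i=l-1$. Once that step is done correctly, the remaining indices $i\leq l-2$ are indeed easy, because the coefficients $A_{n-i}=-A_i$ with $i\leq l-1$ and their derivatives $DA_i$ (for $i<l$ these are genuine free generators of $\mathfrak{S}_n$) carry no $A_l$-dependence, and no induction on $k$ is needed.
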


\begin{proof}
From the modified flatness equations (\ref{eqn:modflateqn}), and the lifting procedure (\ref{eq:ModifiedFlatnessLift}) we have
\begin{equation}\label{eq:oddderivativefirstpart}
\frac{\partial\widetilde{P}_{i,j}^k}{\partial{A_s}}=0
\end{equation}
for $i$ in the range $\{0\} \cup \{s+1,...,n-1\}$, since $\widetilde{P}_{i,j}^k$ does not contain $A_s$ term for this range of $i$'s.

Now observe the following two equations
\begin{align}
\widetilde{P}_{s,j}^k=&\widetilde{P}_{{s+1},j}^k+\frac{1}{L}D\widetilde{P}_{{s+1},j}^{k-1}+A_s\widetilde{P}_{{s+1},j}^{k-1}\,,\label{eq:oddmiddleequations1}\\
\widetilde{P}_{s-1,j}^k=&\widetilde{P}_{{s},j}^k+\frac{1}{L}D\widetilde{P}_{{s},j}^{k-1}-A_s\widetilde{P}_{{s},j}^{k-1}.\label{eq:oddmiddleequations2}
\end{align}
These are first two rows in modified flatness equations where we see $A_s$. From the first equation we see that
\begin{equation}\label{eq:odd_case_non_zero_derivative}
\frac{\partial\widetilde{P}_{s,j}^k}{\partial{A_s}}=\widetilde{P}_{{s+1},j}^{k-1}.
\end{equation}
Then, by equations (\ref{eq:oddderivativefirstpart}) and (\ref{eq:odd_case_non_zero_derivative}) we see that
\begin{equation}\label{eq:KH_for_odd}
\widetilde{P}_{s,j}^k=KA_s+H
\end{equation}
where $K=\widetilde{P}_{{s+1},j}^{k-1}$ and $H$ are constants with respect to $A_s$. Note that Lemma \ref{lem:Equations forDAl} gives
\begin{equation}\label{eq:partial_of_DAl}
\frac{\partial \left(DA_s\right)}{\partial A_s}=2LA_s.
\end{equation}

Now, by equations (\ref{eq:KH_for_odd}) and (\ref{eq:partial_of_DAl}) we observe the following
\begin{equation}\label{eq:Dplus2LAs}
\begin{split}
\frac{\partial(D{\widetilde{P}_{s,j}^k})}{\partial A_s}=&\frac{\partial}{\partial A_s}((DK)A_s+K(DA_s)+DH)\\
=&DK+K\frac{\partial \left(DA_s\right)}{\partial A_s}\\
=&D\widetilde{P}_{{s+1},j}^{k-1}+2LA_s\widetilde{P}_{{s+1},j}^{k-1}\,.
\end{split}
\end{equation}

Then, by equations (\ref{eq:oddmiddleequations2}) and (\ref{eq:Dplus2LAs}) we have
\begin{equation}
\begin{split}
\frac{\partial \widetilde{P}_{s-1,j}^k}{\partial A_s}
=&\frac{\partial \widetilde{P}_{{s},j}^k}{\partial A_s}+\frac{1}{L}\frac{\partial\left(D\widetilde{P}_{{s},j}^{k-1}\right)}{\partial A_s}-\widetilde{P}_{{s},j}^{k-1}-A_s\frac{\partial \widetilde{P}_{{s},j}^{k-1}}{\partial A_s}\\
=&\frac{\partial \widetilde{P}_{{s},j}^k}{\partial A_s}+\frac{1}{L}\left(D\widetilde{P}_{{s+1},j}^{k-2}+2LA_s\widetilde{P}_{{s+1},j}^{k-2}\right)-\widetilde{P}_{{s},j}^{k-1}-A_s\frac{\partial \widetilde{P}_{{s},j}^{k-1}}{\partial A_s},
\end{split}
\end{equation}
and equation (\ref{eq:oddmiddleequations1}) implies that
\begin{equation}\label{eq:Alvanishing}
\begin{split}
\frac{\partial \widetilde{P}_{s-1,j}^k}{\partial A_s}=&\widetilde{P}_{{s+1},j}^{k-1}+\frac{1}{L}D\widetilde{P}_{{s+1},j}^{k-2}+2A_s\widetilde{P}_{{s+1},j}^{k-2}-\widetilde{P}_{{s},j}^{k-1}-A_s\widetilde{P}_{{s+1},j}^{k-2}\\
=&\widetilde{P}_{{s+1},j}^{k-1}+\frac{1}{L}D\widetilde{P}_{{s+1},j}^{k-2}+A_s\widetilde{P}_{{s+1},j}^{k-2}-\widetilde{P}_{{s},j}^{k-1}\\
=&0.
\end{split}
\end{equation}

Equation (\ref{eq:Alvanishing}) shows that $\widetilde{P}_{s-1,j}^k$ does not depend on $A_s$. Since the lifting procedure is an inductive process, we see that $\widetilde{P}_{i,j}^k$ does not depend on $A_s$ for $1\leq i \leq s-1$ by the modified flatness equations (\ref{eqn:modflateqn}), and the lifting procedure (\ref{eq:ModifiedFlatnessLift}). Hence, we conlude  that
\begin{equation*}
\frac{\partial\widetilde{P}_{i,j}^k}{\partial{A_s}}=0
\end{equation*}
for $1\leq i \leq s-1$. Combining this equation with the equations (\ref{eq:oddderivativefirstpart}) and (\ref{eq:odd_case_non_zero_derivative}), we complete the proof.
\end{proof}

\begin{lem}[Even case]\label{lem:evenderivativeflatness}
Let $n\geq{4}$ be an even number with $n=2s$. We have the following identity
\begin{equation*}
\frac{\partial\widetilde{P}_{i,j}^k}{\partial{A_{s-1}}}=\delta_{i,s}\widetilde{P}_{{s+1},j}^{k-1}+\delta_{i,{s-1}}\widetilde{P}_{{s},j}^{k-1}.
\end{equation*}
\end{lem}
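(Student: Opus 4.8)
The plan is to mirror the structure of the proof of Lemma \ref{lem:oddderivativeflatness}, tracking where the single variable $A_l$ in the odd case gets replaced by the \emph{two} consecutive variables $A_{l-1}$ and $A_{l+1}=-A_{l-1}$ (using $A_i=-A_{n-i}$ from Lemma \ref{lem:PropertiesofAis}) that appear in the even case, since now $A_l=A_{n/2}=0$ drops out by part (2) of Lemma \ref{lem:PropertiesofAis}. First I would observe that in the lifting procedure (\ref{eq:ModifiedFlatnessLift}), the variable $A_{l-1}$ first enters in the equation producing $\widetilde{P}_{l,j}^k$ from $\widetilde{P}_{l+1,j}^k$: the relevant instance of (\ref{eqn:modflateqn}) has coefficient $A_{n-i}$ with $n-i=l+1$, i.e. $A_{l+1}=-A_{l-1}$. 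Thus $\widetilde{P}_{i,j}^k$ contains no $A_{l-1}$ for $l+1\leq i\leq n-1$ and $i=0$, giving $\tfrac{\partial}{\partial A_{l-1}}\widetilde{P}_{i,j}^k=0$ in that range, which accounts for the absence of those terms in the claimed formula.

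Next I would write out the three consecutive rows of (\ref{eqn:modflateqn}) in which $A_{l-1}$ appears, namely the ones computing $\widetilde{P}_{l,j}^k$, $\widetilde{P}_{l-1,j}^k$, and $\widetilde{P}_{l-2,j}^k$ from the preceding ones, using $A_{l+1}=-A_{l-1}$, $A_l=0$, and $A_{l-1}=A_{l-1}$ respectively for the three coefficients $A_{n-i}$ at $i=l+1,\,l,\,l-1$:
\begin{align*}
\widetilde{P}_{l,j}^k&=\widetilde{P}_{l+1,j}^k+\tfrac{1}{L}D\widetilde{P}_{l+1,j}^{k-1}-A_{l-1}\widetilde{P}_{l+1,j}^{k-1},\\
\widetilde{P}_{l-1,j}^k&=\widetilde{P}_{l,j}^k+\tfrac{1}{L}D\widetilde{P}_{l,j}^{k-1},\\
\widetilde{P}_{l-2,j}^k&=\widetilde{P}_{l-1,j}^k+\tfrac{1}{L}D\widetilde{P}_{l-1,j}^{k-1}+A_{l-1}\widetilde{P}_{l-1,j}^{k-1}.
\end{align*}
From the first equation, $\tfrac{\partial}{\partial A_{l-1}}\widetilde{P}_{l,j}^k=-\widetilde{P}_{l+1,j}^{k-1}$ — here I expect a sign discrepancy with the stated formula and would resolve it either by checking the $A_{n-i}$ convention in (\ref{eqn:modflateqn}) or, more likely, by noting that $A_{n-i}$ at $i=l+1$ equals $A_{l-1}$ directly (if the index arithmetic gives $n-(l+1)=l-1$ without a sign), so that the coefficient is $+A_{l-1}$ and the derivative is $+\widetilde{P}_{l+1,j}^{k-1}$, matching $\delta_{i,l}\widetilde{P}_{l+1,j}^{k-1}$. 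From the second equation, differentiating and using $\tfrac{\partial}{\partial A_{l-1}}(DA_{l-1})=2LA_{l-1}$ from Lemma \ref{lem:Equations forDAl} (the even-case formula there gives $\partial(DA_{s-1})/\partial A_{s-1}=L A_{s-1}$ up to the factor in $2DA_{s-1}=\cdots$, which I would pin down carefully), one gets $\tfrac{\partial}{\partial A_{l-1}}\widetilde{P}_{l-1,j}^k=\widetilde{P}_{l+1,j}^{k-1}+\widetilde{P}_{l,j}^{k-1}$, which is exactly $\delta_{i,l-1}\widetilde{P}_{l,j}^{k-1}$ plus the carried-over $\delta_{i,l}$ contribution shifted — so I would track indices to confirm it reads as $\widetilde{P}_{l,j}^{k-1}$ (the $i=l-1$ term) after the $\widetilde{P}_{l+1,j}$ piece telescopes correctly against what follows.

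Finally, for the third equation I would show, exactly as in (\ref{eq:Alvanishing}) of the odd case, that $\tfrac{\partial}{\partial A_{l-1}}\widetilde{P}_{l-2,j}^k=0$: substituting the expressions for $\tfrac{\partial}{\partial A_{l-1}}\widetilde{P}_{l-1,j}^k$ and $\tfrac{\partial}{\partial A_{l-1}}\widetilde{P}_{l-1,j}^{k-1}$, the $2LA_{l-1}$ term from $D$ combines with the $-A_{l-1}$ (or $+A_{l-1}$) contributions and the explicit $\widetilde{P}$-terms cancel in pairs by re-applying the first two displayed equations. Then induction downward through (\ref{eq:ModifiedFlatnessLift}) gives $\tfrac{\partial}{\partial A_{l-1}}\widetilde{P}_{i,j}^k=0$ for $0\leq i\leq l-2$, since no further $A_{l-1}$ is introduced. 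The main obstacle I anticipate is purely bookkeeping: getting all the signs right in the substitutions $A_{l+1}=-A_{l-1}$, $A_l=0$, and in $\partial(DA_{l-1})/\partial A_{l-1}$ from the even-case formula in Lemma \ref{lem:Equations forDAl}, and making sure the telescoping cancellation in the $\widetilde{P}_{l-2,j}^k$ computation matches the odd-case template after the shift by one index. Once those are settled the proof is a direct transcription of the odd case.
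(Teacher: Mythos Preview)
Your overall plan matches the paper's: identify the three consecutive modified flatness equations in which $A_{l-1}$ appears, differentiate them in order, and show the derivative vanishes from $i=l-2$ downward. However, there are two concrete issues.

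First, your three displayed equations have the signs swapped. In (\ref{eqn:modflateqn}) the coefficient is $A_{n-i}$; with $n=2l$ and $i=l+1$ this gives $A_{l-1}$ directly (as you yourself suspect), while $i=l-1$ gives $A_{l+1}=-A_{l-1}$. So the correct equations are
\[
\widetilde{P}_{l,j}^k=\widetilde{P}_{l+1,j}^k+\tfrac{1}{L}D\widetilde{P}_{l+1,j}^{k-1}+A_{l-1}\widetilde{P}_{l+1,j}^{k-1},\qquad
\widetilde{P}_{l-2,j}^k=\widetilde{P}_{l-1,j}^k+\tfrac{1}{L}D\widetilde{P}_{l-1,j}^{k-1}-A_{l-1}\widetilde{P}_{l-1,j}^{k-1},
\]
with the middle one as you wrote. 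Also, the even-case formula in Lemma \ref{lem:Equations forDAl} reads $2DA_{l-1}=LA_{l-1}^2+\cdots$, so $\partial(DA_{l-1})/\partial A_{l-1}=LA_{l-1}$, not $2LA_{l-1}$. With these corrections the second row gives $\partial\widetilde{P}_{l-1,j}^k/\partial A_{l-1}=\widetilde{P}_{l+1,j}^{k-1}+A_{l-1}\widetilde{P}_{l+1,j}^{k-2}+\tfrac{1}{L}D\widetilde{P}_{l+1,j}^{k-2}=\widetilde{P}_{l,j}^{k-1}$, not the sum $\widetilde{P}_{l+1,j}^{k-1}+\widetilde{P}_{l,j}^{k-1}$ you wrote.

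Second, and more substantively, you underestimate the $\partial\widetilde{P}_{l-2,j}^k/\partial A_{l-1}=0$ step. In the odd case $\widetilde{P}_{l,j}^k$ is \emph{linear} in $A_l$, so differentiating $D\widetilde{P}_{l,j}^{k-1}$ with respect to $A_l$ is a one-line application of $\partial(DA_l)/\partial A_l$. In the even case $\widetilde{P}_{l-1,j}^k$ is \emph{quadratic} in $A_{l-1}$: the term $\tfrac{1}{L}D\widetilde{P}_{l,j}^{k-1}$ contains $DA_{l-1}$, which after lifting via Lemma \ref{lem:Equations forDAl} contributes $\tfrac{L}{2}A_{l-1}^2$. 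Your sketch (``the terms cancel in pairs'') does not address how to compute $\partial(D\widetilde{P}_{l-1,j}^{k-1})/\partial A_{l-1}$ when the input is quadratic. The paper handles this by writing $\widetilde{P}_{l-1,j}^k=K\tfrac{A_{l-1}^2}{2}+HA_{l-1}+Q$ with $K=\widetilde{P}_{l+1,j}^{k-2}$ and $H=\widetilde{P}_{l+1,j}^{k-1}+\tfrac{1}{L}D\widetilde{P}_{l+1,j}^{k-2}$ independent of $A_{l-1}$, and then computing directly that $\partial(D\widetilde{P}_{l-1,j}^{k-1})/\partial A_{l-1}=D\widetilde{P}_{l,j}^{k-2}+LA_{l-1}\widetilde{P}_{l,j}^{k-2}$. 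Once you have this, the vanishing of $\partial\widetilde{P}_{l-2,j}^k/\partial A_{l-1}$ follows by the same telescoping pattern as the odd case. This quadratic bookkeeping is the genuine new ingredient you need to supply.
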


\begin{proof}
From the modified flatness equations (\ref{eqn:modflateqn}), and the lifting procedure (\ref{eq:ModifiedFlatnessLift}) we have
\begin{equation}\label{eq:evenderivativepart1}
\frac{\partial\widetilde{P}_{i,j}^k}{\partial{A_{s-1}}}=0
\end{equation}
for $i$ in the range $\{0\} \cup \{s+1,...,n-1\}$, since $\widetilde{P}_{i,j}^k$ does not contain $A_{s-1}$ term for this range of $i$'s.

Observe the following equations obtanined from the modified flatness equations (\ref{eqn:modflateqn}) and Lemma \ref{lem:PropertiesofAis}
\begin{equation}
\begin{split}
\widetilde{P}_{s,j}^k=&\widetilde{P}_{{s+1},j}^k+\frac{1}{L}D\widetilde{P}_{{s+1},j}^{k-1}+A_{s-1}\widetilde{P}_{{s+1},j}^{k-1}\\
\widetilde{P}_{s-1,j}^k=&\widetilde{P}_{{s},j}^k+\frac{1}{L}D\widetilde{P}_{{s},j}^{k-1}\\
\widetilde{P}_{s-2,j}^k=&\widetilde{P}_{{s-1},j}^k+\frac{1}{L}D\widetilde{P}_{{s-1},j}^{k-1}-A_{s-1}\widetilde{P}_{{s-1},j}^{k-1}.
\end{split}
\end{equation}
Two of these equations are first two rows in modified flatness equations where we see $A_{s-1}$. From the first equation we see that
\begin{equation}\label{eq:evenderivativepart2}
\frac{\partial\widetilde{P}_{s,j}^k}{\partial{A_{s-1}}}=\widetilde{P}_{{s+1},j}^{k-1}.
\end{equation}
Note that by Lemma \ref{lem:Equations forDAl}, we have
\begin{equation*}
\frac{\partial \left(DA_{s-1}\right)}{\partial A_{s-1}}=LA_{s-1}.
\end{equation*}
This implies 
\begin{equation}\label{eq:evenderivativepart3}
\begin{split}
\frac{\partial \widetilde{P}_{s-1,j}^k}{\partial A_{s-1}}=&\frac{\partial \widetilde{P}_{{s},j}^k}{\partial A_{s-1}}+\frac{1}{L}\frac{\partial\left(D\widetilde{P}_{{s},j}^{k-1}\right)}{\partial A_{s-1}}\\
=&\widetilde{P}_{{s+1},j}^{k-1}+\frac{1}{L}\left(LA_{s-1}\widetilde{P}_{s+1,j}^{k-2}+D\widetilde{P}_{s+1,j}^{k-2}\right)\\
=&\widetilde{P}_{{s},j}^{k-1}.
\end{split}
\end{equation}
Equations (\ref{eq:evenderivativepart1}), (\ref{eq:evenderivativepart2}), and (\ref{eq:evenderivativepart3}) simply show that $\widetilde{P}_{s-1,j}^k$ is of degree $2$ with respect to $A_{s-1}$. Let $\widetilde{P}_{s-1,j}^k$ be given by
\begin{equation*}
\begin{split}
\widetilde{P}_{s-1,j}^k=K\frac{A_{s-1}^2}{2}+HA_{s-1}+Q
\end{split}
\end{equation*}
where $K$, $H$, are constants with respect to $A_{s-1}$. Then, we see that $K$ and $H$ are given by
\begin{equation*}
K=\widetilde{P}_{s+1,j}^{k-2}\quad\text{and}\quad
H=\widetilde{P}_{s,j}^{k-1}-\widetilde{P}_{s+1,j}^{k-2}A_{s-1}=\widetilde{P}_{s+1,j}^{k-1}+\frac{1}{L}D\widetilde{P}_{s+1,j}^{k-2}.
\end{equation*}
We will need the following intermediate calculation to complete the proof:
\begin{equation*}
\begin{split}
\frac{\partial (D\widetilde{P}_{s-1,j}^k)}{\partial A_{s-1}}
=&\frac{\partial}{\partial A_{s-1}}((DK)\frac{A_{s-1}^2}{2}+KA_{s-1}DA_{s-1}+(DH)A_{s-1}+H(DA_{s-1})+DQ)\\
=&(DK)A_{s-1}+K(DA_{s-1})+KLA_{s-1}^2+DH+LHA_{s-1}\\
=&(D\widetilde{P}_{s+1,j}^{k-2})A_{s-1}+\widetilde{P}_{s+1,j}^{k-2}(DA_{s-1})+L\widetilde{P}_{s+1,j}^{k-2}A_{s-1}^2\\
&+D\widetilde{P}_{s,j}^{k-1}-(D\widetilde{P}_{s+1,j}^{k-2})A_{s-1}-\widetilde{P}_{s+1,j}^{k-2}(DA_{s-1})+L\widetilde{P}_{s,j}^{k-1}A_{s-1}-L\widetilde{P}_{s+1,j}^{k-2}A_{s-1}^2\\
=&D\widetilde{P}_{s,j}^{k-1}+L\widetilde{P}_{s,j}^{k-1}A_{s-1}.
\end{split}
\end{equation*}
Next, we compute
\begin{equation}\label{eq:Alminusonevanishing}
\begin{split}
\frac{\partial \widetilde{P}_{s-2,j}^k}{\partial A_{s-1}}
=&\frac{\partial \widetilde{P}_{{s-1},j}^k}{\partial A_{s-1}}+\frac{1}{L}\frac{\partial\left(D\widetilde{P}_{{s-1},j}^{k-1}\right)}{\partial A_{s-1}}-\widetilde{P}_{{s-1},j}^{k-1}-A_{s-1}\frac{\partial \widetilde{P}_{{s-1},j}^{k-1}}{\partial A_{s-1}}\\
=&\widetilde{P}_{s,j}^{k-1}+\frac{1}{L}D\widetilde{P}_{s,j}^{k-2}+\widetilde{P}_{s,j}^{k-2}A_{s-1}-\widetilde{P}_{s-1,j}^{k-1}-A_{s-1}\widetilde{P}_{s,j}^{k-2}\\
=&\widetilde{P}_{s,j}^{k-1}+\frac{1}{L}D\widetilde{P}_{s,j}^{k-2}-\widetilde{P}_{s-1,j}^{k-1}=0.
\end{split}
\end{equation}

Equation (\ref{eq:Alminusonevanishing}) shows that $\widetilde{P}_{s-2,j}^k$ does not depend on $A_{s-1}$. Since the lifting procedure is an inductive process, it follows that $\widetilde{P}_{i,j}^k$ does not depend on $A_{s-1}$ for $1\leq i \leq s-2$ by the modified flatness equations (\ref{eqn:modflateqn}), and the lifting procedure (\ref{eq:ModifiedFlatnessLift}). Hence, we conlude that
\begin{equation*}
\frac{\partial\widetilde{P}_{i,j}^k}{\partial{A_{s-1}}}=0
\end{equation*}
for $1\leq i \leq s-2$. Combining this equation together with the equations (\ref{eq:evenderivativepart1}), (\ref{eq:evenderivativepart2}), and (\ref{eq:evenderivativepart3}), we complete the proof.
\end{proof}

\subsection{Formula for potentials}
\subsubsection{Semisimple Cohomological Field Theories}
By general considerations, Gromov-Witten theory of $\CnZn$ has the structure of a cohomological field theory (CohFT). We refer to \cite{km} and \cite{Picm} for discussions on CohFTs. 

By the results of Section \ref{sec:genus_0}, this CohFT is semisimple. The Givental-Teleman classification of semisimple CohFTs \cite{g3}, \cite{t} states that a semisimple CohFT $\Omega$ can be obtained from its {\em topological part} via the actions of its $R$-matrix and $T$-vector, where $T(z)$ is given by $z(\text{Id}-R(z))$ applied to the unit. We refer to \cite{Picm} and \cite{ppz} for detailed discussions on this. 

Generating functions of a CohFT $\Omega$ can be defined by integrating CohFT classes. If $\Omega$ is semisimple, its topological part can be evaluated explicitly in the idempotent basis, see e.g. \cite[Section 2.5.1]{lho-p2}. A consequence of the Givental-Teleman classification is that the generating functions of $\Omega$ can be explicitly written as sums of graphs. A reference for this can be found in \cite[Section 2.5.2]{lho-p2}.

The $R$-matrix for the Gromov-Witten theory of $\CnZn$ is studied in Section \ref{sec:genus_0}. The general consideration on semisimple CohFTs recalled above yields a formula for the Gromov-Witten potential $\mathcal{F}_{g, m}^{\left[\mathbb{C}^{n} / \mathbb{Z}_{n}\right]}\left(\phi_{c_{1}}, \ldots, \phi_{c_{m}}\right)$. In the remainder of this subsection, we work out this formula in details.

\subsubsection{Graphs}
In order to state the formula for Gromov-Witten potentials, we need to describe certain graphs.

A \textit{stable graph} $\Gamma$ is a tuple
\begin{equation*}
\Gamma=\left(\mathrm{V}_{\Gamma}, \mathrm{g}: \mathrm{V}_{\Gamma} \rightarrow \mathbb{Z}_{\geq 0},  \mathrm{H}_{\Gamma}, \iota: \mathrm{H}_{\Gamma} \rightarrow \mathrm{H}_{\Gamma}, \mathrm{L}_{\Gamma}, \ell:\mathrm{L}_{\Gamma}\rightarrow\{1,\ldots,m\},  \nu: \mathrm{H}_{\Gamma} \rightarrow \mathrm{V}_{\Gamma}\right)
\end{equation*}
satisfying:
\begin{enumerate}
\item $\mathrm{V}_{\Gamma}$ is the set of vertices and $\mathrm{g}:\mathrm{V}_{\Gamma}\rightarrow\mathbb{Z}_{\geq 0}$ is a genus assignment,
    
\item $\mathrm{H}_{\Gamma}$ is the set of half-edges and  $\iota: \mathrm{H}_{\Gamma} \rightarrow \mathrm{H}_{\Gamma}$ is an involution,
    
\item $\mathrm{E}_{\Gamma}$ is the set of edges\footnote{Self-edges are allowed.} defined by the orbits of $\iota: \mathrm{H}_{\Gamma} \rightarrow \mathrm{H}_{\Gamma}$, and the tuple $\left(\mathrm{V}_{\Gamma},\mathrm{E}_{\Gamma}\right)$ defines a connected graph,
    
\item $\mathrm{L}_{\Gamma}$ is the set of legs, the subset of $\mathrm{H}_{\Gamma}$ fixed by the involution $\iota: \mathrm{H}_{\Gamma} \rightarrow \mathrm{H}_{\Gamma}$ and the map $\ell:\mathrm{L}_{\Gamma}\rightarrow\{1,\ldots,m\}$ is an isomorphism labeling legs,

\item The map $\nu: \mathrm{H}_{\Gamma} \rightarrow \mathrm{V}_{\Gamma}$ is a vertex assignment,
    
\item For each vertex $\mathfrak{v}$, let $\mathrm{l}(\mathfrak{v})$ and $\mathrm{h}(\mathfrak{v})$ be the number of legs and the number of edges attached to the vertex $\mathfrak{v}$ respectively and hence $\mathrm{n}(\mathfrak{v})=\mathrm{l}(\mathfrak{v})+\mathrm{h}(\mathfrak{v})$ be the valence of the vertex $\mathfrak{v}$. Then, for each vertex $\mathfrak{v}$ the following (stability) condition holds:
\begin{equation*}
2\mathrm{g}(\mathfrak{v})-2+\mathrm{n}(\mathfrak{v})>0.
\end{equation*}
\end{enumerate}
The \textit{genus} of $\Gamma$ is defined by
\begin{equation*}
\mathrm{g}(\Gamma)=h^1(\Gamma)+\sum_{\mathfrak{v}\in\mathrm{V}_{\Gamma}}\mathrm{g}(\mathfrak{v}).
\end{equation*}
An \textit{isomorphism} $\varphi:\Gamma \rightarrow \widetilde{\Gamma}$ of stable graphs is a collection of bijective maps
\begin{equation*}
\varphi_{\mathrm{V}}:\mathrm{V}_{\Gamma}\rightarrow \mathrm{V}_{\widetilde{\Gamma}}\quad \text{and} \quad \varphi_{\mathrm{H}}:\mathrm{H}_{\Gamma}\rightarrow \mathrm{H}_{\widetilde{\Gamma}}
\end{equation*}
of their sets of vertices and half-edges which are compatible with their genus, leg, vertex assignments and involutions $\iota$, $\tilde{\iota}$:
\begin{align*}
\tilde{\mathrm{g}}(\varphi_{\mathrm{V}}(\mathfrak{v}))=&\mathrm{g}(\mathfrak{v}),\\
\tilde{\nu}(\varphi_{\mathrm{H}}(\mathfrak{h}))=& \varphi_{\mathrm{V}}(\nu(\mathfrak{h})),\\
\tilde{\iota}(\varphi_{\mathrm{H}}(\mathfrak{h}))=& \varphi_{\mathrm{H}}(\iota(\mathfrak{h})),\\
\tilde{\ell}(\varphi_{\mathrm{H}}(\mathfrak{h}))=& \ell (\mathfrak{h}).
\end{align*}
Let $\mathrm{G}_{g,m}$ be the isomorphism classes of stable graphs of genus $g$ with $m$ legs.

In the formula for Gromov-Witten potentials, we need to work with {\em decorated} stable graphs. This has to do with the $T$-action on CohFTs. To see this, we recall the description of the $T$-actions in general, as follows. Let $\Omega=\{\Omega_{g,m}\}_{2g-2+m>0}$ be a CohFT based on the vector space $V$ and let
\begin{equation*}
T(z)=T_2z^2+T_3z^3+\cdots
\end{equation*}
be a $V$-valued power series with vanishing coefficients in degrees $0$ and $1$. The \textit{translation} of $\Omega$ by $T$ is the CohFT $T\Omega$ defined by

\begin{equation*}
T \Omega_{g, m}\left(v_{1}, \ldots,v_{m}\right)
=\sum_{k \geqslant 0} \frac{1}{k !}\left(\pi_{k}\right)_{*} \Omega_{g, m+k}\left(v_{1}, \ldots, v_{m}, T\left(\psi_{m+1}\right), \ldots, T\left(\psi_{m+k}\right)\right)
\end{equation*}
where $\pi_k:\overline{M}_{g,m+k}\rightarrow\overline{M}_{g,m}$ is the forgetful map dropping the last $k$ marked points. Here, by above notation, we actually mean
\begin{equation*}
\Omega_{g,m+k}\left(\ldots ,T\left(\psi_{i}\right),\ldots\right)=\sum_{r \geqslant 2} \psi_{i}^{r} \Omega_{g,m+k}\left(\ldots, T_{r}, \ldots \right).
\end{equation*}

Now, consider elements of $V$ written in terms of the normalized idempotent basis,
\begin{equation*}
v_j=\sum_{i=0}^{n-1}v_{ij}\widetilde{e}_i\quad\text{and}\quad T_r=\sum_{i=0}^{n-1}T_{ir}\widetilde{e}_i.
\end{equation*}
Then, assume in addition\footnote{This case suffices for our purpose, because in the formula for $\mathcal{F}_{g,m}$ from Givental-Teleman classification, $T$ acts on the topological part.} that $\Omega$ is a topological field theory, we have
\begin{equation}\label{eq:TmatrixActionExpanded}
\begin{split}
T \Omega_{g, m}\left(v_1, \ldots,v_{m}\right)
&=\sum_{k \geqslant 0} \frac{1}{k !}\left(\pi_{k}\right)_{*} \Omega_{g, m+k}\left(v_{1}, \ldots, v_{m}, T\left(\psi_{m+1}\right), \ldots, T\left(\psi_{m+k}\right)\right)\\
&=\sum_{k \geqslant 0} \frac{1}{k !}\sum_{r_1,\ldots,r_k\geq{2}}\left(\prod_{l=1}^k\left(\pi_{k}\right)_{*}\left(\psi^{r_l}_{m+l}\right)\right)\Omega_{g, m+k}\left(v_{1}, \ldots, v_{m}, T_{r_1}, \ldots, T_{r_k}\right)\\
&=\sum_{k \geqslant 0} \frac{1}{k !}\sum_{r_1,\ldots,r_k\geq{2}}\left(\prod_{l=1}^k\left(\pi_{k}\right)_{*}\left(\psi^{r_l}_{m+l}\right)\right)\\
&\times\sum_{0\leq i_1,\ldots,i_{m+k}\leq{n-1}}v_{i_11}\cdots v_{i_mm}T_{i_{m+1}r_1}\cdots T_{i_{m+k}r_k} \Omega_{g, m+k}\left(\widetilde{e}_{i_1}, \ldots, \widetilde{e}_{i_m}, \widetilde{e}_{i_{m+1}}, \ldots, \widetilde{e}_{i_{m+k}}\right)\\
&=\sum_{k \geqslant 0} \frac{1}{k !}\sum_{r_1,\ldots,r_k\geq{2}}\left(\prod_{l=1}^k\left(\pi_{k}\right)_{*}\left(\psi^{r_l}_{m+l}\right)\right)\sum_{i=0}^{n-1}v_{i1}\cdots v_{im}T_{ir_1}\cdots T_{ir_k}g({e}_i,{e}_i)^{-\frac{2g-2+m+k}{2}}\\
&=\sum_{i=0}^{n-1}v_{i1}\cdots v_{im}\sum_{k \geq 0} \frac{g({e}_i,{e}_i)^{-\frac{2g-2+m+k}{2}}}{k !}\sum_{r_1,\ldots,r_k\geq{2}}T_{ir_1}\cdots T_{ir_k}\left(\prod_{l=1}^k\left(\pi_{k}\right)_{*}\left(\psi^{r_l}_{m+l}\right)\right)\\
&=\sum_{i=0}^{n-1}{v_{i1}\cdots v_{im}\sum_{k \geq 0} \frac{g({e}_i,{e}_i)^{-\frac{2g-2+m+k}{2}}}{k !}\left(\pi_{k}\right)_{*}\left(t_i(\psi_{m+1})\cdots t_i(\psi_{m+k})\right)},
\end{split}
\end{equation}
where
\begin{equation*}
t_i(z)=\sum_{r\geq{2}}T_{ir}z^r.
\end{equation*}
Note that the above derivation uses the explicit evaluation of a topological field theory in the normalized idempotent basis, c.f. \cite[Section 2.5.1]{lho-p2}.

Setting the summand of $T \Omega_{g, m}\left(v_1, \ldots,v_{m}\right)$ to
\begin{equation*}
\widetilde{\Omega}^i_{g, m}\left(v_1, \ldots,v_{m}\right)={v_{i1}\cdots v_{im}\sum_{k \geq 0} \frac{g({e}_i,{e}_i)^{-\frac{2g-2+m+k}{2}}}{k !}\left(\pi_{k}\right)_{*}\left(t_i(\psi_{m+1})\cdots t_i(\psi_{m+k})\right)}
\end{equation*}
we can write it as
\begin{equation}\label{eqn:TOmega}
T\Omega_{g, m}\left(v_1, \ldots,v_{m}\right)=\sum_{i=0}^{n-1}\widetilde{\Omega}^i_{g, m}\left(v_1, \ldots,v_{m}\right).
\end{equation}
By the formula for generating functions, as described in \cite[Section 2.5.2]{lho-p2}, for each vertex in a stable graph, a class $T\Omega_{g,m}$ is inserted. By equation (\ref{eqn:TOmega}), this is equivalent to inserting $\widetilde{\Omega}^i_{g,m}$ if vertices of stable graphs carry extra labels $i$. This leads to the notion of decorated stable graphs.

More precisely, a {decorated} stable graph $$\Gamma\in\mathrm{G}_{g,m}^{\text{Dec}}(n)$$ of order $n$ is a stable graph $\Gamma\in\mathrm{G}_{g,m}$ with an extra assignment $\mathrm{p}: \mathrm{V}_{\Gamma}\rightarrow \{0,...,n-1\}$ to each vertex $\mathfrak{v}\in\mathrm{V}_{\Gamma}$. For a decorated stable graph $\Gamma\in\mathrm{G}_{g,m}^{\text{Dec}}(n)$ we denote its underlying stable graph by $$\Gamma^{\mathrm{St}}\in\mathrm{G}_{g,m}$$ after forgetting the decoration.

\subsubsection{Formula for $\mathcal{F}_g$}
By the discussions above, we have
\begin{equation}\label{eqn:formula_Fg}
\mathcal{F}_{g, m}^{\left[\mathbb{C}^{n} / \mathbb{Z}_{n}\right]}\left(\phi_{c_{1}}, \ldots, \phi_{c_{m}}\right)=\sum_{\Gamma\in\mathrm{G}_{g,m}^{\text{Dec}}(n)}\operatorname{Cont}_{\Gamma}\left(\phi_{c_{1}}, \ldots, \phi_{c_{m}}\right).
\end{equation}

The following is the generalization of Proposition 15 of \cite{lho-p} to $\left[\mathbb{C}^n/\mathbb{Z}_n\right]$.

\begin{prop}\label{prop:contributions}
For each decorated stable graph $\Gamma\in\mathrm{G}_{g,m}^{\text{Dec}}(n)$, the associated contribution is given by
\begin{equation*}
\operatorname{Cont}_{\Gamma}\left(\phi_{c_{1}}, \ldots, \phi_{c_{m}}\right)=\frac{1}{|\mathrm{Aut}(\Gamma^{\mathrm{St}})|} \sum_{\mathrm{A} \in \mathbb{Z}_{\geq 0}^{\mathrm{F}(\Gamma)}} \prod_{\mathfrak{v} \in \mathrm{V}_{\Gamma}} \mathrm{Cont}_{\Gamma}^{\mathrm{A}}(\mathfrak{v}) \prod_{\mathfrak{e}\in \mathrm{E}_{\Gamma}} \mathrm{Cont}_{\Gamma}^{\mathrm{A}}(\mathfrak{e}) \prod_{\mathfrak{l} \in \mathrm{L}_{\Gamma}} \mathrm{Cont}_{\Gamma}^{\mathrm{A}}(\mathfrak{l})
\end{equation*}
where $\mathrm{F}(\Gamma)=\left\vert\mathrm{H}_{\Gamma}\right\vert$. Here, $\mathrm{Cont}_{\Gamma}^{\mathrm{A}}(\mathfrak{v})$, $\mathrm{Cont}_{\Gamma}^{\mathrm{A}}(\mathfrak{e})$, and $\mathrm{Cont}_{\Gamma}^{\mathrm{A}}(\mathfrak{l})$ are the {\em vertex}, {\em edge} and {\em leg} contributions with flag $\mathrm{A}-$values\footnote{Notation: The values ${b_{\mathfrak{v}1}},\ldots,{b_{\mathfrak{v}\mathrm{h}(\mathfrak{v})}}$ and $b_{\mathfrak{e}1},b_{\mathfrak{e}2}$ are the entries of $(a_1,\ldots,a_m,b_{m+1},\ldots,b_{\left\vert\mathrm{H}_{\Gamma}\right\vert})$ corresponding to $\mathrm{Cont}_{\Gamma}^{\mathrm{A}}(\mathfrak{v})$ and $\mathrm{Cont}_{\Gamma}^{\mathrm{A}}(\mathfrak{e})$ respectively.} $(a_1,\ldots,a_m,b_{m+1},\ldots,b_{\left\vert\mathrm{H}_{\Gamma}\right\vert})$ respectively, and they are given by
\begin{equation*}
\begin{split}
    \mathrm{Cont}_{\Gamma}^{\mathrm{A}}(\mathfrak{v})
    =&\sum_{k \geq 0} \frac{g({e}_{\mathrm{p}(\mathfrak{v})},{e}_{\mathrm{p}(\mathfrak{v})})^{-\frac{2\mathrm{g}(\mathfrak{v})-2+\mathrm{n}(\mathfrak{v})+k}{2}}}{k !}\\
    &\times\int_{\overline{M}_{\mathrm{g}(\mathfrak{v}),\mathrm{n}(\mathfrak{v})+k}}\psi_1^{a_{\mathfrak{v}1}}\cdots\psi_{\mathrm{l}(\mathfrak{v})}^{a_{\mathfrak{v}\mathrm{l}(\mathfrak{v})}}\psi_{\mathrm{l}(\mathfrak{v})+1}^{b_{\mathfrak{v}1}}\cdots\psi_{\mathrm{n}(\mathfrak{v})}^{b_{\mathfrak{v}\mathrm{h}(\mathfrak{v})}}t_{\mathrm{p}(\mathfrak{v})}(\psi_{\mathrm{n}(\mathfrak{v})+1})\cdots t_{\mathrm{p}(\mathfrak{v})}(\psi_{\mathrm{n}(\mathfrak{v})+k}),\\
    \mathrm{Cont}_{\Gamma}^{\mathrm{A}}(\mathfrak{e})
    =&\frac{(-1)^{b_{\mathfrak{e}1}+b_{\mathfrak{e}2}}}{n} \sum_{j=0}^{b_{\mathfrak{e}2}}(-1)^{j} \sum_{r=0}^{n-1}\frac{\widetilde{P}_{\mathrm{Inv}(r),\mathrm{p}(\mathfrak{v}_1)}^{b_{\mathfrak{e}1}+j+1}\widetilde{P}_{r,\mathrm{p}(\mathfrak{v}_2)}^{b_{\mathfrak{e}2}-j}}{\zeta^{(b_{\mathfrak{e}1}+j+1+\mathrm{Inv}(r))\mathrm{p}(\mathfrak{v}_1)}\zeta^{(b_{\mathfrak{e}2}-j+r)\mathrm{p}(\mathfrak{v}_2)}},\\
    \mathrm{Cont}_{\Gamma}^{\mathrm{A}}(\mathfrak{l})
    =&\frac{(-1)^{a_{\ell(\mathfrak{l})}}}{n}\frac{K_{\mathrm{Inv}(c_{\ell(\mathfrak{l})})}}{L^{\mathrm{Inv}(c_{\ell(\mathfrak{l})})}}
    \frac{\widetilde{P}_{\mathrm{Inv}(c_{\ell(\mathfrak{l})}),\mathrm{p}(\nu(\mathfrak{l}))}^{{a_{\ell(\mathfrak{l})}}}}{     \zeta^{({a_{\ell(\mathfrak{l})}}+{\mathrm{Inv}(c_{\ell(\mathfrak{l})})})\mathrm{p}(\nu(\mathfrak{l}))}},
\end{split}
\end{equation*}
where
\begin{equation*}
t_{\mathrm{p}(\mathfrak{v})}(z)=\sum_{i\geq{2}}\mathrm{T}_{\mathrm{p}(\mathfrak{v})i}z^i\quad\text{with}\quad \mathrm{T}_{\mathrm{p}(\mathfrak{v})i}=\frac{(-1)^i}{n}\widetilde{P}_{0,\mathrm{p}(\mathfrak{v})}^{i-1}\zeta^{-(i-1)\mathrm{p}(\mathfrak{v})}.
\end{equation*}
\end{prop}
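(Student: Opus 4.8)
The plan is to derive the formula directly from the Givental--Teleman reconstruction of the CohFT together with the explicit data computed in Section~\ref{sec:genus_0}. The starting point is the graph-sum formula for generating functions of a semisimple CohFT, as recalled in \cite[Section~2.5.2]{lho-p2}: each stable graph contributes a product of vertex, edge and leg factors, where the vertex factors involve the topological part translated by the $T$-vector, the edge factors involve the bilinear form built from the $R$-matrix, $R^{-1}(z)R^{-1}(-z)^*$ evaluated against the metric, and the leg factors involve $R^{-1}(z)$ applied to the input classes. The decoration $\mathrm{p}:\mathrm{V}_\Gamma\to\{0,\ldots,n-1\}$ records which idempotent block a vertex sits in, so that the translated topological part becomes $\widetilde\Omega^i$ as in equation~(\ref{eqn:TOmega}).

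First I would pin down the three ingredients in the normalized idempotent frame. For the vertex contribution, equation~(\ref{eq:TmatrixActionExpanded}) already gives the translated topological field theory explicitly as an integral over $\overline M_{g,n+k}$ with insertions $t_i(\psi)$ and a power $g(e_i,e_i)^{-(2g-2+\mathrm{n}+k)/2}$; restricting to the vertex $\mathfrak v$ with its flag $\psi$-exponents $a_{\mathfrak v\bullet}$ and $b_{\mathfrak v\bullet}$ reproduces $\mathrm{Cont}_\Gamma^{\mathrm A}(\mathfrak v)$ verbatim, once one identifies $t_{\mathrm p(\mathfrak v)}(z)=\sum_{i\geq 2}\mathrm T_{\mathrm p(\mathfrak v)i}z^i$ with $t_i(z)$; this forces $\mathrm T_{\mathrm p(\mathfrak v)i}$ to be the idempotent-frame components of $T(z)=z(\mathrm{Id}-R(z))(\mathbf 1)$. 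Computing those components is where the matrix $P_k=\Psi^{-1}R_k$ and its rescaled version $\widetilde P^k_{i,j}$ enter: since $\Psi$ is the transition matrix to the normalized idempotent frame and $T=z(\mathrm{Id}-R)\mathbf 1$, the idempotent components of $T$ are read off from the first column (or the action on the unit) of $\Psi^{-1}R$, and using $R_0=\mathrm{Id}$ together with the definition of $\widetilde P^i_{0,\mathrm p(\mathfrak v)}$ one gets $\mathrm T_{\mathrm p(\mathfrak v)i}=\frac{(-1)^i}{n}\widetilde P^i_{0,\mathrm p(\mathfrak v)}\zeta^{-i\mathrm p(\mathfrak v)}$. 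For the leg contribution I would expand $R^{-1}(z)$ applied to $\phi_{c_{\ell(\mathfrak l)}}$ in the idempotent basis, use the symplectic relation $R(z)R(-z)^*=\mathrm{Id}$ to rewrite $R^{-1}$ in terms of $R(-z)^*$, insert the explicit $\Psi$, $\Psi^{-1}$ from the lemmas of Section~\ref{sec:genus_0}, and the Poincar\'e pairing $G_{ij}=\frac1n\delta_{\mathrm{Inv}(i)j}$; tracking the $\zeta$-powers coming from $\Psi_{\alpha i}=\frac1n\zeta^{\alpha i}L^i/K_i$ and the $(-1)^{a}$ from the sign in $R(-z)$ yields the stated leg factor with its $K_{\mathrm{Inv}(c)}/L^{\mathrm{Inv}(c)}$ prefactor. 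For the edge contribution I would similarly expand the bivector $\sum_{\alpha}\frac{R^{-1}(z)e_\alpha\otimes R^{-1}(w)e_\alpha}{g(e_\alpha,e_\alpha)}$ restricted to the propagator $\frac{1-R^{-1}(z)R^{-1}(w)^*}{z+w}$ type expression, match powers $z^{b_{\mathfrak e1}}w^{b_{\mathfrak e2}}$, and convert $R$-matrix entries to $\widetilde P$'s; the alternating sum $\sum_{j=0}^{b_{\mathfrak e2}}(-1)^j$ is exactly the geometric-series expansion of $\frac{1}{z+w}$ after one side is re-expanded, and the $\frac{(-1)^{b_{\mathfrak e1}+b_{\mathfrak e2}}}{n}\sum_r$ collects the idempotent sum and the sign conventions.

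The main obstacle will be the bookkeeping of the various normalization conventions so that every factor of $n$, every $\zeta$-power, and every sign $(-1)^{\bullet}$ lands in the right place: the normalized idempotents $\widetilde e_\alpha=ne_\alpha$, the metric value $g(e_\alpha,e_\alpha)=1/n^2$, the orbifold Poincar\'e pairing with its $\frac1n\delta_{\mathrm{Inv}(i)j}$, the factor $(-1)^n$ hidden in the specialization~(\ref{eqn:specialization}), and the rescaling $\widetilde P^k_{i,j}=\frac{L^i}{K_i}P^k_{i,j}\zeta^{(k+i)j}$ all interact. The cleanest way to control this is to first do the $n=3$ case and check it against \cite[Proposition~15]{lho-p} (and the $\left[\mathbb C^3/\mathbb Z_3\right]$ computation in \cite{lho-p2}), since the general-$n$ formula must specialize to it, and then carry the same manipulations through with $n$ kept symbolic. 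Apart from this, the derivation is a routine but lengthy unwinding: substitute the explicit $\Psi$, $\Psi^{-1}$, $DU=\diag(L,L\zeta,\ldots,L\zeta^{n-1})$, and the matrices $P_k$ from Section~\ref{sec:genus_0} into the standard Givental graph-sum formula, and collect terms. No new geometric input is needed beyond the semisimplicity and the $R$-matrix analysis already established; the statement is essentially a repackaging of the Givental--Teleman formula in coordinates adapted to $\left[\mathbb C^n/\mathbb Z_n\right]$.
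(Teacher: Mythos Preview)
Your proposal is correct and follows essentially the same route as the paper: both derive the formula by unwinding the Givental--Teleman graph sum in the normalized idempotent frame, using the symplectic condition to express $R^{-1}(z)$ as $R^T(-z)$, computing the $T$-vector components from $\Psi^{-1}R$ acting on the unit, extracting the edge factor from the propagator $(\delta_{ij}-[R^{-1}(z)R^{-1}(w)^T]_{ij})/(z+w)$ via the alternating-sum expansion, and the leg factor from $R^{-1}(z)\cdot\phi_j$ in idempotent coordinates. The paper simply carries out the matrix algebra you describe line by line (computing $\Psi^T\Psi$, $\Psi^Tv$, etc.\ explicitly), and your remark that the bookkeeping of $n$'s, $\zeta$-powers and signs is the only real obstacle is exactly right.
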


\begin{proof}
To simplify notations, write $\{\tilde{e}\}$ for the normalized idempotent basis $\{\tilde{e}_0,\ldots,\tilde{e}_{n-1}\}$ and $\{\phi\}$ for the basis $\{\phi_0,\ldots,\phi_{n-1}\}$. Let $\mathcal{T}_{\tilde{e}}^{\phi}$ be the transition matrix from $\{\tilde{e}\}$ to $\{\phi\}$ and let $\mathcal{T}_{\phi}^{\tilde{e}}$ be its inverse i.e. the transition matrix from $\{\phi\}$ to  $\{\tilde{e}\}$. Then, we have $$\mathcal{T}_{\tilde{e}}^{\phi}=\Psi^{-1},\quad \mathcal{T}_{\phi}^{\tilde{e}}=\Psi.$$

Let $G$ and $\widetilde{G}$ be matrix representations of the metric $g$ with respect to basis $\{\phi\}$ and  $\{\tilde{e}\}$. Then, the relation between them is given by
\begin{equation}\label{eq:contproofeq1}
\widetilde{G}=\left(\Psi^{-1}\right)^TG\Psi^{-1}.
\end{equation}
And it can easily be shown that we have $\widetilde{G}=\mathrm{Id}$.

Define $T(z)=z\left(\mathrm{Id}-R^{-1}(z)\right)\cdot{\phi_0}$. We provided $R$-matrix action with respect to normalized idempotent basis. To be consistent we need to write $\phi_0$ in terms of $\{\tilde{e}\}$ basis. Since we have
\begin{equation}\label{eq:contproofeq2}
\phi_0=\sum_{i=0}^{n-1}\Psi_{i0}\tilde{e}_i=\frac{1}{n}\left(\tilde{e}_0+\ldots+\tilde{e}_{n-1}\right),
\end{equation}
we see that $T(z)=z\left(\mathrm{Id}-R^{-1}(z)\right)v$ where $v=\frac{1}{n}[1\,\cdots\, 1]^T$.

We now find $R^{-1}(z)$. By the symplectic condition, $R^{-1}(z)=R^t(-z)$. Here $R^t(-z)$ means adjoint with respect to the metric $g$ in the basis $\{\tilde{e}\}$. We see that
\begin{equation}\label{eq:contproofeq3}
R^{-1}(z)=R^t(-z)=\widetilde{G}^{-1}R^T(-z)\widetilde{G}=R^T(-z)=\left(\Psi P(-z)\right)^T=P^T(-z)\Psi^T.
\end{equation}

Also, observe that 
\begin{equation}\label{eq:contproofeq4}
\begin{split}
{\left[ \Psi^Tv\right]}_i
=&\frac{1}{n}\sum_{j=0}^{n-1}\Psi_{ij}^T=\frac{1}{n}\sum_{j=0}^{n-1}\Psi_{ji}\\
=&\frac{1}{n}\sum_{j=0}^{n-1}\frac{1}{n}\zeta^{ij}\frac{L^i}{K_i}=\frac{1}{n^2}\frac{L^i}{K_i}\sum_{j=0}^{n-1}\zeta^{ij}=\frac{1}{n^2}n\delta_{i,0}=\frac{1}{n}\delta_{i,0}.
\end{split}
\end{equation}
So, we have $\Psi^Tv=\frac{1}{n}\left[1\,0\,\cdots\,0\right]^T$. This implies that the translation vector
\begin{equation}\label{eq:contproofeq5}
T(z)=z\left(\mathrm{Id}-R^{-1}(z)\right)v=T_2z^2+T_3z^3+\cdots
\end{equation}
where $T_k$ is the coefficient of $z^{k-1}$ in $-R^{-1}(z)v$ given by 
\begin{equation}\label{eq:contproofeq6}
\begin{split}
 T_{jk}
 &=\text{the $j^{\text{th}}$ entry of the coefficient of $z^{k-1}$  in }-R^{-1}(z)v\\
 &=\text{the $j^{\text{th}}$ entry of the coefficient of $z^{k-1}$  in }-P^T(-z)\Psi^Tv\\
 &=\frac{(-1)^k}{n}P_{0j}^{k-1}.
\end{split}
\end{equation}
This enables us to understand the translation action by $T(z)$ and vertex contributions after the translation action. Next, we will understand the effects of the $R$-matrix action and obtain the expressions for the contributions fully.

Now, consider
\begin{equation*}
F(z,w)=\frac{M(z,w)}{z+w}
\end{equation*}
with both $F(z,w),M(z,w)\in\mathbb{C}[\![z,w]\!]$, 
\begin{equation*}
F(z, w)=\sum_{a, b \geq 0} \beta_{a, b} z^{a} w^{b}\quad\text{and}\quad M(z, w)=\sum_{c, d \geq 0} \alpha_{c, d} z^{c} w^{d}.
\end{equation*}
Then, the coefficients $\beta_{a, b}$ are given by
\begin{equation}\label{eq:contproofeq7}
\beta_{a, b}=\sum_{m=0}^{b}(-1)^{m} \alpha_{a+m+1,b-m}.
\end{equation}

Now observe that
\begin{equation}\label{eq:contproofeq8}
\begin{split}
{\left[\Psi^T \Psi\right]}_{l j}
&=
\sum_{r=0}^{n-1} \Psi_{r l} \Psi_{r j}=\sum_{r=0}^{n-1} \frac{1}{n}\zeta^{r l} \frac{L^{l}}{K_{l}} \frac{1}{n}\zeta^{r j} \frac{L^{j}}{K_{j}}\\
&=
\frac{1}{n^2}\frac{L^{l}}{K_{l}}\frac{L^{j}}{K_{j}}\sum_{r=0}^{n-1}\zeta^{r(l-\mathrm{Inv}(j))}=\frac{1}{n^2}\frac{L^{l}}{K_{l}}\frac{L^{j}}{K_{j}}n\delta_{l,\mathrm{Inv}(j)}\\
&=
\frac{1}{n}\underbrace{\frac{L^{\mathrm{Inv}(j)+j}}{K_{\mathrm{Inv}(j)}K_j}}_{=1}\delta_{l,\mathrm{Inv}(j)}=\frac{1}{n}\delta_{l,\mathrm{Inv}(j)}.
\end{split}
\end{equation}

Next, in order to understand the edge contributions, we compute
\begin{equation}\label{eq:contproofeq9}
\begin{split}
{\delta_{i,j}-{\left[R^{-1}(z)R^{-1}(w)^T\right]}_{ij}}
&=\delta_{i,j}-\sum_{s,r=0}^{n-1}P_{i,s}^T(-z)\left[\Psi^T\Psi\right]_{sr} P_{r,j}(-w)\\
&=\delta_{i,j}-\sum_{s,r=0}^{n-1}P_{i,s}^T(-z)\frac{1}{n}\delta_{s,\mathrm{Inv}(r)} P_{r,j}(-w)\\
&=\delta_{i,j}-\frac{1}{n}\sum_{r=0}^{n-1}P_{i,\mathrm{Inv}(r)}^T(-z)P_{r,j}(-w)\\
&=\delta_{i,j}-\frac{1}{n}\sum_{r=0}^{n-1}\sum_{c,d\geq{0}}(-1)^{c+d}P^c_{\mathrm{Inv}(r),i}P^d_{r,j}z^cw^d\\
&=\delta_{i,j}-\frac{1}{n}\sum_{r=0}^{n-1}\sum_{c,d\geq{0}}(-1)^{c+d}\frac{K_{\mathrm{Inv}(r)}}{L^{\mathrm{Inv}(r)}}\frac{\widetilde{P}^c_{\mathrm{Inv}(r),i}}{\zeta^{(c+\mathrm{Inv}(r))i}}\frac{K_r}{L^r}\frac{\widetilde{P}^d_{r,j}}{\zeta^{(d+r)j}}z^cw^d\\
&=\delta_{i,j}-\frac{1}{n}\sum_{r=0}^{n-1}\sum_{c,d\geq{0}}(-1)^{c+d}\frac{\widetilde{P}^c_{\mathrm{Inv}(r),i}\widetilde{P}^d_{r,j}}{\zeta^{(c+\mathrm{Inv}(r))i}\zeta^{(d+r)j}}z^cw^d.
\end{split}
\end{equation}
So, we have
\begin{equation}\label{eq:contproofeq10}
\frac{{\delta_{i,j}-{\left[R^{-1}(z)R^{-1}(w)^T\right]}_{ij}}}{z+w}=\sum_{b_1,b_2\geq{0}}\beta^{i,j}_{b_1,b_2}z^{b_1}w^{b_2}
\end{equation}
with
\begin{equation}\label{eq:contproofeq11}
\beta^{i,j}_{b_1,b_2}=\frac{(-1)^{b_1+b_2}}{n}\sum_{m=0}^{b_2}(-1)^m\sum_{r=0}^{n-1}\frac{\widetilde{P}^{b_1+m+1}_{\mathrm{Inv}(r),i}\widetilde{P}^{b_2-m}_{r,j}}{\zeta^{(b_1+m+1+\mathrm{Inv}(r))i}\zeta^{(b_2-m+r)j}}
\end{equation}
by equation (\ref{eq:contproofeq7}).

In order to understand the leg contributions, we compute 
\begin{equation}\label{eq:contproofeq12}
\begin{split}
\left[R^{-1}(z)\cdot\phi_j\right]_i
&=\left[P^T(-z)\Psi^T\Psi\right]_{ij}=\sum_{a\geq{0}}(-1)^a\sum_{r=0}^{n-1}P_{r,i}^a\frac{1}{n}\delta_{r,\mathrm{Inv}(j)}z^a\\
&=\sum_{a\geq{0}}\frac{(-1)^a}{n}\frac{K_{\mathrm{Inv}(j)}}{L^{\mathrm{Inv}(j)}}\frac{\widetilde{P}^a_{\mathrm{Inv}(j),i}}{\zeta^{(a+\mathrm{Inv}(j))i}}z^a
\end{split}
\end{equation}
for each $0\leq{i,j}\leq{n-1}$.

Let $\mathfrak{v}\in\mathrm{V}_{\Gamma}$ be a vertex of a decorated stable graph $\Gamma$ with legs $\{\mathfrak{l}_{\mathfrak{v}1},\ldots,\mathfrak{l}_{\mathfrak{v}\mathrm{l}(\mathfrak{v})}\}\subseteq\mathrm{L}_{\Gamma}$ and edges $\{\mathfrak{e}_{\mathfrak{v}1},\ldots,\mathfrak{e}_{\mathfrak{v}\mathrm{h}(\mathfrak{v})}\}$. Then, the (cycle-valued) contribution associated with this vertex, its legs, and edges connected to this vertex is\footnote{Notation: Here $\mathfrak{v}'_i$ is the vertex at the other end of the edge $e_{\mathfrak{v}i}$ and $\psi_{\mathrm{l}(\mathfrak{v}'_i)+j}$ is the psi class associated to the marked point corresponding to the other half of the edge $e_{\mathfrak{v}i}$.}
\begin{equation*}
\begin{split}
\widetilde{\Omega}^{\mathrm{p}(\mathfrak{v})}_{\mathrm{g}(\mathfrak{v}),\mathrm{l}(\mathfrak{v})+\mathrm{h}(\mathfrak{v})}(R^{-1}(\psi_1)\cdot&\phi_{c_{\ell(l_{\mathfrak{v}1})}},\ldots,R^{-1}(\psi_{\mathrm{l}(\mathfrak{v})})\cdot\phi_{c_{\ell(l_{\mathfrak{v}\mathrm{l}(\mathfrak{v})})}},\underbrace{\widetilde{e}_{\mathrm{p}(\mathfrak{v})},\ldots,\widetilde{e}_{\mathrm{p}(\mathfrak{v})}}_{\mathrm{h}(\mathfrak{v})\text{ many}})\\
&\times\prod_{i=1}^{\mathrm{h}(\mathfrak{v})}\left(\sum_{{b_{\mathfrak{e}_{\mathfrak{v}i}}},b_{\mathfrak{e}'_i}\geq{0}}\beta_{b_{\mathfrak{e}_{\mathfrak{v}i}},b_{\mathfrak{e}'_i}}^{\mathrm{p}(\mathfrak{v}),\mathrm{p}(\mathfrak{v}'_i)}\psi_{\mathrm{l}(\mathfrak{v})+i}^{b_{\mathfrak{e}_{\mathfrak{v}i}}}\psi_{\mathrm{l}(\mathfrak{v}'_i)+j}^{b_{\mathfrak{e}'_i}}\right).
\end{split}
\end{equation*}
This together with equations (\ref{eq:TmatrixActionExpanded}), (\ref{eq:contproofeq11}), and (\ref{eq:contproofeq12}) complete the proof after integration of cycle-valued contributions of graph $\Gamma$ over $\overline{M}_{\mathrm{g,m}}$ and using functoriality of push-forward.
\end{proof}

Define the following free algebra $$\mathds{F}_{\CnZn}\coloneqq \mathbb{C}[L^{\pm{1}}][\mathfrak{S}_n][\mathfrak{C}_n]$$
where $\mathfrak{C}_n=\{C_1,\ldots,C_{n-1}\}$. Then, an immediate corollary of Proposition \ref{prop:contributions} is the following finite generation property or polynomiality of the Gromov-Witten potential $\mathcal{F}_{g, m}^{\left[\mathbb{C}^{n} / \mathbb{Z}_{n}\right]}\left(\phi_{c_{1}}, \ldots, \phi_{c_{m}}\right)$.
\begin{cor}\label{cor:VertexEdgeCont}
The vertex, edge, and leg contributions of $\operatorname{Cont}_{\Gamma}\left(\phi_{c_{1}}, \ldots, \phi_{c_{m}}\right)$ lie in certain polynomial rings. More, precisely
\begin{equation*}
\begin{split}
&\mathrm{Cont}_{\Gamma}^{\mathrm{A}}(\mathfrak{v})\in\mathbb{C}\left[{L}^{\pm{1}}\right],\\
&\mathrm{Cont}_{\Gamma}^{\mathrm{A}}(\mathfrak{e})\in\mathbb{C}[L^{\pm{1}}][\mathfrak{S}_n],\\
&\mathrm{Cont}_{\Gamma}^{\mathrm{A}}(\mathfrak{l})\in\mathbb{C}[L^{\pm{1}}][\mathfrak{S}_n][\mathfrak{C}_n]=\mathds{F}_{\CnZn}
\end{split}
\end{equation*}
Hence, we have
\begin{equation*}
\mathcal{F}_{g, m}^{\left[\mathbb{C}^{n} / \mathbb{Z}_{n}\right]}\left(\phi_{c_{1}}, \ldots, \phi_{c_{m}}\right)\in\mathds{F}_{\CnZn}.
\end{equation*}
\end{cor}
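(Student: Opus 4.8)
The plan is to substitute the explicit formulas of Proposition~\ref{prop:contributions} into the graph expansion \eqref{eqn:formula_Fg} and check, one contribution type at a time, that every piece lands in $\mathbb{C}[L^{\pm 1}][\mathfrak{S}_n][\mathfrak{C}_n]$. The three inputs I would lean on are: Corollary~\ref{cor:P0jk_is_in_CL}, giving $P^{k}_{0,j}\in\mathbb{C}[L]$; the convention established at the start of Section~\ref{sec:HAE} that the canonical lift of each $\widetilde{P}_{i,j}^k$ lies in $\mathbb{C}[L^{\pm 1}][\mathfrak{S}_n]$ (itself a consequence of the modified flatness equations \eqref{eqn:modflateqn} together with Lemmas~\ref{lem:DGradedRingX}, \ref{lem:PropertiesofAis}, \ref{lem:Equations forDAl}); and the identity $K_i=\prod_{l=0}^{i}C_l$, which places $K_{\mathrm{Inv}(c)}/L^{\mathrm{Inv}(c)}\in\mathbb{C}[L^{\pm 1}][\mathfrak{C}_n]$ for every $0\le c\le n-1$. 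Throughout, $\zeta$ and $g(e_\alpha,e_\alpha)=1/n^2$ are numerical constants and thus irrelevant to ring membership.

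For the vertex contribution I would first simplify the translation coefficients: since $K_0=L^0=1$ we have $\widetilde{P}_{0,j}^i=P_{0,j}^i\zeta^{ij}$, hence $\mathrm{T}_{\mathrm{p}(\mathfrak{v})i}=\tfrac{(-1)^i}{n}\widetilde{P}_{0,\mathrm{p}(\mathfrak{v})}^i\zeta^{-i\mathrm{p}(\mathfrak{v})}=\tfrac{(-1)^i}{n}P_{0,\mathrm{p}(\mathfrak{v})}^i\in\mathbb{C}[L]$ by Corollary~\ref{cor:P0jk_is_in_CL} (the $\zeta$-powers cancel). Expanding each $t_{\mathrm{p}(\mathfrak{v})}(\psi)$ and using that $\int_{\overline{M}_{\mathrm{g}(\mathfrak{v}),\mathrm{n}(\mathfrak{v})+k}}(\cdots)$ is a rational number and $g(e_{\mathrm{p}(\mathfrak{v})},e_{\mathrm{p}(\mathfrak{v})})^{-(2g-2+\mathrm{n}(\mathfrak{v})+k)/2}=n^{\,2g-2+\mathrm{n}(\mathfrak{v})+k}$, one sees that $\mathrm{Cont}_\Gamma^{\mathrm{A}}(\mathfrak{v})$ is a $\mathbb{C}$-linear combination of products of elements of $\mathbb{C}[L]$. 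Because $t_{\mathrm{p}(\mathfrak{v})}(z)$ vanishes to order $\ge 2$ at $z=0$ and any $\psi$-monomial of total degree exceeding $\dim\overline{M}_{\mathrm{g}(\mathfrak{v}),\mathrm{n}(\mathfrak{v})+k}=3\mathrm{g}(\mathfrak{v})-3+\mathrm{n}(\mathfrak{v})+k$ integrates to zero, this combination is finite and vanishes for all but finitely many flag vectors $\mathrm{A}$; hence $\mathrm{Cont}_\Gamma^{\mathrm{A}}(\mathfrak{v})\in\mathbb{C}[L]\subseteq\mathbb{C}[L^{\pm 1}]$. For the edge contribution the only non-constant ingredients are the factors $\widetilde{P}_{\bullet,\bullet}^{\bullet}\in\mathbb{C}[L^{\pm 1}][\mathfrak{S}_n]$, so $\mathrm{Cont}_\Gamma^{\mathrm{A}}(\mathfrak{e})\in\mathbb{C}[L^{\pm 1}][\mathfrak{S}_n]$; for the leg contribution one has in addition the prefactor $\tfrac{1}{n}K_{\mathrm{Inv}(c_{\ell(\mathfrak{l})})}/L^{\mathrm{Inv}(c_{\ell(\mathfrak{l})})}\in\mathbb{C}[L^{\pm 1}][\mathfrak{C}_n]$, so $\mathrm{Cont}_\Gamma^{\mathrm{A}}(\mathfrak{l})\in\mathbb{C}[L^{\pm 1}][\mathfrak{S}_n][\mathfrak{C}_n]$.

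To conclude, I would assemble these through \eqref{eqn:formula_Fg}: the outer sum over $\Gamma\in\mathrm{G}_{g,m}^{\text{Dec}}(n)$ is finite (finitely many stable graphs of genus $g$ with $m$ legs, each with finitely many decorations $\mathrm{p}$), and for each $\Gamma$ the sum over $\mathrm{A}\in\mathbb{Z}_{\geq 0}^{\mathrm{F}(\Gamma)}$ is finite because the vertex dimension bound above, applied at every vertex (each flag belongs to a unique vertex), bounds all entries of $\mathrm{A}$. Each summand $\tfrac{1}{|\mathrm{Aut}(\Gamma)|}\prod_{\mathfrak{v}}\mathrm{Cont}_\Gamma^{\mathrm{A}}(\mathfrak{v})\prod_{\mathfrak{e}}\mathrm{Cont}_\Gamma^{\mathrm{A}}(\mathfrak{e})\prod_{\mathfrak{l}}\mathrm{Cont}_\Gamma^{\mathrm{A}}(\mathfrak{l})$ is then a finite product of elements of $\mathbb{C}[L^{\pm 1}][\mathfrak{S}_n][\mathfrak{C}_n]$, whence $\mathcal{F}_{g,m}^{[\mathbb{C}^n/\mathbb{Z}_n]}(\phi_{c_1},\ldots,\phi_{c_m})\in\mathbb{C}[L^{\pm 1}][\mathfrak{S}_n][\mathfrak{C}_n]$. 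The only step needing genuine (if routine) care is this finiteness: one must combine the vanishing of high-degree $\psi$-integrals on $\overline{M}_{\mathrm{g}(\mathfrak{v}),\mathrm{n}(\mathfrak{v})+k}$ with the order-$2$ vanishing of $t_{\mathrm{p}(\mathfrak{v})}$ to bound simultaneously the number $k$ of stabilization points and all flag exponents at each vertex; everything else is direct substitution into Proposition~\ref{prop:contributions} and bookkeeping of which generators occur.
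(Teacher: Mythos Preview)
Your proposal is correct and follows essentially the same approach as the paper: the paper's proof is a single sentence citing Corollary~\ref{cor:P0jk_is_in_CL}, Proposition~\ref{prop:contributions}, and the modified flatness equations~\eqref{eqn:modflateqn}, and you have unpacked exactly these three ingredients. Your additional care with the finiteness of the sums over $k$ and over $\mathrm{A}$ (via the dimension bound on $\overline{M}_{\mathrm{g}(\mathfrak{v}),\mathrm{n}(\mathfrak{v})+k}$ combined with the order-$2$ vanishing of $t_{\mathrm{p}(\mathfrak{v})}$) is a point the paper leaves implicit but which is indeed needed for the conclusion to be a genuine polynomial statement rather than a formal one.
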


\begin{proof}
This is immediate from Lemma \ref{cor:P0jk_is_in_CL}, Proposition \ref{prop:contributions}, and the modified flatness equations (\ref{eqn:modflateqn}).
\end{proof}

We should note that $C_i$'s are related to each other via Lemma \ref{propertiesofCfunctions}, hence $\mathfrak{C}_n$ consists of $C_1,\ldots,C_{\lfloor{\frac{n+1}{2}}\rfloor}$. Also, we should emphasize that the Gromov-Witten potential $\mathcal{F}_{g, m}^{\left[\mathbb{C}^{n} / \mathbb{Z}_{n}\right]}\left(\phi_{c_{1}}, \ldots, \phi_{c_{m}}\right)$ may lie in a smaller ring depending on insertions. For example, $\mathcal{F}_g$ lies in $\mathbb{C}[L^{\pm{1}}][\mathfrak{S}_n]$.

The following two lemmas are crucial for the proof of holomorphic anomaly equations.

\begin{lem}\label{lem:oddderivativeofedge}
Let $n\geq{3}$ be an odd number with $n=2s+1$, then we have
\begin{equation*} 
    \frac{\partial}{\partial A_{s}}\mathrm{Cont}_{\Gamma}^{\mathrm{A}}(\mathfrak{e})
    =\frac{(-1)^{b_{\mathfrak{e}1}+b_{\mathfrak{e}2}}}{2s+1} \frac{\widetilde{P}_{s+1,\mathrm{p}(\mathfrak{v}_1)}^{b_{\mathfrak{e}1}}\widetilde{P}_{s+1,\mathrm{p}(\mathfrak{v}_2)}^{b_{\mathfrak{e}2}}}{\zeta^{(b_{\mathfrak{e}1}+s+1)\mathrm{p}(\mathfrak{e}_1)}\zeta^{(b_{\mathfrak{e}2}+s+1)\mathrm{p}(\mathfrak{v}_2)}}.
\end{equation*}
\end{lem}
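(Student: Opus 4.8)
The plan is to differentiate the edge contribution formula term by term with respect to $A_s$, using Lemma \ref{lem:oddderivativeflatness} to control the derivatives of the building blocks $\widetilde{P}_{i,j}^k$. Recall from Proposition \ref{prop:contributions} that
\begin{equation*}
\mathrm{Cont}_{\Gamma}^{\mathrm{A}}(\mathfrak{e})
=\frac{(-1)^{b_{\mathfrak{e}1}+b_{\mathfrak{e}2}}}{n} \sum_{j=0}^{b_{\mathfrak{e}2}}(-1)^{j} \sum_{r=0}^{n-1}\frac{\widetilde{P}_{\mathrm{Inv}(r),\mathrm{p}(\mathfrak{v}_1)}^{b_{\mathfrak{e}1}+j+1}\widetilde{P}_{r,\mathrm{p}(\mathfrak{v}_2)}^{b_{\mathfrak{e}2}-j}}{\zeta^{(b_{\mathfrak{e}1}+j+1+\mathrm{Inv}(r))\mathrm{p}(\mathfrak{v}_1)}\zeta^{(b_{\mathfrak{e}2}-j+r)\mathrm{p}(\mathfrak{v}_2)}},
\end{equation*}
where $n=2s+1$. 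By Lemma \ref{lem:oddderivativeflatness}, $\partial\widetilde{P}_{i,j}^k/\partial A_s=\delta_{i,s}\widetilde{P}_{s+1,j}^{k-1}$, so applying the Leibniz rule to each summand, only those terms survive in which one of the two $\widetilde P$-factors has first index equal to $s$. For the factor $\widetilde{P}_{\mathrm{Inv}(r),\mathrm{p}(\mathfrak{v}_1)}^{b_{\mathfrak{e}1}+j+1}$ this forces $\mathrm{Inv}(r)=s$, i.e. $r=n-s=s+1$; for the factor $\widetilde{P}_{r,\mathrm{p}(\mathfrak{v}_2)}^{b_{\mathfrak{e}2}-j}$ it forces $r=s$. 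So the $r$-sum collapses to the two indices $r=s+1$ and $r=s$.

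The main computational step is then to add up these two contributions and check that everything except the claimed term cancels. Differentiating the $r=s+1$ term replaces $\widetilde{P}_{\mathrm{Inv}(s+1),\mathrm{p}(\mathfrak{v}_1)}^{b_{\mathfrak{e}1}+j+1}=\widetilde{P}_{s,\mathrm{p}(\mathfrak{v}_1)}^{b_{\mathfrak{e}1}+j+1}$ by $\widetilde{P}_{s+1,\mathrm{p}(\mathfrak{v}_1)}^{b_{\mathfrak{e}1}+j}$, while differentiating the $r=s$ term replaces $\widetilde{P}_{s,\mathrm{p}(\mathfrak{v}_2)}^{b_{\mathfrak{e}2}-j}$ by $\widetilde{P}_{s+1,\mathrm{p}(\mathfrak{v}_2)}^{b_{\mathfrak{e}2}-j-1}$. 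Using $\mathrm{Inv}(s)=s+1$ and $\mathrm{Inv}(s+1)=s$ to rewrite the remaining (undifferentiated) factors, the $j$-sum becomes telescoping: the $r=s+1$, parameter-$j$ summand matches (up to sign) the $r=s$, parameter-$(j-1)$ summand. After reindexing one of the sums by $j\mapsto j-1$ and accounting for the $(-1)^j$ signs, all interior terms cancel pairwise, leaving only the boundary contributions at $j=0$ (from the $r=s+1$ family) and $j=b_{\mathfrak{e}2}$ (from the $r=s$ family). One should check these two boundary terms coincide and combine, with the prefactor $\tfrac1n$ doubling to $\tfrac{2}{n}$ and $\tfrac{2}{n}\cdot\tfrac{1}{2}$ — wait, more carefully: the two surviving boundary terms should be equal, so $\tfrac1n(1+1)=\tfrac{2}{2s+1}$; one then verifies this equals $\tfrac{1}{2s+1}$ times the stated expression, which means I expect exactly one boundary term to survive after the cancellation rather than two, or that a factor of $\tfrac12$ appears. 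This bookkeeping of which boundary terms survive and with what coefficient is the step I expect to require the most care.

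Concretely, the order of operations I would follow is: (i) write out $\partial\mathrm{Cont}_{\Gamma}^{\mathrm{A}}(\mathfrak{e})/\partial A_s$ by Leibniz, (ii) invoke Lemma \ref{lem:oddderivativeflatness} to kill all $r$ except $r=s,s+1$, (iii) in each of the two resulting single sums, use $\mathrm{Inv}(s)=s+1$, $\mathrm{Inv}(s+1)=s$ and shift the summation index in one of them to exhibit the telescoping, (iv) collect the surviving boundary term(s) at $j=0$ and $j=b_{\mathfrak{e}2}$ and simplify the $\zeta$-powers using $b_{\mathfrak{e}1}+j+1+\mathrm{Inv}(r)$ at the boundary values, and (v) identify the result with the claimed formula, noting $\mathrm{p}(\mathfrak{e}_1)=\mathrm{p}(\mathfrak{v}_1)$ in the statement. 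The only genuine input beyond elementary algebra is Lemma \ref{lem:oddderivativeflatness}; everything else is a finite, if slightly fiddly, rearrangement, so the proof should be short once the telescoping is set up correctly.
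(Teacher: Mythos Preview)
Your approach is correct and essentially identical to the paper's: differentiate the edge formula, use Lemma~\ref{lem:oddderivativeflatness} to collapse the $r$-sum to $r=s$ and $r=s+1$, then telescope the $j$-sum. The only point where you waver is the boundary bookkeeping, and here the resolution is simple: the $j=b_{\mathfrak{e}2}$ term of the $r=s$ family produces $\widetilde{P}_{s+1,\mathrm{p}(\mathfrak{v}_2)}^{\,b_{\mathfrak{e}2}-b_{\mathfrak{e}2}-1}=\widetilde{P}_{s+1,\mathrm{p}(\mathfrak{v}_2)}^{-1}=0$, so that boundary contributes nothing. Hence exactly one term survives---the $j=0$ term of the $r=s+1$ family---and the prefactor is $\tfrac{1}{n}=\tfrac{1}{2s+1}$ with no factor of $2$. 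The paper makes this explicit by writing the second sum only up to $b_{\mathfrak{e}2}-1$ and noting $\widetilde{P}_{i,j}^k=0$ for $k<0$; once you insert that observation your outline becomes a complete proof.
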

\begin{proof}
The proof is the following direct computation:
\begin{equation*}
\begin{split}
    \frac{\partial}{\partial A_{s}}\mathrm{Cont}_{\Gamma}^{\mathrm{A}}(\mathfrak{e})
    =&\frac{(-1)^{b_{\mathfrak{e}1}+b_{\mathfrak{e}2}}}{n} \sum_{m=0}^{b_{\mathfrak{e}2}}(-1)^{m} \sum_{r=0}^{n-1}\frac{\frac{\partial}{\partial A_{s}}\left(\widetilde{P}_{\mathrm{Inv}(r),\mathrm{p}(\mathfrak{v}_1)}^{b_{\mathfrak{e}1}+m+1}\widetilde{P}_{r,\mathrm{p}(\mathfrak{v}_2)}^{b_{\mathfrak{e}2}-m}\right)}{\zeta^{(b_{\mathfrak{e}1}+m+1+\mathrm{Inv}(r))\mathrm{p}(\mathfrak{v}_1)}\zeta^{(b_{\mathfrak{e}2}-m+r)\mathrm{p}(\mathfrak{v}_2)}}\\
    =&\frac{(-1)^{b_{\mathfrak{e}1}+b_{\mathfrak{e}2}}}{n} \sum_{m=0}^{b_{\mathfrak{e}2}}(-1)^{m} \frac{\widetilde{P}_{s+1,\mathrm{p}(\mathfrak{v}_1)}^{b_{\mathfrak{e}1}+m}\widetilde{P}_{s+1,\mathrm{p}(\mathfrak{v}_2)}^{b_{\mathfrak{e}2}-m}}{\zeta^{(b_{\mathfrak{e}1}+m+s+1)\mathrm{p}(\mathfrak{v}_1)}\zeta^{(b_{\mathfrak{e}2}-m+s+1)\mathrm{p}(\mathfrak{v}_2)}}\\
    &+\frac{(-1)^{b_{\mathfrak{e}1}+b_{\mathfrak{e}2}}}{n} \sum_{m=0}^{b_{\mathfrak{e}2}-1}(-1)^{m}\frac{\widetilde{P}_{s+1,\mathrm{p}(\mathfrak{v}_1)}^{b_{\mathfrak{e}1}+m+1}\widetilde{P}_{s+1,\mathrm{p}(\mathfrak{v}_2)}^{b_{\mathfrak{e}2}-m-1}}{\zeta^{(b_{\mathfrak{e}1}+m+s+2)\mathrm{p}(\mathfrak{v}_1)}\zeta^{(b_{\mathfrak{e}2}-m+s)\mathrm{p}(\mathfrak{v}_2)}}\\
    =&\frac{(-1)^{b_{\mathfrak{e}1}+b_{\mathfrak{e}2}}}{n} \sum_{m=0}^{b_{\mathfrak{e}2}}(-1)^{m} \frac{\widetilde{P}_{s+1,\mathrm{p}(\mathfrak{v}_1)}^{b_{\mathfrak{e}1}+m}\widetilde{P}_{s+1,\mathrm{p}(\mathfrak{v}_2)}^{b_{\mathfrak{e}2}-m}}{\zeta^{(b_{\mathfrak{e}1}+m+s+1)\mathrm{p}(\mathfrak{v}_1)}\zeta^{(b_{\mathfrak{e}2}-m+s+1)\mathrm{p}(\mathfrak{v}_2)}}\\
    &+\frac{(-1)^{b_{\mathfrak{e}1}+b_{\mathfrak{e}2}}}{n} \sum_{m=1}^{b_{\mathfrak{e}2}}(-1)^{m-1} \frac{\widetilde{P}_{s+1,\mathrm{p}(\mathfrak{v}_1)}^{b_{\mathfrak{e}1}+m}\widetilde{P}_{s+1,\mathrm{p}(\mathfrak{v}_2)}^{b_{\mathfrak{e}2}-m}}{\zeta^{(b_{\mathfrak{e}1}+m+s+1)\mathrm{p}(\mathfrak{v}_1)}\zeta^{(b_{\mathfrak{e}2}-m+s+1)\mathrm{p}(\mathfrak{v}_2)}}\\
    =&\frac{(-1)^{b_{\mathfrak{e}1}+b_{\mathfrak{e}2}}}{2s+1} \frac{\widetilde{P}_{s+1,\mathrm{p}(\mathfrak{v}_1)}^{b_{\mathfrak{e}1}}\widetilde{P}_{s+1,\mathrm{p}(\mathfrak{v}_2)}^{b_{\mathfrak{e}2}}}{\zeta^{(b_{\mathfrak{e}1}+s+1)\mathrm{p}(\mathfrak{v}_1)}\zeta^{(b_{\mathfrak{e}2}+s+1)\mathrm{p}(\mathfrak{v}_2)}}.
\end{split}
\end{equation*}
The first equality is just the derivative of the edge contribution part of Proposition \ref{prop:contributions}. The second equality follows from Lemma \ref{lem:oddderivativeflatness} and $\widetilde{P}_{i,j}^k=0$ by definition if $k<0$. The rest is just shifting the index of the first sum and cancelling out the terms of the total expression.
\end{proof}
\begin{lem}\label{lem:evenderivativeofedge}
Let $n\geq{4}$ be an even number with $n=2s$, then we have
\begin{equation*} 
    \frac{\partial}{\partial A_{s-1}}\mathrm{Cont}_{\Gamma}^{\mathrm{A}}(\mathfrak{e})=\frac{(-1)^{b_{\mathfrak{e}1}+b_{\mathfrak{e}2}}}{2s}\left(\frac{\widetilde{P}_{s+1,\mathrm{p}(\mathfrak{v}_1)}^{b_{\mathfrak{e}1}}\widetilde{P}_{s,\mathrm{p}(\mathfrak{v}_2)}^{b_{\mathfrak{e}2}}}{\zeta^{(b_{\mathfrak{e}1}+s+1)\mathrm{p}(\mathfrak{v}_1)}\zeta^{(b_{\mathfrak{e}2}+s)\mathrm{p}(\mathfrak{v}_2)}}+\frac{\widetilde{P}_{s,\mathrm{p}(\mathfrak{v}_1)}^{b_{\mathfrak{e}1}}\widetilde{P}_{s+1,\mathrm{p}(\mathfrak{v}_2)}^{b_{\mathfrak{e}2}}}{\zeta^{(b_{\mathfrak{e}1}+s)\mathrm{p}(\mathfrak{v}_1)}\zeta^{(b_{\mathfrak{e}2}+s+1)\mathrm{p}(\mathfrak{v}_2)}}\right).
\end{equation*}
\end{lem}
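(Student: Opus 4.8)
The plan is to follow the proof of Lemma \ref{lem:oddderivativeofedge} verbatim in structure, accounting only for the fact that in the even case the operator $\partial/\partial A_{s-1}$, applied to a lifted $\widetilde{P}_{i,j}^k\in\mathbb{C}[L^{\pm1}][\mathfrak{S}_n]$, produces \emph{two} nonzero contributions rather than one. First I would start from the edge contribution of Proposition \ref{prop:contributions},
\begin{equation*}
\mathrm{Cont}_{\Gamma}^{\mathrm{A}}(\mathfrak{e})=\frac{(-1)^{b_{\mathfrak{e}1}+b_{\mathfrak{e}2}}}{n}\sum_{m=0}^{b_{\mathfrak{e}2}}(-1)^{m}\sum_{r=0}^{n-1}\frac{\widetilde{P}_{\mathrm{Inv}(r),\mathrm{p}(\mathfrak{v}_1)}^{b_{\mathfrak{e}1}+m+1}\widetilde{P}_{r,\mathrm{p}(\mathfrak{v}_2)}^{b_{\mathfrak{e}2}-m}}{\zeta^{(b_{\mathfrak{e}1}+m+1+\mathrm{Inv}(r))\mathrm{p}(\mathfrak{v}_1)}\zeta^{(b_{\mathfrak{e}2}-m+r)\mathrm{p}(\mathfrak{v}_2)}},
\end{equation*}
note that the sign $(-1)^{b_{\mathfrak{e}1}+b_{\mathfrak{e}2}}$ and all the $\zeta$-denominators are constant in $A_{s-1}$, and apply the Leibniz rule together with Lemma \ref{lem:evenderivativeflatness} specialized to $l=s$, i.e. $\partial\widetilde{P}_{i,j}^k/\partial A_{s-1}=\delta_{i,s}\widetilde{P}_{s+1,j}^{k-1}+\delta_{i,s-1}\widetilde{P}_{s,j}^{k-1}$; in particular this derivative vanishes unless $i\in\{s-1,s\}$.

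Next I would record the arithmetic of $\mathrm{Inv}$ for $n=2s$, namely $\mathrm{Inv}(s)=s$, $\mathrm{Inv}(s-1)=s+1$, $\mathrm{Inv}(s+1)=s-1$ (all meaningful since $s\ge2$). Hence in $\partial_{A_{s-1}}(\widetilde{P}_{\mathrm{Inv}(r),\mathrm{p}(\mathfrak{v}_1)}^{b_{\mathfrak{e}1}+m+1}\widetilde{P}_{r,\mathrm{p}(\mathfrak{v}_2)}^{b_{\mathfrak{e}2}-m})$ the derivative of the first factor is nonzero only for $r\in\{s,s+1\}$ and that of the second factor only for $r\in\{s,s-1\}$. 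Collecting the four resulting families by their $\widetilde{P}$-index pattern at $(\mathfrak{v}_1,\mathfrak{v}_2)$, the terms of type $(s+1,s)$ are the $r=s$ piece from differentiating the first factor together with the $r=s-1$ piece from differentiating the second factor, while the terms of type $(s,s+1)$ are the $r=s+1$ piece from the first factor together with the $r=s$ piece from the second factor.

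Finally, for each of the two groups I would run the telescoping argument of the odd case. In the $(s+1,s)$ group, after the index shift $m\mapsto m-1$ the $r=s-1$ sum has general term $(-1)^{m-1}\widetilde{P}_{s+1,\mathrm{p}(\mathfrak{v}_1)}^{b_{\mathfrak{e}1}+m}\widetilde{P}_{s,\mathrm{p}(\mathfrak{v}_2)}^{b_{\mathfrak{e}2}-m}$ divided by exactly the same $\zeta$-denominator as the $r=s$ sum $\sum_m(-1)^m\widetilde{P}_{s+1,\mathrm{p}(\mathfrak{v}_1)}^{b_{\mathfrak{e}1}+m}\widetilde{P}_{s,\mathrm{p}(\mathfrak{v}_2)}^{b_{\mathfrak{e}2}-m}/(\cdots)$; this alignment is the only genuine check and reduces to $b_{\mathfrak{e}1}+m+1+\mathrm{Inv}(s)=b_{\mathfrak{e}1}+(m-1)+1+\mathrm{Inv}(s-1)$ and the analogous identity in the $\mathfrak{v}_2$ exponent. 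The $m=1,\dots,b_{\mathfrak{e}2}$ terms then cancel in pairs, the $m=b_{\mathfrak{e}2}+1$ term of the reindexed sum vanishes because $\widetilde{P}_{i,j}^{-1}=0$, and only the $m=0$ term of the $r=s$ sum survives, giving $\widetilde{P}_{s+1,\mathrm{p}(\mathfrak{v}_1)}^{b_{\mathfrak{e}1}}\widetilde{P}_{s,\mathrm{p}(\mathfrak{v}_2)}^{b_{\mathfrak{e}2}}/\bigl(\zeta^{(b_{\mathfrak{e}1}+s+1)\mathrm{p}(\mathfrak{v}_1)}\zeta^{(b_{\mathfrak{e}2}+s)\mathrm{p}(\mathfrak{v}_2)}\bigr)$. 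The $(s,s+1)$ group collapses identically to $\widetilde{P}_{s,\mathrm{p}(\mathfrak{v}_1)}^{b_{\mathfrak{e}1}}\widetilde{P}_{s+1,\mathrm{p}(\mathfrak{v}_2)}^{b_{\mathfrak{e}2}}/\bigl(\zeta^{(b_{\mathfrak{e}1}+s)\mathrm{p}(\mathfrak{v}_1)}\zeta^{(b_{\mathfrak{e}2}+s+1)\mathrm{p}(\mathfrak{v}_2)}\bigr)$, and dividing by $n=2s$ yields the stated identity. The whole argument is bookkeeping; the one step to be careful about is the $\zeta$-exponent alignment under the shift $m\mapsto m-1$, which is exactly what forces the use of $\mathrm{Inv}(s)=s$ and $\mathrm{Inv}(s\mp1)=s\pm1$, together with the boundary vanishing $\widetilde{P}_{i,j}^{-1}=0$ that isolates the single surviving term in each group.
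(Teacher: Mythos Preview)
Your proposal is correct and follows exactly the approach the paper intends: the paper's own proof simply says ``similar to that of Lemma~\ref{lem:oddderivativeofedge}, using Lemma~\ref{lem:evenderivativeflatness} instead of Lemma~\ref{lem:oddderivativeflatness},'' and you have spelled out precisely that computation, including the correct identification of the four nonvanishing $r$-values, the pairing into the $(s+1,s)$ and $(s,s+1)$ groups, the $\zeta$-exponent alignment under $m\mapsto m-1$, and the boundary vanishing $\widetilde{P}^{-1}_{i,j}=0$.
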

\begin{proof}
The proof is similar to that of Lemma \ref{lem:oddderivativeofedge}. In this case, we use Lemma \ref{lem:evenderivativeflatness} instead of Lemma \ref{lem:oddderivativeflatness}.
\end{proof}

\subsection{The action on the edges}\label{sec:edge_action}

Let $\Gamma\in \mathrm{G}_{g,m}$ be a stable graph, and $\mathfrak{e}\in\mathrm{E}_{\Gamma}$ be an edge of $\Gamma$. The automorphism group $\mathrm{Aut}(\Gamma)$ acts on the set of edges $\mathrm{E}_{\Gamma}$ of $\Gamma$. Let $\mathrm{Stab}_{\Gamma}(\mathfrak{e})$ and $\mathrm{Orb_{\Gamma}(\mathfrak{e})}$ be the stabilizer and the orbit of the edge $\mathfrak{e}$ under the action of $\mathrm{Aut}(\Gamma)$, respectively.

We obtain a graph $\Gamma_{\mathfrak{e}}$ by  breaking an edge $\mathfrak{e}$ into two legs $\mathfrak{l}_{\mathfrak{e}}$ and $\mathfrak{l}_{\mathfrak{e}}'$. There are two possibilities: either the resulting graph is connected, or it is disconnected with two connected parts and each connected component has only one of $\mathfrak{l}_{\mathfrak{e}}$ and $\mathfrak{l}_{\mathfrak{e}}'$.
\begin{itemize}
    \item[(i)] If the resulting graph is connected, there is no canonical way of labeling the legs $\mathfrak{l}_{\mathfrak{e}}$ and $\mathfrak{l}_{\mathfrak{e}}'$. We can extend the labeling $\ell$ of the legs $\mathrm{L}_{\Gamma}$ of $\Gamma$ to a labeling of the legs $\mathrm{L}_{\Gamma_{\mathfrak{e}}}=\mathrm{L}_{\Gamma}\cup \{\mathfrak{l}_{\mathfrak{e}},\mathfrak{l}_{\mathfrak{e}}'\}$ of $\Gamma_{\mathfrak{e}}$ by labeling one of the $\mathfrak{l}_{\mathfrak{e}},\mathfrak{l}_{\mathfrak{e}}'$ as $m+1$ and the other as $m+2$. Fixing a labeling of $\mathfrak{l}_{\mathfrak{e}},\mathfrak{l}_{\mathfrak{e}}'$, the graph $\Gamma_{\mathfrak{e}}$ becomes a stable graph and it is an element of $\mathrm{G}_{g-1,m+2}$. Denote $\Gamma_{\mathfrak{e}}$ as $\Gamma_{\mathfrak{e},(m+1,m+2)}$ for one labeling and as $\Gamma_{\mathfrak{e},(m+2,m+1)}$ for the other labeling.
    
    \item[(ii)] If the resulting graph is disconnected, then we denote its connected components as $\Gamma_{{\mathfrak{e}}}^1$ and $\Gamma_{{\mathfrak{e}}}^2$. Without loss of generality assume  $\mathfrak{l}_{\mathfrak{e}}$ be a leg of $\Gamma_{{\mathfrak{e}}}^1$ and $\mathfrak{l}_{\mathfrak{e}}'$ be a leg of $\Gamma_{{\mathfrak{e}}}^2$.
    
    Let $m_i+1$ be the number of legs of $\Gamma_{{\mathfrak{e}}}^i$ and $g_i$ be the genus of $\Gamma_{{\mathfrak{e}}}^i$ for $i=1,2$. Then, we have $m_1+m_2=m$ and $g_1+g_2=g$. The labeling $\ell$ of the legs $\mathrm{L}_{\Gamma}$ of $\Gamma$ yields canonical labelings $\ell_i:\mathrm{L}_{\Gamma_{{\mathfrak{e}}}^i}\rightarrow \{1,\ldots,m_i+1\}$ of $\Gamma_{{\mathfrak{e}}}^i$ for $i=1,2$. This is achived by relabeling legs of $\Gamma_{{\mathfrak{e}}}^i$ coming from $\mathrm{L}_{\Gamma}$ in the ascending order of the labeling induced by $\ell$ and setting $\ell_1(\mathfrak{l}_{\mathfrak{e}})=m_1+1$ and $\ell_2(\mathfrak{l}_{\mathfrak{e}}')=m_2+1$. This makes each of $\Gamma_{{\mathfrak{e}}}^i$ a stable graph lying in $\mathrm{G}_{g_i,m_i+1}$ for $i=1,2$.
\end{itemize}

By the definition of the isomorphism of stable graphs and a basic graph analysis the following can be obtained.

\begin{lem}\label{lem:breaking_an_edge}
Assume two edges $\mathfrak{e}$ and $\tilde{\mathfrak{e}}$ of $\Gamma$ are in the same orbit. 
\begin{enumerate}
    \item For the connected case, the stable graph $\Gamma_{\mathfrak{e},(m+1,m+2)}$ is isomorphic to the stable graph $\Gamma_{\tilde{\mathfrak{e}},(m+1,m+2)}$ or the stable graph $\Gamma_{\tilde{\mathfrak{e}},(m+2,m+1)}$.
    \item For the disconnected case, the connected components of the graph $\Gamma_{\mathfrak{e}}$ are isomorphic as stable graphs to the connected components of the graph $\Gamma_{\tilde{\mathfrak{e}}}$.
\end{enumerate}
\end{lem}

\subsubsection{Case A} For a fixed edge $\mathfrak{e}$, assume all elements of $\mathrm{Aut}(\Gamma)$ fixes the half-edges of $\mathfrak{e}$, i.e. there does not exist an automorphism of $\Gamma$ interchanging the half-edges of $\mathfrak{e}$.
\begin{enumerate}
    \item For the connected case,  we have $$\vert \mathrm{Aut}({\Gamma_{\mathfrak{e}}})\vert=\vert \mathrm{Aut}(\Gamma_{\tilde{\mathfrak{e}},(m+1,m+2)})\vert=\vert \mathrm{Aut}(\Gamma_{\tilde{\mathfrak{e}},(m+2,m+1)})\vert=\vert \mathrm{Stab}_{\Gamma}(\mathfrak{e}) \vert .$$
    Hence, consequently we have
    $$\frac{\vert \mathrm{Aut}(\Gamma) \vert}{\vert \mathrm{Aut}({\Gamma_{\mathfrak{e}}}) \vert}=\vert \mathrm{Orb_{\Gamma}(\mathfrak{e})} \vert .$$
    
    \item For the disconnected case, we have
    $$\vert \mathrm{Aut}(\Gamma_{{\mathfrak{e}}}^1)\vert \vert \mathrm{Aut}(\Gamma_{{\mathfrak{e}}}^2)\vert=\vert \mathrm{Stab}_{\Gamma}(\mathfrak{e}) \vert\,.$$
    Hence, consequently we have
    $$\frac{\vert \mathrm{Aut}(\Gamma) \vert}{\vert \mathrm{Aut}(\Gamma_{{\mathfrak{e}}}^1)\vert \vert \mathrm{Aut}(\Gamma_{{\mathfrak{e}}}^2)\vert}=\vert \mathrm{Orb_{\Gamma}(\mathfrak{e})} \vert . $$
\end{enumerate}

\subsubsection{Case B} Now on the contrary assume there exists  an element $\varphi_{\mathfrak{e}}$ of $\mathrm{Aut}(\Gamma)$ interchanging the half edges of $\mathfrak{e}$. In the connected case, such $\varphi_{\mathfrak{e}}$ exists if and only if the stable graphs $\Gamma_{\tilde{\mathfrak{e}},(m+1,m+2)}$ and $\Gamma_{\tilde{\mathfrak{e}},(m+2,m+1)}$ are isomorphic. In the disconnected case, such $\varphi_{\mathfrak{e}}$ exists if and only if connected components are isomorphic to each other and have no legs other than ${\mathfrak{l}_{\mathfrak{e}}}$, and ${\mathfrak{l}_{\mathfrak{e}}'}$. If such $\varphi_{\mathfrak{e}}$ exists then we have the following situation:
\begin{itemize}
    \item If $\Gamma_{\mathfrak{e}}$ is connected, then $$\vert \mathrm{Aut}({\Gamma_{\mathfrak{e}}})\vert=\vert \mathrm{Aut}(\Gamma_{\tilde{\mathfrak{e}},(m+1,m+2)})\vert=\vert \mathrm{Aut}(\Gamma_{\tilde{\mathfrak{e}},(m+2,m+1)})\vert=\frac{1}{2}\vert \mathrm{Stab}_{\Gamma}(\mathfrak{e}) \vert .$$ Hence, consequently, we have $$\frac{\vert \mathrm{Aut}(\Gamma) \vert}{\vert \mathrm{Aut}({\Gamma_{\mathfrak{e}}}) \vert}=2\vert \mathrm{Orb_{\Gamma}(\mathfrak{e})} \vert .$$
    \item If $\Gamma_{\mathfrak{e}}$ is is disconnected, then we have $$\vert \mathrm{Aut}(\Gamma_{{\mathfrak{e}}}^1)\vert \vert \mathrm{Aut}(\Gamma_{{\mathfrak{e}}}^2)\vert=\frac{1}{2}\vert \mathrm{Stab}_{\Gamma}(\mathfrak{e}) \vert\,.$$
    Hence, consequently we have
    $$\frac{\vert \mathrm{Aut}(\Gamma) \vert}{\vert \mathrm{Aut}(\Gamma_{{\mathfrak{e}}}^1)\vert \vert \mathrm{Aut}(\Gamma_{{\mathfrak{e}}}^2)\vert}=2\vert \mathrm{Orb_{\Gamma}(\mathfrak{e})} \vert \,. $$
    
\end{itemize}

For both \textit{Case A} and \textit{Case B}, we will shortly use the notation $\mathrm{O}_{\Gamma}^{\mathfrak{e}}$ for the ratios

$$\frac{\vert \mathrm{Aut}(\Gamma) \vert}{\vert \mathrm{Aut}({\Gamma_{\mathfrak{e}}}) \vert}\quad \text{and} \quad \frac{\vert \mathrm{Aut}(\Gamma) \vert}{\vert \mathrm{Aut}(\Gamma_{{\mathfrak{e}}}^1)\vert \vert \mathrm{Aut}(\Gamma_{{\mathfrak{e}}}^2)\vert}\,.$$

\begin{rem}\label{rem:nonisomorphic_graph_contributions}
 For a decorated stable graph  $\Gamma\in\mathrm{G}_{g,m}^{\text{Dec}}(n)$ and and an edge ${\mathfrak{e}}\in\mathrm{E}_{\Gamma}$, we can define $\Gamma_{\mathfrak{e}}$ in a similar vein. If $\Gamma_{\mathfrak{e}}$ is connected then for any $0\leq i,j \leq n-1$, we have
\begin{equation}
\operatorname{Cont}_{\Gamma_{{\mathfrak{e},(m+1,m+2)}}}\left(\phi_{c_1},\ldots,\phi_{c_m},\phi_{i},\phi_{j}\right)
=
\operatorname{Cont}_{\Gamma_{{\mathfrak{e},(m+2,m+1)}}}\left(\phi_{c_1},\ldots,\phi_{c_m},\phi_{j},\phi_{i}\right)\,.
\end{equation}   
\end{rem} 
This suggests that in the connected case the notation
\begin{equation}
\operatorname{Cont}_{\Gamma_{\mathfrak{e}}}\left(\phi_{c_1},\ldots,\phi_{c_m},\phi_{i},\phi_{i}\right)
\end{equation}
is well-defined since it is independent of labelings of $\mathfrak{l}_{\tilde{\mathfrak{e}}}$ and $\mathfrak{l}'_{\tilde{\mathfrak{e}}}$. 

\begin{rem}\label{rem:isomorphic_graph_contribution}
If two stable graphs $\Gamma,\widetilde{\Gamma}\in\mathrm{G}_{g,m}$ are isomorphic, say via an isomorphism $\varphi:\Gamma \rightarrow \widetilde{\Gamma}$, then the sums of the contributions
$$\operatorname{Cont}_{\Gamma}(\phi_{c_1},\ldots,\phi_{c_m})\quad\text{and}\quad \operatorname{Cont}_{\Gamma}(\phi_{c_{\overline{\ell}(1)}},\ldots,\phi_{c_{\overline{\ell}(m)}})$$
over all possible decorations agree. Here, $\overline{\ell}:\{1,\ldots,m\}\rightarrow\{1,\ldots,m\}$ is the isomorphism defined by $\overline{\ell}=\tilde{\ell}\circ\varphi\circ\ell^{-1}.$  
\end{rem}

\subsection{Holomorphic anomaly equations}
Holomorphic anomaly equations we present here depend on the parity of $n$. 
\begin{thm}\label{thm:HAE_n_odd}
Let $n\geq{3}$ be an odd number with $n=2s+1$, and $g\geq{2}$. We have
\begin{equation*}
\frac{C_{s+1}}{(2s+1)L}\frac{\partial}{\partial A_{s}}\mathcal{F}_{g}^{\left[\mathbb{C}^{n} / \mathbb{Z}_{n}\right]}
=\frac{1}{2}\mathcal{F}_{g-1,2}^{\left[\mathbb{C}^n/ \mathbb{Z}_n\right]}\left(\phi_s,\phi_s\right)+\frac{1}{2}\sum_{i=1}^{g-1}\mathcal{F}_{g-i,1}^{\left[\mathbb{C}^n/ \mathbb{Z}_n\right]}\left(\phi_s\right)\mathcal{F}_{i,1}^{\left[\mathbb{C}^n/ \mathbb{Z}_n\right]}\left(\phi_s\right)
\end{equation*}
in $\mathbb{C}[L^{\pm{1}}][\mathfrak{S}_n][C_{s+1}].$
\end{thm}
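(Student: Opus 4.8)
The plan is to apply $\frac{\partial}{\partial A_s}$ to the graph-sum formula (\ref{eqn:formula_Fg}) for $\mathcal{F}_g=\mathcal{F}_{g,0}^{[\mathbb{C}^n/\mathbb{Z}_n]}$ and interpret each term geometrically. First I would observe that for a decorated stable graph $\Gamma\in\mathrm{G}_{g,0}^{\text{Dec}}(n)$, the contribution $\operatorname{Cont}_\Gamma$ is a product of vertex, edge and leg factors; by Corollary \ref{cor:VertexEdgeCont} the vertex factors lie in $\mathbb{C}[L^{\pm1}]$ and hence (by Lemma \ref{lem:oddderivativeflatness} applied with $l=s$, together with Lemma \ref{lem:Equations forDAl} governing $DA_s$) the only $A_s$-dependence sits in the edge factors and in the iterated $D$'s appearing inside them. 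So the Leibniz rule gives $\frac{\partial}{\partial A_s}\operatorname{Cont}_\Gamma$ as a sum, over edges $\mathfrak{e}$ of $\Gamma$, of the graph contribution with $\mathrm{Cont}_\Gamma^{\mathrm{A}}(\mathfrak{e})$ replaced by $\frac{\partial}{\partial A_s}\mathrm{Cont}_\Gamma^{\mathrm{A}}(\mathfrak{e})$. The crucial input is Lemma \ref{lem:oddderivativeofedge}, which shows this derivative factors as a product of two ``half-edge leg-type'' terms $\frac{(-1)^{b_{\mathfrak{e}1}}}{\sqrt{2s+1}}\widetilde P_{s+1,\mathrm{p}(\mathfrak{v}_1)}^{b_{\mathfrak{e}1}}\zeta^{-(\cdots)}$ and its mate at $\mathfrak{v}_2$ — i.e. differentiating in $A_s$ \emph{cuts} the edge $\mathfrak{e}$.

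\textbf{Key steps.} (1) Set up the bijection between (graph $\Gamma$, chosen edge $\mathfrak{e}$) and the two ways of cutting: either $\mathfrak{e}$ is a non-separating edge, in which case cutting it yields a connected genus $g-1$ graph with two new legs, or $\mathfrak{e}$ is separating, in which case cutting yields an ordered pair of connected graphs of genera $g-i$ and $i$ with $i+(g-i)=g$, each carrying one new leg. (2) Check that the cut half-edge contribution produced by Lemma \ref{lem:oddderivativeofedge} matches exactly the leg contribution $\mathrm{Cont}_\Gamma^{\mathrm{A}}(\mathfrak{l})$ from Proposition \ref{prop:contributions} evaluated at $c_{\ell(\mathfrak{l})}=s$: indeed $\mathrm{Inv}(s)=s+1$ since $n=2s+1$, so $\widetilde P_{\mathrm{Inv}(c),\cdot}=\widetilde P_{s+1,\cdot}$, and one needs to reconcile the prefactor $\frac{1}{n}\frac{K_{\mathrm{Inv}(s)}}{L^{\mathrm{Inv}(s)}}=\frac{1}{n}\frac{K_{s+1}}{L^{s+1}}$ in the leg formula with the $\frac{1}{2s+1}=\frac1n$ in Lemma \ref{lem:oddderivativeofedge} and with the normalization of the two-point class on $\overline M_{g,1}\times\overline M_{g',1}$ / $\overline M_{g-1,2}$. (3) Track the combinatorial factors: $|\mathrm{Aut}(\Gamma)|$ versus $|\mathrm{Aut}|$ of the cut graphs — the self-edge (non-separating) case produces the symmetry factor $\tfrac12$ and the $\sum_{i=1}^{g-1}$ together with its $\tfrac12$ in the separating case comes from ordering the two pieces. (4) Finally identify the factor $\frac{K_{s+1}}{L^{s+1}}$ produced on each cut end: reassembling the two ends and comparing with the desired right-hand side forces the overall coefficient $\frac{C_{s+1}}{(2s+1)L}$ on the left — here one uses $K_{s+1}=C_{s+1}K_s$ and $K_sK_{s+1}=K_sK_{\mathrm{Inv}(s)}=L^{s+\mathrm{Inv}(s)}=L^{2s+1}=L^n$ from parts (\ref{Kfunctions1})–(\ref{Kfunctions2}) of the $K$-identities, which makes the $K$'s and $L$'s collapse to exactly $\frac{C_{s+1}}{(2s+1)L}$ after dividing through.

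\textbf{Main obstacle.} The genuinely delicate part is the bookkeeping in step (2)–(4): making sure that when an edge is cut, the $R$-matrix data $\widetilde P_{s+1,\mathrm{p}(\mathfrak{v})}^{b}$ on the two resulting half-edges assemble correctly into two \emph{independent} copies of $\mathcal{F}_{\bullet,1}(\phi_s)$ (resp. one copy of $\mathcal{F}_{g-1,2}(\phi_s,\phi_s)$), rather than leaving a residual cross term tying the decorations $\mathrm{p}(\mathfrak{v}_1),\mathrm{p}(\mathfrak{v}_2)$ together. This requires re-summing over the vertex decorations $\mathrm{p}:\mathrm{V}_\Gamma\to\{0,\dots,n-1\}$ and checking that the $\zeta$-phases $\zeta^{-(b_{\mathfrak{e}1}+s+1)\mathrm{p}(\mathfrak{v}_1)}$ and $\zeta^{-(b_{\mathfrak{e}2}+s+1)\mathrm{p}(\mathfrak{v}_2)}$ are precisely what is needed to turn each half into the standard leg contribution of Proposition \ref{prop:contributions} with insertion $\phi_s$. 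Once the edge-cutting dictionary is pinned down, the identity follows by matching both sides term-by-term over the (finite, for fixed $g$) set of graphs; I would organize this exactly as in \cite[Section 3]{lho-p2} for $n=3$, with $\mathrm{Inv}(s)=s+1$ replacing the role of the self-dual class there, and flag the verification that the polynomial identity indeed holds inside $\mathbb{C}[L^{\pm1}][\mathfrak{S}_n][C_{s+1}]$ (not just as power series in $x$) using Corollary \ref{cor:VertexEdgeCont}.
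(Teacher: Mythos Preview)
Your proposal is correct and follows essentially the same route as the paper: differentiate the graph-sum formula using that vertex contributions lie in $\mathbb{C}[L^{\pm1}]$, invoke Lemma~\ref{lem:oddderivativeofedge} to cut each edge into two $\phi_s$-legs via $\mathrm{Inv}(s)=s+1$, and use $K_sK_{s+1}=L^n$ to collapse the prefactors to $\frac{C_{s+1}}{(2s+1)L}$. Two small clarifications: the ``residual cross term'' worry in your main obstacle is a non-issue because the vertex decorations $\mathrm{p}(\mathfrak{v}_1),\mathrm{p}(\mathfrak{v}_2)$ are part of the decorated-graph data and simply pass to the cut graphs without any re-summation; and the $\tfrac12$ in case~(i) arises not from self-edges but from the two possible labelings of the new legs $\mathfrak{l}_{\tilde{\mathfrak{e}}},\mathfrak{l}'_{\tilde{\mathfrak{e}}}$ when matching against the sum over $\mathrm{G}_{g-1,2}^{\mathrm{Dec}}(n)$.
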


\begin{proof}
Let $\tilde{\mathfrak{e}}\in\mathrm{E}_{\Gamma}$ be an edge of a decorated stable graph $\Gamma\in\mathrm{G}_{g,0}^{\text{Dec}}(n)$ and let $\tilde{\mathfrak{e}}\in\mathrm{E}_{\Gamma}$ be connecting two vertices $\mathfrak{v}_1$ and $\mathfrak{v}_{2}$. As described in Subsection \ref{sec:edge_action}, breaking the edge $\tilde{\mathfrak{e}}$ into two new legs $\mathfrak{l}_{\tilde{\mathfrak{e}}}$ and $\mathfrak{l}'_{\tilde{\mathfrak{e}}}$ results in new graphs. Recall that there are two possibilities: either the resulting graph is connected, or it is disconnected with two connected parts.
\begin{itemize}
    \item[(i)] If the resulting graph is connected, then it is an element of $\mathrm{G}_{g-1,2}^{\text{Dec}}(n)$ and it is underlying stable graph is an element of $\mathrm{G}_{g-1,2}$. We denote them as $\Gamma_{\tilde{\mathfrak{e}}}$ and $\Gamma_{\tilde{\mathfrak{e}}}^{\mathrm{St}}$ respectively. 
    
    \item[(ii)] If the resulting graph is disconnected, then we denote its connected components as $\Gamma_{\tilde{\mathfrak{e}}}^1\in\mathrm{G}_{g_1,1}^{\text{Dec}}(n)$ and $\Gamma_{\tilde{\mathfrak{e}}}^2\in\mathrm{G}_{g_2,1}^{\text{Dec}}(n)$ where $g=g_1+g_2$, and we denote their underlying stable graphs as $\Gamma_{\tilde{\mathfrak{e}}}^{1,\mathrm{St}}\in\mathrm{G}_{g_1,1}$ and $\Gamma_{\tilde{\mathfrak{e}}}^{2,\mathrm{St}}\in\mathrm{G}_{g_2,1}$.
\end{itemize}

By Proposition \ref{prop:contributions} and Lemma \ref{lem:oddderivativeofedge}, we observe that
\begin{equation*}
\begin{split}
\frac{\partial \mathrm{Cont}_{\Gamma}^{\mathrm{A}}(\tilde{\mathfrak{e}}) }{\partial A_s}
=&\frac{(-1)^{b_{\tilde{\mathfrak{e}}1}+b_{\tilde{\mathfrak{e}}2}}}{2s+1} \frac{\widetilde{P}_{s+1,\mathrm{p}(\mathfrak{v}_1)}^{b_{\tilde{\mathfrak{e}}1}}\widetilde{P}_{s+1,\mathrm{p}(\mathfrak{v}_2)}^{b_{\tilde{\mathfrak{e}}2}}}{\zeta^{(b_{\tilde{\mathfrak{e}}1}+s+1)\mathrm{p}(\mathfrak{v}_1)}\zeta^{(b_{\tilde{\mathfrak{e}}2}+s+1)\mathrm{p}(\mathfrak{v}_2)}}\\
=&(2s+1)\left(\frac{L^{s+1}}{K_{s+1}}\right)^2
\begin{cases}
\mathrm{Cont}_{\Gamma_{\tilde{\mathfrak{e}}}}^{\mathrm{A}}(\mathfrak{l}_{\tilde{\mathfrak{e}}})\mathrm{Cont}_{\Gamma_{\tilde{\mathfrak{e}}}}^{\mathrm{A}}(\mathfrak{l}'_{\tilde{\mathfrak{e}}})&\text{for the case (i)},\\
\mathrm{Cont}_{\Gamma_{\tilde{\mathfrak{e}}}^1}^{\mathrm{A}}(\mathfrak{l}_{\tilde{\mathfrak{e}}})\mathrm{Cont}_{\Gamma_{\tilde{\mathfrak{e}}}^2}^{\mathrm{A}}(\mathfrak{l}'_{\tilde{\mathfrak{e}}})&\text{for the case (ii)},
\end{cases}
\end{split}
\end{equation*}
where $c_{\ell(\mathfrak{l}_{\tilde{\mathfrak{e}}})}=c_{\ell(\mathfrak{l}'_{\tilde{\mathfrak{e}}})}=\mathrm{Inv}(s+1)=s$ in both cases (i) and (ii).

Using Corollary \ref{Kfunctions}, we also note that 
\begin{equation*}
\left(\frac{L^{s+1}}{K_{s+1}}\right)^2=\frac{L}{C_{s+1}}.
\end{equation*}
Then, for case (i), we easily see that we have
\begin{equation}\label{eqn:edge_deletion_cont_case_i}
\begin{split}
\operatorname{Cont}_{\Gamma_{\tilde{\mathfrak{e}}}}\left(\phi_{s},\phi_{s}\right)
=&\frac{1}{|\mathrm{Aut}(\Gamma_{\tilde{\mathfrak{e}}}^{\mathrm{St}})|} \sum_{\mathrm{A} \in \mathbb{Z}_{\geq 0}^{\mathrm{F}({\Gamma_{\tilde{\mathfrak{e}}}})}} \prod_{\mathfrak{v} \in \mathrm{V}_{\Gamma_{\tilde{\mathfrak{e}}}}} \mathrm{Cont}_{\Gamma_{\tilde{\mathfrak{e}}}}^{\mathrm{A}}(\mathfrak{v}) \prod_{\mathfrak{e} \in \mathrm{E}_{\Gamma_{\tilde{\mathfrak{e}}}}} \mathrm{Cont}_{\Gamma_{\tilde{\mathfrak{e}}}}^{\mathrm{A}}(\mathfrak{e})\prod_{\mathfrak{l} \in \mathrm{L}_{\Gamma_{\tilde{\mathfrak{e}}}}} \mathrm{Cont}_{\Gamma_{\tilde{\mathfrak{e}}}}^{\mathrm{A}}(\mathfrak{l})\\
=&\frac{\mathrm{O}_{\Gamma}^{e}}{|\mathrm{Aut}(\Gamma^{\mathrm{St}})|} \sum_{\mathrm{A} \in \mathbb{Z}_{\geq 0}^{\mathrm{F}(\Gamma)}}\frac{C_{s+1}}{(2s+1)L}\frac{\partial \mathrm{Cont}_{\Gamma}^{\mathrm{A}}(\tilde{\mathfrak{e}}) }{\partial A_s} \prod_{\mathfrak{v} \in \mathrm{V}_{\Gamma}} \mathrm{Cont}_{\Gamma}^{\mathrm{A}}(\mathfrak{v}) \prod_{\substack{
\mathfrak{e} \in \mathrm{E}_{\Gamma} \\ \mathfrak{e}\neq \tilde{\mathfrak{e}}}} \mathrm{Cont}_{\Gamma}^{\mathrm{A}}(\mathfrak{e}).
\end{split}
\end{equation}
Similarly, for case (ii), we observe the following
\begin{equation}\label{eqn:edge_deletion_cont_case_ii}
\begin{split}
\operatorname{Cont}_{\Gamma_{\tilde{\mathfrak{e}}}^1}\left(\phi_{s}\right)&\operatorname{Cont}_{\Gamma_{\tilde{\mathfrak{e}}}^2}\left(\phi_{s}\right)\\
=&\frac{1}{|\mathrm{Aut}(\Gamma_{\tilde{\mathfrak{e}}}^{1,\mathrm{St}})|} \sum_{\mathrm{A} \in \mathbb{Z}_{\geq 0}^{\mathrm{F}({\Gamma_{\tilde{\mathfrak{e}}}^1})}} \mathrm{Cont}_{\Gamma_{\tilde{\mathfrak{e}}}^1}^{\mathrm{A}}(\mathfrak{l}_{\tilde{\mathfrak{e}}})\prod_{\mathfrak{v} \in \mathrm{V}_{\Gamma_{\tilde{\mathfrak{e}}}^1}} \mathrm{Cont}_{\Gamma_{\tilde{\mathfrak{e}}}^1}^{\mathrm{A}}(\mathfrak{v}) \prod_{\mathfrak{e} \in \mathrm{E}_{\Gamma_{\tilde{\mathfrak{e}}}^1}} \mathrm{Cont}_{\Gamma_{\tilde{\mathfrak{e}}}^1}^{\mathrm{A}}(\mathfrak{e})\\
\times&\frac{1}{|\mathrm{Aut}(\Gamma_{\tilde{\mathfrak{e}}}^{2,\mathrm{St}})|} \sum_{\mathrm{A} \in \mathbb{Z}_{\geq 0}^{\mathrm{F}({\Gamma_{\tilde{\mathfrak{e}}}^2})}} \mathrm{Cont}_{\Gamma_{\tilde{\mathfrak{e}}}^2}^{\mathrm{A}}(\mathfrak{l}'_{\tilde{\mathfrak{e}}}) \prod_{v \in \mathrm{V}_{\Gamma_{\tilde{\mathfrak{e}}}^2}} \mathrm{Cont}_{\Gamma_{\tilde{\mathfrak{e}}}^2}^{\mathrm{A}}(\mathfrak{v}) \prod_{\mathfrak{e} \in \mathrm{E}_{\Gamma_{\tilde{\mathfrak{e}}}^2}} \mathrm{Cont}_{\Gamma_{\tilde{\mathfrak{e}}}^2}^{\mathrm{A}}(\mathfrak{e})\\
=&\frac{\mathrm{O}_{\Gamma}^{e}}{|\mathrm{Aut}(\Gamma^{\mathrm{St}})|}  \sum_{\mathrm{A} \in \mathbb{Z}_{\geq 0}^{\mathrm{F}(\Gamma)}}\frac{C_{s+1}}{(2s+1)L}\frac{\partial \mathrm{Cont}_{\Gamma}^{\mathrm{A}}(\tilde{\mathfrak{e}}) }{\partial A_s} \prod_{\mathfrak{v} \in \mathrm{V}_{\Gamma}} \mathrm{Cont}_{\Gamma}^{\mathrm{A}}(\mathfrak{v}) \prod_{\substack{
\mathfrak{e} \in \mathrm{E}_{\Gamma} \\ \mathfrak{e}\neq \tilde{\mathfrak{e}}}} \mathrm{Cont}_{\Gamma}^{\mathrm{A}}(\mathfrak{e}).
\end{split}
\end{equation}
By Corollary \ref{cor:VertexEdgeCont}, we have the following vanishing result
\begin{equation*}
\frac{\partial \mathrm{Cont}_{\Gamma}^{\mathrm{A}}(\mathfrak{v}) }{\partial A_s}=0
\end{equation*}
for any vertex $\mathfrak{v}\in\mathrm{V}_{\Gamma} $. This vanishing result gives us:
\begin{equation}\label{eqn:implication_of_vanishing}
\begin{split}
\frac{\partial \mathrm{Cont}_{\Gamma}}{\partial A_s}
=&\frac{1}{|\mathrm{Aut}(\Gamma^{\mathrm{St}})|} \sum_{\mathrm{A} \in \mathbb{Z}_{\geq 0}^{\mathrm{F}(\Gamma)}} \prod_{\mathfrak{v} \in \mathrm{V}_{\Gamma}} \mathrm{Cont}_{\Gamma}^{\mathrm{A}}(\mathfrak{v})\frac{\partial}{\partial A_s}\left(\prod_{\mathfrak{e}\in \mathrm{E}_{\Gamma}} \mathrm{Cont}_{\Gamma}^{\mathrm{A}}(\mathfrak{e})\right)\\
=&\frac{1}{|\mathrm{Aut}(\Gamma^{\mathrm{St}})|} \sum_{\mathrm{A} \in \mathbb{Z}_{\geq 0}^{\mathrm{F}(\Gamma)}} \prod_{\mathfrak{v} \in \mathrm{V}_{\Gamma}} \mathrm{Cont}_{\Gamma}^{\mathrm{A}}(\mathfrak{v})\prod_{\substack{
\mathfrak{e} \in \mathrm{E}_{\Gamma} \\ \mathfrak{e}\neq \tilde{\mathfrak{e}}}} \mathrm{Cont}_{\Gamma}^{\mathrm{A}}(\mathfrak{e})\frac{\partial \mathrm{Cont}_{\Gamma}^{\mathrm{A}}(\tilde{\mathfrak{e}}) }{\partial A_s}\\
=&\sum_{\tilde{\mathfrak{e}}\in \mathrm{E}_{\Gamma}}\frac{1}{|\mathrm{Aut}(\Gamma^{\mathrm{St}})|} \sum_{\mathrm{A} \in \mathbb{Z}_{\geq 0}^{\mathrm{F}(\Gamma)}}\frac{\partial \mathrm{Cont}_{\Gamma}^{\mathrm{A}}(\tilde{\mathfrak{e}}) }{\partial A_s} \prod_{\mathfrak{v} \in \mathrm{V}_{\Gamma}} \mathrm{Cont}_{\Gamma}^{\mathrm{A}}(\mathfrak{v})\prod_{\substack{
\mathfrak{e} \in \mathrm{E}_{\Gamma} \\ \mathfrak{e}\neq \tilde{\mathfrak{e}}}} \mathrm{Cont}_{\Gamma}^{\mathrm{A}}(\mathfrak{e}).
\end{split}
\end{equation}

By the Subsection \ref{sec:edge_action}, we complete the proof after summing the equations (\ref{eqn:edge_deletion_cont_case_i}), (\ref{eqn:edge_deletion_cont_case_ii}), (\ref{eqn:implication_of_vanishing}) over all decorated stable graphs. The ratio $\mathrm{O}_{\Gamma}^{\mathfrak{e}}$ is taken care of by Lemma \ref{lem:breaking_an_edge} and Remark \ref{rem:isomorphic_graph_contribution}. The reason we have a factor of $\frac{1}{2}$ on the right-hand side of the holomorphic anomaly equation is compensation  due to the following:
\begin{itemize}
    \item In \textit{Case A}, $\Gamma_{\tilde{\mathfrak{e}},(1,2)}$ and $\Gamma_{\tilde{\mathfrak{e}},(2,1)}$ are not isomorphic for the connected case. Yet, their contributions are the same by Remark \ref{rem:nonisomorphic_graph_contributions}. Similarly, for the disconnected case, $\Gamma_{\tilde{\mathfrak{e}}}^1$ and $\Gamma_{\tilde{\mathfrak{e}}}^1$ are not isomorphic. But, the products of the contributions appear twice. So, there is a double counting both in connected and disconnected cases.
    \item In \textit{Case B}, we do not have a double counting but we have $\mathrm{O}_{\Gamma}^{\mathfrak{e}}=2\vert \mathrm{Orb_{\Gamma}(\mathfrak{e})} \vert$.
\end{itemize}
\end{proof}

\begin{thm}\label{thm:HAE_n_even}
Let $n\geq{4}$ be an even number with $n=2s$, and $g\geq{2}$. We have
\begin{equation*}
\frac{C_{s+1}}{2sL}\frac{\partial}{\partial A_{s-1}}\mathcal{F}_{g}^{\left[\mathbb{C}^{n} / \mathbb{Z}_{n}\right]}
=\mathcal{F}_{g-1,2}^{\left[\mathbb{C}^n/ \mathbb{Z}_n\right]}\left(\phi_{s-1},\phi_s\right)+\sum_{i=1}^{g-1}\mathcal{F}_{g-i,1}^{\left[\mathbb{C}^n/ \mathbb{Z}_n\right]}\left(\phi_{s-1}\right)\mathcal{F}_{i,1}^{\left[\mathbb{C}^n/ \mathbb{Z}_n\right]}\left(\phi_s\right)
\end{equation*}
in $\mathbb{C}[L^{\pm{1}}][\mathfrak{S}_n][C_{s+1}].$
\end{thm}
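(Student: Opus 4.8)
The plan is to transcribe the proof of Theorem~\ref{thm:HAE_n_odd} with the even-case ingredients substituted throughout. Starting from the graph-sum expression (\ref{eqn:formula_Fg}), write $\mathcal{F}_g=\mathcal{F}^{\CnZn}_{g,0}=\sum_{\Gamma\in\mathrm{G}_{g,0}^{\text{Dec}}(n)}\operatorname{Cont}_\Gamma$ and differentiate term by term. Since $\mathcal{F}_g$ carries no legs and, by Corollary~\ref{cor:VertexEdgeCont}, every vertex contribution lies in $\mathbb{C}[L^{\pm1}]$ and is therefore annihilated by $\partial/\partial A_{s-1}$, the Leibniz rule leaves (exactly as in (\ref{eqn:implication_of_vanishing})) only a sum over edges $\tilde{\mathfrak{e}}\in\mathrm{E}_\Gamma$ of $\partial_{A_{s-1}}\mathrm{Cont}^{\mathrm{A}}_\Gamma(\tilde{\mathfrak{e}})$ multiplied by the remaining undifferentiated vertex and edge factors.

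The next step is the even analogue of Lemma~\ref{lem:oddderivativeofedge}: combining Lemma~\ref{lem:evenderivativeflatness} with the edge-contribution formula of Proposition~\ref{prop:contributions} (just as Lemma~\ref{lem:oddderivativeofedge} follows from Lemma~\ref{lem:oddderivativeflatness}) expresses $\partial_{A_{s-1}}\mathrm{Cont}^{\mathrm{A}}_\Gamma(\tilde{\mathfrak{e}})$ as a \emph{symmetric} sum of two terms, one carrying $\widetilde{P}_{s+1}$ on the $\mathfrak{v}_1$-end and $\widetilde{P}_s$ on the $\mathfrak{v}_2$-end of $\tilde{\mathfrak{e}}$, the other with the roles exchanged. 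The arithmetic input producing the coefficient $\tfrac{C_{s+1}}{2sL}$ is the identity
\[
\frac{L^{s+1}}{K_{s+1}}\cdot\frac{L^{s}}{K_{s}}=\frac{L}{C_{s+1}},
\]
which follows from $K_{s+1}=C_{s+1}K_s$ and $K_s^2=L^n$ (both consequences of Corollary~\ref{Kfunctions}, using $n=2s$). Since $\mathrm{Inv}(s-1)=s+1$ and $\mathrm{Inv}(s)=s$, a leg decorated by $\phi_{s-1}$ contributes $\tfrac{K_{s+1}}{L^{s+1}}\widetilde{P}_{s+1}$ and a leg decorated by $\phi_s$ contributes $\tfrac{K_{s}}{L^{s}}\widetilde{P}_{s}$; matching these against the two terms above yields, for the legs $\mathfrak{l}_{\tilde{\mathfrak{e}}},\mathfrak{l}'_{\tilde{\mathfrak{e}}}$ obtained by cutting $\tilde{\mathfrak{e}}$,
\[
\frac{C_{s+1}}{2sL}\,\frac{\partial\mathrm{Cont}^{\mathrm{A}}_\Gamma(\tilde{\mathfrak{e}})}{\partial A_{s-1}}=\mathrm{Cont}^{\mathrm{A}}_{\phi_{s-1}}(\mathfrak{l}_{\tilde{\mathfrak{e}}})\,\mathrm{Cont}^{\mathrm{A}}_{\phi_s}(\mathfrak{l}'_{\tilde{\mathfrak{e}}})+\mathrm{Cont}^{\mathrm{A}}_{\phi_s}(\mathfrak{l}_{\tilde{\mathfrak{e}}})\,\mathrm{Cont}^{\mathrm{A}}_{\phi_{s-1}}(\mathfrak{l}'_{\tilde{\mathfrak{e}}}).
\]

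I would then run the edge-deletion dichotomy verbatim from Theorem~\ref{thm:HAE_n_odd}: cutting $\tilde{\mathfrak{e}}$ gives either a connected $\Gamma^0_{\tilde{\mathfrak{e}}}\in\mathrm{G}_{g-1,2}^{\text{Dec}}(n)$ with $|\mathrm{Aut}(\Gamma)|=|\mathrm{Aut}(\Gamma^0_{\tilde{\mathfrak{e}}})|$ (case (i)), or a disjoint union $\Gamma^1_{\tilde{\mathfrak{e}}}\sqcup\Gamma^2_{\tilde{\mathfrak{e}}}$ with $\Gamma^j_{\tilde{\mathfrak{e}}}\in\mathrm{G}_{g_j,1}^{\text{Dec}}(n)$, $g_1+g_2=g$, and $|\mathrm{Aut}(\Gamma)|=|\mathrm{Aut}(\Gamma^1_{\tilde{\mathfrak{e}}})||\mathrm{Aut}(\Gamma^2_{\tilde{\mathfrak{e}}})|$ (case (ii)). Inserting the last display into $\operatorname{Cont}_{\Gamma^0_{\tilde{\mathfrak{e}}}}(\phi_{s-1},\phi_s)$ and into $\operatorname{Cont}_{\Gamma^1_{\tilde{\mathfrak{e}}}}(\phi_{s-1})\operatorname{Cont}_{\Gamma^2_{\tilde{\mathfrak{e}}}}(\phi_s)$ — the even analogues of (\ref{eqn:edge_deletion_cont_case_i}) and (\ref{eqn:edge_deletion_cont_case_ii}) — and summing over all decorated stable graphs $\Gamma$ and all edges $\tilde{\mathfrak{e}}$ reassembles $\tfrac{C_{s+1}}{2sL}\partial_{A_{s-1}}\mathcal{F}_g$ as $\mathcal{F}^{\CnZn}_{g-1,2}(\phi_{s-1},\phi_s)+\sum_{i=1}^{g-1}\mathcal{F}^{\CnZn}_{g-i,1}(\phi_{s-1})\mathcal{F}^{\CnZn}_{i,1}(\phi_s)$. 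The feature distinguishing the even case is the absence of the factor $\tfrac12$: in Theorem~\ref{thm:HAE_n_odd} that $\tfrac12$ compensates the two non-canonical labelings of $\mathfrak{l}_{\tilde{\mathfrak{e}}},\mathfrak{l}'_{\tilde{\mathfrak{e}}}$ (resp. the two orderings of the connected components), which both produce the \emph{same} value because the insertion is $\phi_s$ on both legs; here the two labelings instead produce the \emph{two distinct} terms of the symmetric edge derivative (the insertions $\phi_{s-1},\phi_s$ being swapped between the ends), so summing over labelings reproduces the full edge derivative rather than twice it. Finally, since $\mathcal{F}_g\in\mathbb{C}[L^{\pm1}][\mathfrak{S}_n]$ and $A_{s-1}$ is one of the generators of $\mathbb{C}[L^{\pm1}][\mathfrak{S}_n]$, the left-hand side manifestly lies in $\mathbb{C}[L^{\pm1}][\mathfrak{S}_n][C_{s+1}]$, which also pins down the ambient ring on the right.

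The step I expect to be the main obstacle is the combinatorial bookkeeping of automorphism cardinalities and flag ($\mathrm{A}$-)values under the cut/reglue correspondence, and in particular making the ``no $\tfrac12$'' claim rigorous, including the degenerate occurrence of case (ii) in which $\Gamma^1_{\tilde{\mathfrak{e}}}$ and $\Gamma^2_{\tilde{\mathfrak{e}}}$ are isomorphic as graphs: there the extra $\mathbb{Z}_2$ in $\mathrm{Aut}(\Gamma)$ is cancelled by the two (now coincident) terms of the symmetric edge derivative, but since $\Gamma^1_{\tilde{\mathfrak{e}}}$ and $\Gamma^2_{\tilde{\mathfrak{e}}}$ carry different leg decorations $\phi_{s-1}$ and $\phi_s$, this has to be phrased slightly differently from the footnoted odd-case argument. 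Everything else is a routine transcription of the proof of Theorem~\ref{thm:HAE_n_odd} under the substitutions $A_s\mapsto A_{s-1}$, $\phi_s\mapsto(\phi_{s-1},\phi_s)$, and $(L^{s+1}/K_{s+1})^2\mapsto(L^{s+1}/K_{s+1})(L^{s}/K_{s})$.
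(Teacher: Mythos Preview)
Your proposal is correct and follows essentially the same approach as the paper's own proof: differentiate the graph sum, use the even edge-derivative lemma (the paper's unnamed lemma following Lemma~\ref{lem:oddderivativeofedge}), invoke the identity $\frac{L^{s+1}}{K_{s+1}}\cdot\frac{L^{s}}{K_{s}}=\frac{L}{C_{s+1}}$, and run the edge-deletion argument from Theorem~\ref{thm:HAE_n_odd}. Your explanation for the absence of the factor $\tfrac12$ matches the paper's, and your treatment of the degenerate case (ii) is in fact slightly more careful than the paper's one-line remark.
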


\begin{proof}
The proof is similar to the proof of the odd case. By Proposition \ref{prop:contributions} and Lemma \ref{lem:evenderivativeofedge}, we observe that
\begin{equation}\label{eq:Asderivative_of_edge}
\begin{split}
\frac{\partial}{\partial A_{s-1}}\mathrm{Cont}_{\Gamma}^{\mathrm{A}}(\tilde{\mathfrak{e}})
=&\frac{(-1)^{b_{\mathfrak{e}1}+b_{\mathfrak{e}2}}}{2s}\left(\frac{\widetilde{P}_{s+1,\mathrm{p}(\mathfrak{v}_1)}^{b_{\mathfrak{e}1}}\widetilde{P}_{s,\mathrm{p}(\mathfrak{v}_2)}^{b_{\mathfrak{e}2}}}{\zeta^{(b_{\mathfrak{e}1}+s+1)\mathrm{p}(\mathfrak{v}_1)}\zeta^{(b_{\mathfrak{e}2}+s)\mathrm{p}(\mathfrak{v}_2)}}+\frac{\widetilde{P}_{s,\mathrm{p}(\mathfrak{v}_1)}^{b_{\mathfrak{e}1}}\widetilde{P}_{s+1,\mathrm{p}(\mathfrak{v}_2)}^{b_{\mathfrak{e}2}}}{\zeta^{(b_{\mathfrak{e}1}+s)\mathrm{p}(\mathfrak{v}_1)}\zeta^{(b_{\mathfrak{e}2}+s+1)\mathrm{p}(\mathfrak{v}_2)}}\right)\\
=&(2s)\frac{L^sL^{s+1}}{K_sK_{s+1}}
\begin{cases}
\mathrm{Cont}_{\Gamma_{\tilde{\mathfrak{e}}}}^{\mathrm{A}}(\mathfrak{l}_{\tilde{\mathfrak{e}}})\mathrm{Cont}_{\Gamma_{\tilde{\mathfrak{e}}}}^{\mathrm{A}}(\mathfrak{l}'_{\tilde{\mathfrak{e}}})+\mathrm{Cont}_{\Gamma_{\tilde{\mathfrak{e}}}}^{\mathrm{A}}(\mathfrak{l}_{\tilde{\mathfrak{e}}})\mathrm{Cont}_{\Gamma_{\tilde{\mathfrak{e}}}}^{\mathrm{A}}(\mathfrak{l}'_{\tilde{\mathfrak{e}}})&\text{for (i)},\\
\mathrm{Cont}_{\Gamma_{\tilde{\mathfrak{e}}}^1}^{\mathrm{A}}(\mathfrak{l}_{\tilde{\mathfrak{e}}})\mathrm{Cont}_{\Gamma_{\tilde{\mathfrak{e}}}^2}^{\mathrm{A}}(\mathfrak{l}'_{\tilde{\mathfrak{e}}})+\mathrm{Cont}_{\Gamma_{\tilde{\mathfrak{e}}}^1}^{\mathrm{A}}(\mathfrak{l}_{\tilde{\mathfrak{e}}})\mathrm{Cont}_{\Gamma_{\tilde{\mathfrak{e}}}^2}^{\mathrm{A}}(\mathfrak{l}'_{\tilde{\mathfrak{e}}})&\text{for (ii).}
\end{cases}
\end{split}
\end{equation}

Using Corollary \ref{Kfunctions}, we note that 
\begin{equation*}
\frac{L^sL^{s+1}}{K_sK_{s+1}}=\frac{L}{C_{s+1}}.
\end{equation*}
The rest of the proof is identical to the proof of Theorem \ref{thm:HAE_n_odd}
; however, this time, we obtain two products of leg contributions for both cases (i) and (ii) in equation (\ref{eq:Asderivative_of_edge}). For the both cases (i) and (ii), $c_{\ell(\mathfrak{l}_{\tilde{\mathfrak{e}}})}=\mathrm{Inv}(s+1)=s-1$ and $c_{\ell(\mathfrak{l}'_{\tilde{\mathfrak{e}}})}=\mathrm{Inv}(s)=s$ for the first product and $c_{\ell(\mathfrak{l}_{\tilde{\mathfrak{e}}})}=\mathrm{Inv}(s)=s$ and $c_{\ell(\mathfrak{l}'_{\tilde{\mathfrak{e}}})}=\mathrm{Inv}(s+1)=s-1$ for the second product. When we sum over all possible decorated stable graphs, this will result in a double counting compensating the earlier one, see the end of proof of Theorem \ref{thm:HAE_n_odd}. For this reason, we do not have the factor $\frac{1}{2}$ on the right-hand side of holomorphic anomaly equations in Theorem \ref{thm:HAE_n_even}.
\end{proof}

\subsection{Holomorphic anomaly equations with insertions}\label{sec:HAE_insertion}
Oberdieck and Pixton proved holomorphic anomaly equations with insertions for elliptic curves \cite{OP}. Motivated from their work, Lho and Pandharipande proved holomorphic anomaly equations with insertions for the total space $K\mathbb{P}^2$ of the canonical bundle of $\mathbb{P}^2$ \cite{lho-p}. Following a similar path, in this subsection, we extend our holomorphic anomaly equations to potentials with arbitrary insertions.

For $0\leq i \leq n-1$, let $r_i\geq 0$ such that $r_0+\cdots +r_{n-1}=m$. We can alternatively define the Gromov-Witten potential as

\begin{align*}
&\mathcal{F}_{g, m}^{\CnZn}[r_0,\ldots,r_{n-1}]\\
&=\sum_{d=0}^{\infty} \frac{\Theta^{d}}{d !} \int_{\left[\overline{M}_{g, m+d}^{\mathrm{orb}}\left(\left[\mathbb{C}^{n} / \mathbb{Z}_{n}\right], 0\right)\right]^{v i r}} \prod_{i=1}^{d_0} \mathrm{ev}_{i}^{*}\left(\phi_0\right) \prod_{i=d_0+1}^{d_1} \mathrm{ev}_{i}^{*}\left(\phi_1\right)\cdots \prod_{i=d_{n-2}+1}^{d_{n-1}} \mathrm{ev}_{i}^{*}\left(\phi_{n-1}\right) \prod_{i=m+1}^{m+d} \mathrm{ev}_{i}^{*}\left(\phi_{1}\right).    
\end{align*}
where
\begin{equation*}
d_i=\sum_{j=0}^{i}r_j\,.
\end{equation*}

Let $\pi$ be the morphism to the moduli space of stable curves determined by the domain, $$\pi:\overline{M}_{g, k}^{\mathrm{orb}}\left(\left[\mathbb{C}^{n} / \mathbb{Z}_{n}\right], 0\right)\rightarrow \overline{M}_{g, k}.$$
In a similar vein, we will define the Gromov-Witten potentials with certain types of ancestor insertions. When $n=2s+1$ is odd, define the Gromov-Witten potential with the ancestor insertions
$$\mathcal{F}_{g, m}^{\CnZn}[r_0,\ldots,r_{n-1};\delta]\quad \text{with}\quad r_0+\cdots r_{n-1}+\delta=m,$$ where $\delta$ keeps track of the number of the insertions of the form $$\pi^{\star}(\psi_i)\cdot \mathrm{ev}^{\star}_i(\phi_{s}).$$ Similarly, when $n=2s$ is even, define the Gromov-Witten potential with the ancestor insertions
$$\mathcal{F}_{g, m}^{\CnZn}[r_0,\ldots,r_{n-1};\delta_1,\delta_2]\quad \text{with}\quad r_0+\cdots r_{n-1}+\delta_1+\delta_2=m,$$
where $\delta_1$ keeps track of the number of the insertions of the form $$\pi^{\star}(\psi_i)\cdot \mathrm{ev}^{\star}_i(\phi_{s}),$$ and $\delta_2$ keeps track of the number of the insertions of the form $$\pi^{\star}(\psi_i)\cdot \mathrm{ev}^{\star}_i(\phi_{s-1}).$$

There are formulas for the above Gromov-Witten potentials with ancestor insertions similar to equation (\ref{eqn:formula_Fg}). Indeed, Gromov-Witten potentials $\mathcal{F}_{g, m}^{\left[\mathbb{C}^{n} / \mathbb{Z}_{n}\right]}\left(\phi_{c_{1}}, \ldots, \phi_{c_{m}}\right)$ are quantities of the form 
\begin{equation*}
\int_{\overline{M}_{g,m}}\Omega_{g,m+d}(\phi_{c_1}\otimes...\otimes\phi_{c_m}).    
\end{equation*}
Proposition \ref{prop:contributions} is obtained by unraveling the Givental-Teleman description of the (shifted) CohFT $\Omega$ given by its $R$-matrix acting on its topological part. The potentials $\mathcal{F}_{g, m}^{\CnZn}[r_0,\ldots,r_{n-1};\delta]$ and $\mathcal{F}_{g, m}^{\CnZn}[r_0,\ldots,r_{n-1};\delta_1,\delta_2]$ are quantities of the forms:
\begin{equation*}
\int_{\overline{M}_{g,m}}\Omega_{g,m}(\phi_0^{\otimes r_0}\otimes...\otimes\phi_{n-1}^{\otimes r_{n-1}}\otimes \phi_s^{\otimes \delta})\prod_{i=m-\delta+1}^m \psi_i,   
\end{equation*}
and
\begin{equation*}
\int_{\overline{M}_{g,m}}\Omega_{g,m}(\phi_0^{\otimes r_0}\otimes...\otimes\phi_{n-1}^{\otimes r_{n-1}}\otimes \phi_s^{\otimes \delta_1}\otimes \phi_{s-1}^{\otimes \delta_2})\prod_{i=m-(\delta_1+\delta_2)+1}^{m} \psi_i.   
\end{equation*}
Thus they are given by formulas similar to equation (\ref{eqn:formula_Fg}), with contributions given by Proposition \ref{prop:contributions} modified appropriately to include $\psi$-class insertions. 

Next, we present the analog of the Proposition \ref{prop:contributions} for the potentials $\mathcal{F}_{g, m}^{\CnZn}[r_0,\ldots,r_{n-1};\delta]$ and $\mathcal{F}_{g, m}^{\CnZn}[r_0,\ldots,r_{n-1};\delta_1,\delta_2]$.

Introduce indices $c_1,...,c_{m-\Delta}$ by setting
\begin{equation*}
(\phi_{c_{1}}, \ldots, \phi_{c_{m-\Delta}})=(\underbrace{\phi_0,\ldots,\phi_0}_{r_0-\text{times}},\underbrace{\phi_1,\ldots,\phi_1}_{r_1-\text{times}},\ldots,\underbrace{\phi_i,\ldots,\phi_i}_{r_i-\text{times}},\ldots,\underbrace{\phi_{n-1},\ldots,\phi_{n-1}}_{r_{n-1}-\text{times}})
\end{equation*}
where
\begin{equation*}
\Delta=
\begin{cases}
\delta\quad &\text{if }n\text{ is odd,}\\
\delta_1+\delta_2\quad &\text{if }n\text{ is even.}
\end{cases}
\end{equation*}

Now, consider the case when $n=2s+1$ is odd. The potential $\mathcal{F}_{g, m}^{\CnZn}[r_0,\ldots,r_{n-1};\delta]$ can be written as the graph sum
\begin{equation}
\mathcal{F}_{g, m}^{\CnZn}[r_0,\ldots,r_{n-1};\delta]=\sum_{\Gamma\in\mathrm{G}_{g,m}^{\text{Dec}}(n)}\operatorname{Cont}_{\Gamma}\left(\phi_{c_{1}}, \ldots, \phi_{c_{m-\delta}};\delta\right).
\end{equation}
Let $\mathrm{l}(\mathfrak{v})$, the number of legs entering the vertex $\mathfrak{v}$, be decomposed as
$$\mathrm{l}(\mathfrak{v})=\mathrm{l}_0(\mathfrak{v})+\mathrm{l}_{\delta}(\mathfrak{v})$$
where $\mathrm{l}_0(\mathfrak{v})$ is the number of legs with $\phi_{c_i}$ insertions, and $\mathrm{l}_{\delta}(\mathfrak{v})$ is the number of legs with insertions $\pi^{\star}(\psi_i)\cdot \mathrm{ev}^{\star}_i(\phi_{s})$. Then, the following is the analog of Proposition \ref{prop:contributions}.

\begin{prop}\label{prop:insertions_cont_odd}
For each decorated stable graph $\Gamma\in\mathrm{G}_{g,m}^{\text{Dec}}(n)$, the associated contribution is given by
\begin{equation*}
\operatorname{Cont}_{\Gamma}\left(\phi_{c_{1}}, \ldots, \phi_{c_{m-\delta}};\delta\right)=\frac{1}{|\mathrm{Aut}(\Gamma^{\mathrm{St}})|} \sum_{\mathrm{A} \in \mathbb{Z}_{\geq 0}^{\mathrm{F}(\Gamma)}} \prod_{\mathfrak{v} \in \mathrm{V}_{\Gamma}} \mathrm{Cont}_{\Gamma}^{\mathrm{A}}(\mathfrak{v}) \prod_{\mathfrak{e}\in \mathrm{E}_{\Gamma}} \mathrm{Cont}_{\Gamma}^{\mathrm{A}}(\mathfrak{e}) \prod_{\mathfrak{l} \in \mathrm{L}_{\Gamma}} \mathrm{Cont}_{\Gamma}^{\mathrm{A}}(\mathfrak{l})
\end{equation*}
where $\mathrm{F}(\Gamma)=\left\vert\mathrm{H}_{\Gamma}\right\vert$. Here, $\mathrm{Cont}_{\Gamma}^{\mathrm{A}}(\mathfrak{v})$, $\mathrm{Cont}_{\Gamma}^{\mathrm{A}}(\mathfrak{e})$, and $\mathrm{Cont}_{\Gamma}^{\mathrm{A}}(\mathfrak{l})$ are the {\em vertex}, {\em edge} and {\em leg} contributions with flag $\mathrm{A}-$values$(a_1,\ldots,a_m,b_{m+1},\ldots,b_{\left\vert\mathrm{H}_{\Gamma}\right\vert})$ respectively. The leg and vertex contributions are the same as Proposition \ref{prop:contributions}. The vertex contribution $\mathrm{Cont}_{\Gamma}^{\mathrm{A}}(\mathfrak{v})$ is given by
\begin{equation*}
\begin{split}
    \mathrm{Cont}&_{\Gamma}^{\mathrm{A}}(\mathfrak{v})
    =\sum_{k \geq 0} \frac{g({e}_{\mathrm{p}(\mathfrak{v})},{e}_{\mathrm{p}(\mathfrak{v})})^{-\frac{2\mathrm{g}(\mathfrak{v})-2+\mathrm{n}(\mathfrak{v})+k}{2}}}{k !}\\
    &\times\int_{\overline{M}_{\mathrm{g}(\mathfrak{v}),\mathrm{n}(\mathfrak{v})+k}}\psi_1^{a_{\mathfrak{v}1}}\cdots\psi_{\mathrm{l}_0(\mathfrak{v})}^{a_{\mathfrak{v}\mathrm{l}_0(\mathfrak{v})}}
    \psi_{\mathrm{l}_0(\mathfrak{v})+1}^{a_{\mathfrak{v}(\mathrm{l}_0(\mathfrak{v})+1)}+1}\cdots
    \psi_{\mathrm{l}(\mathfrak{v})}^{a_{\mathfrak{v}\mathrm{l}(\mathfrak{v})}+1}
    \psi_{\mathrm{l}(\mathfrak{v})+1}^{b_{\mathfrak{v}1}}\cdots\psi_{\mathrm{n}(\mathfrak{v})}^{b_{\mathfrak{v}\mathrm{h}(\mathfrak{v})}}t_{\mathrm{p}(\mathfrak{v})}(\psi_{\mathrm{n}(\mathfrak{v})+1})\cdots t_{\mathrm{p}(\mathfrak{v})}(\psi_{\mathrm{n}(\mathfrak{v})+k})
\end{split}
\end{equation*}
where $t_{\mathrm{p}(\mathfrak{v})}$ is the same as in Proposition \ref{prop:contributions}.
\end{prop}
To be more precise about the statement of Proposition \ref{prop:insertions_cont_odd}, we add the following remark which also explains the shifts of the powers of $\psi$-classes in the vertex contributions.
\begin{rem}
In Proposition \ref{prop:insertions_cont_odd}, the leg contributions with $\phi_{c_i}$ insertions are the same as in Proposition \ref{prop:contributions}. The leg contributions corresponding to $\pi^{\star}(\psi_i)\cdot \mathrm{ev}^{\star}_i(\phi_{s-1})$ insertions are the same as the leg contributions with $\phi_{s-1}$ insertions in Proposition \ref{prop:contributions} and the effect of the $\psi$-class is handled by the contribution of the vertex that the leg is attached to.
\end{rem}

Similarly, when $n=2s$ is even, the potential $\mathcal{F}_{g, m}^{\CnZn}[r_0,\ldots,r_{n-1};\delta_1,\delta_2]$ can be written as a graph sum
\begin{equation}
\mathcal{F}_{g, m}^{\CnZn}[r_0,\ldots,r_{n-1};\delta_1,\delta_2]=\sum_{\Gamma\in\mathrm{G}_{g,m}^{\text{Dec}}(n)}\operatorname{Cont}_{\Gamma}\left(\phi_{c_{1}}, \ldots, \phi_{c_{m-\delta}};\delta_1,\delta_2\right),
\end{equation}
Let $\mathrm{l}(\mathfrak{v})$, the number of legs entering the vertex $\mathfrak{v}$, be decomposed as
$$\mathrm{l}(\mathfrak{v})=\mathrm{l}_0(\mathfrak{v})+\mathrm{l}_{\delta_1}(\mathfrak{v})+\mathrm{l}_{\delta_2}(\mathfrak{v})$$
where $\mathrm{l}_0(\mathfrak{v})$ is the number of legs with $\phi_{c_i}$ insertions, and $\mathrm{l}_{\delta_1}(\mathfrak{v})$ is the number of legs with insertions $\pi^{\star}(\psi_i)\cdot \mathrm{ev}^{\star}_i(\phi_{s})$, and $\mathrm{l}_{\delta_2}(\mathfrak{v})$ is the number of legs with insertions $\pi^{\star}(\psi_i)\cdot \mathrm{ev}^{\star}_i(\phi_{s-1})$. We also set $$\mathrm{l}_1(\mathfrak{v})=\mathrm{l}_0(\mathfrak{v})+\mathrm{l}_{\delta_1}(\mathfrak{v}).$$
Then, the following is the analog of Proposition \ref{prop:contributions}.

\begin{prop}\label{prop:insertions_cont_even}
For each decorated stable graph $\Gamma\in\mathrm{G}_{g,m}^{\text{Dec}}(n)$, the associated contribution is given by
\begin{equation*}
\operatorname{Cont}_{\Gamma}\left(\phi_{c_{1}}, \ldots, \phi_{c_{m-\delta}};\delta_1,\delta_2\right)=\frac{1}{|\mathrm{Aut}(\Gamma^{\mathrm{St}})|} \sum_{\mathrm{A} \in \mathbb{Z}_{\geq 0}^{\mathrm{F}(\Gamma)}} \prod_{\mathfrak{v} \in \mathrm{V}_{\Gamma}} \mathrm{Cont}_{\Gamma}^{\mathrm{A}}(\mathfrak{v}) \prod_{\mathfrak{e}\in \mathrm{E}_{\Gamma}} \mathrm{Cont}_{\Gamma}^{\mathrm{A}}(\mathfrak{e}) \prod_{\mathfrak{l} \in \mathrm{L}_{\Gamma}} \mathrm{Cont}_{\Gamma}^{\mathrm{A}}(\mathfrak{l})
\end{equation*}
where $\mathrm{F}(\Gamma)=\left\vert\mathrm{H}_{\Gamma}\right\vert$. Here, $\mathrm{Cont}_{\Gamma}^{\mathrm{A}}(\mathfrak{v})$, $\mathrm{Cont}_{\Gamma}^{\mathrm{A}}(\mathfrak{e})$, and $\mathrm{Cont}_{\Gamma}^{\mathrm{A}}(\mathfrak{l})$ are the {\em vertex}, {\em edge} and {\em leg} contributions with flag $\mathrm{A}-$values$(a_1,\ldots,a_m,b_{m+1},\ldots,b_{\left\vert\mathrm{H}_{\Gamma}\right\vert})$ respectively. The edge and leg contributions are the same as in Proposition \ref{prop:contributions}. The vertex contribution $\mathrm{Cont}_{\Gamma}^{\mathrm{A}}(\mathfrak{v})$ is given by
\begin{equation*}
\begin{split}
    \mathrm{Cont}&_{\Gamma}^{\mathrm{A}}(\mathfrak{v})
    =\sum_{k \geq 0} \frac{g({e}_{\mathrm{p}(\mathfrak{v})},{e}_{\mathrm{p}(\mathfrak{v})})^{-\frac{2\mathrm{g}(\mathfrak{v})-2+\mathrm{n}(\mathfrak{v})+k}{2}}}{k !}\\
    &\times\int_{\overline{M}_{\mathrm{g}(\mathfrak{v}),\mathrm{n}(\mathfrak{v})+k}}\psi_1^{a_{\mathfrak{v}1}}\cdots\psi_{\mathrm{l}_0(\mathfrak{v})}^{a_{\mathfrak{v}\mathrm{l}_0(\mathfrak{v})}}
    \psi_{\mathrm{l}_0(\mathfrak{v})+1}^{a_{\mathfrak{v}(\mathrm{l}_0(\mathfrak{v})+1)}+1}\cdots
    \psi_{\mathrm{l}_1(\mathfrak{v})}^{a_{\mathfrak{v}\mathrm{l}_1(\mathfrak{v})}+1}
    \psi_{\mathrm{l}_1(\mathfrak{v})+1}^{a_{\mathfrak{v}(\mathrm{l}_1(\mathfrak{v})+1)}+1}\cdots
    \psi_{\mathrm{l}(\mathfrak{v})}^{a_{\mathfrak{v}\mathrm{l}(\mathfrak{v})}+1}\\
    &\hspace{10em}\cdot\psi_{\mathrm{l}(\mathfrak{v})+1}^{b_{\mathfrak{v}1}}\cdots\psi_{\mathrm{n}(\mathfrak{v})}^{b_{\mathfrak{v}\mathrm{h}(\mathfrak{v})}}t_{\mathrm{p}(\mathfrak{v})}(\psi_{\mathrm{n}(\mathfrak{v})+1})\cdots t_{\mathrm{p}(\mathfrak{v})}(\psi_{\mathrm{n}(\mathfrak{v})+k})
\end{split}
\end{equation*}
where $t_{\mathrm{p}(\mathfrak{v})}$ is the same as in Proposition \ref{prop:contributions}.
\end{prop}
To be more precise about the leg contributions, we add the following remark.
\begin{rem}
In Proposition \ref{prop:insertions_cont_even}, the leg contributions with $\phi_{c_i}$ insertions are the same as in Proposition \ref{prop:contributions}. The leg contributions corresponding to $\pi^{\star}(\psi_i)\cdot \mathrm{ev}^{\star}_i(\phi_{j})$ insertions with $j=s,s-1$ are the same as the leg contributions with $\phi_{j}$ insertions in Proposition \ref{prop:contributions} and the effect of the $\psi$-class is handled by the contribution of the vertex that the leg is attached to.
\end{rem}

Furthermore, for the potentials $\mathcal{F}_{g, m}^{\CnZn}[r_0,\ldots,r_{n-1};\delta]$ and $\mathcal{F}_{g, m}^{\CnZn}[r_0,\ldots,r_{n-1};\delta_1,\delta_2]$, the statement of Corollary \ref{cor:VertexEdgeCont} remains true. In other words, the vertex, edge, and leg contributions lie in the same rings, and hence, we get

\begin{equation*}
\mathcal{F}_{g, m}^{\CnZn}[r_0,\ldots,r_{n-1};\delta],\,\mathcal{F}_{g, m}^{\CnZn}[r_0,\ldots,r_{n-1};\delta_1,\delta_2] \in\mathds{F}_{\CnZn}.
\end{equation*}

When $n$ is odd, the holomorphic anomaly equations with insertions for $\mathcal{F}_{g, m}^{\CnZn}[r_0,\ldots,r_{n-1};\delta]$ are given by the following result.

\begin{thm}\label{thm:HAE_Odd_Insertions} Let $n=2s+1\geq{3}$. Then, in the stable range $2g-2+m>0$, we have
\begin{equation*}
\begin{aligned}
&\frac{C_{s+1}}{(2s+1)L}\frac{\partial}{\partial A_{s}}\mathcal{F}_{g, m}^{\CnZn}[r_0,\ldots,r_{n-1}]\\
=&\frac{1}{2}\mathcal{F}_{g-1,m+2}^{\CnZn}[r_0,\ldots,r_{s-1},r_s+2,r_{s+1},\ldots, r_{n-1}]\\
&+\frac{1}{2}\sum_{\substack{g_1+g_2=g \\m_1+m_2=m \\ \bar{r}_i+\tilde{r}_i=r_i}}\mathcal{F}_{g_1,m_1+1}^{\CnZn}[\bar{r}_0,\ldots,\bar{r}_{s-1},\bar{r}_s+1,\bar{r}_{s+1},\ldots, \bar{r}_{n-1}]\mathcal{F}_{g_2,m_2+1}^{\CnZn}[\tilde{r}_0,\ldots,\tilde{r}_{s-1},\tilde{r}_s+1,\tilde{r}_{s+1},\ldots, \tilde{r}_{n-1}]\\
&-\frac{r_{s+1}}{2s+1}\mathcal{F}_{g,m}^{\CnZn}[r_0,\ldots,r_s,r_{s+1}-1,r_{s+2},\ldots, r_{n-1};1].
\end{aligned}
\end{equation*}
\end{thm}

\begin{proof}
Let $\Gamma\in\mathrm{G}_{g,m}^{\text{Dec}}(n)$ be a decorated stable graph and  $\mathrm{Cont}_{\Gamma}[r_0,\ldots,r_{n-1}]$  be its contribution to the potential $\mathcal{F}_{g, m}^{\CnZn}[r_0,\ldots,r_{n-1}]$. Then, its partial derivative with respect to $A_s$ is

\begin{equation}
\begin{split}
&\frac{\partial \mathrm{Cont}_{\Gamma}[r_0,\ldots,r_{n-1}]}{\partial A_s}\\
=&\underbrace{\frac{1}{|\mathrm{Aut}(\Gamma^{\mathrm{st}})|} \sum_{\mathrm{A} \in \mathbb{Z}_{\geq 0}^{\mathrm{F}(\Gamma)}} \prod_{\mathfrak{v} \in \mathrm{V}_{\Gamma}} \mathrm{Cont}_{\Gamma}^{\mathrm{A}}(\mathfrak{v})\left(\frac{\partial}{\partial A_s}\prod_{\mathfrak{e}\in \mathrm{E}_{\Gamma}} \mathrm{Cont}_{\Gamma}^{\mathrm{A}}(\mathfrak{e})\right)\prod_{\mathfrak{l} \in \mathrm{L}_{\Gamma}}\mathrm{Cont}_{\Gamma}^{\mathrm{A}}(\mathfrak{l})}_{=(\star)}\\
+&\underbrace{\frac{1}{|\mathrm{Aut}(\Gamma^{\mathrm{st}})|} \sum_{\mathrm{A} \in \mathbb{Z}_{\geq 0}^{\mathrm{F}(\Gamma)}} \prod_{\mathfrak{v} \in \mathrm{V}_{\Gamma}} \mathrm{Cont}_{\Gamma}^{\mathrm{A}}(\mathfrak{v})\prod_{\mathfrak{e}\in \mathrm{E}_{\Gamma}} \mathrm{Cont}_{\Gamma}^{\mathrm{A}}(\mathfrak{e})\left(\frac{\partial}{\partial A_s}\prod_{\mathfrak{l} \in \mathrm{L}_{\Gamma}}\mathrm{Cont}_{\Gamma}^{\mathrm{A}}(\mathfrak{l})\right)}_{=(\star \star)}
\end{split}
\end{equation}
due to the vanishing result
\begin{equation*}
\frac{\partial \mathrm{Cont}_{\Gamma}^{\mathrm{A}}(\mathfrak{v}) }{\partial A_s}=0.
\end{equation*}

A reasoning similar to the proof of Theorem \ref{thm:HAE_n_odd} shows that $(\star)$ yields the terms 
\begin{equation*}
\begin{aligned}
&\frac{1}{2}\mathcal{F}_{g-1,m+2}^{\CnZn}[r_0,\ldots,r_{s-1},r_s+2,r_{s+1},\ldots, r_{n-1}]\\
&+\frac{1}{2}\sum_{\substack{g_1+g_2=g \\m_1+m_2=m \\ \bar{r}_i+\tilde{r}_i=r_i}}\mathcal{F}_{g_1,m_1+1}^{\CnZn}[\bar{r}_0,\ldots,\bar{r}_{s-1},\bar{r}_s+1,\bar{r}_{s+1},\ldots, \bar{r}_{n-1}]\mathcal{F}_{g_2,m_2+1}^{\CnZn}[\tilde{r}_0,\ldots,\tilde{r}_{s-1},\tilde{r}_s+1,\tilde{r}_{s+1},\ldots, \tilde{r}_{n-1}]
\end{aligned}
\end{equation*}
that appear on the right-hand side of the holomorphic anomaly equations in Theorem \ref{thm:HAE_Odd_Insertions}. The crucial difference is that, when breaking the decorated stable graph $\Gamma\in\mathrm{G}_{g,m}^{\text{Dec}}(n)$ at an edge $\tilde{\mathfrak{e}}$, we either get a decorated stable graph $\Gamma_{\tilde{\mathfrak{e}}}\in\mathrm{G}_{g-1,m+2}^{\text{Dec}}(n)$ or a graph with connected components $\Gamma_{\tilde{\mathfrak{e}}}^1\in\mathrm{G}_{g_1,m_1+1}^{\text{Dec}}(n)$ and $\Gamma_{\tilde{\mathfrak{e}}}^2\in\mathrm{G}_{g_2,m_2+1}^{\text{Dec}}(n)$ where $g=g_1+g_2$ and $m_1+m_2=m$.

Now we will analyze the effect of the term $(\star \star)$ more closely. By Lemma \ref{lem:oddderivativeflatness}, we obtain the following :
\begin{equation*}
\begin{aligned}
\frac{C_{s+1}}{(2s+1)L}\frac{\partial \mathrm{Cont}_{\Gamma}^{\mathrm{A}}(\mathfrak{l}) }{\partial A_s}
&=\frac{C_{s+1}}{(2s+1)L}\frac{(-1)^{a_{\ell(\mathfrak{l})}}}{n}\frac{K_{\mathrm{Inv}(c_{\ell(\mathfrak{l})})}}{L^{\mathrm{Inv}(c_{\ell(\mathfrak{l})})}}
    \frac{\frac{\partial}{\partial A_s}\widetilde{P}_{\mathrm{Inv}(c_{\ell(\mathfrak{l})}),\mathrm{p}(\nu(\mathfrak{l}))}^{{a_{\ell(\mathfrak{l})}}}}{     \zeta^{({a_{\ell(\mathfrak{l})}}+{\mathrm{Inv}(c_{\ell(\mathfrak{l})})})\mathrm{p}(\nu(\mathfrak{l}))}}\\
&=\begin{cases}
\frac{(-1)^{a_{\ell(\mathfrak{l})}}}{(2s+1)^2}\frac{C_{s+1}\cdot K_{s}}{L\cdot L^{s}}
    \frac{\widetilde{P}_{s+1,\mathrm{p}(\nu(\mathfrak{l}))}^{{a_{\ell(\mathfrak{l})}}-1}}{     \zeta^{({a_{\ell(\mathfrak{l})}}+s)\mathrm{p}(\nu(\mathfrak{l}))}}\quad & \text{if}\quad c_{\ell(\mathfrak{l})}=s+1,\\
0\quad & \text{if} \quad c_{\ell(\mathfrak{l})}\neq s+1.
\end{cases}
\end{aligned}
\end{equation*}
This means the only possible non-zero derivative of a leg contribution $\mathrm{Cont}_{\Gamma}^{\mathrm{A}}(\mathfrak{l})$ with respect to $A_s$ is when the insertion of the leg $\mathfrak{l}$ is $\phi_{s+1}$. In this case, we get
\begin{equation*}
\frac{C_{s+1}}{(2s+1)L}\frac{\partial \mathrm{Cont}_{\Gamma}^{\mathrm{A}}(\mathfrak{l}) }{\partial A_s}=\frac{(-1)^{a_{\ell(\mathfrak{l})}}}{(2s+1)^2}\frac{K_{s+1}}{ L^{s+1}}
    \frac{\widetilde{P}_{s+1,\mathrm{p}(\nu(\mathfrak{l}))}^{{a_{\ell(\mathfrak{l})}}-1}}{     \zeta^{({a_{\ell(\mathfrak{l})}}+s)\mathrm{p}(\nu(\mathfrak{l}))}}.
\end{equation*}
Shifting $a_{\ell(\mathfrak{l})}$ by $1$, we see that right hand side becomes
\begin{equation*}
\frac{(-1)^{a_{\ell(\mathfrak{l})}+1}}{(2s+1)^2}\frac{K_{s+1}}{ L^{s+1}}
    \frac{\widetilde{P}_{s+1,\mathrm{p}(\nu(\mathfrak{l}))}^{{a_{\ell(\mathfrak{l})}}}}{     \zeta^{({a_{\ell(\mathfrak{l})}}+s+1)\mathrm{p}(\nu(\mathfrak{l}))}}
\end{equation*}
which seems to correspond to a leg contribution with $\phi_s$ scaled by $$-\frac{1}{2s+1}.$$ However, this also shifts the power of $\psi_{\bullet}^{{a_{\ell(\mathfrak{l})}}}$ by $1$, the  $\psi$-class appearing in the vertex contribution $\mathrm{Cont}_{\Gamma}^{\mathrm{A}}(\mathfrak{v})$ of the vertex $\mathfrak{v}$ that is $\mathfrak{l}$ attached to. So, by Proposition \ref{prop:insertions_cont_odd}, we see that one $\mathrm{ev}^{\star}_i(\phi_{s+1})$  in the potential is traded with $\pi^{\star}(\psi_i)\cdot \mathrm{ev}^{\star}_i(\phi_{s})$ after a rescaling of $$-\frac{1}{2s+1}.$$ When combined with the product rule for derivatives, we see that this happens $r_{s+1}$ many times by the term $(\star \star)$. Hence, the term $(\star \star)$ gives us the contribution
\begin{equation*}
-\frac{r_{s+1}}{2s+1}\mathcal{F}_{g,m}^{\CnZn}[r_0,\ldots,r_s,r_{s+1}-1,r_{s+2},\ldots, r_{n-1};1]
\end{equation*}
after summing over all possible decorated stable graphs.
\end{proof}

We remark that if we let $n=3$, then Theorem \ref{thm:HAE_Odd_Insertions} agrees with the holomorphic anomaly equations for $K\mathbb{P}^2$ with insertions given in \cite[Theorem 29]{lho-p} when combined with the crepant resolution correspondence for $[\mathbb{C}^3/\mathbb{Z}_3]$ proved in \cite{lho-p2}.

When $n$ is even, the holomorphic anomaly equations with insertions for $\mathcal{F}_{g, m}^{\CnZn}[r_0,\ldots,r_{n-1};\delta_1,\delta_2]$ are given by the following result.

\begin{thm} Let $n=2s\geq{4}$. Then, for the stable range $2g-2+m>0$, we have
\begin{equation*}
\begin{aligned}
&\frac{C_{s+1}}{2sL}\frac{\partial}{\partial A_{s-1}}\mathcal{F}_{g, m}^{\CnZn}[r_0,\ldots,r_{n-1}]\\
=&\mathcal{F}_{g-1,m+2}^{\CnZn}[r_0,\ldots,r_{s-2},r_{s-1}+1,r_s+1,r_{s+1},\ldots, r_{n-1}]\\
&+\sum_{\substack{g_1+g_2=g \\m_1+m_2=m \\ \bar{r}_i+\tilde{r}_i=r_i}}\mathcal{F}_{g_1,m_1+1}^{\CnZn}[\bar{r}_0,\ldots,\bar{r}_{s-2},\bar{r}_{s-1}+1,\bar{r}_s,\ldots, \bar{r}_{n-1}]\mathcal{F}_{g_2,m_2+1}^{\CnZn}[\tilde{r}_0,\ldots,\tilde{r}_{s-1},\tilde{r}_s+1,\tilde{r}_{s+1},\ldots, \tilde{r}_{n-1}]\\
&-\frac{r_{s+1}}{2s}\mathcal{F}_{g,m}^{\CnZn}[r_0,\ldots,r_s,r_{s+1}-1,r_{s+2},\ldots, r_{n-1};1,0]\\
&-\frac{r_{s}}{2s}\mathcal{F}_{g,m}^{\CnZn}[r_0,\ldots,r_{s-1},r_s-1,r_{s+1},\ldots, r_{n-1};0,1]
\end{aligned}
\end{equation*}
\end{thm}

\begin{proof}
The proof is similar to the odd case. The explanation for the terms
\begin{equation*}
\begin{aligned}
&\mathcal{F}_{g-1,m+2}^{\CnZn}[r_0,\ldots,r_{s-2},r_{s-1}+1,r_s+1,r_{s+1},\ldots, r_{n-1}]\\
&+\sum_{\substack{g_1+g_2=g \\m_1+m_2=m \\ \bar{r}_i+\tilde{r}_i=r_i}}\mathcal{F}_{g_1,m_1+1}^{\CnZn}[\bar{r}_0,\ldots,\bar{r}_{s-2},\bar{r}_{s-1}+1,\bar{r}_s,\ldots, \bar{r}_{n-1}]\mathcal{F}_{g_2,m_2+1}^{\CnZn}[\tilde{r}_0,\ldots,\tilde{r}_{s-1},\tilde{r}_s+1,\tilde{r}_{s+1},\ldots, \tilde{r}_{n-1}]
\end{aligned}
\end{equation*}
follows by the strategy of the proof of Theorem \ref{thm:HAE_n_even} and the first half of the proof Theorem \ref{thm:HAE_Odd_Insertions}. We will briefly explain the other two terms in the right-hand side of the equation.

Lemma \ref{lem:evenderivativeflatness} implies that the derivative of a leg contribution $\mathrm{Cont}_{\Gamma}^{\mathrm{A}}(\mathfrak{l})$ with respect to $A_{s-1}$ is zero if the insertion of the leg $\mathfrak{l}$ is not one of the insertions $\phi_{s}$ and $\phi_{s+1}$.

If the insertion of the leg $\mathfrak{l}$ is $\phi_{s+1}$, then by Lemma \ref{lem:evenderivativeflatness} we have

\begin{equation*}
\frac{C_{s+1}}{2sL}\frac{\partial \mathrm{Cont}_{\Gamma}^{\mathrm{A}}(\mathfrak{l}) }{\partial A_{s-1}}=\frac{(-1)^{a_{\ell(\mathfrak{l})}}}{(2s)^2}\frac{C_{s+1} K_{s-1}}{ L^{s}}
    \frac{\widetilde{P}_{s,\mathrm{p}(\nu(\mathfrak{l}))}^{{a_{\ell(\mathfrak{l})}}-1}}{     \zeta^{({a_{\ell(\mathfrak{l})}}+s-1)\mathrm{p}(\nu(\mathfrak{l}))}}.
\end{equation*}
We have $C_{s+1}=C_s$ by Lemma \ref{propertiesofCfunctions} and also we know $C_sK_{s-1}=K_s$. Hence, we see that
\begin{equation*}
\frac{C_{s+1}}{2sL}\frac{\partial \mathrm{Cont}_{\Gamma}^{\mathrm{A}}(\mathfrak{l}) }{\partial A_{s-1}}=\frac{(-1)^{a_{\ell(\mathfrak{l})}}}{(2s)^2}\frac{K_{s}}{ L^{s}}
    \frac{\widetilde{P}_{s,\mathrm{p}(\nu(\mathfrak{l}))}^{{a_{\ell(\mathfrak{l})}}-1}}{     \zeta^{({a_{\ell(\mathfrak{l})}}+s-1)\mathrm{p}(\nu(\mathfrak{l}))}}.
\end{equation*}
As in the proof of Theorem \ref{thm:HAE_Odd_Insertions}, shifting $a_{\ell(\mathfrak{l})}$ by $1$ we see that right hand side corresponds to a leg contribution with insertion $\phi_s$ scaled by $$-\frac{1}{2s}$$ together with a shift of the power of the class $\psi_{\bullet}^{a_{\ell(\mathfrak{l})}}$ 
by $1$, the $\psi$-class corresponding to the vertex contribution $\mathrm{Cont}_{\Gamma}^{\mathrm{A}}(\mathfrak{v})$ of the vertex $\mathfrak{v}$ that $\mathfrak{l}$ is attached to.  So,  by Proposition \ref{prop:insertions_cont_even}, we see that  one $\mathrm{ev}^{\star}_i(\phi_{s+1})$  in the potential is traded with $\pi^{\star}(\psi_i)\cdot \mathrm{ev}^{\star}_i(\phi_{s})$ together with a rescaling $$-\frac{1}{2s}.$$ Because of the product rule for derivatives , this happens $r_{s+1}$ many times.

If the insertion of the leg $\mathfrak{l}$ is $\phi_{s}$, then by Lemma \ref{lem:evenderivativeflatness} we have

\begin{equation*}
\frac{C_{s+1}}{2sL}\frac{\partial \mathrm{Cont}_{\Gamma}^{\mathrm{A}}(\mathfrak{l}) }{\partial A_{s-1}}=\frac{(-1)^{a_{\ell(\mathfrak{l})}}}{(2s)^2}\frac{C_{s+1} K_{s}}{ L^{s+1}}
    \frac{\widetilde{P}_{s+1,\mathrm{p}(\nu(\mathfrak{l}))}^{{a_{\ell(\mathfrak{l})}}-1}}{     \zeta^{({a_{\ell(\mathfrak{l})}}+s)\mathrm{p}(\nu(\mathfrak{l}))}}.
\end{equation*}
Again, by a similar shifting argument, this implies that one $\mathrm{ev}^{\star}_i(\phi_{s})$  in the potential is traded with $\pi^{\star}(\psi_i)\cdot \mathrm{ev}^{\star}_i(\phi_{s-1})$ together with a rescaling $$-\frac{1}{2s},$$ and this happens $r_{s}$ many times. Hence, we complete the proof.
\end{proof}

\appendix
\section{Stirling numbers}\label{appendix:stirling}
In this section, we provide a brief account of Stirling numbers and their properties used in the paper.  A detailed treatment of Stirling numbers can be found in \cite{gq}. Convenient online references for Stirling numbers include \cite{dlmf-s}.

The \textit{Stirling number of first kind} $s_{m,k}$ is defined to be the coefficient of $x^k$ of the falling factorial:
\begin{equation}\label{eq:defnitionStirlingFirst}
(x)_m=x(x-1)\cdots(x-m+1)=\sum_{k=0}^{m}s_{m,k}x^k.
\end{equation}
The special case $s_{0,0}$ is set to be 1, and certain Stirling numbers of the first kind we use to do some explicit computations in the paper are:
\begin{equation*}
s_{m,0}=0\quad\text{for } m\geq{1},\\
\end{equation*}
\begin{equation*}
s_{m,m-1}=-\binom{m}{2},\quad s_{m,m-2}=\frac{3m-1}{4}\binom{m}{3},\quad s_{m,m-3}=-\binom{m}{2}\binom{m}{4}.
\end{equation*}
The \textit{Stirling number of the second kind} $S_{m,k}$  is the number of ways to partition a set of $m$ objects into $k$ non-empty subsets. Stirling numbers of the second kind satisfy the following basic recurrence:
\begin{equation}\label{eq:Stirling_2nd_Recursion}
S_{m,k}=kS_{m-1,k}+S_{m-1,k-1}\quad\text{with}\quad S_{m,0}=\delta_{m,0}.
\end{equation}
A well-known formula for Stirling numbers of the second kind called Euler's formula is
\begin{equation}\label{eq:EulerForStirlingSecond}
 S_{m, k}=\frac{1}{k !} \sum_{i=0}^k(-1)^{k-i}\binom{k}{i} i^m.
\end{equation}
If $k$ is not in the range $0\leq{k}\leq{m}$, Stirling numbers $s_{m,k}$ and $S_{m,k}$ are defined to be 0. The following relation holds
\begin{equation}
\sum_{j\geq{0}}{s_{m,j}S_{j,k}}=\sum_{j\geq{0}}{S_{m,j}s_{j,k}}=\delta_{m,k}
\end{equation}
i.e., Stirling numbers are inverses of each other when they are seen as triangular matrices.

\section{A note on \textit{I}-functions}\label{appendix:I-function}
In this Appendix, we carry out a detailed analysis for the $I$-function of $\left[\mathbb{C}^{n} / \mathbb{Z}_{n}\right]$ by following the methodology of \cite{zz}. The techniques we use are more or less identical to \cite{zz}. However, the $I$-function for $\CnZn$ is different than the main hypergeometric series used in \cite{zz}. For this reason and for the convenience of the reader, we provide a detailed treatment of these techniques for the $I$-function of $\CnZn$.

\subsection{Series associated to \textit{I}-functions}\label{appendix:I-function-Part1}

\begin{lem}[See \cite{zz}]\label{zzlemma}
Suppose $y_0,...,y_m$, $f$, $g$, $a$ are functions of $t$ (with $f$ not identically $0$) satisfying
\begin{align*}
\begin{split}
y_mf^{(m)}+y_{m-1}f^{(m-1)}+...+y_0f=&0,\\
y_mg^{(m)}+y_{m-1}g^{(m-1)}+...+y_0g=&a,
\end{split}
\end{align*}
where $f^{(k)}=\frac{d^kf}{dt^k}$. Then the function $h=(g/f)'$ satisfies
\begin{equation*}
    \tilde{y}_{m-1}h^{(m-1)}+\tilde{y}_{m-2}h^{(m-2)}+...+\tilde{y}_{0}h=a,
\end{equation*}
where $\tilde{y}_{s}(t)=\sum_{r=s+1}^{m} \binom{r}{s+1} y_r(t)f^{(r-1-s)}(t).$
\end{lem}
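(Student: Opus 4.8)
The plan is to carry out the classical reduction of order: write $g=fv$ with $v=g/f$ (which makes sense wherever $f$ does not vanish, and in particular as a formal series when $f$ has nonzero constant term, as in all applications in this paper), and to show that $h=v'$ satisfies an equation of order $m-1$.

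First I would expand the derivatives of $g=fv$ by the Leibniz rule,
\[
g^{(r)}=\sum_{j=0}^{r}\binom{r}{j}f^{(r-j)}v^{(j)},
\]
substitute this into the inhomogeneous equation $\sum_{r=0}^{m}y_r g^{(r)}=a$, and interchange the two finite sums to obtain
\[
a=\sum_{j=0}^{m}v^{(j)}\left(\sum_{r=j}^{m}\binom{r}{j}y_r f^{(r-j)}\right).
\]
The $j=0$ term of this sum is $v\cdot\sum_{r=0}^{m}y_r f^{(r)}$, which vanishes identically because $f$ solves the homogeneous equation. Hence only the terms with $j\ge 1$ contribute.

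Next I would reindex by $s=j-1$ and use $v^{(j)}=(v')^{(j-1)}=h^{(s)}$, which turns the identity into
\[
a=\sum_{s=0}^{m-1}h^{(s)}\left(\sum_{r=s+1}^{m}\binom{r}{s+1}y_r f^{(r-1-s)}\right)=\sum_{s=0}^{m-1}\tilde{y}_s h^{(s)},
\]
by the very definition of $\tilde{y}_s$. This is exactly the claimed equation $\tilde{y}_{m-1}h^{(m-1)}+\dots+\tilde{y}_0 h=a$.

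The argument is entirely routine; the only places requiring a bit of care are the interchange of the (finite) summations and checking that, after the shift $j\mapsto s+1$, the inner coefficient is literally the stated $\tilde{y}_s$. There is no genuine obstacle here — the statement is the standard fact that dividing a particular solution of an inhomogeneous linear ODE by a homogeneous solution and then differentiating lowers the order by one — so the proof reduces to the short computation sketched above.
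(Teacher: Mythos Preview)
Your proof is correct. The paper does not give its own proof of this lemma: it is stated with the citation ``See \cite{zz}'' and used as a black box to deduce Corollary~\ref{zzcorollary}. Your argument is the standard reduction-of-order computation---write $g=fv$, expand by Leibniz, kill the $j=0$ term via the homogeneous equation for $f$, and reindex---and is exactly the kind of short calculation one would expect to find in the cited reference.
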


We obtain the following result that is similar to \cite[Corollary 1]{zz}.
\begin{cor}\label{zzcorollary}
Suppose $F(x,z)$ satisfies
\begin{equation}\label{corrolary1eqn}
\sum_{r=0}^{m}W_r(x)D^rF(x,z)=A(x,z)
\end{equation}
with $A(\infty,x)\equiv 0$, then we have
\begin{equation*}
    \left(\sum_{s=0}^{m-1}\tilde{W}_s(x)D^s\right)\mathds{M}F(x,z)=zA(x,z),
\end{equation*}
where $\tilde{W}_{s}(x)=\sum_{r=s+1}^{m} \binom{r}{s+1} W_r(x)D^{(r-1-s)}F(x,\infty)$ and $\mathds{M}$ is defined in (\ref{eqn:BF_operator}).
\end{cor}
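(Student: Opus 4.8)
The plan is to derive Corollary \ref{zzcorollary} from Lemma \ref{zzlemma} by a direct substitution, treating $x$ as the independent variable via the operator $D=x\frac{d}{dx}$ and keeping $z$ as a parameter. First I would set up the dictionary between the two statements: in Lemma \ref{zzlemma} the differential operator is $\frac{d}{dt}$, so I need to re-express everything in terms of $D$. The cleanest route is to apply Lemma \ref{zzlemma} with a genuine derivative and then translate, but since the paper works throughout with $D$, I would instead take $f(x)=F(x,\infty)$ and $g(x)=F(x,z)$ and observe that the hypothesis \eqref{corrolary1eqn} says $\sum_r W_r D^r g = A$, while extracting the $z$-independent part (the constant term with respect to $1/z$, i.e. evaluating at $z=\infty$) and using $A(x,\infty)\equiv 0$ gives $\sum_r W_r D^r f = 0$. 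So $f$ and $g$ play exactly the roles of the two functions in Lemma \ref{zzlemma}, with the operator $D$ in place of $\frac{d}{dt}$, coefficients $y_r = W_r$, and right-hand sides $0$ and $A$ respectively.

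The second step is to identify $h=(g/f)'$ — or rather its $D$-analogue — with $\mathds{M}F(x,z)$ up to the factor of $z$. By definition \eqref{eqn:BF_operator}, $\mathds{M}F(x,z)=zD\big(F(x,z)/F(x,\infty)\big)=zD(g/f)$. So the conclusion of Lemma \ref{zzlemma} applied with the operator $D$ reads $\sum_{s=0}^{m-1}\tilde y_s\,D^s\big(D(g/f)\big)=A$, where $\tilde y_s = \sum_{r=s+1}^m \binom{r}{s+1} y_r\, D^{r-1-s}f$. Multiplying through by $z$ and using $zD(g/f)=\mathds{M}F$, this becomes $\sum_{s=0}^{m-1}\tilde W_s\, D^s(\mathds{M}F)=zA$ with $\tilde W_s = \sum_{r=s+1}^m \binom{r}{s+1} W_r\, D^{r-1-s}F(x,\infty)$, which is exactly the claimed identity. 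So modulo the validity of Lemma \ref{zzlemma} with $D$ replacing $\frac{d}{dt}$, the corollary is immediate.

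The one genuine point requiring care — and the main (minor) obstacle — is justifying that Lemma \ref{zzlemma} remains valid when $\frac{d}{dt}$ is replaced by the operator $D=x\frac{d}{dx}$. This is true because $D$ is an ordinary derivation on $\mathbb{C}[[x]]$ (indeed $D = \frac{d}{du}$ in the coordinate $u=\log x$), and the proof of Lemma \ref{zzlemma} — which is essentially repeated application of the Leibniz rule and a reindexing of binomial coefficients — uses only that $\frac{d}{dt}$ is a derivation satisfying the product rule. Concretely I would either (a) remark that substituting $t=\log x$ converts $D$ into $\frac{d}{dt}$, apply Lemma \ref{zzlemma} verbatim, and substitute back; or (b) reprove the one-line inductive identity $\tilde y_s = \sum_{r>s}\binom{r}{s+1} y_r f^{(r-1-s)}$ directly with $D$ in place of the derivative, noting the computation is formally identical. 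I would also note that $F(x,\infty)=f$ is not identically zero (it is a power series with constant term $1$ in all the applications in the paper, cf. \eqref{def:E-function} and \eqref{def:EiCi}), so the hypotheses of Lemma \ref{zzlemma} are met and $g/f$ makes sense in $\mathbb{C}[[x]]$. Finally I would confirm that $D$ and the operation $F(x,z)\mapsto F(x,\infty)$ commute — both only see the $x$-dependence — so that extracting the $z$-independent part of \eqref{corrolary1eqn} legitimately yields $\sum_r W_r D^r F(x,\infty)=0$; this is where $A(x,\infty)\equiv 0$ is used.
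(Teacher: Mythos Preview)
Your proposal is correct and matches the paper's approach: the paper's one-line proof is precisely your option (a), applying Lemma \ref{zzlemma} with $f(t)=F(e^t,\infty)$, $g(t)=F(e^t,z)$, $a(t)=A(e^t,z)$, $y_r(t)=W_r(e^t)$, i.e.\ using the substitution $x=e^t$ to convert $D$ into $\frac{d}{dt}$. Your write-up supplies the details the paper leaves implicit (why $A(x,\infty)=0$ yields the homogeneous equation for $f$, the identification $\mathds{M}F=zD(g/f)$, and the nonvanishing of $f$), but the argument is the same.
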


\begin{proof}
Apply Lemma \ref{zzlemma} with $f(t)=F(e^t,\infty)$, $g(t)=F(e^t,z)$, $a(t)=A(e^t,z)$, and $y_r(t)=W_r(e^t)$ for $0\leq r \leq m$.
\end{proof}

\begin{lem}\label{propertiesofCfunctions}
The series $C_k$ in $x$ satisfy the following properties:
\begin{enumerate}
    \item $C_{k+n}=C_k$ for all $k\geq 1$,
    \item $\prod_{k=1}^nC_k={L}^n$,
    \item $C_{k}=C_{n+1-k}$ for all $1\leq k \leq n$.
\end{enumerate}
\end{lem}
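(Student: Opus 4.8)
The plan is to establish the three properties by a careful bookkeeping of the definition $C_{i}=D\mathfrak{L}_{i-1}\cdots\mathfrak{L}_{0}I_{i}$ together with the Picard--Fuchs equation \eqref{eq:PF3} and the Birkhoff-factorization machinery of Corollary \ref{zzcorollary}. The key input is that the operator $\mathds{M}$ applied to a solution of a Picard--Fuchs-type equation produces a solution of the ``derived'' equation obtained by the $\tilde{W}$-rule of Corollary \ref{zzcorollary}; iterating $\mathds{M}$ $n$ times on $E(x,z)$ should return an equation of the same shape as the one we started with, which is the source of the periodicity in part (1). Concretely, I would first record that $E_i(x,z)=\mathds{M}^{i}E(x,z)$ and $C_i=E_i(x,\infty)$, so the $C_i$ are exactly the successive ``leading constants'' produced by Birkhoff factorization of the $I$-function, and that by \eqref{PropertiesofSOperator1}, \eqref{PropertiesofSOperator2} the operator $\mathfrak{L}_i=C_i^{-1}D$ is the $i$-th factor in the factorization $\mathfrak{L}_n\cdots\mathfrak{L}_1 I=z^{-n}I$ of \eqref{eq:PFFactored}.

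First I would prove (2). Starting from \eqref{eq:PFFactored}, namely $\mathfrak{L}_n\cdots\mathfrak{L}_1 I(x,z)=z^{-n}I(x,z)$, and expanding $\mathfrak{L}_n\cdots\mathfrak{L}_1=\bigl(\prod_{k=1}^{n}C_k\bigr)^{-1}(D+\alpha_{n-1})\cdots(D+\alpha_1)D$ exactly as in the proof of Lemma \ref{lem:PFFactorization}, the leading coefficient of this $n$-th order operator is $\bigl(\prod_{k=1}^{n}C_k\bigr)^{-1}$. On the other hand, the Picard--Fuchs equation \eqref{eq:PF3} has leading coefficient $L^{-n}$. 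Since \eqref{eq:PF3} and \eqref{eq:PFFactored} share the same $n$-dimensional solution space $\{\widetilde I_i\}_{0\le i\le n-1}$ (this is exactly the content of the argument in Lemma \ref{lem:PFFactorization}, which shows the two equations coincide), their leading coefficients agree, giving $\prod_{k=1}^{n}C_k=L^n$, which is (2).

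Next, (1) and (3) I would handle by an induction that tracks the $\tilde W$-operators of Corollary \ref{zzcorollary}. Each application of $\mathds{M}$ to $E_i$ lowers the order by one and transforms the coefficient sequence by the explicit rule $\tilde W_s=\sum_{r>s}\binom{r}{s+1}W_r\,D^{r-1-s}E_i(x,\infty)$; after $n$ applications the order returns to $n$ and, using the periodicity $\phi_{k+n}=\phi_k$ already built into the $I$-function together with the computation $\frac{d^n}{dx^n}I = z^{-n}I + (-1)^n n^{-n}D^n I$ from the proof of \eqref{eq:PF1}, the resulting $n$-th order equation for $E_n$ is (up to an overall factor that one checks is $1$ by comparing $x\to 0$ asymptotics) the same Picard--Fuchs equation that $E_0=E$ satisfies; comparing the ``leading constants at $\infty$'' of the two factorizations then forces $C_{k+n}=C_k$. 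Property (3) should come out of the symplectic/self-adjointness structure of the factorization: the operator $L^{-n}(D^n+\frac{DL}{L}\sum s_{n,k}D^k)$ has a symmetry under $D\mapsto -D$ up to conjugation (reflecting $R(z)R(-z)^*=\mathrm{Id}$), which interchanges the factor $\mathfrak{L}_i$ with $\mathfrak{L}_{n+1-i}$ and hence $C_i$ with $C_{n+1-i}$; alternatively one verifies directly from the closed form $C_i = D\mathfrak{L}_{i-1}\cdots\mathfrak{L}_0 I_i$ and the identity $I_k(x)=\sum_l \frac{(-1)^{nl}x^{nl+k}}{(nl+k)!}\bigl(\Gamma(l+k/n)/\Gamma(k/n)\bigr)^n$, whose $\Gamma$-factor has the reflection symmetry $k\leftrightarrow n-k$ after the shift by $1$.

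The main obstacle I anticipate is pinning down the overall normalizing constant in the ``return'' equation for $E_n$ in the proof of (1): Corollary \ref{zzcorollary} tells us $\mathds{M}$ transforms a Picard--Fuchs-type equation into another one of predictable shape, but the $n$-fold composition only obviously reproduces \emph{the same operator up to a unit multiple}, and ruling out a spurious scalar requires either a clean induction hypothesis on the normalization of $E_i$ at $x=0$ (each $E_i = 1 + O(x)$, so $C_i = E_i(x,\infty)$ has constant term $1$) or an appeal to the uniqueness part of the ODE argument used in Lemma \ref{lem:PFFactorization}. I expect the cleanest route is to carry the normalization $E_i(0,\infty)=1$ through the induction and invoke uniqueness of monic Picard--Fuchs operators with a fixed solution space, exactly as in Lemma \ref{lem:PFFactorization}.
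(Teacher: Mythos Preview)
Your argument for (2) has a circularity you should flag: you invoke ``the argument in Lemma \ref{lem:PFFactorization}, which shows the two equations coincide,'' but that lemma's proof \emph{uses} part (2) of the present lemma to match the leading coefficients before concluding the operators are equal. You can repair this: from \eqref{PropertiesofSOperator1}--\eqref{PropertiesofSOperator2} alone one gets \eqref{eq:PFFactored}, and the two $n$-th order operators (in $x$) sharing the solution basis $\{\widetilde I_i\}$ forces them to be proportional; the inhomogeneous term $z^{-n}I$ on both sides then pins the proportionality factor to $1$. That extra normalization step is essential and is not in your write-up. The paper avoids all of this: it iterates Corollary \ref{zzcorollary} on $E(x,z)$ until the order drops to zero, obtaining directly $\bigl(L^{-n}\prod_{i=1}^{n-1}C_i\bigr)E_n=E$, and evaluates at $z=\infty$.

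Your route to (1) is far more laborious than needed and is not fully justified: you would have to track the $\tilde W$-coefficients through $n$ iterations and argue the operator returns to itself, including the normalization you worry about at the end. The paper instead gets (1) in one line from (2): the relation $E_n/C_n=E$ gives $E_{n+k}=E_k$ after applying $\mathds{M}^{k-1}zD$, hence $C_{n+k}=C_k$.

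The real gap is (3). Neither of your suggestions is a proof. The ``$\Gamma$-reflection'' idea does not work as stated: there is no evident $k\leftrightarrow n-k$ symmetry in the series $I_k$, and even if there were, $C_k$ is an iterated nonlinear construction from the $I_j$, so a symmetry of $I_k$ alone would not immediately yield $C_k=C_{n+1-k}$. The self-adjointness idea is at best a heuristic here; making it precise would require identifying the correct adjoint of the Picard--Fuchs operator and checking it swaps the Birkhoff factors in the right way, which you have not done. The paper's proof is short and uses an ingredient you did not consider: the two-point function \eqref{twopointfunctioncalc} gives $\mathfrak{L}_i\cdots\mathfrak{L}_0 I_{i+1}=\mathfrak{L}_j\cdots\mathfrak{L}_0 I_{j+1}$ whenever $i+j=n-1$ (by the symmetry $\langle\langle\phi_i,\phi_j\rangle\rangle=\langle\langle\phi_j,\phi_i\rangle\rangle$), and applying $D$ yields $C_{i+1}=C_{j+1}$, i.e.\ $C_k=C_{n+1-k}$.
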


\begin{proof}
Since we basically set all $\phi_i$'s in $I(x,z)$ to $1$ to obtain the series $E(x,z)$, it also satisfies the Picard-Fuchs equation:
\begin{equation*}
x^{-n}\left(\left(1-(-1)^n\left(\frac{x}{n}\right)^n\right)D^nE(x,z)+\sum_{k=1}^{n-1}s_{n,k}D^kE(x,z)\right)=z^{-n}E(x,z)
\end{equation*}
which is of the form (\ref{corrolary1eqn}) with $F(x,z)=E(x,z)$, $A(x,z)=z^{-n}E(x,z)$, $m=n$, and
\begin{align*}
W_n(x)=&x^{-n}\left(1-(-1)^n\left(\frac{x}{n}\right)^n\right)=L^{-n},\\
W_r(x)=&x^{-n}s_{n,r}\quad\text{for}\quad(1\leq r\leq n-1),\\
W_0(x)=&0.
\end{align*}
Applying Corollary \ref{zzcorollary} repeatedly, we obtain
\begin{equation}\label{inductivesteplemma}
    \sum_{s=0}^{n-1-p}W_{s,p}(x)D^sE_{p+1}(x,z)=z^{-n+p+1}E(x,z)\quad (0\leq p \leq n-1),
\end{equation}
where $E_i(x,z)$ is defined in (\ref{def:EiCi}) and $W_{s,p}(x)$ is given inductively by
\begin{equation*}
W_{s,p}(x)=\sum_{r=s+1}^{n-p}\binom{r}{s+1} W_{r,p-1}(x)D^{r-1-s}C_p(x).
\end{equation*}
By induction on $p$, we see that the first coefficient in (\ref{inductivesteplemma}) is given by
\begin{equation*}
    W_{n-1-p,p}(x)=W_n(x)\prod_{i=1}^pC_i(x)\label{coefficient1}.
\end{equation*}
Then, equation (\ref{inductivesteplemma}) for $p=n-1$ gives
\begin{equation}\label{equationgiving12oflemma}
\left(W_n(x)\prod_{i=1}^{n-1}C_i(x)\right)E_n(x,z)=E(x,z).
\end{equation}
Letting $z=\infty$ in (\ref{equationgiving12oflemma}), using $E(x,\infty)=1$, $W_n(x)=L^{-n}$ and $E_n(x,\infty)=C_n(x)$ we obtain
\begin{equation*}
L^{-n}\prod_{i=1}^{n}C_i(x)=1.
\end{equation*}
which proves part (\ref{Cfunctions2}) of Lemma \ref{propertiesofCfunctions}. 

Substituting part (\ref{Cfunctions2}) of Lemma \ref{propertiesofCfunctions} into equation (\ref{equationgiving12oflemma}) gives
\begin{equation*}
    \frac{E_n(x,z)}{C_n(x)}=E(x,z).
\end{equation*}
Applying $\mathds{M}^{k-1}zD$ to both sides of this equality for $k\geq 1$ results in
\begin{equation*}
    E_{n+k}(x,z)=E_{k}(x,z),
\end{equation*}
which proves part (\ref{Cfunctions1}) of Lemma \ref{propertiesofCfunctions}.

Now, equation (\ref{twopointfunctioncalc}) yields that for any $0\leq i,j\leq n-1$ we have
\begin{equation*}
\mathfrak{L}_{i}...\mathfrak{L}_{0}I_{i+1}=\mathfrak{L}_{j}...\mathfrak{L}_{0}I_{j+1}\quad\text{if}\quad i+j=n-1.
\end{equation*}
Applying the operator $D$ to both sides gives part (\ref{Cfunctions4}) of Lemma \ref{propertiesofCfunctions}.
\end{proof}

For any $l\geq 0$, we define the following series in $x$
\begin{equation*}
K_l=\prod_{i=0}^lC_i.
\end{equation*}

\begin{cor}\label{Kfunctions} The series $K_l$ satisfy
\begin{enumerate}
    \item $K_{n+l}=L^nK_l$ for all $l\geq{0}$, in particular $K_n=L^n$, 
    \item $K_lK_{n-l}=L^n$ and $K_lK_{\mathrm{Inv}(l)}=L^{l+\mathrm{Inv}(l)}$ for all $0\leq l \leq n-1$.
\end{enumerate}
\end{cor}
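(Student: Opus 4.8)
The plan is to derive Corollary \ref{Kfunctions} purely formally from the three properties of the series $C_k$ recorded in Lemma \ref{propertiesofCfunctions} --- the periodicity $C_{k+n}=C_k$ for $k\geq 1$, the product formula $\prod_{k=1}^n C_k=L^n$, and the symmetry $C_k=C_{n+1-k}$ for $1\leq k\leq n$ --- together with the normalization $C_0=1$ built into the definition $K_l=\prod_{i=0}^l C_i$.

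For part (1), I would write
\begin{equation*}
K_{n+l}=\prod_{i=0}^{n+l}C_i=\left(\prod_{i=0}^{l}C_i\right)\left(\prod_{i=l+1}^{l+n}C_i\right)=K_l\prod_{i=l+1}^{l+n}C_i,
\end{equation*}
and then observe that the last factor is a product of $n$ consecutive $C_i$'s whose indices $l+1,\dots,l+n$ are all $\geq 1$ and run through every residue class modulo $n$ exactly once. Applying the periodicity to reduce each index into $\{1,\dots,n\}$ turns this product into $\prod_{k=1}^n C_k=L^n$, whence $K_{n+l}=L^nK_l$. Taking $l=0$ and using $K_0=C_0=1$ gives $K_n=L^n$ in particular.

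For part (2), I would treat $l=0$ separately (there $K_0K_n=1\cdot L^n=L^n$) and, for $1\leq l\leq n-1$, use $C_0=1$ to write $K_lK_{n-l}=\bigl(\prod_{i=1}^{l}C_i\bigr)\bigl(\prod_{i=1}^{n-l}C_i\bigr)$; the symmetry $C_i=C_{n+1-i}$ rewrites the second factor as $\prod_{k=l+1}^{n}C_k$, so the product collapses to $\prod_{k=1}^{n}C_k=L^n$. The identity $K_lK_{\mathrm{Inv}(l)}=L^{l+\mathrm{Inv}(l)}$ is then immediate from the definition of $\mathrm{Inv}$: for $1\leq l\leq n-1$ one has $\mathrm{Inv}(l)=n-l$, so $l+\mathrm{Inv}(l)=n$ and the claim is exactly $K_lK_{n-l}=L^n$, while for $l=0$ both sides equal $1$. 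The only step that requires a little care is ensuring that the periodicity $C_{k+n}=C_k$ is invoked only for indices $\geq 1$ --- it fails at $k=0$ since $C_0=1$ but $C_n=L^n$ --- which is why all the shifted products above are arranged to begin at an index $\geq 1$; beyond this there is no real obstacle.
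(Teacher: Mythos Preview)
Your argument is correct and follows essentially the same route as the paper's proof: both parts are reduced to the three properties in Lemma \ref{propertiesofCfunctions} via the same index manipulations, and your treatment of part (2) matches the paper's computation line by line. Your additional remark that the periodicity $C_{k+n}=C_k$ must not be invoked at $k=0$ is a welcome clarification that the paper leaves implicit.
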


\begin{proof}
For the first part, the special case $K_n=L^n$ is just part (\ref{Cfunctions2}) of Lemma \ref{propertiesofCfunctions}. Then, general case $K_{n+l}=L^nK_l$ follows from part (\ref{Cfunctions1}) of Lemma \ref{propertiesofCfunctions}.

For the second part, we calculate
\begin{align*}
K_lK_{n-l}
=&\left(\prod_{i=1}^{l}C_i\right)\left(\prod_{j=1}^{n-l}C_j\right)\\
=&\left(\prod_{i=1}^{l}C_i\right)\left(\prod_{j=1}^{n-l}C_{n+1-j}\right)\quad \text{ by part (\ref{Cfunctions4}) of Lemma \ref{propertiesofCfunctions}}\\
=&\left(\prod_{i=1}^{l}C_i\right)\left(\prod_{i=l+1}^{n}C_{i}\right)=K_n=L^n,
\end{align*}
the rest follows from the fact that $K_0K_n=1\cdot L^n$ and $\mathrm{Inv}(l)=n-l$ for $1\leq{l}\leq{n-1}$.
\end{proof}

\subsection{Asymptotic solutions of Picard-Fuchs equations}\label{appendix:I-function-Part2}
In this part of the Appendix \ref{appendix:I-function}, our aim is to prove a recursive equation and polynomiality result similar to the some parts of \cite[Theorem 4]{zz}. In some sense, Lemma \ref{lem:AymptoticPFSolution} is analogous to \cite[Theorem 4(ii)]{zz} and Corollary \ref{cor:Phi_jk_identity} is analogous to \cite[Theorem 4(i)]{zz}. These results will be applied to the $P_{i,j}^k$'s appearing in the main body of the paper, see Corollary \ref{cor:P0jk_is_in_CL} and Corollary \ref{cor:LPijkidentity}. The proofs follow the path of \cite[Section 2.4]{zz}.

For $0\leq{j}\leq{n-1}$, define
\begin{equation*}
D_{L_j}=D+\frac{L_j}{z} \quad\text{and}\quad \mu_j=\int_0^x\frac{L_j(u)}{u}du
\end{equation*}
where $L_j=L\zeta^j$. 

\begin{lem}\label{lem:AymptoticPFSolution}
Assume for $0\leq{j}\leq{n-1}$ a function of the form $e^{\frac{\mu_{j}}{z}}\Phi_{j}(z)$ satisfies the Picard-Fuchs equation (\ref{eq:PF3}), i.e.
\begin{equation*}
L^{-n}\left(D^{n}\left(e^{\frac{\mu_{j}}{z}}\Phi_{j}(z)\right)+\frac{D L}{L} \sum_{r=1}^{n-1} s_{n,r} D^{r}\left(e^{\frac{\mu_{j}}{z}}\Phi_{j}(z)\right)\right)=z^{-n}e^{\frac{\mu_{j}}{z}}\Phi_{j}(z)
\end{equation*}
where
\begin{equation*}
\Phi_{j}(z)=\sum_{k=0}^{\infty} \Phi_{j,k} z^{k}\quad\text{with } \Phi_{j,k}\in\mathbb{C}[\![x]\!]\text{ and }\Phi_{j,k}=0\text{ if } k<0.
\end{equation*}
Then, we have $\Phi_{j,k}\in\mathbb{C}[L_j]=\mathbb{C}[L]$.
\end{lem}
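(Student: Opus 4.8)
The plan is to substitute the ansatz $e^{\mu_j/z}\Phi_j(z)$ into the Picard--Fuchs equation \eqref{eq:PF3}, expand everything as a Laurent series in $z$, and read off a recursion for the coefficients $\Phi_{j,k}$ that can be solved by induction on $k$. The key computational device is the conjugation identity for the operator $D$: since $D\mu_j = L_j = L\zeta^j$, one has $D\bigl(e^{\mu_j/z}f\bigr) = e^{\mu_j/z}\bigl(D + \tfrac{L_j}{z}\bigr)f = e^{\mu_j/z}D_{L_j}f$, and hence $D^r\bigl(e^{\mu_j/z}\Phi_j(z)\bigr) = e^{\mu_j/z}D_{L_j}^r\Phi_j(z)$. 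Cancelling the common factor $e^{\mu_j/z}$, the hypothesis becomes
\begin{equation*}
L^{-n}\left(D_{L_j}^{n}\Phi_j(z) + \frac{DL}{L}\sum_{r=1}^{n-1}s_{n,r}D_{L_j}^{r}\Phi_j(z)\right) = z^{-n}\Phi_j(z).
\end{equation*}

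First I would expand $D_{L_j}^r = \sum_{p=0}^{r}\binom{r}{p}_{\!*}z^{-(r-p)}(\cdots)$ carefully — here one must be attentive that $D$ and multiplication by $L_j$ do not commute, since $DL_j = (DL)\zeta^j \neq 0$, so $D_{L_j}^r$ produces terms $z^{-(r-p)}$ times differential operators in $D$ whose coefficients are polynomials in $L$ and its $D$-derivatives (all of which lie in $\mathbb{C}[L^{\pm1}]$ by \eqref{eq:DLLLemma1}). Then I would collect the coefficient of each power $z^{k-n}$ on the left and of $z^{k-n}$ on the right (writing $\Phi_j(z)=\sum_k\Phi_{j,k}z^k$). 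The leading term, the coefficient of $z^{-n}$, reproduces the algebraic relation $L^{-n}L_j^n = L^{-n}(L\zeta^j)^n = \zeta^{jn} = 1$ applied to $\Phi_{j,0}$, i.e. it is automatically satisfied (consistent with $\Phi_{j,0}$ being a free initial datum). At the next orders, matching the coefficient of $z^{k-n}$ yields an identity of the schematic form
\begin{equation*}
\bigl(L_j^n - L^n z^{-n}\text{-part}\bigr)\Phi_{j,k} \;=\; n L_j^{\,n-1}D\Phi_{j,k} + (\text{lower-order-in-}z\text{ terms involving }\Phi_{j,0},\dots,\Phi_{j,k-1}),
\end{equation*}
which, after the $\Phi_{j,k}$ terms cancel between the two sides (again because $L^{-n}L_j^n=1$), becomes a \emph{first-order} linear ODE for $\Phi_{j,k}$: something of the shape $n L_j\, D\Phi_{j,k} = R_{j,k}(L)$ where $R_{j,k}$ is an explicit element of $\mathbb{C}[L^{\pm1}]$ built from $\Phi_{j,0},\dots,\Phi_{j,k-1}$ and their $D$-derivatives, with no free parameter.

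The inductive step is then: assume $\Phi_{j,0},\dots,\Phi_{j,k-1}\in\mathbb{C}[L]$ (base case $\Phi_{j,0}$ constant, which is in $\mathbb{C}[L]$); then $R_{j,k}\in\mathbb{C}[L^{\pm1}]$, and we must solve $D\Phi_{j,k} = \tfrac{1}{nL_j}R_{j,k}$. Since $D L = L\cdot\tfrac{DL}{L} = L(1 + (-1)^n L^n/n^n)$ by \eqref{eq:DLLLemma1}, the operator $D$ sends $\mathbb{C}[L]\to\mathbb{C}[L]$ (indeed $D L^m = mL^m(1+(-1)^nL^n/n^n)$), and one checks that $D$ restricted to $x\mathbb{C}[[x]]\cap\mathbb{C}[L]$ — equivalently to the span of $L^m$ with $m\geq 1$ — is invertible onto its image with inverse again landing in $\mathbb{C}[L]$, because $D L^m$ has lowest-order term $mL^m$. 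So it suffices to verify that $\tfrac{1}{nL_j}R_{j,k}$ has no constant term in $L$ (equivalently, that $\Phi_{j,k}$ has positive $x$-order), which follows from tracking the $x$-adic (equivalently $L$-adic) valuations through the recursion — the hypothesis $\Phi_{j,k}\in\mathbb{C}[[x]]$ with the stated vanishing, together with the form of \eqref{eq:PF3}, forces this. Therefore $\Phi_{j,k}\in\mathbb{C}[L_j]=\mathbb{C}[L]$ (the last equality since $L_j = L\zeta^j$ differs from $L$ by a nonzero scalar).

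\textbf{Main obstacle.} The delicate point is the bookkeeping in expanding $D_{L_j}^r$ and verifying that, after the cancellation forced by $L^{-n}L_j^n=1$, the resulting recursion for $\Phi_{j,k}$ is genuinely first-order with the ``source'' term $R_{j,k}$ manifestly in $\mathbb{C}[L^{\pm1}]$ and of the right $L$-adic order to be $D$-integrable back into $\mathbb{C}[L]$. A cleaner route to avoid re-deriving this from scratch is to observe that \eqref{eq:PF3} already factors as $\mathfrak{L}_n\cdots\mathfrak{L}_1$ by Lemma~\ref{lem:PFFactorization}, so one could instead analyze $\mathfrak{L}_n\cdots\mathfrak{L}_1\bigl(e^{\mu_j/z}\Phi_j\bigr)=z^{-n}e^{\mu_j/z}\Phi_j$ stage by stage through the operators $D_{L_j}/C_i$ — but since $C_i\notin\mathbb{C}[L]$ in general, one must be careful that the intermediate quantities leave $\mathbb{C}[L]$ while the final $\Phi_{j,k}$ returns to it; for that reason I expect the direct expansion of the degree-$n$ form \eqref{eq:PF3} (whose coefficients all lie in $\mathbb{C}[L^{\pm1}]$ by Lemma~\ref{DLLemma}) to be the safer path, with the combinatorics of the $\binom{r}{p}$-type coefficients being the part that needs genuine care.
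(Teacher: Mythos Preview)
Your overall strategy is essentially the paper's: conjugate away $e^{\mu_j/z}$ to obtain $\mathds{L}_j\Phi_j(z)=0$, expand in powers of $z^{-1}$, and extract a first-order recursion $nD\Phi_{j,k}=(\text{source built from }\Phi_{j,<k})$, then induct. The paper carries this out by writing $\mathds{L}_j=\sum_{k=1}^n(L_j/z)^{n-k}\mathds{L}_{j,k}$ with $\mathds{L}_{j,1}=nD$ (Lemmas~\ref{lem:DLj_Hml_Equation} and~\ref{lem:Lj_Intermsof_Ljk}, Corollary~\ref{cor:Phi_jk_identity}), exactly as you outline.

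The gap is in your integrability criterion. You assert that to solve $D\Phi_{j,k}=g$ within $\mathbb{C}[L]$ it suffices that $g$ have no constant term in $L$. This is false: since $D(L^m)=mL^mY$ with $Y=\tfrac{DL}{L}=1+(-1)^nL^n/n^n$, the image $D(\mathbb{C}[L])$ is $LY\,\mathbb{C}[L]$, a \emph{proper} subset of $L\,\mathbb{C}[L]$. Concretely, $L^2$ has no constant term but is not in $D(\mathbb{C}[L])$: any nonconstant $f\in\mathbb{C}[L]$ of top degree $M\geq 1$ has $Df$ of top degree $M+n>2$. So ``no constant term'' (an $L$-adic valuation condition) is not the right hypothesis; what is needed is divisibility by $Y$.

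Supplying this divisibility is precisely the content of Lemma~\ref{lem:Ljkequivalence}: working modulo the ideal $\mathcal{I}=(L^nY)\subset\mathbb{C}[L]$, one proves $\mathds{L}_{j,k}\equiv\binom{n}{k}D(D-Y)\cdots(D-(k-1)Y)$, whence $\mathds{L}_{j,k}(\mathbb{C}[L])\subseteq L^kY\,\mathbb{C}[L]$. Then each term $L_j^{1-l}\mathds{L}_{j,l}(\Phi_{j,k+1-l})$ lands in $L_jY\,\mathbb{C}[L_j]=\operatorname{im}(nD)$, and the induction closes. Establishing that congruence is where the real work lies: it passes through an explicit recursion for $D_{L_j}^m(1)$ (the polynomials $H_{m,l}$), identifies their leading behavior modulo $\mathcal{I}$ with Stirling numbers of the second kind, and finishes with a Stirling-number identity combining both kinds. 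Your proposed fix via ``tracking $L$-adic valuations'' cannot detect this---the obstruction is divisibility by the specific polynomial $Y$, not a valuation bound.
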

For the rest of this section, our aim is to prove Lemma \ref{lem:AymptoticPFSolution}, hence we adopt its assumptions. For any function $F(x,z)$, observe the following
\begin{equation}\label{eqn:mujcommutation}
\begin{split}
D\left(e^{\frac{\mu_{j}}{z}} F(x,z)\right)
&=e^{\frac{\mu_{j}}{z}} D F(x,z)+\frac{D \mu_{j}}{z} e^{\frac{\mu_{j}}{z}} F(x,z) \\
&=e^{\frac{\mu_{j}}{z}} D F(x,z)+\frac{L_{j}}{z} e^{\frac{\mu_{j}}{z}} F(x,z)\\
&=e^{\frac{\mu_{j}}{z}} D_{L_j} F(x,z).
\end{split}
\end{equation}
Then, the equation in Lemma \ref{lem:AymptoticPFSolution} reads as
\begin{equation}\label{eq:LPz0}
\mathds{L}_j\Phi_{j}(z)=0\quad\text{where}\quad
\mathds{L}_j=-\left(\frac{L_j}{z}\right)^n+D_{L_{j}}^{n}+\frac{D L_j}{L_j} \sum_{r=1}^{n-1} s_{n, r} D_{L_{j}}^{r}
\end{equation}
using equation \eqref{eqn:mujcommutation}, $L^n=(L_j)^n$, and $\frac{DL}{L}=\frac{DL_j}{L_j}$.

For $1\leq{k}\leq{n}$, define\footnote{Note that the definition of $\mathds{L}_{j,k}$ does not depend on $j$ since $L^n=(L_j)^n$, and $\frac{DL}{L}=\frac{DL_j}{L_j}$.}
\begin{equation}\label{eq:LLjk}
\mathds{L}_{j,k}=\sum_{i=0}^{k}\left(\binom{n}{i} H_{n-i, k-i}+\frac{D L_j}{L_j} \sum_{r=1}^{k-i}\binom{n-r}{i} s_{n, n-r} H_{n-i-r, k-i-r}\right) D^{i}
\end{equation}
where $H_{m,l}$ are defined\footnote{$H_{m,l}$ is set to be $0$ outside the range $m\geq{1}$, $0\leq{l}\leq{m}$.} by the following recursion for $m\geq{1}$ and $0\leq{l}\leq{m}$:
\begin{equation}\label{eq:HmlRecursion}
H_{0,l}=\delta_{0,l},\quad\text{and}\quad H_{m,l}=H_{m-1,l}+n\left(1+(-1)^n\frac{X}{n^n}\right)\left(X\frac{d}{d X}+\frac{m-l}{n}\right)H_{m-1,l-1},
\end{equation}
here $X$ is a formal variable.

Equations (\ref{eq:LLjk}) and (\ref{eq:HmlRecursion}) are adaptations of \cite[Equation (7)]{zz} and \cite[Equation (9)]{zz}. By a detailed analysis of $H_{m,l}$'s, the operators $\mathds{L}_j$ will be decomposed into a summation involving the operators $\mathds{L}_{j,k}$ in Lemma \ref{lem:Lj_Intermsof_Ljk}.

By induction, we see that
\begin{equation*}
\begin{split}
H_{m,l}=&0\quad \text{ if } l>m,\\
H_{m,0}=&1\quad \text{ for } m\geq{0},\\
H_{m,1}=&\binom{m}{2}\left(1+(-1)^n\frac{X}{n^n}\right)\quad \text{ for } m\geq{1},\\
H_{m,2}=&3\binom{m}{4}\left(1+(-1)^n\frac{X}{n^n}\right)^2\\
        &+\binom{m}{3}\left((n+1)\left(1+(-1)^n\frac{X}{n^n}\right)^2-n\left(1+(-1)^n\frac{X}{n^n}\right)\right)\quad \text{ for } m\geq{2}.
\end{split}
\end{equation*}

Now we specialize $X$ and define $Y$ as follows:
\begin{equation}\label{eqn:XandY}
\begin{split}
X=&L_j^n=L^n\\
Y=&\frac{DL_j}{L_j}=\frac{DL}{L}=1+(-1)^n\frac{L^n}{n^n}=1+(-1)^n\frac{X}{n^n}.
\end{split}
\end{equation}
Then, we see that
\begin{equation*}
\begin{split}
DY=&(-1)^n\frac{L^{n-1}}{n^{n-1}}DL=(-1)^n\frac{L^{n}}{n^{n-1}}\frac{DL}{L}=(-1)^n\frac{1}{n^{n-1}}XY,\\
DX=&nL^{n-1}DL=nL^n\frac{DL}{L}=nXY=nX\left(1+(-1)^n\frac{X}{n^n}\right).
\end{split}
\end{equation*}

Also, using Stirling numbers of the first kind which are explicitly given in Appendix \ref{appendix:stirling}, we compute the first two terms of $\mathds{L}_{j,k}$:
\begin{equation*}
\begin{split}
\mathds{L}_{j,1}=&nD,\\
\mathds{L}_{j,2}=&\binom{n+1}{4}(Y^2-Y)-\binom{n}{2}YD+\binom{n}{2}D^2.
\end{split}
\end{equation*}
The following lemma will be useful in the decomposion of $\mathds{L}_j$ with respect to the operators $\mathds{L}_{j,k}$.
\begin{lem}\label{lem:DLj_Hml_Equation}
For all $k\geq{0}$, we have
\begin{equation*} 
D_{L_{j}}^{k}=\sum_{m=0}^{k}\sum_{l=0}^{m} \binom{k}{m} H_{m,l}\left(\frac{L_{j}}{z}\right)^{m-l} D^{k-m}.
\end{equation*}
\end{lem}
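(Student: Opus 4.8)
The plan is to prove the operator identity
\[
D_{L_j}^{k}=\sum_{m=0}^{k}\sum_{l=0}^{m}\binom{k}{m}H_{m,l}\left(\frac{L_j}{z}\right)^{m-l}D^{k-m}
\]
by induction on $k$, where $D_{L_j}=D+\frac{L_j}{z}$ and $H_{m,l}$ is governed by the recursion (\ref{eq:HmlRecursion}). The base case $k=0$ (and $k=1$) is immediate from $H_{0,0}=1$, $H_{1,0}=1$, $H_{1,1}=0$. For the inductive step I would write $D_{L_j}^{k+1}=D_{L_j}\circ D_{L_j}^{k}$ and apply $D_{L_j}=D+\frac{L_j}{z}$ termwise to the assumed expression for $D_{L_j}^k$. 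The $\frac{L_j}{z}$ part simply raises the power of $\frac{L_j}{z}$ by one, contributing $\binom{k}{m}H_{m,l}(\frac{L_j}{z})^{m+1-l}D^{k-m}$. The $D$ part is the subtle one: $D$ does not commute with multiplication by $(\frac{L_j}{z})^{m-l}$, since $L_j=L\zeta^j$ depends on $x$; here I would use $D\big((\frac{L_j}{z})^{m-l}F\big)=(\frac{L_j}{z})^{m-l}\big(DF+(m-l)\frac{DL_j}{L_j}F\big)$ together with the fact, recorded in (\ref{eqn:XandY}), that $\frac{DL_j}{L_j}=Y=1+(-1)^n\frac{X}{n^n}$ and $X=L_j^n$, so that the extra term is again a polynomial multiplier of the shape appearing in (\ref{eq:HmlRecursion}).

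The key step is then to reindex the resulting double sum and match coefficients of $(\frac{L_j}{z})^{m-l}D^{k+1-m}$ against the claimed formula for $D_{L_j}^{k+1}$. After collecting the three contributions — from $\frac{L_j}{z}$ acting on the $(m-1,l-1)$ term, from $D$ acting purely on $D^{k-m}$ giving the $(m,l)$ term shifted via $\binom{k}{m}+\binom{k}{m-1}=\binom{k+1}{m}$, and from $D$ hitting the $(\frac{L_j}{z})^{m-l}$ factor producing the $(m-1,l-1)$ term weighted by $n Y(X\frac{d}{dX}+\frac{m-l}{n})$ once one rewrites $D$ in terms of $X\frac{d}{dX}$ using $DX=nXY$ — the coefficient of $(\frac{L_j}{z})^{m-l}D^{k+1-m}$ is forced to be $\binom{k+1}{m}$ times exactly the right-hand side of the recursion (\ref{eq:HmlRecursion}) for $H_{m,l}$, hence equals $\binom{k+1}{m}H_{m,l}$. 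The Pascal identity for binomial coefficients is what converts the two separate pieces $\binom{k}{m}$ and $\binom{k}{m-1}$ into the single $\binom{k+1}{m}$, and this is the bookkeeping that makes the recursion (\ref{eq:HmlRecursion}) the natural one.

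The main obstacle is purely notational rather than conceptual: keeping the shifts of the indices $m$ and $l$ straight while separating the ``raise the $\frac{L_j}{z}$ power'' effect from the ``differentiate the coefficient'' effect, and confirming that the derivative $D$ acting on $(\frac{L_j}{z})^{m-l}$, expressed through $X\frac{d}{dX}$, reproduces precisely the operator $nY\big(X\frac{d}{dX}+\frac{m-l}{n}\big)$ that appears in (\ref{eq:HmlRecursion}); the factor $\frac{m-l}{n}$ there is exactly the logarithmic derivative contribution $(m-l)\frac{DL_j}{L_j}$ divided by $n$ after using $D=nY\,X\frac{d}{dX}\cdot\frac{1}{\,\cdot\,}$—more precisely one uses $D f(X)=nXY f'(X)$, so $(m-l)Y f = nY\cdot\frac{m-l}{n}f$, giving the combined operator $nY(X\frac{d}{dX}+\frac{m-l}{n})$ acting on $H_{m-1,l-1}$. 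Once this single verification is in place, the induction closes and the lemma follows.
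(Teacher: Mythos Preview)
Your approach is correct and is essentially the paper's argument, folded into a single induction rather than two. The paper first proves the Leibniz-type identity $D_{L_j}^{k}=\sum_{m=0}^{k}\binom{k}{m}D_{L_j}^{m}(1)\,D^{k-m}$ (isolating the Pascal combinatorics), and then separately shows $D_{L_j}^{m}(1)=\sum_{l=0}^{m}H_{m,l}(L_j/z)^{m-l}$ (which is where the recursion (\ref{eq:HmlRecursion}) enters); you do both at once.

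One small bookkeeping slip: the $\frac{L_j}{z}$ contribution to the coefficient of $(L_j/z)^{m-l}D^{k+1-m}$ comes from the old $(m-1,l)$ term, not the $(m-1,l-1)$ term, and gives $\binom{k}{m-1}H_{m-1,l}$. Combined with your third contribution $\binom{k}{m-1}\,nY\bigl(X\tfrac{d}{dX}+\tfrac{m-l}{n}\bigr)H_{m-1,l-1}$, the $H$-recursion collapses these to $\binom{k}{m-1}H_{m,l}$; adding the second contribution $\binom{k}{m}H_{m,l}$ and applying Pascal then yields $\binom{k+1}{m}H_{m,l}$, closing the induction exactly as you outline.
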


\begin{proof}
First, we prove by induction that
\begin{equation*}
D_{L_{j}}^{k}=\sum_{m=0}^{k}\binom{k}{m}D_{L_{j}}^{m}(1)D^{k-m}.
\end{equation*}

We need to note that
\begin{equation}\label{eq:DLjLeibniz}
\begin{split}
D_{L_{j}}\left(FD^k\right)
&=\left(D+\frac{L_j}{z}\right)\left(FD^k \right)\\
&=(DF)D^k+FD^{k+1}+\frac{L_j}{z}FD^k\\
&=\left(D_{L_{j}}F\right)D^k+FD^{k+1}.
\end{split}
\end{equation}

For the base step $k=1$, we have
\begin{equation*}
D_{L_j}=D+\frac{L_j}{z}=D+D_{L_j}(1)=\sum_{m=0}^{1}\binom{1}{m}D_{L_{j}}^{m}(1)D^{1-m}.
\end{equation*}
For the inductive step, we have
\begin{align*}
D_{L_{j}}^{k}
&=D_{L_{j}}D_{L_{j}}^{k-1}\\
&=\sum_{m=0}^{k-1}\binom{k-1}{m}D_{L_{j}}\left(D_{L_{j}}^{m}(1)D^{k-1-m}\right)\\
&=\sum_{m=0}^{k-1}\binom{k-1}{m}\left(D_{L_{j}}^{m+1}(1)D^{k-1-m}+D_{L_{j}}^{m}(1)D^{k-m}\right)\quad\text{ by equation \eqref{eq:DLjLeibniz}}\\
&=\sum_{m=0}^{k-1}\binom{k-1}{m}D_{L_{j}}^{m+1}(1)D^{k-1-m}+\sum_{m=0}^{k-1}\binom{k-1}{m}D_{L_{j}}^{m}(1)D^{k-m}\\
&=\sum_{m=1}^{k}\binom{k-1}{m-1}D_{L_{j}}^{m}(1)D^{k-m}+\sum_{m=0}^{k-1}\binom{k-1}{m}D_{L_{j}}^{m}(1)D^{k-m}\\
&=\underbrace{\binom{k-1}{k-1}}_{=\binom{k}{k}}D_{L_{j}}^{k}(1)D^{k-k}+\sum_{m=1}^{k-1}\underbrace{\left(\binom{k-1}{m-1}+\binom{k-1}{m}\right)}_{=\binom{k}{m}}D_{L_{j}}^{m}(1)D^{k-m}+\underbrace{\binom{k-1}{0}}_{=\binom{k}{0}}D_{L_{j}}^{0}D^{k-0}\\
&=\sum_{m=0}^{k}\binom{k}{m}D_{L_{j}}^{m}(1)D^{k-m}.
\end{align*}

Next, using another induction we prove
\begin{equation*}
D_{L_{j}}^{m}(1)=\sum_{l=0}^{m}  H_{m,l}\left(\frac{L_{j}}{z}\right)^{m-l}.
\end{equation*}
We begin with some observations:
\begin{equation}\label{eq:DLj1}
\begin{split}
DH_{m-1,l}(X)
&=\frac{d}{dX}H_{m-1,l}DX\\
&=n\left(1+(-1)^n\frac{X}{n^n}\right)X\frac{d}{dX}H_{m-1,l}
\end{split}
\end{equation}
and
\begin{equation}\label{eq:DLj2}
\begin{split}
D\left(\frac{L_j}{z}\right)^{m-1-l}
&=(m-1-l)\left(\frac{L_j}{z}\right)^{m-2-l}D\left(\frac{L_j}{z}\right)\\
&=(m-1-l)\left(\frac{L_j}{z}\right)^{m-2-l}\left(\frac{L_j}{z}\right)\left(1+(-1)^n\frac{X}{n^n}\right)\\
&=\left(1+(-1)^n\frac{X^n}{n^n}\right)(m-1-l)\left(\frac{L_j}{z}\right)^{m-1-l}
\end{split}
\end{equation}
and
\begin{equation}\label{eq:DLjLeibniz2}
D_{L_j}(FG)=(DF)G+F(DG)+\frac{L_j}{z}(FG).
\end{equation}

For the base step $m=0$, we have
\begin{equation*}
D^0_{L_j}(1)=1=H_{0,0}\left(\frac{L_j}{z}\right)^0.
\end{equation*}
For the inductive step, assume the statement holds for $m-1$, then
\begin{equation*}
\begin{split}
&\quad D_{L_j}^m\\
&=D_{L_j}D_{L_j}^{m-1}\\
&=D_{L_j}\sum_{l=0}^{m-1}H_{m-1,l}\left(\frac{L_j}{z}\right)^{m-1-l}\\
&=\sum_{l=0}^{m-1}\left((DH_{m-1,l})\left(\frac{L_j}{z}\right)^{m-1-l}+H_{m-1,l}D\left(\frac{L_j}{z}\right)^{m-1-l}+H_{m-1,l}\left(\frac{L_j}{z}\right)^{m-l}\right)\quad \text{ by equation \eqref{eq:DLjLeibniz2}}.
\end{split}
\end{equation*}
Then, by equations \eqref{eq:DLj1} and \eqref{eq:DLj2}, we see that 
\begin{equation*}
\begin{split}
&\quad D_{L_j}^m\\
&=\sum_{l=0}^{m-1}\left(\left(n(1+(-1)^n\frac{X}{n^n})\left(X\frac{d}{dX}+\frac{m-1-l}{n}\right)H_{m-1,l}\right)\left(\frac{L_j}{z}\right)^{m-1-l}+H_{m-1,l}\left(\frac{L_j}{z}\right)^{m-l}\right)\\
&=\sum_{l=1}^{m}\left(n(1+(-1)^n\frac{X}{n^n})\left(X\frac{d}{dX}+\frac{m-l}{n}\right)H_{m-1,l-1}\right)\left(\frac{L_j}{z}\right)^{m-l}+\sum_{l=0}^{m-1}H_{m-1,l}\left(\frac{L_j}{z}\right)^{m-l}\\
&=\sum_{l=0}^{m}\left(n(1+(-1)^n\frac{X}{n^n})\left(X\frac{d}{dX}+\frac{m-l}{n}\right)H_{m-1,l-1}+H_{m-1,l}\right)\left(\frac{L_j}{z}\right)^{m-l}\text{by $H_{m-1,-1}=0$, $H_{m-1,m}=0$}\\
&=\sum_{l=0}^{m}H_{m,l}\left(\frac{L_j}{z}\right)^{m-l}.
\end{split}
\end{equation*}
\end{proof}
Now, we are ready to prove the following lemma which describes the operator $\mathds{L}$ as a summmation of the operators $\mathds{L}_{j,k}$.
\begin{lem}\label{lem:Lj_Intermsof_Ljk}
For all $0\leq{j}\leq{n-1}$, we have
\begin{equation*}
\mathds{L}_j=\sum_{k=1}^n\left(\frac{L_j}{z}\right)^{n-k}\mathds{L}_{j,k}.
\end{equation*}
\end{lem}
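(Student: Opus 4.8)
The plan is to prove the identity $\mathds{L}_j=\sum_{k=1}^n\left(\frac{L_j}{z}\right)^{n-k}\mathds{L}_{j,k}$ by starting from the definition of $\mathds{L}_j$ in equation \eqref{eq:LPz0}, expanding each $D_{L_j}^r$ via Lemma \ref{lem:DLj_Hml_Equation}, and then regrouping the resulting double (and triple) sums so that the coefficient of each fixed power $\left(\frac{L_j}{z}\right)^{n-k}D^i$ is collected; comparing with the definition \eqref{eq:LLjk} of $\mathds{L}_{j,k}$ should then finish the proof. First I would write
\begin{equation*}
\mathds{L}_j=-\left(\frac{L_j}{z}\right)^n+D_{L_j}^n+\frac{DL_j}{L_j}\sum_{r=1}^{n-1}s_{n,r}D_{L_j}^r,
\end{equation*}
and substitute Lemma \ref{lem:DLj_Hml_Equation}, namely $D_{L_j}^k=\sum_{m=0}^k\sum_{l=0}^m\binom{k}{m}H_{m,l}\left(\frac{L_j}{z}\right)^{m-l}D^{k-m}$, both for $k=n$ and for each $k=r$ in the sum. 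This turns $\mathds{L}_j$ into a sum of terms of the form $(\text{coefficient})\left(\frac{L_j}{z}\right)^{p}D^{q}$, where the coefficient is a product of binomials, Stirling numbers, and $H_{m,l}$'s (and possibly the factor $\frac{DL_j}{L_j}$).

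The key bookkeeping step is the change of summation variables. In $\sum_{k=1}^n\left(\frac{L_j}{z}\right)^{n-k}\mathds{L}_{j,k}$, the power of $\frac{L_j}{z}$ appearing is $(n-k)+(m-l)$ with $0\le i\le k$ in \eqref{eq:LLjk} (after renaming the summation indices there). I would set the target exponent of $\frac{L_j}{z}$ to be a fixed value, say $e$, and the power of $D$ to be a fixed $i$; then both sides should reduce to the same sum over the remaining free index. For the $D_{L_j}^n$ contribution, writing $k-m=i$ shows the power of $D$ is $i$ and the power of $\frac{L_j}{z}$ is $m-l=n-i-l$, so $k:=i+l$ indexes which $\mathds{L}_{j,k}$ it lands in; the binomial $\binom{n}{m}=\binom{n}{n-i}$ matches the $\binom{n}{i}H_{n-i,k-i}$ term of \eqref{eq:LLjk} (note $H_{n-i,k-i}=H_{n-i,l}$ here, consistent). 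For the $\frac{DL_j}{L_j}\sum_{r=1}^{n-1}s_{n,r}D_{L_j}^r$ contribution, expanding $D_{L_j}^r$ and writing $r-m=i$, $m-l = $ the relevant power, one matches the second group of terms in \eqref{eq:LLjk}, after substituting $r\mapsto n-r$ so that $s_{n,r}$ becomes $s_{n,n-r}$ as written there. The $-\left(\frac{L_j}{z}\right)^n$ term should be absorbed into the $k=n$ (i.e. $i=0$, $l=0$) piece: indeed $H_{n,0}=1$ so $\binom{n}{0}H_{n,0}=1$, but the full $\mathds{L}_{j,n}$ at $i=0$ also contains this term with a sign that I expect cancels the explicit $-\left(\frac{L_j}{z}\right)^n$; I will need to check this cancellation carefully — alternatively it is cleaner to observe that the recursion \eqref{eq:HmlRecursion} and the definition \eqref{eq:LLjk} were precisely engineered so that the $-(L_j/z)^n$ is already accounted for, and trace through exactly where.

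The main obstacle I anticipate is purely combinatorial: correctly aligning the three nested sums (over $r$, over $m$, over $l$) coming from the expansion against the three-fold structure of \eqref{eq:LLjk} (the outer sum over $i$, the inner sum over $r$, and the dependence on $k-i$ in the $H$-indices), and getting every binomial-coefficient identity (Vandermonde-type, as in $\binom{n-r}{i}$ versus $\binom{r'}{m}$) and every index shift exactly right, including the edge cases where $H_{m,l}$ or $s_{n,r}$ vanishes outside its defined range. No deep idea is needed beyond Lemma \ref{lem:DLj_Hml_Equation} and the definitions; it is a careful reorganization. A clean way to present it is to fix $i$ and $e$ (the exponents of $D$ and of $L_j/z$), extract the coefficient of $\left(\frac{L_j}{z}\right)^{e}D^{i}$ on both sides, and verify equality of those coefficients, using that $H_{m,l}$ depends only on $m$ and $l$ (not on which $D_{L_j}^k$ it came from) and that $\frac{DL_j}{L_j}=Y$ is a function of $x$ commuting through the bookkeeping.
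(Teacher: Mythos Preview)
Your approach is essentially the same as the paper's: expand $\mathds{L}_j$ using Lemma~\ref{lem:DLj_Hml_Equation} and reorganize the indices until the definition \eqref{eq:LLjk} of $\mathds{L}_{j,k}$ emerges. Two small clarifications will remove the uncertainties you flagged. First, the $-\left(\frac{L_j}{z}\right)^n$ term does \emph{not} land inside any $\mathds{L}_{j,k}$ on the right-hand side (the sum there runs over $k\ge 1$, so the exponent $n-k$ never reaches $n$); rather, it cancels directly against the $m=n$, $l=0$ term of the expansion of $D_{L_j}^n$, since $\binom{n}{n}H_{n,0}\left(\frac{L_j}{z}\right)^n D^0=\left(\frac{L_j}{z}\right)^n$. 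Second, no Vandermonde-type identity is needed: the only binomial manipulation is the symmetry $\binom{a}{b}=\binom{a}{a-b}$, which converts $\binom{n}{m}$ into $\binom{n}{i}$ after $m=n-i$, and similarly turns the binomials in the $s_{n,r}$-sum into the $\binom{n-r}{i}$ appearing in \eqref{eq:LLjk} after the appropriate shift.
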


\begin{proof}
By Lemma \ref{lem:DLj_Hml_Equation}, and the definition\footnote{Note that the sum in the definition of $\mathds{L}_j$ in equation (\ref{eq:LPz0}) can start at $r=0$ since $s_{n,0}=0$.} of $\mathds{L}_j$ we have
\begin{equation*}
\begin{split}
\mathds{L}_j
&=-\left(\frac{L_j}{z}\right)^{n}+\sum_{m=0}^{n}\sum_{l=0}^{m} H_{m, l}\binom{n}{m}\left(\frac{L_j}{z}\right)^{m-l} D^{n-m}+\frac{D L_j}{L_j} \underbrace{{\sum_{r=0}^{n-1} \sum_{m=0}^{r} }}_{=\sum_{m=0}^{m=n-1}\sum_{r=m}^{r=n-1}}\sum_{l=0}^{m} s_{n, r} \binom{r}{m} H_{m, l}\left(\frac{L_j}{z}\right)^{m-l} D^{r-m}\\
&=-\left(\frac{L_j}{z}\right)^{n}+\sum_{m=0}^{n}\sum_{l=0}^{m} H_{m, l}\binom{n}{m}\left(\frac{L_j}{z}\right)^{m-l} D^{n-m}+\frac{D L_j}{L_j} \sum_{m=0}^{n-1}\sum_{l=0}^{m}\sum_{r=m}^{n-1} s_{n, r} \binom{r}{m} H_{m, l}\left(\frac{L_j}{z}\right)^{m-l} D^{r-m}.
\end{split}
\end{equation*}
By separating $m=n$ case from the first double summation and by the change of indices via $m=n-i$ and $l=k-i$, we obtain
\begin{align*}
\mathds{L}_j=&-\left(\frac{L_j}{z}\right)^{n}+\sum_{l=0}^{n}\binom{n}{n} H_{n, l}\left(\frac{L_j}{z}\right)^{n-l}\\
&+\underbrace{\sum_{i=1}^{n} \sum_{k=i}^{n}}_{={\sum_{k=1}^{k=n} \sum_{i=1}^{i=k}}}\left(\frac{L_j}{z}\right)^{n-k}\left(\binom{n}{n-i} H_{n-i, k-i} D^{i}+\frac{D L_j}{L_j} \sum_{r=n-i}^{n-1} s_{n, r}\binom{r}{n-i} H_{n-i, k-i} D^{r-n+i}\right).
\end{align*}
Change the index $l$ to $k$ in the first summation and shift $r$ by $n-i$:
\begin{equation}\label{eq:L_jlastform}
\begin{split}
\mathds{L}_j=&-\left(\frac{L_j}{z}\right)^{n}+\sum_{k=0}^{n}\binom{n}{n} H_{n, l}\left(\frac{L_j}{z}\right)^{n-k}+\sum_{k=1}^{n} \sum_{i=1}^{k}\left(\frac{L_j}{z}\right)^{n-k}\underbrace{\binom{n}{n-i}}_{=\binom{n}{i}} H_{n-i, k-i} D^{i}\\
&+\frac{D L_j}{L_j} \sum_{k=1}^{n}\left(\frac{L_j}{z}\right)^{n-k} \overbrace{\sum_{i=1}^{k}\sum_{r=0}^{i-1} s_{n, r+n-i}\underbrace{\binom{r+n-i}{n-i}}_{=\binom{r+n-i}{r}} H_{n-i, k-i} D^{r}}^{(\star)}.
\end{split}
\end{equation}

For $(\star)$, we have
\begin{align*}
\underbrace{\sum_{i=1}^{k} \sum_{r=0}^{i-1}}_{={\sum_{r=0}^{r=k-1} \sum_{i=r+1}^{i=k}}} s_{n, r+n-i}\binom{r+n-i}{r} H_{n-i, k-i} D^{r}
&=\sum_{i=0}^{k-1} \sum_{r=i+1}^{k} s_{n, i+n-r}\binom{i+n-r}{i} H_{n-r, k-r} D^{i}\quad (i\leftrightarrow r)\\
&=\sum_{i=0}^{k-1} \sum_{r=1}^{k-i} s_{n, n-r} \binom{n-r}{i}{H}_{n-i-r, k-i-r} D^{i}\quad\text{shift $r$ by $i$}\\
&=\sum_{i=0}^{k} \sum_{r=1}^{k-i} s_{n, n-r} \binom{n-r}{i}{H}_{n-i-r, k-i-r} D^{i}\quad\text{since }H_{n-k-r,-r}=0.
\end{align*}
Note also that
\begin{equation*}
-\left(\frac{L_j}{z}\right)^{n}+\sum_{k=0}^{n}\binom{n}{n} H_{n, k}\left(\frac{L_j}{z}\right)^{n-k}=\sum_{k=1}^{n}\binom{n}{n} H_{n, k}\left(\frac{L_j}{z}\right)^{n-k}
\end{equation*}
Hence, \eqref{eq:L_jlastform} reads as
\begin{equation*}
\begin{split}
\mathds{L}_j=&\underbrace{\sum_{k=1}^{n}\binom{n}{n} H_{n, k}\left(\frac{L_j}{z}\right)^{n-k}+\sum_{k=1}^{n} \sum_{i=1}^{k}\left(\frac{L_j}{z}\right)^{n-k}\binom{n}{i} H_{n-i, k-i} D^{i}}_{\text{These can be combined.}}\\
&+\frac{D L_j}{L_j} \sum_{k=1}^{n}\left(\frac{L_j}{z}\right)^{n-k} \sum_{i=0}^{k} \sum_{r=1}^{k-i} s_{n, n-r} \binom{n-r}{i}{H}_{n-i-r, k-i-r} D^{i}.
\end{split}
\end{equation*}
So, we have
\begin{align*}
\mathds{L}_j=&\sum_{k=1}^{n}\left(\frac{L_j}{z}\right)^{n-k} \sum_{i=0}^{k}\binom{n}{i} H_{n-i, k-i} D^{i}+\frac{D L_j}{L_j} \sum_{k=1}^{n}\left(\frac{L_j}{z}\right)^{n-k} \sum_{i=0}^{k} \sum_{r=1}^{k-i} s_{n, n-r} \binom{n-r}{i}{H}_{n-i-r, k-i-r} D^{i}\\
=&\sum_{k=1}^{n}\left(\frac{L_j}{z}\right)^{n-k}\underbrace{\sum_{i=0}^{k}\left(\binom{n}{i} H_{n-i, k-i}+\frac{D L_j}{L_j} \sum_{r=1}^{k-i} s_{n, n-r} \binom{n-r}{i}{H}_{n-i-r, k-i-r} \right)D^{i}}_{=\mathds{L}_{j,k}}.
\end{align*}
\end{proof}

Using Lemma \ref{lem:Lj_Intermsof_Ljk} above, we obtain the following result which can be used to determine $\Phi_{j,k}$'s recursively for a given set initial conditions $\Phi_{j,k}\vert_{x=0}$.

\begin{cor}\label{cor:Phi_jk_identity}
For $0\leq{j}\leq{n-1}$ and $k\geq 0$, we have
\begin{equation*}
\mathds{L}_{j,1}(\Phi_{j,k})+\frac{1}{L_j}\mathds{L}_{j,2}(\Phi_{j,k-1})+\frac{1}{L_j^2}\mathds{L}_{j,3}(\Phi_{j,k-2})+\cdots+\frac{1}{L_j^{n-1}}\mathds{L}_{j,n}(\Phi_{j,k+1-n})=0.
\end{equation*}
\end{cor}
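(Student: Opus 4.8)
The plan is to turn the single functional identity $\mathds{L}_j\Phi_j(z)=0$ into the asserted family of $x$-series identities, one for each $k\geq 0$, by expanding in powers of $z$. First I would recall that, by equation \eqref{eqn:mujcommutation} (applied repeatedly to commute $e^{\mu_j/z}$ past $D$), the hypothesis that $e^{\mu_j/z}\Phi_j(z)$ solves the Picard–Fuchs equation \eqref{eq:PF3} is exactly the statement $\mathds{L}_j\Phi_j(z)=0$, with $\mathds{L}_j$ the operator displayed in \eqref{eq:LPz0}; this reformulation is already recorded in the excerpt. Then I would invoke Lemma \ref{lem:Lj_Intermsof_Ljk} to rewrite this as
\begin{equation*}
\sum_{k=1}^{n}\left(\frac{L_j}{z}\right)^{n-k}\mathds{L}_{j,k}\,\Phi_j(z)=0 .
\end{equation*}

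The key observation, which makes the coefficient extraction clean, is that each $\mathds{L}_{j,k}$ is by its definition \eqref{eq:LLjk} a differential operator in the variable $x$ alone (its coefficients are polynomials in $X=L^n$, in $DL_j/L_j$, in $D$, and in Stirling numbers, with no occurrence of $z$), hence it commutes with multiplication by powers of $z$. Substituting $\Phi_j(z)=\sum_{l\geq 0}\Phi_{j,l}z^l$ and using $\Phi_{j,l}=0$ for $l<0$, the left-hand side becomes $\sum_{k=1}^{n}\sum_{l\geq 0}L_j^{\,n-k}\bigl(\mathds{L}_{j,k}\Phi_{j,l}\bigr)z^{\,l-(n-k)}$, a well-defined element of $\mathbb{C}[[x]]((z))$ (the powers of $z$ are bounded below by $-(n-1)$). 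Setting the coefficient of $z^{\,k-n+1}$ equal to zero, for arbitrary $k\geq 0$, and then dividing through by $L_j^{\,n-1}$ and relabelling the summation index, yields precisely
\begin{equation*}
\mathds{L}_{j,1}(\Phi_{j,k})+\frac{1}{L_j}\mathds{L}_{j,2}(\Phi_{j,k-1})+\cdots+\frac{1}{L_j^{\,n-1}}\mathds{L}_{j,n}(\Phi_{j,k+1-n})=0 .
\end{equation*}
(For small $k$ the terms with negative second subscript drop out automatically, consistent with the convention $\Phi_{j,l}=0$ for $l<0$.)

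There is no serious obstacle here: once Lemma \ref{lem:Lj_Intermsof_Ljk} is available the corollary is a purely formal consequence obtained by equating coefficients of $z$. The only point requiring a little care is the bookkeeping of the index shift — verifying that the coefficient of $z^{\,k-n+1}$ in $\sum_{k=1}^n (L_j/z)^{n-k}\mathds{L}_{j,k}\Phi_j(z)$ is $\sum_{i=1}^{n}L_j^{\,n-i}\mathds{L}_{j,i}\Phi_{j,k+1-i}$ — together with the remark that $L_j$ is invertible in the relevant ring (a unit times $x$), so that division by $L_j^{\,n-1}$ is legitimate and the identity can be stated in the normalized form above.
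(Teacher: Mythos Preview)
Your proposal is correct and follows essentially the same route as the paper: start from $\mathds{L}_j\Phi_j(z)=0$, apply Lemma \ref{lem:Lj_Intermsof_Ljk}, expand $\Phi_j(z)$ as a power series in $z$, shift indices so that the coefficient of $z^{k+1-n}$ is $\sum_{l=1}^n L_j^{\,n-l}\mathds{L}_{j,l}\Phi_{j,k+1-l}$, and divide by $L_j^{\,n-1}$. The paper carries out exactly this computation line by line, including the same use of the convention $\Phi_{j,l}=0$ for $l<0$ to extend the inner sum down to $k=0$.
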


\begin{proof}
We calculate 
\begin{align*}
0=\mathds{L}_j\Phi_{j}(z)
=&\sum_{l=1}^n\left(\frac{L_j}{z}\right)^{n-l}\mathds{L}_{j,l}\Phi_{j}(z)\quad\text{by Lemma \ref{lem:Lj_Intermsof_Ljk}}\\
=&\sum_{l=1}^n\sum_{k=0}^{\infty}\left(\frac{L_j}{z}\right)^{n-l}\mathds{L}_{j,l}\Phi_{j,k}z^k\\
=&\sum_{l=1}^n\sum_{k=0}^{\infty}{L_j}^{n-l}\mathds{L}_{j,l}\Phi_{j,k}z^{k+l-n}\\
=&\sum_{l=1}^n\sum_{k=l-1}^{\infty}{L_j}^{n-l}\mathds{L}_{j,l}\Phi_{j,k+1-l}z^{k+1-n}\quad\text{shift }k\text{ by }l-1\\
=&\sum_{l=1}^n\sum_{k=0}^{\infty}{L_j}^{n-l}\mathds{L}_{j,l}\Phi_{j,k+1-l}z^{k+1-n}\quad\quad\text{since }\Phi_{j,k+1-l}=0\text{ for }k<l-1\\
=&\sum_{k=0}^{\infty}\sum_{l=1}^n{L_j}^{n-l}\mathds{L}_{j,l}\Phi_{j,k+1-l}z^{k+1-n}.
\end{align*}
Then equation (\ref{eq:LPz0}) reads as
\begin{equation*}
\sum_{l=1}^n{L_j}^{n-l}\mathds{L}_{j,l}\Phi_{j,k+1-l}=0
\end{equation*}
for any $k\geq{0}$. By dividing out $L_j^{n-1}$, we finish the proof.
\end{proof}

Let $$\mathcal{I}\subset\mathbb{C}[L]$$ be the ideal generated by $XY$.

The following result shows that the operators $\mathds{L}_{j,k}$ have a simple form modulo the ideal $\mathcal{I}$. This will be useful in the polynomiality result Lemma \ref{lem:AymptoticPFSolution} we want to prove.
\begin{lem}\label{lem:Ljkequivalence}
We have
\begin{equation*}
\mathds{L}_{j,k}\equiv\binom{n}{k}(D)(D-Y)\cdots(D-(k-1)Y)\mod\mathcal{I}.
\end{equation*}
\end{lem}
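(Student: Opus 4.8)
The plan is to prove the congruence by working modulo the ideal $\mathcal{I}=(XY)\subset\mathbb{C}[L]$ directly from the explicit formula \eqref{eq:LLjk} for $\mathds{L}_{j,k}$, reducing it to a clean identity about the functions $H_{m,l}$ modulo $\mathcal{I}$. First I would record the two facts about $\mathcal{I}$ that make everything collapse: since $DX=nXY\equiv 0$ and $DY=(-1)^n n^{1-n}XY\equiv 0\pmod{\mathcal{I}}$, both $X$ and $Y$ are \emph{constants} in the differential ring $\mathbb{C}[L]/\mathcal{I}$; moreover $\mathcal{I}$ is $D$-stable, so $D$ descends to $\mathbb{C}[L]/\mathcal{I}$. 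Consequently, in the recursion \eqref{eq:HmlRecursion} the operator $n(1+(-1)^n X/n^n)(X\frac{d}{dX}+\frac{m-l}{n})$ reduces modulo $\mathcal{I}$: the term $X\frac{d}{dX}H_{m-1,l-1}$ is a multiple of $XY$ times a polynomial (because each application of $X\frac{d}{dX}=\frac1n D$ to anything in $\mathbb{C}[L]$ produces a multiple of $XY$ by the displayed formula for $DX$), hence vanishes, while $1+(-1)^nX/n^n=Y$. Thus modulo $\mathcal{I}$ the recursion becomes $H_{m,l}\equiv H_{m-1,l}+(m-l)Y\,H_{m-1,l-1}$, with $H_{0,l}=\delta_{0,l}$. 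A direct induction (or comparison with the generating function $\prod_{i=0}^{m-1}(1+iYt)=\sum_l H_{m,l}t^l$, i.e. $H_{m,l}\equiv Y^l\,s'_{m,m-l}$ where $s'$ is the unsigned Stirling number, equivalently $H_{m,l}\equiv Y^l\,e_l(0,1,\dots,m-1)$ the elementary symmetric polynomial) then gives the closed form $H_{m,l}\equiv Y^l\, c(m,m-l)\pmod{\mathcal{I}}$ with $c$ the signless Stirling numbers of the first kind.

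Next I would substitute this into \eqref{eq:LLjk}. Modulo $\mathcal{I}$ we have $\frac{DL_j}{L_j}=Y$ (a constant), and also every term in the inner sum $\sum_{r=1}^{k-i}\binom{n-r}{i}s_{n,n-r}H_{n-i-r,k-i-r}D^i$ gets multiplied by the extra factor $\frac{DL_j}{L_j}=Y$; combined with the Stirling-number reduction this second family of terms will reorganize into shifts of the first. Concretely, after plugging in $H_{n-i,k-i}\equiv Y^{k-i}c(n-i,n-k)$ and $H_{n-i-r,k-i-r}\equiv Y^{k-i-r}c(n-i-r,n-k)$, the coefficient of $D^i$ in $\mathds{L}_{j,k}$ becomes $Y^{k-i}$ times a sum over $r\ge 0$ (with $r=0$ the first family) of $\binom{n-r}{i}s_{n,n-r}c(n-i-r,n-k)$ — here I use $s_{n,n}=1$ so the $r=0$ term is $\binom{n}{i}c(n-i,n-k)$. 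So the whole thing reads
\begin{equation*}
\mathds{L}_{j,k}\equiv\sum_{i=0}^{k}Y^{k-i}\Bigl(\sum_{r\ge 0}\binom{n-r}{i}s_{n,n-r}\,c(n-i-r,n-k)\Bigr)D^i\pmod{\mathcal{I}}.
\end{equation*}

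Then the remaining task is the purely combinatorial identity that this equals $\binom{n}{k}\,D(D-Y)\cdots(D-(k-1)Y)$. Since in $\mathbb{C}[L]/\mathcal{I}$ the element $Y$ commutes with $D$ as a scalar, expanding the falling-factorial-type product gives $\binom{n}{k}\sum_{i}(-1)^{k-i}s'_{k,i}\,Y^{k-i}D^i$ (signed Stirling numbers of the first kind again, now of the integers $0,1,\dots,k-1$), so matching coefficients of $Y^{k-i}D^i$ reduces to proving, for each $0\le i\le k\le n$,
\begin{equation*}
\sum_{r\ge 0}\binom{n-r}{i}s_{n,n-r}\,c(n-i-r,\,n-k)=\binom{n}{k}\,(-1)^{k-i}s_{k,i},
\end{equation*}
an identity among Stirling numbers of the first kind and binomial coefficients. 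I would prove this via generating functions: both sides are coefficients in the expansion of falling factorials, and the left side is exactly the coefficient extraction in the identity $(x)_n = (x)_i\cdot(x-i)_{n-i}$ after a further split of $(x)_{n-i}$, combined with the Vandermonde-type reorganization $\binom{n}{k}s_{k,i}$; the cleanest route is to recognize $\sum_{m,k}\mathds{L}_{j,k}$-coefficients as arising from $D^{\underline n}=D(D-1)\cdots(D-n+1)$ being rewritten via the substitution $D\mapsto$ (the shifted operators $\mathfrak{L}$), i.e. to reprove it by applying the congruence directly to the Picard–Fuchs factorization \eqref{eq:PF3} / Lemma \ref{lem:PFFactorization} rather than to \eqref{eq:LLjk} entrywise. \textbf{The main obstacle} is precisely this last combinatorial identity: getting the bookkeeping of the two Stirling-number families ($s_{n,n-r}$ from the Picard–Fuchs operator and $c(\cdot,n-k)$ from the $H_{m,l}$ reduction) to telescope into the single clean product $\binom{n}{k}D(D-Y)\cdots(D-(k-1)Y)$. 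I expect this to follow by induction on $k$ using the $H_{m,l}$ recursion modulo $\mathcal{I}$ together with $s_{n,k}=s_{n-1,k-1}-(n-1)s_{n-1,k}$, mirroring how $\mathds{L}_{j,1}=nD$ and $\mathds{L}_{j,2}=\binom{n+1}{4}(Y^2-Y)-\binom{n}{2}YD+\binom{n}{2}D^2\equiv\binom{n}{2}(D^2-YD)=\binom{n}{2}D(D-Y)$ were computed above, but verifying the inductive step in general is where the real work sits.
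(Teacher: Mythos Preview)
Your overall strategy matches the paper's: reduce the $H_{m,l}$ recursion modulo $\mathcal{I}$, identify the resulting numbers, plug back into \eqref{eq:LLjk}, and then verify a purely combinatorial identity. The reduction of the recursion to $H_{m,l}\equiv H_{m-1,l}+(m-l)Y\,H_{m-1,l-1}\pmod{\mathcal I}$ is correct. But you then misidentify the numbers: the recursion $h_{m,l}=h_{m-1,l}+(m-l)h_{m-1,l-1}$ with $h_{0,l}=\delta_{0,l}$ gives $h_{m,l}=S_{m,m-l}$, the Stirling numbers of the \emph{second} kind (set $k=m-l$ in $S_{m,k}=kS_{m-1,k}+S_{m-1,k-1}$), not the unsigned Stirling numbers of the first kind. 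Your claimed generating function $\prod_{i=0}^{m-1}(1+iYt)$ produces the recursion with factor $(m-1)$, not $(m-l)$; and a small check ($m=3$, $l=2$) gives $h_{3,2}=1=S_{3,1}$, not $c(3,1)=2$.

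This error propagates: the combinatorial identity you set out to prove,
\[
\sum_{r\ge 0}\binom{n-r}{i}s_{n,n-r}\,c(n-i-r,n-k)=\binom{n}{k}(-1)^{k-i}s_{k,i},
\]
is simply false as stated (take $n=3,k=2,i=0$). The correct identity, with $S_{n-i-r,n-k}$ in place of $c(n-i-r,n-k)$, is what the paper proves, and it does so not by induction on $k$ but by inserting Euler's explicit formula $S_{m,k}=\frac{1}{k!}\sum_{l}(-1)^{k-l}\binom{k}{l}l^{m}$, summing in $r$ via the definition of $s_{n,\cdot}$ as falling-factorial coefficients, and then recognizing the remaining alternating binomial sum as a Chu--Vandermonde computation of $\binom{t}{k}$. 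Once you correct the identification to second-kind Stirling numbers, that route is available and clean; your proposed induction on $k$ using the first-kind recursion $s_{n,k}=s_{n-1,k-1}-(n-1)s_{n-1,k}$ does not obviously interact well with the $S_{\cdot,n-k}$ factor and is likely harder than the paper's direct evaluation.
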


\begin{proof}
Note that $Y$ and $D$ commute modulo $\mathcal{I}$:
\begin{equation*}
D(YF)=(DY)F+Y(DF)=\frac{(-1)^n}{n^{n-1}}XYF+Y(DF).
\end{equation*}
Also observe that for any $r\geq{1}$ we have,
\begin{equation}\label{eq:Yr_equiv_YmodI}
\begin{split}
Y^r
&\equiv (Y)^{r-1}Y\mod \mathcal{I}\\
&\equiv (1+(-1)^n\frac{X}{n^n})^{r-1}Y \mod \mathcal{I}\\
&\equiv Y \mod \mathcal{I}.
\end{split}   
\end{equation}

We first show by induction that
\begin{equation}\label{eq:Hml_equiv_hmYl}
H_{m,l}\equiv h_{m,l}Y^l\mod \mathcal{I}
\end{equation}
where $h_{m,l}=S_{m,m-l}$ is the Stirling number of the second kind. The only thing we need to prove is that if $H_{m,l}\equiv h_{m,l}Y^l\mod \mathcal{I}$, then the numbers $h_{m,l}$ are given by the recursion
\begin{equation*}
h_{0,l}=\delta_{0,l},\text{ and }h_{m,l}=h_{m-1,l}+(m-l)h_{m-1,l-1}\text{ for all }m\geq{1}.
\end{equation*}
This will imply $h_{m,l}=S_{m,m-l}$ by recursion (\ref{eq:Stirling_2nd_Recursion}). The base case $l=0$ is given by
\begin{equation*}
H_{m,0}=1\text{, and } h_{m,0}=S_{m,m}=1.
\end{equation*}
The recursion above is equivalent to equation \eqref{eq:HmlRecursion}:
\begin{equation*}
\begin{split}
H_{m,l}
&\equiv H_{m-1,l}+nY\left(X\frac{d}{d X}+\frac{m-l}{n}\right)H_{m-1,l-1} \mod\mathcal{I}\\
&\equiv H_{m-1,l}+(m-l)YH_{m-1,l-1}\mod\mathcal{I}
\end{split}
\end{equation*}
which is the same as
\begin{equation*}
\begin{split}
h_{m,l}Y^l
&\equiv H_{m-1,l}Y^l+(m-l)YH_{m-1,l-1}Y^{l-1}\mod\mathcal{I}\\
&\equiv H_{m-1,l}Y^l+(m-l)H_{m-1,l-1}Y^l\mod\mathcal{I}.
\end{split}
\end{equation*}
By induction, cancelling out $Y^l$ on both sides we get what we want; that is, $H_{m,l}\equiv h_{m,l}Y^l\mod \mathcal{I}$.

By the definitions of $Y$ and $\mathds{L}_{j,k}$, and using equation (\ref{eq:Yr_equiv_YmodI}) in the second line, we obtain
\begin{equation}\label{eq:LLjkmodI}
\begin{split}
\mathds{L}_{j,k}
&\equiv\sum_{i=0}^{k}\left(\binom{n}{i} H_{n-i, k-i}+Y\sum_{r=1}^{k-i}\binom{n-r}{i} s_{n, n-r} H_{n-i-r, k-i-r}\right) D^{i}\mod\mathcal{I}\\
&\equiv\sum_{i=0}^{k}\left(\binom{n}{i} H_{n-i, k-i}+\sum_{r=1}^{k-i}Y^r\binom{n-r}{i} s_{n, n-r} H_{n-i-r, k-i-r}\right) D^{i}\mod\mathcal{I}\\
&\equiv\sum_{i=0}^{k}\left(\sum_{r=0}^{k-i}Y^r\binom{n-r}{i} s_{n, n-r} H_{n-i-r, k-i-r}\right) D^{i}\mod\mathcal{I}.
\end{split}
\end{equation}
Then, by equation (\ref{eq:Hml_equiv_hmYl}) we have
\begin{equation}\label{eq:LLjkmodI2}
\begin{split}
\mathds{L}_{j,k}
&\equiv\sum_{i=0}^{k}\left(\sum_{r=0}^{k-i}Y^r\binom{n-r}{i} s_{n, n-r} h_{n-i-r, k-i-r}Y^{k-i-r}\right) D^{i}\mod\mathcal{I}\\
&\equiv\sum_{i=0}^{k}\left(\sum_{r=0}^{k-i}\binom{n-r}{i} s_{n, n-r} h_{n-i-r, k-i-r}\right) Y^{k-i}D^{i}\mod\mathcal{I}\\
&\equiv\sum_{i=0}^{k}\left(\sum_{r=0}^{k-i}\binom{n-r}{i} s_{n, n-r} S_{n-i-r, n-k}\right) Y^{k-i}D^{i}\mod\mathcal{I}\quad\text{by }h_{m,l}=S_{m,m-l}.
\end{split}
\end{equation}

Since $(-1)^ks_{n,n-k}$ are the elementary symmetric polynomials evaluated at $0,1,\ldots,n-1$, then by the combinatorial identity\footnote{The difference is that they have elementary symmetric polynomials evaluated at $1,2,\ldots, n$. The notation used in \cite{zz} for elementary symmetric polynomials evaluated at $1,2,\ldots, n$ is $S_k(n)$, and for the Stirling numbers of second kind is $\mathfrak{S}_n^{(k)}$.} in the proof of \cite[Lemma 4]{zz} we have
\begin{equation*}
\sum_{i=0}^{k}\left(\sum_{r=0}^{k-i}\binom{n-r}{i}  s_{n, n-r}S_{n-i-r, n-k}\right) t^{i}=\binom{n}{k}t(t-1)\cdots(t-(k-1)).
\end{equation*}
Hence, we obtain
\begin{equation*}
\sum_{i=0}^{k}\left(\sum_{r=0}^{k-i}\binom{n-r}{i}  s_{n, n-r}S_{n-i-r, n-k}\right) y^{k-i}t^{i}
=\binom{n}{k}t(t-y)\cdots(t-(k-1)y).
\end{equation*}
This together with equation (\ref{eq:LLjkmodI2}) completes the proof of the lemma when it is combined with the commutation of $Y$ and $D$ modulo $\mathcal{I}$.
\end{proof}

Now, we are ready to prove Lemma \ref{lem:AymptoticPFSolution}. Since $D$ and $Y$ commutes modulo $\mathcal{I}$, inductively we show that
\begin{equation*}
 (D)(D-Y)\cdots(D-(k-1)Y)L_j^r\mod\mathcal{I}\equiv
\begin{cases}
0 &\text{if } 0\leq{r}\leq{k-1},\\
r(r-1)\cdots{(r-(k-1))}{L_j^rY^k} &\text{if } {r}\geq{k}.
\end{cases}
\end{equation*}
Then, Lemma \ref{lem:Ljkequivalence} implies that
\begin{equation*}
\mathds{L}_{j,k}\left(L_j^r\right)\in
\begin{cases}
\mathcal{I} &\text{if } 0\leq{r}\leq{k-1},\\
{L_j^rY^k}+\mathcal{I} &\text{if } {r}\geq{k}.
\end{cases}
\end{equation*}
From this, we conclude that $\mathds{L}_{j,k}(\mathbb{C}[L_j])\subseteq L_j^kY\mathbb{C}[L_j]$ for any $1\leq{k}\leq{n}$ since $\mathcal{I}$ is generated by $XY$ and $X=L_j^n$. Moreover, for the case $k=1$, we have the equality $\mathds{L}_{j,1}(\mathbb{C}[L_j])=L_jY\mathbb{C}[L_j]$. This is because we have $\mathds{L}_{j,1}(L_{j}^r)=nDL_j^r=nrL_j^rY$ for any $r\geq{1}$ and $\mathds{L}_{j,1}(a)=nDa=0$ for any $a\in\mathbb{C}$. 

It is clear that the statement is true if $k=0$. Now, we prove the statement inductively. By Corollary \ref{cor:LPijkidentity}, we have the following
\begin{equation*}
\mathds{L}_{j,1}(\Phi_{j,k})=-\sum_{l=2}^n{L_j}^{1-l}\mathds{L}_{j,l}\left(\Phi_{j,k+1-l}\right)\in{L_jY}\mathbb{C}\left[L_j\right].
\end{equation*}
The right hand side belongs to ${L_jY}\mathbb{C}\left[L_j\right]$ by inductive hypothesis since $\mathds{L}_{j,l}(\mathbb{C}[L_j])\subseteq L_j^lY\mathbb{C}[L_j]$. This shows $\Phi_{j,k}\in\mathbb{C}[L_j]$ and completes the proof of Lemma \ref{lem:AymptoticPFSolution} since $\mathds{L}_{j,1}(\mathbb{C}[L_j])= L_jY\mathbb{C}[L_j]$.

\end{document}